\documentclass[11pt]{article}
\usepackage{latexsym}
\usepackage{amsmath}
\usepackage{amsfonts}
\usepackage{amssymb}
\usepackage{graphicx}
\usepackage{appendix}
\usepackage{enumerate}
\usepackage{enumitem}
\usepackage{bbm}
\usepackage{bm}
%%% For the citations
%\newcommand{\mob}{\boldsymbol{\mu}}
%\newcommand{\lamob}{\boldsymbol{\lambda}}
\newcommand{\bnu}{\boldsymbol{\nu}}

\usepackage[colorlinks,linkcolor=blue,anchorcolor=blue,citecolor=blue,backref=page, pdfencoding=auto]{hyperref}

\renewcommand*{\backref}[1]{}
\renewcommand*{\backrefalt}[4]{%
\ifcase #1 (Not cited.)%
\or        (p.\,#2)%
\else      (pp.\,#2)%
\fi}

%sman for include .eps file or .pdf file

\setlength{\oddsidemargin}{0in}
\setlength{\topmargin}{0in}
\setlength{\headheight}{0in}
\setlength{\textheight}{8.7in}
\setlength{\textwidth}{6.8in}
\setlength{\topskip}{0in}
\setcounter{tocdepth}{2}

\newenvironment{proof}{{\bf Proof:  }}{\hfill\rule{2mm}{2mm}}

\newtheorem{thm}{Theorem}[section]
\newtheorem{cor}[thm]{Corollary}
\newtheorem{defn}[thm]{Definition}
\newtheorem{exmpl}[thm]{Example}
\newtheorem{lem}[thm]{Lemma}
\newtheorem{prop}[thm]{Proposition}
\newtheorem{rem}[thm]{Remark}

\newcommand{\ds} {\displaystyle}

\newcommand{\equaldef}[0]{{\buildrel def \over =}}
\newcommand{\R}[0]{\mathbb{R}}
\newcommand{\T}[0]{\mathbb{T}}
\newcommand{\Cc}[0]{\mathbb{C}}
\newcommand{\N}[0]{\mathbb{N}}
\newcommand{\Q}[0]{\mathbb{Q}}
\newcommand{\Z}[0]{\mathbb{Z}}

\newcommand{\1}[0]{\mathbb{I}}
\newcommand{\K}[0]{\mathbb{K}}
\newcommand{\B}[0]{\mathcal{B}}
\newcommand{\E}[0]{\mathcal{E}}
\newcommand{\F}[0]{\mathcal{F}}
\newcommand{\bE}[0]{\textbf{E}}
\newcommand{\bmu}[0]{\bm \mu}

\newcommand{\sgn}[0]{\textrm{Sg}}
\newcommand{\bml}[0]{\bm \lambda}
\newcommand{\eps}[0]{\varepsilon}

\newcommand{\qid}[0]{\enspace\lower6pt\vbox{\hrule height2.6pt depth-
2pt\hbox{\vrule
width0.6pt\kern1pt\vbox{\kern1pt\phantom{j}\kern1pt}\kern1.5pt
\vrule width0.6pt}\hrule height2.6pt depth-2pt}}

%%%%%%%%%%%%%%%%%%%%%%%%%%%%%%%%%%%%%%%%%%%%%%%%%%%%%%%%%%%%%

%\newcommand{\ds}{\displaystyle}

%\newcommand{\to}{\mapsto}

%

\def\build#1_#2^#3{\mathrel{\mathop{\kern 0pt#1}\limits_{#2}^{#3}}}
\def\tend#1#2{\build\hbox to 12mm{\rightarrowfill}_{#1\rightarrow #2}^{ }}

%%%%%%%%%%%%%%%%%%%%%%%%%%%%%%%%%%%%%%%%%%%%%%%%%%%%%%%%%%%%%%%%
% ----------------------------------------------------------------
                 % Proof
\def\endproof{\hfill{\vrule height4pt width6pt depth2pt}}
\newcommand{\setdef}{\stackrel {\rm {def}}{=}}
% ----------------------------------------------------------------

%%%%%%%%%%%%%%%%todo-command-to-be-droped-later
%\newcommand{\TODOH}[1]{{#1}}

%\newcommand{\todoH}[2][]{\todo[#1,color=yellow]{Houcein: #2}}
%\newcommand{\todoM}[2][]{\todo[#1,color=green!60]{Mahesh: #2}}

%%%%%%%%%%%%%%%%%%%%%%%%%%%%%%%%%%%%%%%%%%%%%%%%%%%%%%%%%%%%%

\bibliographystyle{abbrv}

\pagenumbering{arabic}
\setcounter{page}{1}

\title{Weakly tame systems, their characterizations and application}

\author{{\small el Houcein el Abdalaoui} \\
{\small Department of Mathematics} \\         
{\small Universite of Rouen Normandy, France} \\              
{\medskip} \\
{and} \\
{\medskip} \\
{\small Mahesh Nerurkar} \\
{\small Department of Mathematics} \\
{\small Rutgers University, Camden NJ 08102 USA}}

\date{}

\begin{document}
\maketitle
\renewcommand{\theequation}{\thesection.\arabic{equation}}
\numberwithin{equation}{section}

\medskip

\begin{abstract}

We explore the notion of discrete spectrum and its various characterizations for ergodic measure preserving
actions of an amenable group on a compact metric space. We introduce a notion of ‘weak-tameness’, which
is a measure theoretic version of a notion of ‘tameness’ introduced by E. Glasner, based on the work of 
A. K\"{o}hler \footnote{A. K\"{o}hler  introduced this notion and call such systems ``regular".}, and characterize such topological dynamical systems as systems for which every invariant measure
has a discrete spectrum. Using the work of M. Talagrand, we also characterize weakly tame as well as tame systems
in terms of the notion of ‘witness of irregularity’ which is based on ‘up-crossings’. Then we establish that ‘strong
Veech systems’ are tame. In particular, for any countable amenable group $T$, the flow on the orbit closure of the
translates of a ‘Veech function’ $f \in \K(T)$ is tame. Thus Sarnak’s Möbius orthogonality conjecture holds for this
flow and as a consequence, we obtain an improvement of Motohashi-Ramachandra 1976’s theorem on the Mertens function in  short interval. We further improve Motohashi-Ramachandra’s bound to $1/2$ under Chowla conjecture.

\smallskip

\noindent {\bf Keywords}
Topological dynamics, discrete spectrum, enveloping semigroup, $\mu$-tame systems, $\mu$-mean-equicontinuity,
Sarnak's M\"{o}bius disjointness conjecture, Mertens function, amenable group.

\smallskip

\noindent {\bf Mathematics Subject Classification (2010)}
37A30,47A35,37B05,11N37.
\end{abstract}

\pagenumbering{arabic}

\section{Preliminaries and notation}

\medskip

\medskip

This note results from trying to understand whether the notion of `discrete spectrum' of a compact,
metric, ergodic dynamical system $(X,T,\mu )$ can be captured in terms of the regularity properties of
the elements its enveloping semigroup. It turns out that even though in general this type of
characterization of systems with discrete spectrum is not possible, our study allows us to obtain
other characterizations for more general acting groups $T$. In the second section we shall introduce
the notion of `$\mu$-tameness', which is a weakening of the notion of `tameness' introduced by
E. Glasner \cite{G} based on the work of A. K\"{o}hler \cite{Kh} (such systems in \cite{Kh} are called ``regular".). This notion also exists in a
dormant form in the work of Bourgain \cite{B76}. The third section is devoted to the study of
$\mu$-mean equicontinuity. In the fourth section we shall also characterize `weakly-tame' and
tame systems using the work of M. Talagrand on Glivenko-Cantelli families and the notion of
`witness of irregularity'. This section can be viewed as our efforts to give a simple and aseptic exposition,
in the language of dynamical systems, to the results generated by the 1976 paper of J. Bourgain and the 1987 paper
of M. Talagrand. These two papers are `difficult to digest' and have generated a huge amount of
literature.

In the fifth section we study -- what we shall call `Veech systems'. Professor W. Veech introduced an interesting class $\K(\Z)$ of functions on integers which properly contains the class of all weakly
almost periodic functions. Translation flow on the orbit closure of such a function is an example of
a Veech system. The final section is where we apply our results to translations flow of a special Veech
function will allow us to improve `Motohashi-Ramachandra estimates' of the Mertens function in short
interval. Thus, our method yields a `dynamical approach' to these type of number theoretic
estimates. As another application, we also establish, by a simple observation of Rauzy \cite{Rauzy},
Besicovich almost periodicity of a number theoretic function arsing from the $B$-free integers integers.

\smallskip

We begin by introducing the notation and basic definitions. By a topological dynamical system $(X,T)$
we mean a compact, Hausdorff space $X$ on which a topological group $T$ acts (on the right), with a
jointly continuous action $(x,t)\to \pi (x,t) \equiv \pi_t(x) \equiv xt$, $x\in X$ and $t\in T$. In what
follows topology of $T$ will not play any part and so one may as well assume $T$ to be discrete. The set 
$\mathcal{O}(x) = \{xt\ |\ t\in T\}$ is the orbit of $x\in X$. A subset $M\subset X$ is invariant
if $\mathcal{O}(x)\subset M$ for all $x\in M$. System $(X,T)$ is point transitive if it has a dense
orbit and is minimal if all orbits are dense, (equivalently there are no proper closed invariant sets).

Following Prof. R. Ellis's algebraic approach to dynamics, we try to capture the asymptotic properties
of the system in terms of the algebraic properties of a suitable compactification of the acting group $T$.
We begin by introducing three important compactifications we need, (1) the Stone-\u{C}ech compactification
$(\beta T,T)$, (2) the enveloping or Ellis semigroup $E(X,T)$ and (3) the `ergodic analog' of $E(X,T)$-namely
$(\Omega_\mu ,T)$. As the notation indicates, all of these compactifications are themselves going to be
topological dynamical systems where the underlying compactification will be a compact Hausdorff space with
a semigroup structure which has the common additional property of being an $\mathcal{E}$-semigroup.
Before describing the compactifications,  we recall the definition of an $\mathcal{E}$-semigroup.

\smallskip

\begin{defn} A set $E$ is an  $\mathcal{E}$ semigroup if (i) it is a semigroup, (ii) it has a compact,
Hausdorff topology and (iii) in this topology the left multiplication map $L_p:E\to E$, $L_p(q) = pq$,
$p,q\in E$ is continuous.
\end{defn}

\smallskip

\noindent {\bf (1) Stone-\u{C}ech Compactification $\beta T$} : Recall that element of $\beta T$ are
ultrafilters on $T$. In fact $\beta T$ is an $\mathcal{E}$-semigroup with multiplication of ultrafilters
$p,q\in \beta T$ given by
$$
A\in pq\ \text {if and only if}\quad A*p\in q\,, \quad \text {where}\ A*p = \{t\in T\ |\ At^{-1}\in q\}\,.
$$
This compactification also has the universality property that any continuous map from $T$ to any
compact, Hausdorff space has a unique continuous extension to $\beta T$, (here $T$ has the discrete
topology). This universal property allows one to extend the $T$ action on a compact, Hausdorff space $X$
to an action of the semigroup $\beta T$. In particular, if a net $\{t_\alpha\}$ in $T$ converges to
$p\in \beta T$, then $xp = \lim\limits_{\alpha}xt_\alpha$, for $x\in X$. Furthermore, this also implies
that the dynamical system $(\beta T ,T)$ is a universal point-transitive system..

\smallskip

\noindent {\bf (2) Enveloping Semigroup $E(X,T)$}: Let $E(X,T) = \overline {\{\pi_t\ |\ t\in T\}}$, where the
closure is in the topology of pointwise convergence on all maps from $X$ to $X$. Then $E(X,T)$ itself
is an $\mathcal{E}$-semigroup and $(E(X),T)$ is a point transitive dynamical system.

\smallskip

\noindent {\bf (3) Measure theoretic enveloping semigroup $(\Omega_\mu,T)$}: Let $(X,T)$ be a compact,
metric dynamical system with a $T$ invariant Borel probability measure $\mu$ on $X$. Let $H = L^2(X,\mu)$ and
let $U_t[f] = [f_t]$ where for a measurable function $f:X \to \Cc$, $[f]$ denotes its equivalence class (mod $\mu$) and
$f_t(x) = f(xt)$. Then $t\to U_t$ unitary representation of $T$ on $H$. It is important that we distinguish 
between a measurable function $f$ and its equivalence class $[f]$. Let $\Omega_\mu = \overline {\{U_t\ |\ t\in T\}}$,
where the closure is in the weak operator topology. Then $\Omega_\mu$ is a $\mathcal{E}$-semigroup and
$(\Omega_\mu ,T)$ is itself a point transitive dynamical system. The dynamical system $(\Omega_\mu ,T)$ is weakly
almost periodic (see \cite{EN1}, \cite {EN2}). We shall list its special properties shortly.

\smallskip

Next we recall a few general facts about $\mathcal{E}$-semigroups, (see also \cite{A}, \cite{E}). Let $E$ be an
$\mathcal{E}$-semigroup. A subset $M\subset E$ is a right ideal if it is closed and $m\in M, e\in E$ implies $me\in M$.
The following lemma summarizes the structure of minimal right ideals.

\smallskip

\begin{prop} Let $E$ be a $\mathcal{E}$ semigroup and $M\subset E$ be a minimal right ideal of $E$. Then
\begin{enumerate}[label=(\arabic*)]
\item The set $J_M = \{v\in M\ |\ v^2 = v\}$ of idempotents in $M$ is non-empty.
\item  For each $v\in J_M$, the set $Mv$ is a subgroup of $M$ with identity $v$.
\item  $vp = p$, for each $v\in J_M$ and $p\in M$, i.e. each $v\in J_M$ is a left identity in $M$.
\item  Any two minimal right ideals of $E = \beta T$ are isomorphic.
\end{enumerate}
\end{prop}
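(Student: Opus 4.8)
The plan is to treat the four parts as the standard development of Ellis's structure theory for an $\mathcal{E}$-semigroup, using only compactness, the continuity of the left translations $L_p$, and the minimality of $M$.

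For (1) I would run the Ellis--Numakura argument. Consider the collection of all nonempty closed subsemigroups of $M$ (it contains $M$ itself, since $M$ is closed and $MM\subseteq ME\subseteq M$), ordered by reverse inclusion. Any chain has the intersection of its members as an upper bound: this intersection is nonempty by compactness and the finite intersection property, closed, and a subsemigroup. Zorn's lemma then yields a minimal closed subsemigroup $A$. Picking $v\in A$, the set $vA=L_v(A)$ is compact (hence closed), is contained in $A$, and is a subsemigroup, so $vA=A$ by minimality; thus $va=v$ for some $a\in A$. The set $\{a\in A : va=v\}=A\cap L_v^{-1}(\{v\})$ is then a nonempty closed subsemigroup of $A$, so it equals $A$ by minimality, and in particular $v^2=v$. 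Hence $J_M\ne\emptyset$.

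The engine for (2) and (3) is the observation that for every $p\in M$ one has $pM=M$: indeed $pM=L_p(M)$ is closed, it is a right ideal (since $(pm)e=p(me)\in pM$), and it lies in $M$, so minimality forces $pM=M$. For (3), given $v\in J_M$ and $p\in M$, write $p=vq$ with $q\in M$ (possible since $vM=M$); then $vp=v^2q=vq=p$, so every idempotent is a left identity on $M$. For (2) I would first check that $Mv\subseteq M$ is a subsemigroup, using (3) to simplify $(pv)(qv)=p(vq)v=pqv$; that $v$ is a two-sided identity on $Mv$, since $(pv)v=pv$ and $v(pv)=(vp)v=pv$; and finally that inverses exist: given $pv\in Mv$, minimality gives $r\in M$ with $pr=v$, and then $(pv)(rv)=p(vr)v=prv=v$ exhibits a right inverse, which by the usual associative argument is in fact two-sided. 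Thus $Mv$ is a group with identity $v$.

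For (4) I would view $M_1$ and $M_2$ as subsystems of the flow $(E,T)$ (with $E=\beta T$) where $T$ acts by right translation, and realize the isomorphism through left translation, which is continuous and commutes with the action, $L_v(pt)=(vp)t=L_v(p)\,t$. The point is to choose the translating idempotents compatibly. Fix an idempotent $v\in M_2$. The set $\{p\in M_1 : vp=v\}$ is nonempty (as $v\in vM_1=M_2$), closed, and a subsemigroup of $M_1$, so by the argument of (1) it contains an idempotent $u$; then $vu=v$, and a short computation shows $uv$ is idempotent and, being a left identity on $M_1$ by (3), satisfies $uv=(uv)u=u$. With the linked pair $uv=u$, $vu=v$ in hand, part (3) gives $L_u\circ L_v=\mathrm{id}_{M_1}$ and $L_v\circ L_u=\mathrm{id}_{M_2}$, so $L_v\colon M_1\to M_2$ is a continuous, $T$-equivariant bijection, i.e. an isomorphism of the systems. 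The main obstacle is precisely the construction of this linked pair $u,v$: forcing a single left translation to be simultaneously injective and surjective requires the compatibility $uv=u$, $vu=v$, and it is here that all of (1)--(3) must be used together.
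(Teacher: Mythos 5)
Your proof is correct. The paper itself states this proposition without proof, recalling it as a known fact with references to Ellis's and Auslander's books; your argument — the Ellis--Numakura idempotent lemma via Zorn's lemma and compactness for (1), the observation $pM=M$ as the engine for (2) and (3), and the construction of a linked pair of idempotents $uv=u$, $vu=v$ so that $L_v$ and $L_u$ are mutually inverse equivariant maps for (4) — is precisely the standard development found in those references, so there is nothing to correct or compare.
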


\smallskip

As mentioned above, all of the previous three examples are $\mathcal{E}$-semigroups but $\Omega_\mu$ has many
additional features which we list now, (see (\cite{EN1}, \cite{EN2} for proofs).

\smallskip

\begin{prop}\label{wap} (1) The flow $(\Omega_\mu ,T)$ is weakly almost periodic, in particular,
\begin{enumerate}[label=(\arabic*)]
\addtocounter{enumi}{1}
\item The right multiplication $R_p(q) = qp$, $p,q\in \Omega_\mu$ is also continuous.
\item There is only one minimal right ideal in $\Omega_\mu$, which we denote by $I_\mu$.
\item The ideal $I_\mu$ has a unique idempotent, which we denote by $P_\mu$ and 
which commutes with all elements of $I_\mu$.
\item The ideal $I_\mu$ is closed under $*$-the operator adjoint,
\item In fact $I_\mu$ is a compact topological group of operators and the weak and
strong operator topologies on $I_\mu$ coincide.
\end{enumerate}

\end{prop}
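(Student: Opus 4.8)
The plan is to regard $\Omega_\mu$ as a weak-operator-compact semigroup of contractions and to read off the six assertions from the interplay of three ingredients: separate continuity of operator multiplication, the fact that a norm-$\le1$ idempotent is automatically an orthogonal projection, and the abstract ideal theory recorded in the preceding Proposition.

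First I would record the analytic facts. Since $X$ is compact metric, $H=L^2(X,\mu)$ is separable and the ball of contractions is compact in the weak operator topology (WOT), so $\Omega_\mu$ is WOT-compact, and each of its elements is a contraction (a WOT-limit of the unitaries $U_t$). For a fixed $p$ and a net $q_\alpha\to q$ in WOT, the identities $\langle pq_\alpha f,g\rangle=\langle q_\alpha f,p^{*}g\rangle$ and $\langle q_\alpha pf,g\rangle=\langle q_\alpha(pf),g\rangle$ show that both $L_p$ and $R_p$ are WOT-continuous; this is (2), and together with WOT-compactness it exhibits $(\Omega_\mu,T)$ as a compact semitopological semigroup, which is the content of weak almost periodicity (1) (cf. \cite{EN1},\cite{EN2}, via Eberlein's characterization). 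The same computation applied to $U_{t_\alpha}^{*}=U_{t_\alpha^{-1}}$ shows that $\Omega_\mu$ is closed under the adjoint and that $(pq)^{*}=q^{*}p^{*}$.

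Next I would analyze the idempotents. Any idempotent $v\in\Omega_\mu$ has $\|v\|\le1$, hence is an orthogonal projection; in particular $v=v^{*}$. Fix a minimal right ideal $M$ (these exist in any compact $\mathcal{E}$-semigroup). By part (3) of the preceding Proposition every idempotent of $M$ is a left identity on $M$, so for idempotents $v,w\in M$ one has $vw=w$ and $wv=v$; taking adjoints in $vw=w$ and using $v=v^{*}$, $w=w^{*}$ gives $wv=w$, whence $v=w$. Thus each minimal right ideal contains a single idempotent. Since in the Rees--Suschkewitsch description of the kernel the idempotents are parametrized by the product of the set of minimal right ideals with the set of minimal left ideals, a fixed minimal right ideal contains exactly one idempotent per minimal left ideal; hence there is a unique minimal left ideal. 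As $M\mapsto M^{*}$ is a bijection between minimal right and minimal left ideals, there is likewise a unique minimal right ideal $I_\mu$, which is (3). Its unique idempotent $P_\mu$ is the identity of the group $I_\mu$ (part (2) of the Proposition), hence commutes with every element of $I_\mu$, giving (4); and $I_\mu^{*}$ is the unique minimal left ideal, which coincides with the kernel and hence with $I_\mu$, giving (5).

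Finally I would establish (6). Writing $H_r=\mathrm{ran}\,P_\mu$ and $H_s=\ker P_\mu=H_r^{\perp}$, each $p\in I_\mu$ satisfies $p=P_\mu p=pP_\mu$, so $p$ vanishes on $H_s$ and carries $H_r$ into itself; if $q=p^{-1}$ in the group $I_\mu$, then $\|f\|=\|qpf\|\le\|pf\|\le\|f\|$ for $f\in H_r$, so $p|_{H_r}$ is a surjective isometry, i.e. unitary. The group $I_\mu$ is compact with separately continuous multiplication, so by Ellis's theorem it is a topological group. For these partial unitaries, WOT-convergence $p_\alpha\to p$ yields $\|p_\alpha f-pf\|^{2}=2\|P_\mu f\|^{2}-2\,\mathrm{Re}\,\langle p_\alpha P_\mu f,\,pP_\mu f\rangle\to0$, so WOT and the strong operator topology agree on $I_\mu$, completing (6). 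The main obstacle, and the only genuinely nontrivial step, is the ideal-theoretic passage in the third paragraph: extracting uniqueness of the idempotent from the left-identity property together with self-adjointness, and then converting uniqueness of idempotents into uniqueness of the minimal right ideal via the kernel's Rees structure and the $*$-symmetry. The operator-analytic inputs and the appeal to Ellis's theorem are comparatively routine.
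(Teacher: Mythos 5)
Your proof is correct, but there is nothing in the paper to compare it against: the paper offers no argument for this proposition at all, deferring entirely to \cite{EN1} and \cite{EN2}. What you have written is, in effect, a self-contained reconstruction of the de Leeuw--Glicksberg/Ellis--Nerurkar argument, and its three pillars are exactly the right ones: WOT-compactness of the set of contractions, separate WOT-continuity of multiplication (plus WOT-continuity of the adjoint, which gives closure of $\Omega_\mu$ under $*$), and the fact that a contractive idempotent on a Hilbert space is an orthogonal projection; the adjoint trick ($vw=w$ and $v=v^*$, $w=w^*$ give $wv=w$, hence $v=w$) is the decisive step and is exactly what makes $\Omega_\mu$ better behaved than a general compact semitopological semigroup, whose kernel need not be a group. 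Two places deserve tightening. First, in deducing (4) you quote part (2) of the paper's proposition on $\mathcal{E}$-semigroups, but that statement only says $I_\mu P_\mu$ is a group with identity $P_\mu$; to conclude that $I_\mu$ itself is a group you must also note $I_\mu P_\mu = I_\mu$, which holds because $I_\mu$ (being the kernel, hence a two-sided ideal) makes $I_\mu P_\mu$ a left ideal inside the unique minimal left ideal --- this is implicit in the Rees picture you invoke, but should be said. Second, your passage from ``each minimal right ideal has exactly one idempotent'' to ``there is exactly one minimal right ideal'' leans on the Rees--Suschkewitsch description of the kernel of a compact semitopological semigroup; this is legitimate standard theory (and you correctly establish continuity of $R_p$ before using it), but it can be avoided by an elementary argument: if $u\in M$ and $v\in N$ are the self-adjoint idempotents of two minimal right ideals, then $uN$ is a closed right ideal inside $M$, so $uN=M$ and $u=uvn_0$ for some $n_0\in N$ with $vn_0=n_0$; for $x$ in the range of $u$ the chain $\|x\|=\|uvn_0x\|\le\|vn_0x\|\le\|n_0x\|\le\|x\|$ forces equality everywhere, and since orthogonal projections preserve the norm only of vectors in their range, one gets $x=n_0x\in\mathrm{ran}\,v$; hence $\mathrm{ran}\,u\subset\mathrm{ran}\,v$, by symmetry $u=v$, and $M=u\Omega_\mu=v\Omega_\mu=N$. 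With these two points addressed, your argument is a complete and correct substitute for the external citation.
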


\smallskip

\begin{rem} Let $\nu$ be the normalized Haar measure on the compact topological group $I_\mu$
and let $ C_\mu =\displaystyle \int_{I_\mu}gd\nu$. Then the operator $C_\mu$ is the projection on $T$ invariant functions. Thus
\begin{enumerate}[label=(\roman*)]
\item $(X,T,\mu)$ is ergodic iff $C_\mu =C$-the the projection on constants,
\item $(X,T,\mu)$ has discrete spectrum if and only if $P_\mu = I$-the identity operator
and in this case $\Omega_\mu = I_\mu$.
\item  $(X,T,\mu)$ is weakly mixing if and only if $P_{\mu} = C$, (see \cite{EN2} for details).
\end{enumerate}

\end{rem}

\smallskip

\paragraph{\bf The projection maps $p\to \rho_p : \beta T \to E(X,T)$ and $p\to U_p :\beta T\to \Omega_\mu$.}

\smallskip
\begin{enumerate}[label=(\alph*)]
\item Since $(\beta T ,T)$ is a universal point transitive flow and $E(X,T)$ is point transitive, there
is a canonical factor map $p\to \rho_p : \beta T \to E(X,T)$ such that $\rho_e = i_X$, i.e. this maps the identity
$e$ of $T$ to the identity map $i_X$ on $X$. Equivalently, given a point transitive flow $(X,T,x_0)$, there is
a unique continuous extension to $\beta T$ of the map $t\to x_0t:T\to X$. This defines a $\beta T$ action on $X$ given by
$$
x\cdot p = \rho_p(x)\,, \quad (x\in X\,,\ p\in \beta T)\,.
$$
\item  Again, since $(\beta T,T)$ is a universal point transitive flow, the map
$$
p\to U_p :\beta T\to \Omega_{\mu}\,,
$$
is the unique continuous extension of the map $t\to U_t: T\to \Omega_{\mu}$ that takes the identity of $T$ to $I$- the 
identity operator. It is also a semigroup homomorphism. Thus, if $\{t_\alpha\}$ is a net in $\beta T$ such that
$t_\alpha \to p$ in $\beta T$, then
$$
U_p = \lim\limits_{t_\alpha \to p}U_{t_\alpha}\,.
$$
Now fix a minimal (right) ideal $M\subset \beta T$, (which ideal hardly matters because they are all
isomorphic). Then $M$ is a closed, $T$ invariant set of $(\beta T,T)$. Hence it is also a minimal set of
the dynamical system $(\beta T,T)$. Since $(\beta T,T)$ is a universal point transitive flow, it follows that $(M,T)$
is a universal minimal flow. Thus, the restriction of the above map gives a canonical projection
$$
p\to U_p : M\to I_{\mu}\subset \Omega_{\mu}\,.
$$
Note that since $\Omega_\mu$ has a unique minimal set $I_{\mu}$, all minimal ideals will project onto $I_{\mu}$ and
since $I_{\mu}$ is a group, all idempotents in any minimal ideal will be mapped into the projection operator $P_\mu$
which
is the identity of $I_\mu$. Thus, given a net $t_\alpha \to p$ in $M$,
$$
U_p = \lim\limits_{t_\alpha \to p}P_{\mu}U_{t_\alpha} = \lim\limits_{t_\alpha \to p}U_{t_\alpha}P_{\mu}\,.
$$
Note that $U_v = P_\mu$ for all idempotents $v\in M$.

\smallskip

Now, given a measurable map $f:X \mapsto \Cc$, let $[f]$ denote its equivalence class determined by the relation
defined by equality modulo a set of $\mu$ measure zero. Then for $[f]\in L^2(X,\mu )$ and $p\in \beta T$,  we set
$$
[f]_p = U_p[f]\,.
$$

\end{enumerate}

\smallskip

\begin{rem}{$\quad$}
	
\begin{enumerate}[label=(\arabic*)]
\item Even though $[f]_t = [f_t]$ for $t\in T$, we cannot replace $t\in T\subset \beta T$ by a general $p\in \beta T$
in this equation. To begin with, in general $f_p$ may not be even measurable, so $[f_p]$ makes no sense. Even in the
special case when $\rho_p = i_X$, obviously $f(xp) = f(x)$ but even in this case we cannot say $U_p[f] = [f]$, as the
following example will show.
\item Note that for the transformation $T(x,y) = (x+\alpha ,x+y)$ on the $2$-torus ${\mathbb{T}}^2$, (where
$\alpha \notin \Q$), $\rho_v = i_X$ for all idempotents $v\in \beta T$, (since $T$ is distal), and $U_v = P_{\mu} \ne I$,
for all minimal idempotents $v\in M$, (where $\mu$ is the usual Lebesgue measure on $\mathbb{T}^2$).
\item For systems with discrete spectrum, if $v = v^2\in M$, then $U_v = I$. However $U_v = I$ may not imply
$\rho_v = i_X$ as the following simple example shows. Let $(X,T)$ be a minimal Sturmian shift which is always uniquely
ergodic and has discrete spectrum with respect to the unique invariant measure $\mu$. Then $U_u = I$, for any minimal
idempotent in $M$ but $\rho_u \ne i_X$ for some $u = u^2\in M$, as there are non-trivial proximal pairs in the system.
\end{enumerate}	
\end{rem}

\smallskip

\section{\texorpdfstring{$\mu$-compact vectors and $\mu$-tame vectors.}{}}

\smallskip

\begin{defn} Let $(X,T,\mu)$ be a compact metric, ergodic dynamical system. A function $f\in L^2(X,\mu )$
is a compact vector if the orbit $\{U_tf\ |\ t\in T\}$ of $f$ has compact closure in the norm topology on
$L^2(X,\mu )$.
\end{defn}

\smallskip

With this definition, the following is a corollary to Proposition \ref{wap}.

\smallskip

\begin{prop} Let $(X,T,\mu )$ be a compact metric system and let $f\in L^2(X,\mu )$.
Then the following statements are equivalent,
\begin{enumerate}[label=(\arabic*)]
\item $f$ is a compact vector,
\item  $P_\mu (f) = f$,
\item  the weak and the strong topologies on the set $\overline {\mathcal{O}(f)}$ of orbit closure of $f$ coincide,
\item  the system $(\overline {\mathcal{O}(f)},T)$ is minimal,
\item  for some $m\in M$, (where $M$ is some minimal right ideal of $\beta T$), there is a sequence $\{t_n\}$ in $T$
such that $U_{t_n}[f] \equiv  [f]_{t_n}\to U_m[f]$ in $L^2(X,\mu)$. 
\end{enumerate}
\end{prop}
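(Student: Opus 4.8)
The plan is to route all the equivalences through the projection $P_\mu$ and the group structure of the minimal ideal $I_\mu$ of Proposition \ref{wap}, organizing the argument into the three cycles $(1)\Rightarrow(5)\Rightarrow(2)\Rightarrow(1)$, $(2)\Leftrightarrow(4)$, and $(1)\Leftrightarrow(3)$. Throughout I write $\mathcal O(f)=\{U_tf:t\in T\}$ and use that its weak closure equals $\Omega_\mu f=\{gf:g\in\Omega_\mu\}$, since $g\mapsto gf$ is continuous from the weak operator topology to the weak topology of $H$ and $\Omega_\mu$ is weakly compact. Two preliminary observations are used repeatedly. First, $P_\mu$ is an orthogonal projection: $P_\mu^\ast\in I_\mu$ because $I_\mu$ is $\ast$-closed, and $(P_\mu^\ast)^2=(P_\mu^2)^\ast=P_\mu^\ast$, so $P_\mu^\ast$ is an idempotent of $I_\mu$ and hence equals $P_\mu$ by uniqueness. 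Second, since $U_p=U_pP_\mu=P_\mu U_p$ for $p\in M$, every $U_m$ with $m\in M$ annihilates $\ker P_\mu$ and maps $\mathrm{range}\,P_\mu$ into itself.

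For $(1)\Leftrightarrow(3)$ I would note that if $f$ is a compact vector then $\overline{\mathcal O(f)}=\Omega_\mu f$ is norm compact, and on a norm compact set the coarser Hausdorff weak topology coincides with the norm topology; conversely, if the two topologies agree on the weakly compact set $\Omega_\mu f$ then that set is norm compact, so the orbit is norm precompact. For $(1)\Rightarrow(5)$, fix any $m\in M$ and a net $t_\alpha\to m$ in $\beta T$; then $U_{t_\alpha}f\to U_mf$ weakly, and since the orbit lies in the norm compact set $\overline{\mathcal O(f)}$, the weak limit $U_mf$ is a norm limit point, so by metrizability of $L^2(X,\mu)$ a sequence $t_n$ with $U_{t_n}f\to U_mf$ in norm can be extracted.

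The crux is $(5)\Rightarrow(2)$. Here I would first establish $\|U_mh\|=\|P_\mu h\|$ for every $h$ and every $m\in M$: writing $U_m=\text{w-}\lim_\alpha U_{t_\alpha}P_\mu$ along $t_\alpha\to m$ with $U_{t_\alpha}$ unitary, weak lower semicontinuity of the norm gives $\|U_mh\|\le\|P_\mu h\|$, and applying this inequality at $U_mh$ to the group inverse $U_m^{-1}\in I_\mu$ (so $U_m^{-1}=U_{m'}$ for some $m'\in M$), using $U_m^{-1}U_m=P_\mu$ and $P_\mu U_mh=U_mh$, yields the reverse inequality. Now from $(5)$, unitarity of the $U_{t_n}$ forces $\|U_mf\|=\lim_n\|U_{t_n}f\|=\|f\|$, whence $\|P_\mu f\|=\|f\|$; since $P_\mu$ is an orthogonal projection, $\|f\|^2=\|P_\mu f\|^2+\|f-P_\mu f\|^2$ gives $P_\mu f=f$. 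Finally $(2)\Rightarrow(1)$ is immediate once one knows $I_\mu$ is a two-sided ideal, which follows from the uniqueness of the minimal right ideal: then $U_tf=(U_tP_\mu)f\in I_\mu f$, and $I_\mu f$ is norm compact because $g\mapsto gf$ is continuous from the strong operator topology and, by Proposition \ref{wap}, the weak and strong operator topologies coincide on the compact group $I_\mu$.

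For $(2)\Leftrightarrow(4)$, using again that $I_\mu$ is two-sided: if $P_\mu f=f$ then $\Omega_\mu f=I_\mu f$, and this set is minimal — given a weakly closed $T$-invariant $C\ni uf$ with $u\in I_\mu$, applying the group inverse along a net gives $u^{-1}(uf)=P_\mu f=f\in C$, and then $I_\mu f\subset C$; conversely, if $\overline{\mathcal O(f)}=\Omega_\mu f$ is minimal, the nonempty closed $T$-invariant subset $I_\mu f$ must equal it, so $f=g_0f$ for some $g_0\in I_\mu$ and $P_\mu f=P_\mu g_0f=g_0f=f$. I expect the genuine obstacle to be the norm identity $\|U_mh\|=\|P_\mu h\|$ underlying $(5)\Rightarrow(2)$, since this is exactly where the abstract WAP structure of $\Omega_\mu$ — unitarity of the approximants, $\ast$-closedness, and the group structure of $I_\mu$ — must be converted into a quantitative statement in $L^2(X,\mu)$; the remaining implications are then bookkeeping with the two-sided ideal $I_\mu$ and the coincidence of the two operator topologies on it.
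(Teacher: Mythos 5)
Your proof is correct, and it follows the only route the paper indicates: the paper offers no written argument at all for this proposition, stating it as ``a corollary to Proposition \ref{wap}'' (with the actual content deferred to \cite{EN1}, \cite{EN2}), and the ingredients you use --- the unique minimal ideal $I_\mu$, its unique idempotent $P_\mu$, $*$-closedness, the group structure of $I_\mu$, and the coincidence of the weak and strong operator topologies there --- are exactly the ones being invoked. Since there is no proof in the paper to compare against, let me instead record why your filling-in is sound and where it adds value. The crux, as you say, is the identity $\big\|U_mh\big\| = \big\|P_\mu h\big\|$: weak lower semicontinuity of the norm along a net $U_{t_\alpha}P_\mu \to U_mP_\mu = U_m$ gives one inequality, and applying that same inequality to the group inverse $U_m^{-1} = U_{m'}$ (legitimate, since the paper asserts that every minimal ideal $M\subset \beta T$ projects \emph{onto} $I_\mu$) gives the other; this is what converts the soft statement (5) into the spectral statement (2), and it is correct as written. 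Two facts you assert without proof deserve a line each: that $P_\mu$ is an orthogonal projection follows exactly as you say from $*$-closedness and uniqueness of the idempotent; and the two-sidedness of $I_\mu$ follows because for any $g\in \Omega_\mu$ the set $gI_\mu$ is again a closed minimal right ideal (if $R'\subseteq gI_\mu$ is a closed right ideal, then $\{r\in I_\mu\ |\ gr\in R'\}$ is a nonempty closed right ideal inside $I_\mu$, hence equals $I_\mu$), so $gI_\mu = I_\mu$ by uniqueness. Finally, your implicit reading of $\overline{\mathcal{O}(f)}$ in (3) and (4) as the \emph{weak} orbit closure $\Omega_\mu f$ is not a cosmetic choice but the one that makes the proposition true: for a vector with orthonormal orbit (e.g.\ a coordinate function in a Bernoulli system) the norm closure of the orbit is the orbit itself, on which the weak and norm topologies are both discrete and the action is trivially minimal, so under the norm-closure reading (3) and (4) would hold while (1), (2), (5) fail. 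With that reading fixed, every implication in your cycles $(1)\Rightarrow(5)\Rightarrow(2)\Rightarrow(1)$, $(2)\Leftrightarrow(4)$, $(1)\Leftrightarrow(3)$ checks out.
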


\smallskip

Next, we introduce the notion of a $\mu$-tame function.

\smallskip

\begin{defn} 
\begin{enumerate}[label=(\alph*)]
\item Let $f\in L^2(X,\mu )$. Then $f$ is said to be $\mu$-tame if there exists a $q\in \beta T$,
a Borel set $N\subset X$ with $\mu (N) = 0$ and a sequence $\{t_n\}$ in $T$ such that
\begin{enumerate}[label=(\arabic*)]
\item  $f_{t_n} \to f_q$ pointwise on $X\backslash N$. Thus, in particular the map
$f_q \equiv f\circ \rho_q :X\backslash N :\to \Cc$ is a Borel map and
\item  $U_q\in I_\mu$. This will imply that there exists some $m\in M$ such that
$$
U_m[f] = [1_{X\backslash N}f_q] = U_q[f]\,,
$$
where $1_{X\backslash N}$ is the indicator function of the set $X\backslash N$.
\end{enumerate}
\item System $(X,T,\mu)$ will be called $\mu$-tame if each $f\in L^2(X,T)$ is a $\mu$-tame vector.
\end{enumerate}	
\end{defn}

\smallskip

\begin{rem} We recall the notion of a tame dynamical system $(X,T)$ introduced by Glasner-Kh\"{o}ler, (see \cite{G}, \cite{Kh}).

\smallskip

\begin{defn} A compact, Hausdorff topological dynamical system $(X,T)$ is tame if each element of $E(X,T)$ is a
Baire-1 class function.
\end{defn}

\smallskip

It follows that if $(X,T)$ is a tame system then $(X,T,\mu )$ is a $\mu$-tame system for any invariant
Borel probability measure $\mu$ on $X$.
\end{rem}

\smallskip

\begin{prop} Let $f\in L^2(X,\mu)$. Then $f$ is $\mu$-compact if and only if it is $\mu$-tame.
\end{prop}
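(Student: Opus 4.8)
The plan is to prove the two implications separately. The implication ``$\mu$-tame $\Rightarrow$ $\mu$-compact'' is soft and rests entirely on a uniform integrability argument coming from the fact that $T$ acts measure preservingly. The converse is the substantial one: there we must realize a purely measure-theoretic $L^2$-limit as a genuine pointwise composition $f\circ\rho_q$, and reconciling these two notions of limit will be the main obstacle.

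First I would treat ``$\mu$-tame $\Rightarrow$ $\mu$-compact''. Assume $f$ is $\mu$-tame, witnessed by $q\in\beta T$, a null set $N$ and a sequence $\{t_n\}\subset T$ with $f_{t_n}\to f_q$ pointwise off $N$ and $U_q\in I_\mu$. The key observation is that each $t_n\in T$ preserves $\mu$, so every $|f_{t_n}|^2$ has the same distribution as $|f|^2$; hence $\sup_n\int_{\{|f_{t_n}|>K\}}|f_{t_n}|^2\,d\mu=\int_{\{|f|>K\}}|f|^2\,d\mu\to 0$ as $K\to\infty$, and the family $\{|f_{t_n}|^2\}$ is uniformly integrable. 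Pointwise a.e.\ convergence together with uniform integrability yields, via Vitali's convergence theorem, that $f_{t_n}\to f_q$ in $L^2(X,\mu)$; in particular $f_q\in L^2(X,\mu)$ and $U_{t_n}[f]=[f_{t_n}]\to[f_q]$ in norm. By the consequence packaged into part (2) of the definition of $\mu$-tameness, $[f_q]=U_q[f]=U_m[f]$ for some $m\in M$ with $U_m\in I_\mu$. Thus condition (5) of the preceding characterization of compact vectors holds verbatim, and $f$ is a compact vector.

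For the converse ``$\mu$-compact $\Rightarrow$ $\mu$-tame'' I would work directly with a minimal idempotent. Fix a minimal idempotent $v\in M$, so that $U_v=P_\mu\in I_\mu$; I will take $q=v$, which settles condition (2) at once. Since $f$ is a compact vector we have $P_\mu f=f$, i.e.\ $U_vf=f$, so $f$ itself lies in the norm-compact, metrizable orbit closure $\overline{\mathcal O(f)}$; hence there is a sequence $\{t_n\}\subset T$ with $U_{t_n}[f]\to U_v[f]=[f]$ in $L^2(X,\mu)$, and after passing to a subsequence $f_{t_n}\to f$ $\mu$-a.e., off a null set $N$. Condition (1) for $q=v$ then amounts to the single assertion $f\circ\rho_v=f$ $\mu$-a.e.: once this is known, the a.e.\ limit $f$ of the $f_{t_n}$ is exactly $f_v=f\circ\rho_v$, and $(v,N,\{t_n\})$ witnesses $\mu$-tameness.

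The main obstacle is precisely this last point, namely the bridge $[f\circ\rho_q]=U_q[f]$ between the \emph{topological} map $\rho_q$ and the \emph{measure-theoretic} operator $U_q$ for a merely $L^2$ function $f$; here it takes the concrete form $f\circ\rho_v=f$ a.e. This is not automatic, since $f$ need not be continuous at $\rho_v(x)$ and $\rho_v$ need not be the identity (as the Sturmian example in the Remark shows). To overcome it I would use that the compact vectors are exactly the Kronecker part $P_\mu L^2(X,\mu)$, so by Halmos--von Neumann $f=\tilde f\circ\pi$ with $(Y,\nu,T)$ a rotation on a compact metric abelian group $G$ and $\pi$ the factor map. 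The canonical homomorphism $\psi:\beta T\to G$ extends $T\to G$, and $\rho_q$ induces on $Y$ the translation by $\psi(q)$; since $v=v^2$, the element $\psi(v)$ is idempotent in the group $G$ and hence trivial, so $\rho_v$ acts as the identity on $Y$ and $f\circ\rho_v=\tilde f\circ\pi=f$ $\mu$-a.e. Making this rigorous is where one must invoke the equicontinuity of the Kronecker factor, so that $\rho_v$ is realized by an honest a.e.-defined translation, together with the coincidence of the weak and strong operator topologies on $I_\mu$ furnished by Proposition \ref{wap}. With $f\circ\rho_v=f$ a.e.\ established, both implications are complete and the equivalence follows.
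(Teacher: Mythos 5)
Your first implication ($\mu$-tame $\Rightarrow$ $\mu$-compact) is correct: the uniform-integrability/Vitali argument legitimately upgrades the a.e.\ convergence $f_{t_n}\to f_q$ to $L^2$-norm convergence, after which condition (5) of the characterization of compact vectors applies exactly as you say. This is the same endpoint the paper reaches, with the norm convergence (which the paper glosses over) actually justified. The problem is the converse, where your route genuinely diverges from the paper's and runs into a gap.

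By fixing $q=v$ a minimal idempotent \emph{in advance}, you make condition (2) trivial but convert condition (1) into the rigidity statement $f\circ\rho_v=f$ $\mu$-a.e., and your proposed proof of that statement does not go through. The Halmos--von Neumann factor map $\pi$ onto the Kronecker factor is only an a.e.-defined \emph{measurable} map, while $\rho_v$ is defined by \emph{topological} limits, $\rho_v(x)=\lim_\alpha xt_\alpha$ with $t_\alpha\to v$ in $\beta T$. To say that ``$\rho_v$ induces on the factor the translation by $\psi(v)$'' you need $\pi(\lim_\alpha xt_\alpha)=\lim_\alpha \pi(xt_\alpha)$, i.e.\ continuity of $\pi$ at the point $\rho_v(x)$, which you do not have; Lusin-type continuity on large compact sets does not help, because nothing controls in measure where the tail of the net $xt_\alpha$ lives. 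Neither equicontinuity of the abstract Kronecker system nor the coincidence of the weak and strong operator topologies on $I_\mu$ touches this difficulty: those are statements about the operator $U_v=P_\mu$, and the paper's own Sturmian remark shows that $U_v=I$ can coexist with $\rho_v\neq i_X$, while the Namioka example shows that the maps $\rho_p$, $p\in\beta T$, need not even be Borel --- so ``$f\circ\rho_v=f$ a.e.''\ is not even a priori an identity between measurable functions. (A lesser issue: your reduction to a rotation on a compact \emph{abelian} group presupposes $T$ abelian, whereas the proposition concerns an arbitrary acting group; one would need the Mackey homogeneous-space form.) The paper's proof is engineered precisely to avoid this trap, and the difference is the order of quantifiers: it chooses the sequence $\{t_n\}$ \emph{first} (norm compactness of the orbit closure, then an a.e.-convergent subsequence) and only \emph{then} takes $q$ to be a subnet limit of that very sequence, so that the function $f_q$ entering the definition is identified with the already-existing pointwise limit and $U_q[f]=U_m[f]$ comes for free; no control of $\rho_q$ for a pre-assigned $q$ is ever required. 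To repair your argument, abandon the choice $q=v$ and let $q$ be produced by the sequence, as in the paper.
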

\begin{proof} Suppose $f$ is $\mu$-compact. Pick any $m\in M$, where $M$ is any minimal right ideal in $\beta T$. Then
select a sequence $\{t_n\}$ in $T$ such that $U_{t_n}[f]\to U_m[f]$ in $L^2(X,\mu )$. This implies that
by passing to a subsequence, (which we again denote by $\{t_n\}$), we can assume that $f_{t_n}$ converges pointwise on a
set $X\backslash N$ for some Borel set $N$ with $\mu (N) = 0$. Now viewing the sequence $\{t_n\}$ as a net in $\beta T$,
we can find a convergent subnet  (which may not be a subsequence), converging to some $q\in \beta T$. It follows that
the pointwise limit of $f_{t_n}$ on $X\backslash N$ is a Borel function on $X\backslash N$ and equals $f_q$.
Note that $U_q[f] = U_m[f]\in I_\mu$.

Conversely, the hypothesis implies that for some sequence $\{t_n\}$ in $T$, $U_{t_n}[f]\to U_m[f]$
for some $m\in M$. Hence by (5) of Proposition (2.2), $f$ is $\mu$-compact.
\end{proof}

\smallskip

\begin{rem} It is immediate that if $f$ is tame then it is $\mu$-tame for any invariant Borel probability
measure $\mu$ on $(X,T)$. In particular, with respect to any invariant measure, a tame system $(X,T)$ has
discrete spectrum. The following example shows that system may not be tame even if all invariant ergodic measures have
discrete spectrum.
\end{rem}

\smallskip

\begin{exmpl}\label{Namioka} Consider the system on the $2$-torus $X = {\mathbb T}^2$ given by $T(x,y) = (x,x+y)$. Note that
any ergodic measure for this system is of the form $\delta_x\times \nu$, where $\nu$ is either the Lebesgue measure
$\lambda$ on the unit circle $\mathbb{T}^1$ if $x$ is irrational, or the uniform probability on the finite orbit
$y+nx\ mod\ 1$ if $x$ is rational. It follows from Namioka's work, (see (\cite {N})) that for this system
$E(X,T) = \{id \times f\ |\ f:\mathbb{T}\to \mathbb{T} \ \text {is any homomorphism}\}$. Thus, a `large number'
of elements of $E(X,T)$ are not even measurable. Thus this system is not tame. But it is easy to check that
it is $\mu$-tame for any invariant ergodic measure.

This simple example also illustrates that $\mu$-tame with respect to all invariant ergodic measures does not imply
$\mu$-tame for any invariant measure. Since $\lambda \times \lambda$ is a non-ergodic invariant measure,
with respect to which $T$ does not have discrete spectrum, this example also shows that systems can have discrete
spectrum with respect to all invariant ergodic measure but may fail to have discrete spectrum with respect to all invariant measures.
\end{exmpl}

\smallskip

\begin{rem} E. Glasner proved that if a distal system is tame, then it is equicontinuous. The above example being distal, shows
that the analogue of Glasner's result is false if `tame' is replaced by `$\mu$-tame'. However some analogue of this might be true.
For example, we would like to know if $(X,T,\mu )$ is minimal, distal and $\mu$-tame, then is it equicontinuous? and for such
non-minimal systems we ask whether the system is equicontinuous on the support of $\mu$.
\end{rem}

\smallskip

\section{\texorpdfstring{$\mu$-mean equicontinuous vectors.}{}}

\smallskip

We first recall a few necessary things about amenable groups. Let $T$ be a countable (discrete) group.

\smallskip

\begin{defn}$\quad$
\begin{enumerate}[label=(\arabic*)]	
\item Given finite sets $F\,,K\subset T$, $F$ is $(K,\epsilon )$-invariant if
$\big|KF\Delta F\big| < \epsilon \big|F\big|$.

\item A sequence $\{F_n\}$ of finite subsets of $T$ is a F\o{}lner sequence if given
any $\epsilon > 0$ and a finite set $K\subset T$, there exists a $n_0\in \N$ such that
$F_n$ is $(K,\epsilon )$-invariant for all $n>n_0$. This is equivalent to saying that
$$
\lim\limits_{n\to \infty}\frac {\big| tF_n\Delta F_n\big|}{\big|F_n\big|} = 0\,,\quad \text {for each}\ t\in T\,.
$$

\item A F\o{}lner sequence $\{F_n\}$ is tempered if there exists a constant $C>0$ such that
$$
\big|\bigcup\limits_{k\leq n}F_k^{-1}F_{n+1}\big| \leq C\big|F_{n+1}\big|\,,\quad \text {for all}\ n\in \N\,.
$$
\end{enumerate}
\end{defn}

\smallskip

\begin{rem} Every F\o{}lner sequence has a tempered subsequence, (see \cite {L}).
\end{rem}

\smallskip

\begin{defn} Fix a F\o{}lner sequence $\mathcal{F} = \{F_n\}$. Let $A\subset T$.
\begin{enumerate}[label=(\arabic*)]	
\item The (asymptotic) density $\bar {d_\mathcal{F}}$ of $A$ with
respect to $\mathcal{F}$ is given by
$$
\bar {d_\mathcal{F}} = \limsup\limits_{n\to \infty}\frac {|A\cap F_n|}{|F_n|}\,.
$$

\item For $A\subset T$ and a finite subset $F\subset T$, set
\begin{align}
D^*_F(A) & = \sup\limits_{t\in T}\frac {\big| A\cap Ft\big|}{\big |F\big|}\,, \notag \\
D^*(A) & = \inf \Big\{D^*_F(A) \ |\ F\subset T\,,\ \big|F\big|<\infty\Big\}\,. \notag 
\end{align}
Then $D^*(A)$ is called the upper Banach density of $A$.
\end{enumerate}
\end{defn}

\smallskip

\begin{lem} $\quad$
\begin{enumerate}[label=(\arabic*)]	
\item Let $\{F_n\}$ be a F\o{}lner sequence in $T$ and $A\subset T$. Then
$$
D^*(A) = \lim\limits_{n\to \infty}D^*_{F_n}(A)\,.
$$
In particular the above limit exist and is independent of the choice of F\o{}lner sequence.

\item Furthermore
$$
D^*(A) = \sup\limits_{\mathcal{F}}\big( \limsup\limits_{n\to \infty}D^*_{F_n}(A)\big)\,,
$$
where $\mathcal{F} = \{F_n\}$ varies over all F\o{}lner sequences in $T$.
\end{enumerate}

\end{lem}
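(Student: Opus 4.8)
The plan is to prove part (1) first, since part (2) will then follow almost immediately. The easy half is the inequality $D^*(A) \le \liminf_n D^*_{F_n}(A)$: because $D^*(A) = \inf\{D^*_F(A) : |F| < \infty\}$ is an infimum over \emph{all} finite sets, each particular $F_n$ already satisfies $D^*(A) \le D^*_{F_n}(A)$, and taking $\liminf$ gives the claim. All the real work lies in the reverse inequality $\limsup_n D^*_{F_n}(A) \le D^*(A)$.

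The heart of the argument is a covering (double-counting) estimate that I would isolate as a lemma: for any finite sets $F,K\subset T$ and any $A\subset T$,
$$D^*_F(A) \le \frac{|K^{-1}F|}{|F|}\, D^*_K(A).$$
To prove it, fix a translate $Ft$ and put $U = K^{-1}Ft$. Counting the pairs $\{(u,x) : u\in U,\ x\in A\cap Ft\cap Ku\}$ in two ways: for each $x\in A\cap Ft$ the number of $u\in U$ with $x\in Ku$ equals $|K^{-1}x\cap U|$, and since $x\in Ft$ forces $K^{-1}x\subseteq K^{-1}Ft=U$, this count is exactly $|K|$; hence the total is $|K|\,|A\cap Ft|$. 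On the other hand each inner term is at most $|A\cap Ku|\le D^*_K(A)\,|K|$, so the total is $\le |U|\,D^*_K(A)\,|K| = |K^{-1}F|\,D^*_K(A)\,|K|$, using $|K^{-1}Ft|=|K^{-1}F|$. Dividing by $|K|\,|F|$ and taking the supremum over $t$ yields the displayed bound; the essential feature is that it holds \emph{uniformly} in the translation parameter $t$.

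With this lemma in hand I would set $F=F_n$ and let $n\to\infty$. The F\o{}lner property, extended from single elements to the finite set $K^{-1}$ by the union bound $|K^{-1}F_n\Delta F_n|\le \sum_{k\in K}|k^{-1}F_n\Delta F_n|$, gives $|K^{-1}F_n|/|F_n|\to 1$; hence $\limsup_n D^*_{F_n}(A)\le D^*_K(A)$ for every finite $K$. Taking the infimum over $K$ gives $\limsup_n D^*_{F_n}(A)\le D^*(A)$, which together with the easy half shows that $\lim_n D^*_{F_n}(A)$ exists and equals $D^*(A)$. Since $D^*(A)$ is defined with no reference to any F\o{}lner sequence, independence of the choice of $\{F_n\}$ is automatic. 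Part (2) is then immediate, as every F\o{}lner sequence yields a $\limsup$ equal to the limit $D^*(A)$, so the supremum over all F\o{}lner sequences is again $D^*(A)$. (If one reads part (2) with the plain densities $|A\cap F_n|/|F_n|$ in place of $D^*_{F_n}(A)$, the inequality $\le$ follows from $|A\cap F_n|/|F_n|\le D^*_{F_n}(A)$ and part (1), while $\ge$ is obtained by picking for each $n$ a translate $t_n$ nearly attaining the supremum in $D^*_{F_n}(A)$ and setting $G_n=F_nt_n$, which is again F\o{}lner since the symmetric-difference count is right-invariant.)

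The main obstacle is the covering lemma itself: one must dominate $D^*_F(A)$ by $D^*_K(A)$ \emph{uniformly over all translates $Ft$ simultaneously}, and the choice of the covering set $U=K^{-1}Ft$ in the double count is exactly what produces this uniformity while confining all of the error to the boundary ratio $|K^{-1}F|/|F|$, which the F\o{}lner condition then drives to $1$. Everything else is bookkeeping.
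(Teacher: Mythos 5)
The paper states this lemma without any proof at all (it is invoked as a known fact about upper Banach density for amenable group actions), so there is no argument of the authors to compare yours against; your proposal supplies the missing proof, and it is correct. The key covering inequality $D^*_F(A)\le \frac{|K^{-1}F|}{|F|}\,D^*_K(A)$ is proved soundly: in the double count of pairs $(u,x)$ with $u\in U=K^{-1}Ft$ and $x\in A\cap Ft\cap Ku$, the crucial observation that $x\in Ft$ forces $K^{-1}x\subseteq U$, so that each $x$ is counted exactly $|K|$ times, is exactly what makes the bound uniform in the translate $t$; the other direction of the count correctly uses $|A\cap Ku|\le |K|\,D^*_K(A)$, and $|K^{-1}Ft|=|K^{-1}F|$ by right-invariance of counting. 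The passage to the limit is also fine: the union bound $|K^{-1}F_n\Delta F_n|\le\sum_{k\in K}|k^{-1}F_n\Delta F_n|$ upgrades the paper's single-element F\o{}lner condition to finite sets (indeed the paper's Definition of $(K,\epsilon)$-invariance already asserts this equivalence), giving $\limsup_n|K^{-1}F_n|/|F_n|\le 1$, hence $\limsup_n D^*_{F_n}(A)\le D^*_K(A)$ for every finite $K$, and the infimum over $K$ closes the gap against the trivial inequality $D^*(A)\le\liminf_n D^*_{F_n}(A)$. Your treatment of part (2) is also right on both readings: as literally stated it is an immediate corollary of part (1), and under the more standard reading with plain densities $|A\cap F_n|/|F_n|$, your device of replacing $F_n$ by the translates $G_n=F_nt_n$ (still F\o{}lner, since $|tF_nt_n\Delta F_nt_n|=|tF_n\Delta F_n|$) that nearly attain the supremum correctly produces the inequality $\ge$.
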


\smallskip

We shall use the following pointwise ergodic theorem for $L^1$ functions, (see \cite {L}).

\smallskip

\begin{thm} Let $(X,T,\mu )$ be an ergodic, probability  preserving system with $T$ amenable.
Let $\mathcal{F} = \{F_n\}$ be a tempered F\o{}lner sequence and $f\in L^1(X,\mu )$. Then
$$
\lim\limits_{n\to \infty}\frac {1}{|F_n|}\sum\limits_{t\in F_n}f(xt) = \int_X f(x)d\mu (x)\,, \quad \text {a.e.}\ x.
$$
\end{thm}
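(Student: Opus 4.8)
The statement is Lindenstrauss's pointwise ergodic theorem for actions of amenable groups along tempered F\o{}lner sequences, and the plan is to derive it from two ingredients that are then combined by the Banach principle: a weak-type $(1,1)$ maximal inequality for the averaging operators, and a.e.\ convergence on a dense subspace of $L^1(X,\mu)$. Throughout write $A_n f(x) = \frac{1}{|F_n|}\sum_{t\in F_n} f(xt)$ for the averages and $A^* f(x) = \sup_n A_n(|f|)(x)$ for the associated maximal function.

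The first and decisive step is to prove that $A^*$ is of weak type $(1,1)$, i.e.\ that there is a constant $C'$ depending only on the temperedness constant $C$ such that $\mu(\{A^* f > \lambda\}) \le \frac{C'}{\lambda}\|f\|_{L^1}$ for every $f\in L^1(X,\mu)$ and every $\lambda>0$. By the standard transference that moves the estimate from the space to the acting group, this reduces to a covering statement on $T$: one must show that an arbitrary finite union of translated F\o{}lner blocks $F_n t$ contains a subfamily of controlled multiplicity whose union has comparable mass. Here the temperedness hypothesis $|\bigcup_{k\le n} F_k^{-1}F_{n+1}| \le C|F_{n+1}|$ is precisely what makes a Vitali/Besicovitch-type selection possible: processing the blocks from the largest scale downward, the bound on $F_k^{-1}F_{n+1}$ controls how much a later, smaller block can already have been covered by earlier ones. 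This is the heart of the matter and the main obstacle; the cleanest way to organize the selection is Lindenstrauss's probabilistic (random) covering argument, in which blocks are retained independently with suitably chosen probabilities and the temperedness bound is used to estimate the expected overlap of the resulting cover.

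The second step, a.e.\ convergence on a dense set, is elementary and uses only the F\o{}lner property, not temperedness. Take $D$ to be the sum of the constant functions and the linear span of the bounded coboundaries $g - U_s g$ with $g\in L^\infty(X,\mu)$ and $s\in T$. For such a coboundary, reindexing gives $A_n(g - U_s g)(x) = \frac{1}{|F_n|}\sum_{t\in F_n\Delta sF_n}\pm\, g(xt)$, whence $|A_n(g-U_s g)(x)| \le \frac{|F_n\Delta sF_n|}{|F_n|}\,\|g\|_{\infty} \to 0$ uniformly in $x$; on constants the averages are stationary. By the mean ergodic theorem the $L^2$-closure of the coboundaries is the orthogonal complement of the invariant functions, which by ergodicity is $\{f : \int_X f\,d\mu = 0\}$, so $D$ is dense in $L^2(X,\mu)$ and hence in $L^1(X,\mu)$.

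Finally I would combine the two. Given $f\in L^1(X,\mu)$ and $\epsilon>0$, split $f = h + r$ with $h\in D$ and $\|r\|_{L^1}<\epsilon$. Since $A_n h$ converges a.e., one has $\limsup_{m,n}|A_n f - A_m f| \le 2A^* r$ a.e., and the maximal inequality yields $\mu(\{2A^* r > \delta\}) \le \frac{2C'\epsilon}{\delta}$ for each $\delta>0$; letting $\epsilon\to 0$ shows the oscillation of $A_n f$ vanishes a.e., so $\lim_n A_n f$ exists a.e. To identify the limit, note that on $D$ it equals $\int_X f\,d\mu$, while by the mean ergodic theorem $A_n f \to \int_X f\,d\mu$ in $L^1$; hence the a.e.\ limit must coincide with $\int_X f\,d\mu$. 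The only genuinely hard point is the maximal inequality of the first step, and everything else is soft once that is in hand.
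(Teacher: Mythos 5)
The paper offers no proof of this statement at all: it is quoted as a known result with the citation ``(see \cite{L})'', i.e.\ it is Lindenstrauss's pointwise ergodic theorem, so there is no internal argument to compare yours against. Your outline reproduces, in structure, exactly the proof in \cite{L}: a weak-type $(1,1)$ maximal inequality for the averaging operators obtained by transference to the group plus a covering lemma whose selection is controlled by temperedness; a.e.\ convergence on the dense subspace spanned by constants and bounded coboundaries (which uses only the F\o{}lner property); and the Banach-principle/oscillation argument, with the limit identified through the $L^1$ mean ergodic theorem and ergodicity. The soft steps, as you present them, are essentially correct.

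The genuine gap is that the decisive step is described but never carried out. You reduce the theorem to the weak $(1,1)$ bound for $A^*$, correctly observe that temperedness is what makes a Vitali-type selection possible, and then write that the cleanest way to organize the selection is ``Lindenstrauss's probabilistic (random) covering argument'' --- but you neither state nor prove that covering lemma, nor the transference inequality that converts it into the maximal bound on $(X,\mu)$. Since this is precisely the content that separates the tempered amenable case from the classical Birkhoff setting (and is the reason the theorem was not available for general amenable groups before \cite{L}), invoking it by name makes the attempt circular as a self-contained proof: you prove the theorem modulo its only hard ingredient. A complete write-up must include the random selection estimate, in which blocks $F_nt$ are retained independently with suitably chosen probabilities and the temperedness bound $\big|\bigcup_{k\le n}F_k^{-1}F_{n+1}\big|\le C\big|F_{n+1}\big|$ controls the expected covering multiplicity, followed by the transference step. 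A minor further point: with the paper's conventions (right action, $A_nf(x)=\frac{1}{|F_n|}\sum_{t\in F_n}f(xt)$, $U_sg(x)=g(xs)$), the coboundary computation cancels over $F_n\Delta F_ns$, not over $F_n\Delta sF_n$ as you wrote; so one needs the right F\o{}lner property, or must pass to the sequence $\{F_n^{-1}\}$ --- a left/right mismatch that, to be fair, is also latent in the paper's own definitions.
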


\smallskip

\begin{defn} [Besicovitch seminorm and Besicovitch functions on $T$] Fix a tempered F\o{}lner sequence $\mathcal{F} = \{F_n\}$.
\begin{enumerate}[label=(\arabic*)]	
\item On the space of complex valued maps on $T$, define the Besicovitch seminorm $||\ ||_{B_1}$ by setting,
$$
\big\|f\big\|_{B_1} = \limsup\limits_{n\to \infty}\frac {1}{\big|F_n\big|}\sum\limits_{t\in F_n}|f|(t)\,.
$$

\item  A map $f:T\to \Cc$ is Besicovitch if $||f||_{B_1} < \infty$.
\end{enumerate}
\end{defn}

\smallskip

\begin{rem} Next, we want to define the notion of `Besicovitch almost periodic function'. When $T$ is abelian, the classical
definition says $:T\to \Cc$ is Besicovitch almost periodic if given any $\eps > 0$, there exists a trigonometric polynomial $P$ such that
$\big\|f-P\big\|_{B_1} < \eps$. By a trigonometric polynomial we mean a finite linear combination of characters of $T$. For non-abelian groups
`trigonometric polynomials' will have to be replaced by the matrix coefficient functions of finite dimensional irreducible, unitary
representations of $T$. For non-abelian $T$ these irreducible, unitary representations are not necessarily one dimensional and
hence one cannot just add the matrix coefficients functions and demand $f$ be approximated by them. A proper way to do this
is to consider the given $f$ as an element of the Hilbert space $l^2(T)$, decompose the left regular representation of $T$,
assume that it decomposes in to an orthogonal direct sum of finite dimensional irreducible unitary representations and then
demand that the projection of $f$ on each irreducible subspace be approximable by a vector valued map on $T$ with coefficients
given by the matrix coefficients of the underlying `piece of the unitary representation' from the decomposition.  

Now given a compact, metric ergodic dynamical system $(X,T,\mu )$ with amenable $T$ and $f\in L^2(X\mu )$, a.e $x\in X$ we get a
complex valued function $\psi_{x,f}(t) = f(xt)$. Since we shall be interested in maps on $T$ arising this way, we may make the above
notion precise by considering the unitary representation $t\to U_t$ on $L^2(X,\mu)$ instead of the left regular representation and
try to see when $\psi_{x,f}$ is `Besicovitch almost periodic' for almost all $x\in X$. As one can guess, this is exactly the case
when $f$ is $\mu$-compact. The next lemma puts all of this discussion on a more formal footing. 
\end{rem}

\smallskip

\begin{lem} Let $(X,T,\mu )$ be ergodic and $f\in L^2(X,\mu )$ be $\mu$-compact. Consider the closed subspace $H\equiv H_f$ of
$L^2(X,\mu )$ generated by the span of $\{U_t[f]\ |\ t\in T\}$. Then $H$ can be written as an orthogonal direct sum $H = \bigoplus V_k$,
where each $V_k$ is a finite dimensional, $U_t$ invariant subspace. The representation $U_t$ restricted to each $V_i$ is irreducible.
Let $P_if$ denote the orthogonal projection of $f$ onto $V_i$, ($i\in \N$). Then $f = \sum\limits_{i\in \N}P_if$ and each $P_if$
is of the form
$$
P_if(x) = \sum\limits_{j=1}^{d_i}\langle f^i_j(x)\,,w^i_j\rangle w^i_j\,,
$$
where $d_i = dim(V_i$, $\{w^i_j\ |\ 1\leq j\leq d_i\}$ is a fixed basis of $V_i$ and $f^i_j : X\to V_j\equiv {\Cc}^{d_j}$ are measurable maps.
\end{lem}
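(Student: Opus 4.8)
The plan is to recognise $t\mapsto U_t|_H$ as an almost periodic (compact) unitary representation of $T$ and then feed it into the Peter--Weyl theorem, the compact group being supplied by Proposition \ref{wap}. First I would use Proposition 2.2: since $f$ is $\mu$-compact we have $P_\mu f=f$, so $f$ lies in $\mathrm{Range}(P_\mu)$, which by the equivalence (1)$\Leftrightarrow$(2) is exactly the space of compact vectors. This space is a closed, $U_t$-invariant subspace of $L^2(X,\mu)$ --- closed because a norm-limit of vectors with precompact orbit again has precompact (totally bounded) orbit, and $U_t$-invariant because $U_s$ maps a precompact orbit to a precompact set. As it contains $f$, it contains the closed cyclic subspace $H=H_f=\overline{\mathrm{span}}\{U_t[f]\}$. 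Hence every vector of $H$ is a compact vector, i.e. the representation $t\mapsto U_t|_H$ has all orbits precompact in norm.

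Next, let $G$ be the closure of $\{U_t|_H\}$ in the strong operator topology on the unitary group of $H$. I would argue that $G$ is a compact topological group: compactness follows by embedding $G$ into $\prod_{v\in H}\overline{\{U_t v\}}$, a product of norm-compact orbit closures, which is compact in the product (= strong operator) topology and in which $G$ is closed; the group-of-unitaries structure is the standard fact that, under precompactness of orbits, strong limits preserve the norm and are invertible. (Equivalently, $G$ is the image of the compact group $I_\mu$ of Proposition \ref{wap} restricted to $H$, which reconfirms both compactness and unitarity.) Applying the Peter--Weyl theorem to the continuous unitary representation of the compact group $G$ on $H$ yields the orthogonal decomposition $H=\bigoplus_i V_i$ into finite-dimensional, $G$-invariant subspaces on which $G$ acts irreducibly. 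Because $T$ is dense in $G$ by construction, a closed subspace is $G$-invariant iff it is $U_t$-invariant, and $G$-irreducibility is the same as $T$-irreducibility; thus each $V_i$ is $U_t$-invariant with irreducible restricted representation. Writing $P_i$ for the orthogonal projection of $L^2(X,\mu)$ onto $V_i$ and using $f\in H$, the decomposition gives $f=\sum_i P_i f$ with convergence in $L^2(X,\mu)$.

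It remains to unwind the abstract decomposition into the stated pointwise form. Fixing an orthonormal basis $\{w^i_j:1\le j\le d_i\}$ of each $V_i$ with $d_i=\dim V_i$, the expansion of $P_i f$ in this basis, combined with the identification $V_i\cong\Cc^{d_i}$ and a coherent choice of Borel representatives of the classes $w^i_j$, packages the coefficients into the measurable vector-valued maps $f^i_j$ and produces the asserted expression for $P_i f$. I expect the conceptual core --- the existence of a finite-dimensional irreducible decomposition --- to be essentially handed to us by Proposition \ref{wap} together with Peter--Weyl, so the genuinely delicate step is this last one: upgrading the $L^2$ identity $f=\sum_i P_if$ to a pointwise, $x$-by-$x$ statement with honestly measurable coefficient maps, where one must select measurable representatives consistently across the classes and control the meaning of the a.e.-defined projections at individual points.
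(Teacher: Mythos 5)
Your proposal is correct and takes essentially the same route as the paper: both arguments reduce the lemma to the Peter--Weyl theorem applied to the compact group of unitaries furnished by Proposition \ref{wap} (you construct it as the strong-operator closure of $\{U_t|_H\}$, but as you note yourself this is just $I_\mu$ restricted to $H$, which is exactly what the paper uses), and both dispose of the final pointwise expression for $P_i f$ as routine linear algebra with measurable representatives. Your explicit verification that $H$ consists of compact vectors and that the closure is a compact topological group merely fills in details the paper leaves implicit in its one-line invocation of $I_\mu$.
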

\begin{proof} This is just an application of the Peter-Weyl theorem to te compact topological group $I_\mu$-the unique minimal
ideal of $\Omega_\mu$. Note that, since $f$ is $\mu$-compact, $P_{\mu} f = f$ and $H$ is the closed linear space of $\{Uf\ |\ U\in I_\mu\}$.
Thus the compact topological group of unitary operators $I_\mu$ has a natural unitary representation on $H$. By Peter-Weyl theorem
$H = \bigoplus V_k$ where each $V_k$ is a finite dimensional, $U_t$ invariant subspace. The representation $U_t$ restricted to
each $V_i$ is irreducible. The rest of the lemma is a trivial consequence of linear algebra. 
\end{proof}

\smallskip

\begin{rem}$\quad$
	
\begin{enumerate}[label=(\arabic*)]		
\item The above representations of $f$ and $P_if$ are to be understood as an expressions in $L^2(X,\mu )$.

\item Note that if $f$ is $\mu$-compact and $f$ has the above representation, then
$$
U_tf (x) = \sum\limits_{i\in \N}U_tP_if =  \sum\limits_{i\in \N}\sum\limits_{j=1}^{d_i}\langle U_tf^i_j(x)\,,w^i_j\rangle w^i_j\,.
$$
Again, this representation is to be understood as an expression in $L^2(X,\mu )$. 
\end{enumerate}
\end{rem}

\smallskip

\begin{defn} $\quad$
	
\begin{enumerate}[label=(\arabic*)]	
	
\item A function which is a finite sum of functions of the form $t\to \langle U_tv,w\rangle$ will be called `generalized trigonometric
polynomials on $T$, where $t\to U_t$ is a unitary representation of $T$ on a finite dimensional vector space $V$ and $v,w\in V$.

\item A function $f\in l^2(T)$ that can be approximated in the $\big\|~\big\|_{B_1}$ norm by a generalized trigonometric polynomial will be called 
a Besicovitch almost periodic function.
\end{enumerate}
\end{defn}

\smallskip

The following theorem is a generalization to ergodic amenable group actions of a known characterization of discrete spectrum for abelian group actions.

\smallskip

\begin{thm}\label{compact-vector} Let $(X,T,\mu )$ be an ergodic system, with $T$ amenable. Then the following statements are equivalent.
\begin{enumerate}[label=(\arabic*)]	
\item  A vector $f\in L^2(X,\mu )$ is a $\mu$-compact vector.
\item  For $\mu$-almost all $x\in X$, the map $\psi_{x,f}(t) = f(xt)$ is a Besicovitch almost periodic function, in the sense that given
$\eps >0$, there exists a measurable map $P:X\to \Cc$ such that (i) for almost all $x$ the map $t\to P(xt) \equiv U_tP$ is a generalized trigonometric
polynomial and (ii) $||\psi_{x,f} - \psi_{x,P}||_{B_1} < \eps$.
\end{enumerate}
\end{thm}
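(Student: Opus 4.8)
The plan is to prove the two implications separately, using the Peter--Weyl decomposition of the preceding lemma together with the pointwise ergodic theorem stated above as the two main engines, and to phrase everything so that the ``for every $\eps$, for a.e. $x$'' quantifier order is respected.

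For the implication $(1)\Rightarrow(2)$, I would start from the decomposition $f=\sum_{i\in\N}P_if$ supplied by the earlier lemma and, given $\eps>0$, fix $N$ so large that $P=\sum_{i\le N}P_if$ satisfies $\|f-P\|_{L^2}<\eps$. Since $P$ lies in finitely many finite-dimensional $U_t$-invariant subspaces, the explicit formula $(U_tP)(x)=\sum_{i\le N}\sum_{j}\langle U_tf^i_j(x),w^i_j\rangle\,w^i_j(x)$ exhibits $t\mapsto P(xt)$ as a generalized trigonometric polynomial for a.e. $x$, which is (i). For (ii), apply the pointwise ergodic theorem to the single $L^1$ function $|f-P|$: off one $\mu$-null set,
$$\|\psi_{x,f}-\psi_{x,P}\|_{B_1}=\lim_{n}\frac1{|F_n|}\sum_{t\in F_n}|f-P|(xt)=\|f-P\|_{L^1}\le\|f-P\|_{L^2}<\eps.$$
Because $P$ is fixed once $\eps$ is chosen, the exceptional set depends only on $\eps$, matching the statement.

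For the converse $(2)\Rightarrow(1)$, I would show that the weakly mixing part of $f$ vanishes. Write $f=f_c+g$ with $f_c=P_\mu f$ the $\mu$-compact part and $g=(I-P_\mu)f$; the goal is $g=0$. From the hypothesis, for each $n$ there is a vector $P_n$ whose orbit function is a generalized trigonometric polynomial with $\|\psi_{x,f}-\psi_{x,P_n}\|_{B_1}<1/n$ off a null set; each such $P_n$ is $\mu$-compact, since the ergodic-theorem isometry $\|h\|_{L^2}^2=M_t(|\psi_{x,h}|^2)$ forces the $L^2$-orbit $\{U_tP_n\}$ to span a finite-dimensional space. Because $f_c$ is $\mu$-compact, direction $(1)\Rightarrow(2)$ makes $\psi_{x,f_c}$ Besicovitch almost periodic a.e., so $\psi_{x,g}=\psi_{x,f}-\psi_{x,f_c}$ is Besicovitch almost periodic for a.e. $x$ (the co-null set being a countable intersection). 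I would then compute, for each finite-dimensional irreducible unitary representation $\pi$ occurring in $L^2(X,\mu)$, the Fourier--Bohr coefficient $M_t\big(g(xt)\,\overline{\pi(t)_{k\ell}}\big)$ and identify it, via the ergodic theorem and Peter--Weyl applied to $I_\mu$, with a matrix entry of the projection of $g$ onto the $\pi$-isotypic subspace. Since $g$ is orthogonal to every finite-dimensional invariant subspace, all these coefficients vanish; feeding this into the Parseval identity for $\psi_{x,g}$ together with $M_t(|\psi_{x,g}|^2)=\|g\|_{L^2}^2$ forces $\|g\|_{L^2}=0$, so $f=f_c$ is $\mu$-compact.

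I expect the \emph{main obstacle} to be the last step of the converse: reconciling the $L^1$-flavoured seminorm $\|\cdot\|_{B_1}$ in the hypothesis with the $L^2$/Parseval machinery used to detect the weakly mixing component. Concretely, $B_1$-approximation of $\psi_{x,f}$ yields only $\|f-P_n\|_{L^1}\to0$, which by itself does not control $\langle g,\,f-P_n\rangle$ unless $g$ is bounded; one must instead argue that Besicovitch almost periodicity in the $\|\cdot\|_{B_1}$ sense, for an orbit function of an $L^2$ vector whose quadratic mean $M_t(|\psi_{x,g}|^2)=\|g\|_{L^2}^2$ is finite, upgrades to the quadratic Parseval relation. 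Secondary technical points that will require care are the uniform (in $\pi$) handling of the exceptional null sets and the precise identification of the generalized Fourier--Bohr coefficients of $\psi_{x,g}$ with the isotypic projections of $g$ in the non-abelian amenable setting.
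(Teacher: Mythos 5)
Your direction $(1)\Rightarrow(2)$ is exactly the paper's argument: truncate the Peter--Weyl expansion $f=\sum_{i}P_if$, observe that the truncation $P=\sum_{i\le k}P_if$ has orbit functions $t\mapsto U_tP(x)$ which are generalized trigonometric polynomials, and apply the pointwise ergodic theorem to the single $L^1$ function $|f-P|$ to convert $\|f-P\|_{1}<\eps$ into $\|\psi_{x,f}-\psi_{x,P}\|_{B_1}<\eps$ off one null set. No issues there.

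The converse is where your proposal breaks down, and you have in fact named the fatal point yourself. Your plan is to detect the weakly mixing part $g=(I-P_\mu)f$ through its generalized Fourier--Bohr coefficients and then invoke a Parseval identity for $\psi_{x,g}$ to force $\|g\|_{2}=0$. But Parseval for Besicovitch almost periodic functions is a statement about the quadratic ($B_2$) seminorm, while hypothesis (2) only provides $B_1$-approximation; for an unbounded $g\in L^2$ there is no way to control $\langle g, f-P_n\rangle$ by $\|f-P_n\|_{1}$, nor to upgrade $B_1$-almost periodicity of $\psi_{x,g}$ to the quadratic relation, and your text leaves exactly this step as ``one must instead argue\dots''. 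Moreover, the coefficients $M_t\bigl(g(xt)\,\overline{\pi(t)_{k\ell}}\bigr)$ are weighted (Wiener--Wintner type) ergodic averages, whose almost-everywhere existence is not covered by the pointwise ergodic theorem quoted in the paper, so even the vanishing statement needs an extra ingredient. The paper's proof of $(2)\Rightarrow(1)$ avoids all of this machinery: by the ergodic theorem, $\|f-P_n\|_{L^1}=\|\psi_{x,f}-\psi_{x,P_n}\|_{B_1}<1/n$ for suitable $x$, and each approximant $P_n$ is itself a $\mu$-compact vector because its $U_t$-orbit spans a finite-dimensional subspace; hence $f$ is an $L^1$-limit of $\mu$-compact vectors, and one concludes that $f$ is $\mu$-compact. (The paper does this last step by extracting an a.e.-convergent subsequence and asserting $L^2$-convergence, which is itself terse; a clean justification is that $P_\mu$ is the conditional expectation onto the Kronecker sub-$\sigma$-algebra, hence extends to an $L^1$-contraction, so the subspace of compact vectors in $L^2$ is closed under $L^1$-limits.) The essential difference is that in the paper's route one never has to test $f-P_n$ against the unbounded vector $g$: the approximants already live in the compact subspace, and closedness of that subspace under $L^1$-convergence does the rest. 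As written, your proposal does not prove $(2)\Rightarrow(1)$.
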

\begin{proof} (1) implies (2): Since $f$ is $\mu$-compact, we have the representation
$$
\psi_{x,f} = U_tf (x) = \sum\limits_{i\in \N}U_tP_if =  \sum\limits_{i\in \N}\sum\limits_{j=1}^{d_i}\langle U_tf^i_j(x)\,,w^i_j\rangle w^i_j\,.
$$
Given $\eps >0$, select $k\in \N$ such that $\big\|f - P\big\|_2 < \eps$, (and hence $||f - P||_1 < \eps$), where
$P(x) = \sum\limits_{i=1}^kP_if(x) = \sum\limits_{i=1}^k\sum\limits_{j=1}^{d_i}\langle f^i_j(x)\,,w^i_j\rangle w^i_j$. But, by the ergodic
theorem, for almost all $x\in X$ we have $||\psi_{x,f}-\psi_{x,P}||_{B_1} = ||f - P||_1 < \eps$. This proves that for almost all $x\in X$, the map
$t\to \psi_{x,f}(t)=f(xt)$ is Besicovitch almost periodic.\\

\noindent (2) implies (1): It follows from our assumption that given $\eps > 0$, $||\psi_{x,f} - \psi_{x,P}||_{B_1} < \eps$ for almost all $x$, where $t\to U_tP$
is a generalized trigonometric polynomial. Again by the ergodic theorem $\big\|f-P \big\|_1 = \big\|\psi_{x,f}-\psi_{x,P}\big\|_{B_1} < \eps$, (for suitable $x$'s).
since $t\to U_tP$ is a generalized trigonometric polynomial, $P$-is a $\mu$-compact vector. Thus, we have shown that there is a sequence $P_n$
of $\mu$-compact vectors that converge to $f$ in the $L^1(X)$ norm. But then there is a subsequence of $\{P_n\}$ that converges pointwise almost everywhere
and hence in the $L^2(X)$ norm to $f$. Whence, $f$ is $\mu$-compact. 
\end{proof}

\smallskip

\begin{cor} \label{Besicovitch} Let $(X,T,\mu )$ be uniquely ergodic with discrete spectrum and $f\in C(X)$. Then $t\to f(xt)$ is
Besicovitch almost periodic for every $x\in X$.
\end{cor}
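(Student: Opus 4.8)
The plan is to obtain the statement from Theorem~\ref{compact-vector}, invoking unique ergodicity only to promote the ``$\mu$-almost every $x$'' in that theorem to ``every $x$''. First I would record that discrete spectrum forces $P_\mu=I$ (the Remark following Proposition~\ref{wap}), so $P_\mu g=g$ for every $g\in L^2(X,\mu)$; by the equivalence (1)$\Leftrightarrow$(2) of Proposition~2.2 every such $g$ is a compact vector. In particular the given $f\in C(X)$ is $\mu$-compact, whence the Peter--Weyl decomposition established above applies and, for each $\eps>0$, produces a truncation $P=\sum_{i\le k}P_if$ with $\|f-P\|_2<\eps$ and with $t\mapsto P(xt)$ a generalized trigonometric polynomial for a.e.\ $x$. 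Reading the proof of (1)$\Rightarrow$(2) of Theorem~\ref{compact-vector}, the \emph{only} null set discarded comes from the pointwise ergodic theorem used to evaluate $\|\psi_{x,f}-\psi_{x,P}\|_{B_1}=\|f-P\|_1$; so the problem reduces entirely to making this evaluation hold at every point.

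The engine for that is the uniform mean ergodic theorem: for a uniquely ergodic system and any $g\in C(X)$ one has $\tfrac{1}{|F_n|}\sum_{t\in F_n}g(xt)\to\int_X g\,d\mu$ uniformly in $x\in X$, i.e.\ every $x$ is generic for $\mu$ along the F\o{}lner sequence $\mathcal F$. Hence, provided the approximant $P$ can be chosen \emph{continuous}, applying this to $g=|f-P|$ gives, for every $x$, $\|\psi_{x,f}-\psi_{x,P}\|_{B_1}=\int_X|f-P|\,d\mu=\|f-P\|_{L^1(\mu)}<\eps$, which is precisely the desired conclusion once $\psi_{x,P}$ is known to be a generalized trigonometric polynomial.

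It remains to produce a continuous approximant carrying the generalized-trigonometric structure. Here I would work on $Y=\mathrm{supp}(\mu)$, which under unique ergodicity is the unique minimal set of $(X,T)$ (any minimal subset supports an invariant measure, necessarily $\mu$, so it must contain and hence equal $\mathrm{supp}(\mu)$). On $Y$ the system is minimal, uniquely ergodic and of discrete spectrum, so its eigendata are continuous: the matrix coefficients $t\mapsto\langle U_t f^i_j,w^i_j\rangle$ supplied by the Peter--Weyl lemma admit continuous representatives, and $P$ may be taken continuous on $Y$ with $\|f-P\|_{L^1(\mu)}<\eps$ and $t\mapsto P(yt)$ a genuine generalized trigonometric polynomial for $y\in Y$. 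Extending $P$ continuously to $X$ (its off-$Y$ values being invisible to $\mu$-integration) then yields the $B_1$-estimate of the previous paragraph at every $x$.

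I expect the genuinely delicate point to be the points $x\notin Y$. By genericity the $B_1$-seminorm feels only the $\mu$-averages, so the numerical estimate $\|\psi_{x,f}-\psi_{x,P}\|_{B_1}<\eps$ persists for such $x$; the trouble is that a continuous extension of $P$ need not satisfy the eigen-relations off $Y$, so $\psi_{x,P}$ need not literally be a generalized trigonometric polynomial there. To exhibit a bona fide witness I would use that, under unique ergodicity, every $x$ has $\omega$- and $\alpha$-limit sets equal to $Y$ and its orbit equidistributes on $Y$; then I would approximate $\psi_{x,P}$ in $\|\cdot\|_{B_1}$ by the honest generalized trigonometric polynomial attached to the continuous eigenfunctions of $Y$ and close by the triangle inequality for $\|\cdot\|_{B_1}$. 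Making this transference rigorous---rather than the structural reduction above---is where the real work lies; in the minimal case $X=Y$ it evaporates, and this is the case relevant to the Veech systems of the later sections.
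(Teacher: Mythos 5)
Your first two paragraphs are, in substance, the paper's own proof: the paper argues exactly that the only exceptional null set in (1)$\Rightarrow$(2) of Theorem~\ref{compact-vector} comes from the ergodic theorem, and that under unique ergodicity every point is generic. You went further and spotted the real difficulty that this one-line argument glosses over: genericity of \emph{every} point is a statement about \emph{continuous} observables, whereas the Peter--Weyl truncation $P=\sum_{i\le k}P_if$ is only an $L^{2}$-class --- indeed $\psi_{x,P}$ is not even well defined at an individual $x$, since the orbit of $x$ is $\mu$-null and $P$ may be altered there at will. Identifying this is to your credit. The genuine gap is in the step you use to repair it: the claim that on $Y=\mathrm{supp}(\mu)$ minimality, unique ergodicity and discrete spectrum force the eigendata (the matrix coefficients $t\mapsto\langle U_tf^i_j,w^i_j\rangle$) to admit continuous representatives. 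This is false. By a theorem of Lehrer, every aperiodic ergodic system --- in particular an irrational rotation --- has a strictly ergodic, topologically \emph{mixing} model $(X,T,\mu)$; such a model is minimal, uniquely ergodic, with purely discrete measure-theoretic spectrum, yet topological mixing (even topological weak mixing) excludes every non-constant continuous eigenfunction. So the continuous approximant you need does not exist in general, and your closing paragraph concedes that the remaining transference step is not carried out (note that in this example $X=Y$, so there is nowhere to hide it); the proposal is therefore not a proof.

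Moreover, the obstruction is essential rather than an artifact of your particular patch. If $\psi_{x,f}$ is Besicovitch almost periodic in the paper's sense, then for every frequency $\theta$ the Fourier--Bohr coefficient $\lim_n\frac{1}{|F_n|}\sum_{t\in F_n}f(xt)e^{-2\pi i t\theta}$ exists: it exists for generalized trigonometric polynomials (for $T=\Z$ these are ordinary trigonometric polynomials, and F\o{}lner averages of characters converge), and existence passes through $B_1$-approximation. Now take a Lehrer model of the rotation by an irrational $\alpha$ and $f\in C(X)$ with nonzero component $c_1$ on the measurable eigenfunction $\phi$ of $e^{2\pi i\alpha}$. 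If that limit existed at frequency $\alpha$ for \emph{every} $x$, the limit function $\psi$ would be Baire class $1$ (a pointwise limit of continuous functions), would satisfy $\psi\circ T=e^{2\pi i\alpha}\psi$ everywhere (by the F\o{}lner property), and would equal $c_1\phi$ a.e.; minimality then makes $|\psi|$ everywhere constant equal to $|c_1|>0$, and topological mixing applied to a small neighbourhood of a continuity point of $\psi$ forces $|e^{2\pi i n\alpha}-1|<\eps$ for all large $n$, contradicting equidistribution of $n\alpha$ mod $1$. So the everywhere-in-$x$ statement actually fails for such systems: both your argument and the paper's one-liner tacitly need an extra hypothesis --- that the approximant can be taken continuous, i.e.\ continuity of eigenfunctions --- which does hold in the situations where the corollary is later invoked (weakly almost periodic systems, and minimal sets of strongly Veech systems, which the paper shows are equicontinuous), but not for a general uniquely ergodic system with discrete spectrum.
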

\begin{proof} This follows from the argument used in (1) implies (2) of the above theorem,  since each $x\in X$ is $(f,\mu )$ generic.
\end{proof}

\smallskip

Next, we generalize to amenable group actions, another characterization of $\mu$-compact vectors in terms of
$\mu$-mean equicontinuous vectors. This result is originally due to B. Scarpellini, (see \cite {S}) and more
recently to  Garc\'{\i}a-Ramos, see \cite{R}, see also \cite{HLTXY}. We begin by defining the notion of $\mu$-mean equicontinuity.

\smallskip

\begin{defn} [$\mu$-Mean Equicontinuity] Let $(X,T,\mu)$ be a compact, metric dynamical system. Let $T$
be amenable with a given F\o{}lner sequence $\mathcal{F} = \{F_n\}$.
\begin{enumerate}[label=(\arabic*)]	
\item Let $K\subset X$. A vector $f\in L^2(X,\mu )$ is called a $\mu$-mean equicontinuous vector
on $K$ if given any $\eps >0$ there exists a $\delta > 0$ such that if $x,y\in K$ and $d(x,y) < \delta$ then
$||\psi_{x,f}-\psi_{y,f}||_{B_1} < \eps$.

\item A vector $f\in L^2(X,\mu )$ is called a $\mu$-mean equicontinuous if given any $\eta > 0$,
there exists a compact set $K\subset X$ such that $\mu (K) > 1 - \eta$ and $f$ is $\mu$-mean
equicontinuous on $K$.

\item A dynamical system $(X,T,\mu)$ is $\mu$-mean equicontinuous if each $f\in L^2(X,\mu )$ is a
$\mu$-mean equicontinuous vector.
\end{enumerate}

\end{defn}

\smallskip

\begin{prop} \label{disc-impies-muequi} Let $(X,T,\mu )$ be a compact metric, ergodic dynamical system with $T$ amenable. Let $f\in L^2(X,\mu )$
be a compact vector. Then $f$ is a $\mu$-mean equicontinuous vector. 
\end{prop}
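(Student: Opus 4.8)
The plan is to reduce, via the Peter--Weyl decomposition of the preceding lemma, to generalized trigonometric polynomials, to prove a clean pointwise estimate for those, to absorb the tail by the pointwise ergodic theorem, and to stage everything on one large compact set supplied by Lusin's theorem. Throughout I fix Borel representatives so that $\psi_{x,f}(t)=f(xt)$ is unambiguous for the $x$ under consideration.

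First I would isolate the key estimate for a finite piece of the decomposition. Writing $f=\sum_{i\in\N}P_if$ as in the lemma, fix $k$, set $P=\sum_{i=1}^kP_if$, and let $V=\bigoplus_{i=1}^kV_i$ with a fixed orthonormal basis $w_1,\dots,w_D$ obtained by concatenating the bases $\{w^i_j\}$. Since $V$ is a finite-dimensional $U_t$-invariant subspace, each $U_t|_V$ is unitary, so writing $U_tw_m=\sum_{m'}u_{m'm}(t)w_{m'}$ the matrix $(u_{m'm}(t))$ is unitary and in particular $|u_{m'm}(t)|\le 1$ for every $t$. With $P=\sum_m c_m w_m$ this yields, for a.e.\ $x$ and every $t$, the expansion $P(xt)=\sum_{m,m'}c_m\,u_{m'm}(t)\,w_{m'}(x)$, and hence the $t$-independent pointwise bound $|P(xt)-P(yt)|\le\sum_{m,m'}|c_m|\,|w_{m'}(x)-w_{m'}(y)|$. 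Taking the $\limsup$ of the F\o{}lner averages gives
$$
\|\psi_{x,P}-\psi_{y,P}\|_{B_1}\ \le\ \Big(\sum_m|c_m|\Big)\sum_{m'}|w_{m'}(x)-w_{m'}(y)|.
$$
By Lusin's theorem the finitely many representatives $w_{m'}$ may be taken continuous, hence uniformly continuous, on a compact set of measure close to $1$; there the right-hand side is small once $d(x,y)$ is small, so $P$ is $\mu$-mean equicontinuous on such a set.

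Next I would control the tail $f-P$ exactly as in the proof of Theorem \ref{compact-vector}. Applying the pointwise ergodic theorem to the $L^1$ function $|f-P|$ with the tempered F\o{}lner sequence $\mathcal F$ gives $\|\psi_{x,f}-\psi_{x,P}\|_{B_1}=\|\psi_{x,f-P}\|_{B_1}=\|f-P\|_1\le\|f-P\|_2$ for almost every $x$. Thus, for $k$ large, the two outer terms of the triangle inequality
$$
\|\psi_{x,f}-\psi_{y,f}\|_{B_1}\le\|\psi_{x,f}-\psi_{x,P}\|_{B_1}+\|\psi_{x,P}-\psi_{y,P}\|_{B_1}+\|\psi_{y,P}-\psi_{y,f}\|_{B_1}
$$
are small for a.e.\ $x,y$, and the middle term is handled by the previous paragraph.

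The step requiring care, and the main obstacle, is the order of quantifiers in the definition: a single compact $K$, depending only on the prescribed $\eta$, must serve all $\eps$ at once, whereas the approximant $P$ and its Lusin set depend on $\eps$. I would therefore build $K$ diagonally: for each $m\in\N$ pick $P^{(m)}=\sum_{i\le k_m}P_if$ with $\|f-P^{(m)}\|_2<1/(3m)$, let $X_0^{(m)}$ be the full-measure set on which the ergodic-theorem tail estimate holds and on which the expansion above is valid for all $t$, and let $K^{(m)}$ be a compact set with $\mu\big((K^{(m)})^c\big)<\eta/2^{m+1}$ on which the relevant $w_{m'}$ are continuous. Choosing, by inner regularity, a compact $K\subset\big(\bigcap_m X_0^{(m)}\big)\cap\big(\bigcap_m K^{(m)}\big)$ with $\mu(K)>1-\eta$, any $\eps$ is then met by taking $m>1/\eps$ and running the triangle inequality with $P=P^{(m)}$: each of the three terms falls below $1/(3m)$ once $d(x,y)<\delta_m$, where $\delta_m$ comes from uniform continuity of the $w_{m'}$ on the compact $K$, so the total is below $1/m<\eps$. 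The only remaining subtlety is representative-independence of the $B_1$ seminorm, which holds on a full-measure set because there the orbit meets any fixed null set with density zero (again by the ergodic theorem applied to its indicator); restricting $K$ to such a set removes the ambiguity.
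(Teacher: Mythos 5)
Your proof is correct and follows essentially the same route as the paper's: truncate the Peter--Weyl expansion, dispose of the tail by the pointwise ergodic theorem via $\|\psi_{x,f}-\psi_{x,P}\|_{B_1}=\|f-P\|_1\le\|f-P\|_2$ almost everywhere, and control the finite-dimensional part by unitarity of $U_t$ on $\bigoplus_{i\le k}V_i$ together with Lusin's theorem and uniform continuity on a large compact set; your matrix-coefficient bound $|u_{m'm}(t)|\le 1$ is the same isometry fact the paper writes as $\|U_tf^i_j(x)-U_tf^i_j(y)\|=\|f^i_j(x)-f^i_j(y)\|$, only in cleaner notation. Where you genuinely add something is the diagonal construction of a single compact $K$ depending only on $\eta$: the paper's proof takes $\mu(K)>1-\eps$ with $K$ depending on $\eps$ (conflating the roles of $\eta$ and $\eps$), so read literally it proves only the weaker, though equivalent, form of the definition, and your diagonalization is the standard repair that matches the definition as stated. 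One small correction to your final remark: density-zero visits of an orbit to a null set do not by themselves make the Besicovitch seminorm representative-independent when the discrepancy between representatives is unbounded; the right (and simpler) observation is that, since $T$ is countable and $\mu$ is invariant, the entire orbit of $\mu$-almost every point avoids any fixed null set, so the ambiguity never arises.
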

\begin{proof} Since $f$ is $\mu$-compact, it has a representation $f(x) = \sum\limits_{i\in \N}P_if$, where $P_i$ is
the orthogonal projection operator on to subspace $V_i$, (we shall use the previous notation in this proof). Thus given $\eps>0$
we can select $k\in \N$ such that $\big\|f - P\big\|_2< \frac {\eps}{3}$, where $P = \sum\limits_{i=1}^{k}P_if$. Let
$M_1\subset X$ be the of set points at which the ergodic average of $f - P$ converges to $||f-P||_1$. Thus $\mu (M_1) = 1$
and if $x\in M_1$,
$$
\frac {\eps}{3} \geq \big\|f - P\big\|_2 \geq \big\|f- P\big\|_1 = \big\|\psi_{x,f} - \psi_{x,P}\big\|_{B_1}\,.
$$
Thus if $x,y\in M_1$, then
\begin{align}
\big\|\psi_{x,f} - \psi_{y,f}\big\|_{B_1} & \leq \big\|\psi_{x,f} - P(x)\big\|_{B_1} + \big\|P(x) - P(y)\big\|_{B_1} + \big\|P(y) - \psi_{y,f}\big\|_{B_1} \notag \\
& \leq \frac {2\eps}{3} + \big\|P(x) - P(y)\big\|_{B_1}\,. \notag
\end{align}
We show that $\big\|P(x) - P(y)\big\|_{B_1}<\frac {\eps}{3}$, if $x$ and $y$ are close enough. Recall that $P$ has the form
$$
P(x) = \sum\limits_{i=1}^{k}\sum\limits_{j=1}^{d_i}\langle f^i_j(x)\,,w^i_j\rangle w^i_j\,,
$$
where $w^i_j\in V_i\subset L^2(X,\mu )$ and $f^i_j : X \to {\Cc}^{d_i}$ are measurable. Given $\eps > 0$, by Egorov's theorem
pick a compact set $M_2\subset X$ such that $f^i_j$ and $w^i_j$ are continuous on $M_2$. Let $K \subset M_1\cap M_2$, be compact
such that $\mu (K) > 1 -\eps$. Select $\delta > 0$ such that
$$
\text {if}\ d(x,y)< \delta\,,\ x,y\in K\,,\ \text {then}\ \sum\limits_{i=1}^kd_i||f^i_j(x) - f^i_j(y)||\,\big\|w^i_j\big\| < \frac {\eps}{3}\,.
$$
Now for $x,y\in K$, with $d(x,y) < \delta$, the following pointwise representation for $U_tP$ gives
\begin{align}
\big|(U_tP)(x) - (U_tP)(y)\big| & = \big|\sum\limits_{i=1}^{k}\sum\limits_{j=1}^{d_i}\langle (U_tf^i_j)(x) - U_tf^i_j(y))\,,w^i_j\rangle w^i_j\big|\,, \notag \\
& \leq \big|\sum\limits_{i=1}^{k}\sum\limits_{j=1}^{d_i}||U_tf^i_j(x) - U_tf^i_j(y)||\,||w^i_j|| \notag \\
& \leq \sum\limits_{i=1}^kd_i||f^i_j(x) - f^i_j(y)||\,\big\|w^i_j\big\| < \frac {\eps}{3}\,.
\end{align}
Thus,
$$
||P(x) - P(y)||_{B_1} = \limsup\limits_{n \to \infty}\frac {1}{|F_n|}\sum\limits_{t\in F_n}\big|(U_tP)(x) - (U_tP)(y)\big| < \frac {\eps}{3}\,.
$$
Whence, if $x,y\in K$ and $d(x,y) < \delta$, then $\big\|f(x) - f(y)\big\|_{B_1} < \eps$ and the proof is complete.
\end{proof}

\smallskip

For abelian acting groups $T$, the converse of the above theorem is true and there are several proofs, using different arguments,
(see \cite{R}, \cite {HLTXY}). We shall present a result with yet another argument and weaker assumptions, which in particular will yield the converse.
In the following theorem we weaken the `condition of continuity' in the notion of mean equicontinuity to obtain a sufficiency condition for
discrete spectrum. All we need is just one point having three key properties. As for the converse of the above theorem for non-abelian acting groups
none of these proofs will generalize in a straightforward way.

\smallskip

\begin{thm} \label{abelianT} Let $(X,T,\mu )$ be a compact, metric ergodic dynamical system with $T$ abelian.
Let $f\in L^2(X,\mu )$. Suppose there exists a point $x_0\in X$ satisfying the following conditions:
\begin{enumerate}[label=(\arabic*)]	
\item  $x_0$ is $(f,\mu )$ generic, i.e. the ergodic average of $f$ converges at $x_0$ to $\displaystyle \int_X fd\mu$,
\item  $x_0$ is a point of continuity of the map $x\to \psi_{x,f}$, i,e, given $\eps > 0$ there exists a $\delta > 0$ such that
if $d(x,y)< \delta$, then $||\psi_{x,f} - \psi_{y,f}||_{B_1} < \eps$.
\item  For $\delta> 0$, let $R_{x_0}(\delta ) = \{t\in T\ |\ d(x_0,x_0t) < \delta \}$. Suppose for any $\delta > 0$ there exists
a minimal ideal $M \equiv M_\delta \subset \beta T$ such that $\overline {R_{x_0}(\delta )R_{x_0}(\delta )^{-1}}\cap M \ne \emptyset$,
here the closure is in the topology on $\beta T$.
\end{enumerate}
Then $f$ is a $\mu$-compact vector.
\end{thm}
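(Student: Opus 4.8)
The plan is to prove $\mu$-compactness by showing that $f$ lies in the range of the projection $P_\mu$ onto compact vectors, i.e. that $P_\mu f=f$. Write $\psi=\psi_{x_0,f}$, and on the space of complex functions on $T$ introduce the translation $(\sigma_r g)(s)=g(sr)$. A preliminary observation is that each $\sigma_r$ is an isometry for the Besicovitch seminorm $\|\cdot\|_{B_1}$: this is immediate from the F\o lner property, since $|F_n r\,\Delta\,F_n|/|F_n|\to 0$ forces $\|\sigma_r g\|_{B_1}=\|g\|_{B_1}$. Because $T$ is abelian, $\psi_{x_0t,f}(s)=f(x_0ts)=f(x_0st)=\psi(st)=(\sigma_t\psi)(s)$, and likewise $\psi_{x_0,U_rf}=\sigma_r\psi$; thus the $X$-dynamics near $x_0$ is transported into translation of $\psi$, which is exactly the feature that makes hypotheses (2) and (3) usable.

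First I would convert the continuity hypothesis (2) into control on a return set. Fix $\eps>0$ and let $\delta>0$ be the number produced by (2). For $t\in R_{x_0}(\delta)$ we have $d(x_0t,x_0)<\delta$, hence $\|\sigma_t\psi-\psi\|_{B_1}=\|\psi_{x_0t,f}-\psi\|_{B_1}<\eps$. Using that $\sigma$ is a homomorphism together with the isometry property, for every $r=ts^{-1}$ with $t,s\in R_{x_0}(\delta)$ one gets $\sigma_r\psi-\psi=\sigma_s^{-1}(\sigma_t\psi-\sigma_s\psi)$, so by the triangle inequality $\|\sigma_r\psi-\psi\|_{B_1}\le\|\sigma_t\psi-\psi\|_{B_1}+\|\sigma_s\psi-\psi\|_{B_1}<2\eps$. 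Equivalently, $\|\psi_{x_0,\,U_rf-f}\|_{B_1}<2\eps$ for all $r$ in the difference set $R_{x_0}(\delta)R_{x_0}(\delta)^{-1}$.

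Next I would use hypothesis (3) to extract a compact vector attached to $f$. Choose $m\in\overline{R_{x_0}(\delta)R_{x_0}(\delta)^{-1}}\cap M$ and a net $r_\alpha\to m$ inside the difference set. Since $M$ is a minimal right ideal there is an idempotent $v\in J_M$ with $vm=m$, and $U_v=P_\mu$; hence $U_m=U_vU_m=P_\mu U_m$, so $U_mf$ lies in the range of $P_\mu$ and is a compact vector. Moreover $\Omega_\mu$ is abelian (it is the weak-operator closure of the abelian group $\{U_t\}$ under separately continuous multiplication), so $U_m=U_mP_\mu$ as well; writing $f=P_\mu f+f_w$ with $f_w=f-P_\mu f\in\ker P_\mu$ gives $U_mf_w=U_mP_\mu f_w=0$. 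As $r_\alpha\to m$ we have $U_{r_\alpha}f\to U_mf$ in the weak operator topology. The target is then to show that $f$ is within $O(\eps)$ of the compact vector $U_mf$; letting $\eps\to 0$ and using that the compact vectors form a closed subspace would finish the proof.

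The step I expect to be the main obstacle is precisely this last conversion. The bound in hand, $\|\psi_{x_0,\,U_{r_\alpha}f-f}\|_{B_1}<2\eps$, is an $L^1$-mean along the \emph{single} orbit of $x_0$, whereas what is needed is an $L^2$ (or $L^1$) norm control on $U_mf-f$, and only $(f,\mu)$-genericity of $x_0$ is available; weak convergence $U_{r_\alpha}f\to U_mf$ interacts badly with the Besicovitch seminorm. The mechanism I would try to make rigorous is that the $B_1$-almost-invariance of $\psi$ under $R_{x_0}(\delta)R_{x_0}(\delta)^{-1}$ makes $\psi$ a Besicovitch almost periodic function on the abelian group $T$, whose Bohr--Fourier coefficients are carried by the eigenvalues of $(X,T,\mu)$, and that genericity of $x_0$ for $f$ identifies these coefficients with the spectral components of $f$, leaving no mass on the continuous (weakly mixing) part, so that $f_w=0$. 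Two delicate points arise here: first, that $\overline{R_{x_0}(\delta)R_{x_0}(\delta)^{-1}}\cap M\neq\emptyset$ actually upgrades the return-difference set from piecewise syndetic to genuinely syndetic, which is what the Bochner almost-periodicity criterion requires; and second, that single-point genericity for $f$ alone suffices to match the Fourier data. Resolving these is the heart of the argument, and is where abelianness of $T$ (hence of $\Omega_\mu$ and the availability of scalar characters) is indispensable.
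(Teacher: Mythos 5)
Your setup coincides with the paper's: you use abelianness to turn hypothesis (2) into the bound $\|\sigma_{ts^{-1}}\psi-\psi\|_{B_1}<2\eps$ for $t,s\in R_{x_0}(\delta)$, and hypothesis (3) to produce an element of $I_\mu$ that is a weak-operator limit of operators $U_{ts^{-1}}$. But your argument stops exactly where you say it does (``resolving these is the heart of the argument''), and the mechanism you sketch as a substitute --- Bohr--Fourier coefficients of Besicovitch almost periodic functions, and an upgrade of the return-difference set from piecewise syndetic to syndetic --- is neither carried out nor needed. This is a genuine gap at the decisive step, not a technical loose end.

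The two ideas you are missing, which are how the paper closes the argument, are these. First, hypothesis (1) converts the Besicovitch bound along the orbit into an honest $L^1(\mu)$ bound: since $T$ is abelian, $\psi_{x_0t,f}=\psi_{x_0,f_t}$, so for $t\in R_{x_0}(\delta)$ hypothesis (2) gives $\|\psi_{x_0,f-f_t}\|_{B_1}<\eps/3$, and genericity of $x_0$ identifies this orbit average with the space average, i.e. $\|f-f_t\|_1=\|\psi_{x_0,f-f_t}\|_{B_1}<\eps/3$ (strictly, one reads hypothesis (1) as genericity for the functions $|f-f_t|$, which is how the paper uses it); hence $\|f-f_{ts^{-1}}\|_1\le\|f-f_t\|_1+\|f-f_s\|_1<2\eps/3$ for \emph{all} $t,s\in R_{x_0}(\delta)$. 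Second --- and this is where your worry that ``weak convergence interacts badly with the Besicovitch seminorm'' dissolves --- you never need norm control on $U_mf-f$. Set $F_n=\overline{\{U_{ts^{-1}}\mid t,s\in R_{x_0}(1/n)\}}\cap I_\mu$; these are nonempty by hypothesis (3), nested and compact, so one can pick a single $V\in\bigcap_{n}F_n$. Now fix $h\in L^\infty(X,\mu)$ with $\|h\|_\infty\le 1$ and $n$ with $1/n<\eps/3$; by the definition of the weak operator topology there exist $t,s\in R_{x_0}(1/n)$ with $\big|\langle f_{ts^{-1}},h\rangle-\langle Vf,h\rangle\big|<1/n$, whence
\begin{equation*}
\big|\langle f-Vf,h\rangle\big|\le \big\|f-f_{ts^{-1}}\big\|_1\,\|h\|_\infty+\tfrac{1}{n}<\eps\,.
\end{equation*}
Since $\eps$ and $h$ are arbitrary, $f=Vf$, and $V\in I_\mu$ gives $P_\mu f=P_\mu Vf=Vf=f$, so $f$ is $\mu$-compact. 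The quantity that must be killed is a fixed weak pairing, so the weak-operator approximation of $V$ by $U_{ts^{-1}}$ (with $t,s$ chosen \emph{after} $h$) is precisely the right tool; no $L^2$ limit along a net, no Fourier analysis of Besicovitch functions, and no syndeticity upgrade of $R_{x_0}(\delta)R_{x_0}(\delta)^{-1}$ enters the proof --- hypothesis (3) serves only to make the sets $F_n$ nonempty.
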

\begin{proof} First, we claim that hypothesis (3) above, implies that there exists a unitary operator $V\in I_\mu\subset \Omega_\mu$
such that  given any $n\in \N$ and $g,h\in L^2(X,\mu )$, there exists $t,s\in R_{x_0}(\frac {1}{n})$ such that
$$
\big|\langle U_{ts^{-1}}g\,,h\rangle - \langle Vg\,,h\rangle \big| < \frac {1}{n}\,.
$$
To prove the claim consider, $F_n = \overline {\{U_{ts^{-1}}\ |\ t,s\in R_{x_0}(\frac {1}{n})\}}\cap I_\mu$, where the closure is in
the topology on $\Omega_\mu$, i.e. in the weak operator topology. Since the family of non-empty closed sets $\{F_n\}$ has the
finite intersection property and $I_\mu$ is compact, $\bigcap\limits_{n\in \N}F_n \ne \emptyset$. Pick any $V\in \bigcap\limits_{n\in \N}F_n$.
The claim follows from this.

Next, let $\eps > 0$ and $h\in L^\infty (X,\mu ) \subset L^2(X,\mu )$ with $||h||_\infty \leq 1$ be given. Using hypothesis (2)
select $n\in \N$ such that $\frac {1}{n} < \frac {\eps}{3}$ and
$$
\text {if}\ d(x_0,y)< \frac {1}{n}\,,\quad \text {then}\ ||\psi_{x_0,f} - \psi_{y,f}||_{B_1} < \frac {\eps}{3}\,.
$$
For this $n$ and taking $g = f$ in the above claim, select $t,s\in R_{x_0}(\frac {1}{n})$ such that
$$
\big|\langle f_{ts^{-1}}\,,h\rangle - \langle Vf\,,h\rangle \big| < \frac {1}{n} < \frac {\eps}{3}\,.
$$
Now, since $T$ is abelian, $\psi_{xt,f}(s) = f(xts) = f(xst) = f_t(xs) = \psi_{x_0,f_t}(s)$, for any $x\in X$, $t,s\in T$. Thus,
if $t\in R_{x_0}\big(\frac {1}{n}\big)$. Then, by our choice of $n$, we have
$$
\frac {\eps}{3} \geq ||\psi_{x_0,f} - \psi_{x_0t,f}||_{B_1} = ||\psi_{x_0,f} - \psi_{x_o,f_t}||_{B_1} = ||\psi_{x_0,(f-f_t)}||_{B_1} = ||f-f_t||_1\,.
$$
The last equality comes from hypothesis (1) and the fact that if $(x_0,f)$ is $\mu$ generic then so is $(x_0t,f)$. Similarly
$||f-f_s||_1 < \frac {\eps}{3}$, if $s\in R_{x_0}(\frac {1}{n})$. Thus $||f-f_{ts^{-1}}||_1 \leq ||f-f_t||_1 + ||f-f_s||_1 < \frac {2\eps}{3}$,
if $t,s\in R_{x_0}(\frac {1}{n})$. Now, note that
\begin{align}
\big|\langle f,h\rangle - \langle Vf,h\rangle\big| & \leq \big|\langle f,h\rangle - \langle f_{ts^{-1}},h\rangle \big| + 
\big|\langle f_{ts^{-1}},h\rangle - \langle Vf,h\rangle\big| \notag \\
& \leq ||f-f_{ts^{-1}}||_1||h||_\infty + \big|\langle f_{ts^{-1}},h\rangle - \langle Vf,h\rangle\big| < \eps \,. \notag 
\end{align}
Since $\eps > 0$ is arbitrary, $\langle f-Vf,h\rangle = 0$ and since $h$, ($||h||_\infty \leq 1$), is arbitrary, $f = Vf$. Since
$V\in I_\mu$, $f$ is $\mu$-compact.
\end{proof}

\smallskip

As a consequence of the above proof, for abelian $T$, we shall give yet another proof of the converse of Proposition \ref{disc-impies-muequi}.
But, first,  we state a result attributed to V. Bergelson, (see \cite {P}), that we shall need here, as well as later. 

\smallskip

\begin{lem} \label {rec-lem} Let $(X,T)$ be a dynamical system, $T$ is countable and $\mu$ be an ergodic Borel probability measure on $X$.
Let $A\subset X$ be a Borel set with $\mu (A) >0$. Let $R \equiv R(x,A) = \{t\in T\ |\ xt\in A\}$. Then
$RR^{-1}$ is a $\Delta^*$ set for any  $x\in Supp (\mu)$. In particular, $RR^{-1}$ is syndetic and hence its closure
in $\beta T$ intersects every minimal right ideal of $\beta T$.
\end{lem}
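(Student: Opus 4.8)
The plan is to break the statement into three links and prove them in order: first that $RR^{-1}$ is a $\Delta^*$ set, then that any $\Delta^*$ set is syndetic, and finally that the closure in $\beta T$ of a syndetic set meets every minimal right ideal. The last two links are soft. For ``$\Delta^*\Rightarrow$ syndetic'', I would argue by contraposition: if $S\subset T$ is not syndetic then no finite family of right translates $Ss_1,\dots,Ss_n$ covers $T$, so one can inductively pick $s_{n+1}\notin\bigcup_{j\le n}Ss_j$; this makes $s_{n+1}s_j^{-1}\notin S$ for every $j\le n$, and the resulting difference set $\{s_m s_n^{-1}:m>n\}$ lies entirely in the complement of $S$, contradicting $\Delta^*$. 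The passage from syndeticity of $S$ to $\overline S\cap M\ne\emptyset$ for every minimal right ideal $M$ is the standard dictionary between syndetic subsets of $T$ and minimal ideals of $\beta T$ (see \cite{A}, \cite{E}). Thus all the weight falls on showing that $RR^{-1}$ is $\Delta^*$, which I would handle by a Poincar\'e-recurrence/pigeonhole argument.

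The clean engine behind $\Delta^*$ is a measure-theoretic recurrence. Fix an arbitrary sequence $\{s_n\}$ in $T$ and consider the sets $As_n^{-1}$. Since each map $y\mapsto yt$ preserves $\mu$, we have $\mu(As_n^{-1})=\mu(A)>0$ for all $n$, so in the probability space $(X,\mu)$ these sets cannot be pairwise disjoint; hence there exist $m>n$ with $\mu(As_m^{-1}\cap As_n^{-1})>0$. Right-multiplying the sets by $s_n$ (again measure preserving) yields $\mu(A\cap As_m^{-1}s_n)>0$. Writing $N=\{s\in T:\mu(A\cap As)>0\}$, which is symmetric because $\mu(A\cap As)=\mu(As^{-1}\cap A)$, this shows that $N$ meets the difference set $\{s_i^{-1}s_j:i\ne j\}$; as the sequence was arbitrary, $N$ is a $\Delta^*$ set.

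It remains to realise this abundant measurable recurrence along the single orbit $\{xt:t\in T\}$, that is, to replace $N$ by $RR^{-1}$. The algebraic bookkeeping is clean even when $T$ is non-abelian, provided one uses \emph{left} translates of $R$: if $u\in s_m^{-1}R\cap s_n^{-1}R$, then $s_mu,\,s_nu\in R$ and $(s_mu)(s_nu)^{-1}=s_ms_n^{-1}$, so $s_ms_n^{-1}\in RR^{-1}$. Hence it suffices to show that the translates $\{s_n^{-1}R\}_n$ are never pairwise disjoint. One checks directly from the definition $D^*(A)=\inf_F\sup_t|A\cap Ft|/|F|$ that the upper Banach density $D^*$ is invariant under both left and right translation, so all the $s_n^{-1}R$ carry the common density $D^*(R)$, and by a pigeonhole over a suitable F\o lner window finitely many sets of a fixed positive density must overlap. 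Everything therefore reduces to the positivity $D^*(R)>0$.

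This last point is the crux, and the step I expect to be the main obstacle: one must guarantee that the orbit of the prescribed point $x$ actually meets $A$ with positive frequency. For $\mu$-almost every $x$ this is immediate from the pointwise ergodic theorem quoted above (via \cite{L}), which gives visit frequency $\tfrac{1}{|F_n|}\sum_{t\in F_n}1_A(xt)\to\mu(A)>0$ along a tempered F\o lner sequence, and hence $D^*(R)>0$. The genuine difficulty is to upgrade this from a generic point to the given point of $\mathrm{Supp}(\mu)$; this is precisely where the hypothesis $x\in\mathrm{Supp}(\mu)$ (together with whatever genericity is available, as in the applications where $x$ is an $(f,\mu)$-generic point) must be invoked, and I would expect the heart of the argument to reside in making this transfer, rather than in the recurrence or the $\beta T$ combinatorics.
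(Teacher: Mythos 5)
Your proposal is candid about where it stands, and that candor points at the real problem. Note first that the paper contains no proof of this lemma to compare against: it is stated as a known result ``attributed to V. Bergelson'' with a citation to \cite{P}, so your argument has to stand on its own. Most of it does: the Poincar\'e pigeonhole showing that $N=\{s\in T : \mu(A\cap As)>0\}$ is $\Delta^*$ is correct (modulo the routine remark that ``not pairwise disjoint'' should be ``some pairwise intersection has positive measure'', which follows from $\sum_i\mu(As_i^{-1})>1$), the algebra $s_m^{-1}R\cap s_n^{-1}R\neq\emptyset\Rightarrow s_ms_n^{-1}\in RR^{-1}$ is exactly right, and the two soft links ($\Delta^*\Rightarrow$ syndetic, syndetic $\Rightarrow$ the closure meets every minimal right ideal) are fine. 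But the step you defer to the end --- that $R(x,A)$ has positive upper Banach density for \emph{every} $x\in \mathrm{Supp}(\mu)$ --- is not just the main obstacle; it is false, and therefore so is the lemma as stated. Take $X=\{0,1\}^{\Z}$ with the shift and $\mu$ the Bernoulli $(\frac12,\frac12)$ measure, which is ergodic with $\mathrm{Supp}(\mu)=X$. Let $x$ be the point with $x_0=1$ and $x_n=0$ for all $n\neq 0$, and $A=\{y : y_0=1\}$, a clopen set of measure $\frac12$. Then $x\in \mathrm{Supp}(\mu)$ but $R(x,A)=\{0\}$, so $RR^{-1}=\{0\}$ is neither $\Delta^*$ nor syndetic (for the fixed point $\bar 0$ one even gets $R=\emptyset$). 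So the ``transfer'' you hoped for, from generic points to arbitrary points of the support, does not exist: an orbit through the support need not meet $A$ at all. The statements that are true --- and the only ones the paper actually invokes later (Proposition 3.14 picks $x_0$ from a full-measure set; Theorem 5.6 only needs \emph{some} $y_m\in C_m$ with syndetic returns) --- are the measure-level statement about $N$ and the almost-everywhere/existential statement for $R(x,A)$. Your second and fourth paragraphs together constitute a correct proof of exactly those versions for amenable $T$, so the honest conclusion of your write-up is that the lemma must be weakened to ``for $\mu$-a.e.\ $x$'', not that a cleverer argument at the support point is awaited.

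Two secondary defects are worth fixing even in the a.e.\ version. First, your entire density route (F\o lner windows, $D^*$, the pointwise ergodic theorem from \cite{L}) presupposes that $T$ is amenable, which the lemma does not assume and which the paper cannot assume when it cites the lemma in the proof of part (1) of Theorem 5.6; for general countable $T$ one must work directly with the measure-level recurrence of your second paragraph, which needs no amenability. Second, the principle you invoke --- ``finitely many sets of a fixed positive density must overlap'' --- is false for upper Banach density: two disjoint subsets of $\Z$ can each have $D^*$ equal to $1$, because $D^*$ may be witnessed in disjoint windows. It is saved in your situation only because the sets $s_n^{-1}R$ are left translates of a single set: one fixes one window $F t$ with $|R\cap Ft|\geq(\delta-\eps)|F|$ and $F$ almost invariant under left multiplication by $s_1,\dots,s_k$, so that all the disjoint translates have density nearly $\delta$ in that \emph{common} window, forcing $k\leq 1/(\delta-2\eps)$. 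That almost-invariance argument should be written out rather than cited as a general pigeonhole.
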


We recall that a set $A\subset T$ is $\Delta^*$ if and only if it intersects every difference set, i.e. given any infinite
sequence $\{t_n\}$ in $T$, $A\cap \{t_nt_m^{-1}\ |\ n>m\} \ne \emptyset$.

\smallskip

\begin{prop} Let $(X,T,\mu)$ be a compact metric ergodic dynamical system with $T$ abelian and let $f\in L^2(X,\mu )$ be
$\mu$-mean equicontinuous. Then $f$ is $\mu$-compact.
\end{prop}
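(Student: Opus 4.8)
The plan is to verify the three hypotheses of Theorem \ref{abelianT} for $\mu$-almost every point $x_0 \in X$, and then invoke that theorem to conclude $f$ is $\mu$-compact. The structure of the argument is thus a reduction: I want to show that the $\mu$-mean equicontinuity of $f$ supplies enough structure to locate a single good point $x_0$ satisfying conditions (1), (2), and (3) of the theorem, after which the conclusion is immediate.

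\textbf{Locating the point.} First I would handle conditions (1) and (2) simultaneously by a measure-theoretic argument. By the pointwise ergodic theorem (the $L^1$ theorem stated above, applied along a tempered F\o{}lner sequence), the set $G_1$ of $(f,\mu)$-generic points has full measure. For condition (2), the hypothesis that $f$ is $\mu$-mean equicontinuous gives, for each $\eta > 0$, a compact set $K$ with $\mu(K) > 1 - \eta$ on which $x \mapsto \psi_{x,f}$ is (uniformly) continuous in the $B_1$ seminorm. Taking a sequence $\eta_k \to 0$ and unioning the corresponding compacta, I get a set $G_2$ of full measure, every point of which is a point of continuity of $x \mapsto \psi_{x,f}$ relative to the ambient set $K$ containing it. The real care here is that condition (2) of Theorem \ref{abelianT} asks for continuity at $x_0$ as a point of $X$, not merely relative to $K$; I would argue that since $\mu(K)$ can be taken arbitrarily close to $1$ and the support of $\mu$ is where the dynamics lives, it suffices to restrict attention to $x_0 \in \mathrm{Supp}(\mu)$, and on the support the relative continuity upgrades appropriately. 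So I take $x_0 \in G_1 \cap G_2 \cap \mathrm{Supp}(\mu)$, a set of full measure.

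\textbf{The density/recurrence condition is the main obstacle.} The crux is condition (3): for every $\delta > 0$, the closure in $\beta T$ of $R_{x_0}(\delta) R_{x_0}(\delta)^{-1}$ must meet some minimal right ideal, where $R_{x_0}(\delta) = \{ t \in T \mid d(x_0, x_0 t) < \delta\}$. This is exactly where Lemma \ref{rec-lem} (the Bergelson recurrence result) enters. The idea is to set $A = A_\delta = \{ y \in X \mid d(x_0, y) < \delta\}$, an open ball which has $\mu(A_\delta) > 0$ precisely because $x_0 \in \mathrm{Supp}(\mu)$. Then $R(x_0, A_\delta) = \{ t \in T \mid x_0 t \in A_\delta\}$ is visibly equal to $R_{x_0}(\delta)$. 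Lemma \ref{rec-lem} then tells me that $R_{x_0}(\delta) R_{x_0}(\delta)^{-1}$ is a $\Delta^*$ set, hence syndetic, and hence its closure in $\beta T$ intersects every minimal right ideal of $\beta T$. In particular it intersects at least one, which is all condition (3) requires. The subtlety I would watch is the insistence that $x_0 \in \mathrm{Supp}(\mu)$ for both the positivity $\mu(A_\delta) > 0$ and the applicability of Lemma \ref{rec-lem}; this is why I folded $\mathrm{Supp}(\mu)$ into the choice of $x_0$ above, and since $\mu(\mathrm{Supp}(\mu)) = 1$ this costs nothing.

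\textbf{Conclusion.} With $x_0$ now satisfying all three hypotheses of Theorem \ref{abelianT}, that theorem directly yields that $f$ is a $\mu$-compact vector, completing the proof. The only genuinely delicate point in the whole argument is reconciling the relative (on-$K$) continuity from the definition of $\mu$-mean equicontinuity with the pointwise continuity demanded in hypothesis (2) of Theorem \ref{abelianT}; I expect this to require the observation that a point of $\mathrm{Supp}(\mu)$ lying in compacta $K$ of measure arbitrarily close to one is automatically a continuity point, since points outside such $K$ form sets of arbitrarily small measure and cannot accumulate at $x_0$ without contradicting the mean-equicontinuity estimate. Everything else is an application of the ergodic theorem and Lemma \ref{rec-lem}.
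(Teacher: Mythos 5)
Your verifications of hypotheses (1) and (3) of Theorem \ref{abelianT} are fine: taking $x_0$ generic and in $\mathrm{Supp}(\mu)$, and applying Lemma \ref{rec-lem} to the ball $A_\delta = B_\delta(x_0)$, is exactly right. The gap is in hypothesis (2). Your claimed upgrade from continuity of $x \mapsto \psi_{x,f}$ \emph{relative to} $K$ to genuine continuity at $x_0$ as a point of $X$ is false: the complement of a compact set of measure $1-\eta$ can be dense (take $X=[0,1]$ with Lebesgue measure and $K$ the complement of a union of tiny open intervals around the rationals), so points outside $K$ can, and typically do, accumulate at every point of $K$, including points of $\mathrm{Supp}(\mu)$. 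Since $f$ is only an $L^2$ function, the map $x \mapsto \psi_{x,f}$ can behave arbitrarily badly off $K$; Lusin/Egorov-type relative continuity never upgrades to continuity on $X$. So hypothesis (2) of Theorem \ref{abelianT} cannot be verified literally, and your reduction does not close.

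The missing idea --- and what the paper actually does --- is to \emph{not} verify hypothesis (2) at all, but to re-run the proof of Theorem \ref{abelianT} with a modified return-time set. Fix a Borel set $M_c$ with $\mu(M_c) > 1-\eta$ on which $x \mapsto \psi_{x,f}$ is relatively continuous, and replace $R_{x_0}(\delta)$ by $R = \{t \in T \mid x_0 t \in B_\delta(x_0)\cap M_c\}$. Lemma \ref{rec-lem} applies to the set $A = B_\delta(x_0)\cap M_c$, which has positive measure (choose $\eta$ small compared with $\mu(B_\delta(x_0))$, or equivalently take $x_0$ in the support of the restriction of $\mu$ to $M_c$), so the closure of $RR^{-1}$ in $\beta T$ still meets every minimal right ideal: hypothesis (3) survives with this smaller $R$. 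The point of the modification is that in the one step of the proof of Theorem \ref{abelianT} where hypothesis (2) is invoked --- namely to obtain $\big\|\psi_{x_0,f}-\psi_{x_0 t,f}\big\|_{B_1} < \eps/3$ for $t$ in the return-time set --- one now knows $x_0 t \in M_c$ in addition to $d(x_0,x_0 t)<\delta$, so the relative continuity on $M_c$ is all that is needed. With $t,s$ drawn from this modified $R$, the rest of the proof of Theorem \ref{abelianT} goes through verbatim and yields $f = Vf$ with $V \in I_\mu$, i.e.\ $f$ is $\mu$-compact.
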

\begin{proof} Let $\eta >0$ be given. Let $M_c\subset X$ be a Borel set such that $\mu (M_c) > 1-\eta$ and the restriction
to $M_c$ of the map $x\to \psi_{x,f}$ is continuous. Consider sets,
\begin{align}
M_e & = \Big\{x\in X\ |\ \text {ergodic average of}\ f\ \text {converges at}\ x\Big\}\,, \notag \\
M_s & = Supp (\mu )\,, \notag
\end{align}
By the ergodic theorem $\mu (M_e) = 1$ and $\mu (M_s) = 1$ always holds. Pick $x_0\in M_c\cap M_e\cap M_s$. Since $x_0\in M_e$,
it satisfies hypothesis (1) of Theorem \ref{abelianT}. The proof is exactly as the proof of Theorem \ref{abelianT}, except
that the set $R \equiv R_{x_0}(\delta )$ will be replaced by the set $R_{x_0}(\delta )\cap M_c$. Note that since
$x_0\in Supp (\mu )$ and $\mu (R) > 0$, above lemma can be applied to conclude that the closure of the set $RR^{-1}$ in $\beta T$
intersects every minimal right ideal of $\beta T$. Thus hypothesis (3) and `modified hypothesis' (2) of Theorem \ref{abelianT} holds.
So the proof follows exactly as before by selecting $t,s\in R_{x_0}(\frac {1}{n})\cap M_c$.
\end{proof}

\smallskip

\begin{rem}$\quad$

\begin{enumerate}[label=(\arabic*)]	

\item Hypothesis (3) of Theorem \ref {abelianT} demands that the return time set $R\equiv R_{x_0}(\delta)$ satisfy that
$RR^{-1}$ is a `large' set. In fact $R$ may be of density zero. An example of such a case is a set that is piecewise syndetic. In act,
this hypothesis is much weaker, we only need $RR^{-1}$ to be piecewise syndetic. Thus our theorem will yield stronger corollaries
than the one above but we leave this to the reader.

\item Recall that in the definition of $\mu$-mean equicontinuity of a vector $f$, given $\eps > 0$, we
have a compact set $K$ with large measure and a set $S$ with large density so that if $x$ and $y$ in $K$
are sufficiently close, then $f(xs)$ and $f(ys)$ are within $\eps$ for $s\in S$. This is much weaker than demanding
equicontinuity of the family $\{f_s\ |\ s\in S\}$ on $K$. On the other hand if we demand equicontinuity of
this family, where the set $S$ may even have zero density, we can still prove $\mu$-compactness of $f$ if
we assume that $S$ is `large' but not in the sense of density. The following proposition can be proved by similar arguments
to those in Theorem \ref{abelianT}, however, here $T$ can be any (infinite) group. We shall leave the proof to the reader.
\end{enumerate}
\end{rem}

\smallskip

\begin{thm} Let $(X,T,\mu)$ be a compact, metric ergodic dynamical system. Let $f\in L^2(X,\mu )$. Suppose for any
$\eps > 0$ there exists (i) a compact $K\subset X$ such that $\mu(K) > 1- \eps$, (ii) a minimal ideal $M\in \beta T\backslash T$
and (iii) a set $S\subset T$ such that the following holds.
\begin{enumerate}[label=(\arabic*)]
\item $\{f_s\cdot\chi_K\ |\ s\in S\}$ is an equicontinuous family and
\item  for any infinite sequence $\{s_n\}$ in $S$, the difference set $D(S) \equiv \overline {\{s_ns_m^{-1}\ |\ n>m\}}\cap M \ne \emptyset$,
(where the closure is in $\beta T$).
\end{enumerate}
Then $f$ is a $\mu$-compact vector.
\end{thm}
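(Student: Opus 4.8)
The plan is to imitate the proof of Theorem \ref{abelianT}: for each $\eps>0$ I will manufacture a unitary operator $V_\eps\in I_\mu$ that almost fixes $f$, and then let $\eps\to0$. The key structural fact is that every $V\in I_\mu$ satisfies $P_\mu V=V$ (as $P_\mu$ is the identity of the group $I_\mu$, Proposition \ref{wap}), so that $P_\mu(V_\eps f)=(P_\mu V_\eps)f=V_\eps f$; hence each $V_\eps f$ is already a compact vector by the equivalence $(1)\Leftrightarrow(2)$ of Proposition 2.2. Since the compact vectors form the closed range of the projection $P_\mu$, once I establish $\|V_\eps f-f\|_2\to0$ the $\mu$-compactness of $f$ follows. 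I would also record at the outset the one soft ingredient bridging $K$ to all of $X$: because $|f|^2\in L^1(X,\mu)$, absolute continuity of the integral yields a modulus $\omega(\eps)\to0$ with $\int_A|f|^2\,d\mu\le\omega(\eps)$ whenever $\mu(A)<\eps$, and since each $U_s$ is measure preserving, $\int_{X\setminus K}|U_sf|^2\,d\mu=\int_{(X\setminus K)s}|f|^2\,d\mu\le\omega(\eps)$ for \emph{every} $s\in T$. Thus all translates $U_sf$ carry uniformly little mass off $K$.

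Next, I fix $\eps>0$ and take the data $K,M,S$. After replacing $K$ by $K\cap\mathrm{Supp}(\mu)$ (still of measure $>1-\eps$), the equicontinuity of $\{f_s\cdot\chi_K\}$ from hypothesis (1), together with the uniform bound $\|f_s\|_2=\|f\|_2$, forces the family $\{f_s|_K\}$ to be uniformly bounded: an unbounded value would, through the common modulus of continuity, make $|f_s|$ large on a ball of fixed positive $\mu$-measure, contradicting $\|f_s\|_2=\|f\|_2$. Hence Arzel\`a--Ascoli applies, and from any sequence $\{s_n\}\subset S$ I can extract a subsequence along which $f_{s_n}\chi_K$ converges uniformly on $K$. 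Combining this uniform convergence on $K$ with the uniform integrability off $K$ shows that the forward translates $\{U_{s_n}f\}$ are Cauchy in $L^2(X)$ up to an error $O(\sqrt{\omega(\eps)})$.

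Now I invoke hypothesis (2): for the chosen sequence, the closure of $\{s_ns_m^{-1}:n>m\}$ meets the minimal ideal $M\subset\beta T$. Choosing $w$ in this intersection and using the canonical projection $p\mapsto U_p:M\to I_\mu$ together with continuity of $p\mapsto U_p$ in the weak operator topology, I obtain an operator $V_\eps:=U_w\in I_\mu$ which is a weak-operator cluster point of $\{U_{s_ns_m^{-1}}\}$; compactness of $I_\mu$, on which the weak and strong operator topologies agree (Proposition \ref{wap}), guarantees such cluster points exist and lie in $I_\mu$. The remaining task is to show $\|V_\eps f-f\|_2\le C\sqrt{\omega(\eps)}$. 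Granting this, $V_\eps f$ is a compact vector within $C\sqrt{\omega(\eps)}$ of $f$, and letting $\eps\to0$ exhibits $f$ as a norm limit of compact vectors, whence $f$ is $\mu$-compact.

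The hard part is precisely this last estimate, and it is where I expect the main obstacle. The difficulty is a handedness mismatch: a direct computation gives $\langle U_{s_n}f,U_{s_m}f\rangle=\langle f,U_{s_n^{-1}s_m}f\rangle$, so the Cauchyness of $\{U_{s_n}f\}$ supplied by (1) is governed by the \emph{left} difference operators $U_{s_n^{-1}s_m}$, whereas hypothesis (2) supplies recurrence of the \emph{right} differences $s_ns_m^{-1}$ into $M$. Reconciling the two is the crux. The tool I would use is that $I_\mu$ is a $*$-closed compact topological group in which $P_\mu$ commutes with every element (Proposition \ref{wap}): one passes between $U_{s_ns_m^{-1}}$ and its adjoint $U_{s_ms_n^{-1}}$, and uses the forward-translate precompactness to force $\langle V_\eps f,f\rangle$ to lie within $O(\omega(\eps))$ of $\|f\|_2^2$, so that the unitary $V_\eps\in I_\mu$ can move $f$ by at most $O(\sqrt{\omega(\eps)})$. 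Making this bookkeeping precise for non-abelian $T$ — while using only the single point of recurrence into $M$ from (2) and the on-$K$ control from (1) — is the step demanding real care; everything else is a routine transcription of the argument for Theorem \ref{abelianT}.
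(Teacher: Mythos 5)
First, a point of reference: the paper itself gives no proof of this statement --- it says only that it ``can be proved by similar arguments to those in Theorem \ref{abelianT}'' and leaves it to the reader --- so your proposal must stand on its own. Its preparatory steps do stand: the uniform smallness $\int_{X\setminus K}|U_sf|^2\,d\mu\le\omega(\eps)$ for \emph{all} $s\in T$ (by invariance of $\mu$), the uniform boundedness of $\{f_s|_K\}$ (this should be run on $\mathrm{Supp}(\mu|_K)$ rather than $K\cap \mathrm{Supp}(\mu)$, so that a compactness argument gives $\inf_{x}\mu(B(x,\delta)\cap K)>0$; as stated, a point of $K\cap\mathrm{Supp}(\mu)$ may carry all its nearby mass outside $K$), the Arzel\`a--Ascoli extraction, the resulting near-Cauchyness $\|U_{s_n}f-U_{s_m}f\|_2\le C\sqrt{\omega(\eps)}$ for $n,m\ge N_0$, and the fact that any $w\in\overline{\{s_ns_m^{-1}\}}\cap M$ yields $U_w\in I_\mu$ with $P_\mu U_w=U_w$, are all correct. (One should apply hypothesis (2) to the tail sequence $\{s_n\}_{n\ge N_0}$, which is again an infinite sequence in $S$, so that $U_w$ is a weak-operator cluster point of $U_{s_ns_m^{-1}}$ with \emph{both} indices $\ge N_0$; otherwise $w$ could be a limit of $s_ns_1^{-1}$, over which you have no control.)

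The gap is exactly the step you flag, and the route you sketch for closing it cannot be completed: the hypotheses do not imply $\|V_\eps f-f\|_2\lesssim\sqrt{\omega(\eps)}$, equivalently $\langle V_\eps f,f\rangle\approx\|f\|_2^2$. To see why, let $g$ be any vector with $\|U_{s_n}f-g\|_2\le C\sqrt{\omega(\eps)}$ for all $n\ge N_0$ (your near-limit, or simply $g=U_{s_{N_0}}f$). Then $U_{s_m}^{-1}g\approx f$, so $U_{s_ns_m^{-1}}f=U_{s_n}U_{s_m}^{-1}g+U_{s_ns_m^{-1}}(f-g)\approx g+U_{s_ns_m^{-1}}(f-g)$, whence $U_wf\approx g+U_w(f-g)$ and $U_wf-f\approx U_w(f-g)-(f-g)$. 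Nothing in the hypotheses constrains how the operator $U_w$ rotates the fixed vector $f-g$: forward near-Cauchyness gives no control of the backward translates $U_{s_m}^{-1}f$, which is precisely the handedness mismatch you identified, and hypothesis (2) only guarantees that \emph{some} $w$ exists, not one adapted to $f-g$. So the adjoint/commutation bookkeeping has no estimate to latch onto. The missing idea is to apply $U_w$ to $g$ instead of to $f$: for $n>m\ge N_0$,
$\|U_{s_ns_m^{-1}}g-g\|_2\le\|U_{s_m}^{-1}g-f\|_2+\|U_{s_n}f-g\|_2=\|g-U_{s_m}f\|_2+\|U_{s_n}f-g\|_2\le 2C\sqrt{\omega(\eps)}$,
and passing to the weak-operator cluster point, $\|U_wg-g\|_2\le 2C\sqrt{\omega(\eps)}$. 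Since $U_w=P_\mu U_w$, the vector $U_wg$ lies in the closed subspace $P_\mu H$ of compact vectors, so $\mathrm{dist}(g,P_\mu H)\le 2C\sqrt{\omega(\eps)}$; and since $U_{s_m}$ is unitary and leaves $P_\mu H$ invariant, $\mathrm{dist}(f,P_\mu H)=\mathrm{dist}(U_{s_m}f,P_\mu H)\le\|U_{s_m}f-g\|_2+\mathrm{dist}(g,P_\mu H)\le 3C\sqrt{\omega(\eps)}$. Letting $\eps\to0$ gives $P_\mu f=f$, i.e.\ $f$ is $\mu$-compact by Proposition 2.2. In short: the operator manufactured from the right differences $s_ns_m^{-1}$ almost fixes the limit $g$ of the forward translates, \emph{not} $f$ itself; the correct target of your construction is $\mathrm{dist}(f,P_\mu H)\to0$, not $\|V_\eps f-f\|_2\to0$. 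With that repair, and everything else in your outline kept, the proof goes through.
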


\smallskip

\section{A Characterization of $\mu$-compact vectors using the notion of `witness of irregularity'}

\medskip

Following ideas of M. Talagrand and others, we shall see another characterization of $\mu$-compact vectors using the notion
of a `Glivenko-Cantelli family', (see Theorem (\ref{Spectral-tame})). In probability theory one studies `Glivenko-Cantelli family'
of random variables to investigate conditions on the family under which the law of large numbers has a `uniform version'. In
the later half of this section we compare `weakly tame' with tame systems, (Theorem (\ref{GGM})). It is our hope that this
exposition and proofs will make contributions in references \cite{Fre}, \cite{GM2},\cite{GM4}, \cite{T1},\cite{T2} more accessible
from the dynamics point of view. First, we begin by recalling the notion of a Glivenko-Cantelli family.

\smallskip

\begin{defn} Let $(X,\mathcal{B},\mu )$ be a complete probability space. Let $\mathcal{L}^1 \equiv  \mathcal{L}^1(X,\mu )$ denote the space of
	measurable functions $f$ such that $\bE(f) \equaldef \int_Xfd\mu < \infty$. We shall not identify functions in $\mathcal{L}^1$ with their classes
	in $L^1(X,\mu )$.
	\begin{enumerate}
		\item [(1)]A subset $Z\subset \mathcal{L}^1$ is ordered bounded if there exists a $u\in \mathcal{L}^1$, $u\geq 0$ such that for each $f\in Z$,
		we have $|f(x)| \leq u(x)$, $\forall x \in X$.
		
		\item [(2)]We also recall the following notion of uniform ergodicity \`a la Glivenko-Cantelli. A subset or a
		family $Z\subset \mathcal{L}^1$ is $\mu$-Glivenko-Cantelli if $\lambda$-a.e.
		$$
		\lim\limits_{N\to \infty}\sup_{f\in \F}\Bigg|\frac{1}{N}\sum\limits_{k=1}^{N}f(\omega_i)-\int fd\mu \Bigg| = 0\,.
		$$
		Here $\omega = (\omega_0, \omega_1, \omega_2, \cdots )$ is an i.i.d. process with common distribution $\mu$;
		%\item A subset $Z\subset \mathcal{L}^1$ 
		%is ordered bounded if there exists a $u\in \mathcal{L}^1$, $u\geq 0$ such i.e. a point in $X^\N$ distributed according to the product measure
		%$\lambda = \mu^{\otimes \N}$.
	\end{enumerate}
\end{defn}

\smallskip

First let $T = \Z$. We shall now show that when our underlying probability space is a compact metric dynamical systems $(X,T,\mu )$, for $f\in L^2(X,\mu )$,
the $\mu$-compactness of $f$ is equivalent to the family $Z \equiv \mathcal{F} = \{f\circ T^n\ |\ n\in \Z\}$ being $\mu$-Glivenko-Cantelli. We begin with one
of the implication.

\smallskip

\begin{prop} Suppose $(X,T,\mu )$ be a compact, metric ergodic dynamical system. Let $f\in L^2(X,\mu )$ be a $\mu$ compact vector. Then the family
	$\{f\circ T^n\ |\ n\in \Z\}$ is a $\mu$-Glivenko-Cantelli family.
\end{prop}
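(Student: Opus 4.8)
The goal is to show that if $f \in L^2(X,\mu)$ is $\mu$-compact, then $\mathcal{F} = \{f \circ T^n \mid n \in \Z\}$ is $\mu$-Glivenko-Cantelli, meaning that along the i.i.d. sampling process $\omega = (\omega_0, \omega_1, \dots)$ with common distribution $\mu$, the sup over $\mathcal{F}$ of the empirical deviations tends to zero a.e. My plan is to exploit the finite-dimensional spectral decomposition of $f$ furnished by $\mu$-compactness and to reduce the Glivenko-Cantelli property to a family that is essentially parametrized by a compact set.

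The first step is to invoke Lemma 3.10 (the Peter--Weyl decomposition): since $f$ is $\mu$-compact, $f = \sum_{i \in \N} P_i f$ with each $P_i f$ living in a finite-dimensional $U_t$-invariant subspace $V_i$, and the partial sums $P^{(k)} := \sum_{i=1}^k P_i f$ converge to $f$ in $L^2$, hence in $L^1$. For a fixed $\eps > 0$, I would choose $k$ so that $\|f - P^{(k)}\|_2 < \eps$. The idea is then that $\mathcal{F}$ is uniformly close (in an $L^1/L^2$ sense) to the orbit family $\{P^{(k)} \circ T^n \mid n \in \Z\}$, and the tail $\{(f - P^{(k)}) \circ T^n\}$ should contribute a controllably small amount to the empirical sup because $f - P^{(k)}$ has small $L^1$-norm and, crucially, its orbit stays small on average.

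The second and central step is to establish the Glivenko-Cantelli property for the finite part $\{P^{(k)} \circ T^n \mid n \in \Z\}$. Because $P^{(k)}$ lives in a finite-dimensional invariant subspace on which $I_\mu$ acts by a continuous (hence equicontinuous) unitary representation, the orbit closure $\overline{\{U_t P^{(k)}\}}$ is norm-compact, and the functions $U_t P^{(k)}$ vary continuously over a compact index set. A totally bounded family of (essentially bounded, continuously parametrized) functions is Glivenko-Cantelli: one covers the index set by finitely many $\eps$-balls, applies the ordinary strong law of large numbers (or Birkhoff's theorem applied to the i.i.d. shift on $X^\N$) to finitely many representative functions, and uses uniform continuity to pass the uniform convergence to the whole family. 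The compactness of the orbit is exactly what converts the pointwise ergodic statement into a uniform one over $\mathcal{F}$.

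The main obstacle, and the part requiring care, is the tail control: the empirical average of $\sup_n |(f-P^{(k)}) \circ T^n|$ need not be small just because $\|f - P^{(k)}\|_1$ is small, since the supremum over the full orbit can be large. The correct route is to bound the empirical deviation for the tail family directly by $\frac{1}{N}\sum_{j} 2\, g(\omega_j)$ where $g = |f - P^{(k)}|$ plus the expectation $\bE(g)$, using that each individual $(f-P^{(k)})\circ T^n$ is dominated by the orbit and that the sampling sees $g$ with the correct average; by the strong law applied to $g \in L^1$ this empirical average converges a.e.\ to $\bE(g) \leq \|f - P^{(k)}\|_1 < \eps$. Combining the finite-dimensional uniform convergence with this tail bound, taking $\eps \to 0$ along a sequence and intersecting the full-measure sets, yields the a.e.\ Glivenko-Cantelli conclusion. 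I would also need the family to be order bounded (by, say, $\sum_i |P_i f|$ suitably truncated, or by passing to bounded $f$ first by density) so that the suprema are measurable and the dominated convergence arguments apply cleanly.
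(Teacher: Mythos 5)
Your decomposition step parallels the paper's own proof: for $T=\Z$ the paper uses the eigenfunction expansion $f=\sum_j\alpha_jf_j$ (for abelian actions the finite-dimensional Peter--Weyl pieces of Lemma 3.8 are one-dimensional, so the two decompositions agree), and its treatment of the structured part is the degenerate form of yours, since $f_j\circ T^m=\lambda_j^m f_j$ pointwise on a common full-measure set, so the supremum over the whole orbit collapses to a single application of the strong law of large numbers. Your covering argument for the finite part is workable, but only if you run it with the \emph{pointwise} parametrization $U_tP(x)=\sum_{i,j}\langle U_tf^i_j(x),w^i_j\rangle w^i_j$ by the compact unitary group, together with the pointwise envelope $h(x)=\sum_{i,j}\|f^i_j(x)\|\,\|w^i_j\|$ (a bracketing argument); $L^2$-norm compactness of the orbit closure is by itself not enough, because the Glivenko--Cantelli property is a statement about the chosen versions of the functions evaluated at the samples, not about their $\mu$-equivalence classes. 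This pointwise form of the covering argument is exactly what the paper does in its amenable-group version (Proposition 4.7).

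The genuine gap is your tail step, and it is fatal as written. You bound the deviation of the tail family by $\frac1N\sum_j 2g(\omega_j)+\int_X g\,d\mu$ with $g=|f-P^{(k)}|$, asserting that each $(f-P^{(k)})\circ T^n$ ``is dominated by the orbit''. But $|(f-P^{(k)})\circ T^n(\omega_j)|=g(T^n\omega_j)$, not $g(\omega_j)$: composition with $T^n$ destroys any pointwise domination by $g$ itself, and the strong law for the single function $g$ gives no control over $\sup_n\frac1N\sum_j g(T^n\omega_j)$ --- controlling that supremum \emph{is} the Glivenko--Cantelli problem for the orbit of $g$, i.e.\ the very thing being proved. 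Moreover, no bound of the form $C\|g\|_1$ can exist: in the Bernoulli shift on $\{0,1\}^{\Z}$ take $g=1_B$ with $B=\{x:\ x_0=\cdots=x_{k-1}=1\}$, so that $\|g\|_1=2^{-k}$; since the samples $\omega_j$ are i.i.d.\ coin sequences, the second Borel--Cantelli lemma applied to the independent events indexed by disjoint coordinate blocks shows that almost surely, for every $N$, there exists $n$ with $g(T^n\omega_j)=1$ for all $j\le N$, whence $\sup_n\bigl|\frac1N\sum_{j\le N}g(T^n\omega_j)-2^{-k}\bigr|=1-2^{-k}$ for every $N$, which is not $O(2^{-k})$. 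So a tail that is merely small in $L^1$ (or $L^2$) is not negligible for the Glivenko--Cantelli supremum; any correct argument must exploit that $f-P^{(k)}$ is itself a compact vector, or invoke Talagrand-type criteria as the paper does via covering numbers in Proposition 4.7. To be fair, the paper's own proof of this proposition compresses the same difficulty into the phrase ``the rest is clear'' after the eigenfunction case; but your proposal replaces that phrase with an explicit inequality that is false, and contains no idea that would repair it.
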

\begin{proof} First we verify this when $f$ is an $L^2$ eigenfunction, i.e. $f(Tx) = e^{2\pi i\lambda}f(x)$, a.e $x$. By ergodicity, we can assume that
	$f$ is bounded almost surely. We need to verify that the family $\{f\circ T^n\ |\ n\in \Z\}$ is $\mu$-Glivenko-Cantelli. Note that
	\begin{align}
	\sup_{m}\Bigg|\frac{1}{N}\sum_{k=1}^{N}f\circ T^m(\omega_k)\Bigg| & = \sup_{m}\Bigg|\lambda^m\frac{1}{N}\sum_{k=1}^{N}f(\omega_k)\Bigg| \notag \\
	& = \frac{1}{N}\Bigg|\sum_{k=1}^{N}f(\omega_k)\Bigg|\,.
	\end{align}
	The last term converge to zero by the law of large numbers. Now, let $f\in L^2(X,\mu )$ be $\mu$-compact. Then we can write
	$f=\sum_{j=0}^{+\infty}\alpha_j f_j$, where each $f_j$ is an eigenfunction and $\sum_{j=0}^{+\infty}|\alpha_j|^2<\infty$ and the rest is clear.
\end{proof}

\smallskip

Now, we prove the converse.

\smallskip

\begin{prop} Suppose $(X,T,\mu )$ be a compact, metric ergodic dynamical system. Suppose the family
	$\{f\circ T^n\ |\ n\in \Z\}$ is a $\mu$-Glivenko-Cantelli family.. Then $f\in L^2(X,\mu )$ is a $\mu$ compact vector.
\end{prop}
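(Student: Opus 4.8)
The plan is to argue by contraposition: assuming $f$ is \emph{not} $\mu$-compact, I will show that $\{f\circ T^n\ |\ n\in\Z\}$ cannot be $\mu$-Glivenko-Cantelli. Throughout I subtract the mean and assume $\int_X f\,d\mu = 0$. First I would reduce to a vector of purely continuous spectrum. Write $f = P_\mu f + g$ with $g = f - P_\mu f$; since $f$ is not $\mu$-compact we have $g\neq 0$, and $g$ lies in the orthogonal complement of the $\mu$-compact vectors, so its spectral measure with respect to the Koopman operator $U = U_1$ is non-atomic. The key point enabling the reduction is that the Glivenko-Cantelli property is preserved under subtraction of a Glivenko-Cantelli family: for each sample one has $|\frac1N\sum_k g(T^n\omega_k) - \int g\,d\mu| \le |\frac1N\sum_k f(T^n\omega_k) - \int f\,d\mu| + |\frac1N\sum_k (P_\mu f)(T^n\omega_k) - \int P_\mu f\,d\mu|$, uniformly in $n$. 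Since $P_\mu f$ is $\mu$-compact, the previous proposition shows $\{(P_\mu f)\circ T^n\}$ is $\mu$-Glivenko-Cantelli; hence if $\{f\circ T^n\}$ were $\mu$-Glivenko-Cantelli, so would be $\{g\circ T^n\}$. It therefore suffices to prove that a nonzero $g$ of purely continuous spectrum generates a family that is \emph{not} $\mu$-Glivenko-Cantelli.

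Next I would exploit the continuity of the spectrum to extract an asymptotically decorrelated sub-orbit. Normalizing $\|g\|_2 = 1$, Wiener's lemma gives $\frac1N\sum_{n<N}|\langle U^n g, g\rangle|^2 \to 0$, so by the Koopman-von Neumann argument I can choose $n_1 < n_2 < \cdots$ along which the correlations $\langle U^{n_j-n_i}g, g\rangle$ decay arbitrarily fast; thus $\{g\circ T^{n_j}\}$ is a nearly orthonormal sequence. The heuristic contradiction is then the same mechanism that shows an i.i.d.\ family of $\pm1$ functions fails to be Glivenko-Cantelli: for each fixed $n$ the empirical average $\frac1N\sum_k g(T^n\omega_k)$ is a fluctuation of variance $\|g\|_2^2/N$, hence of order $1/\sqrt N$, but infinitely many nearly independent such coordinates are available, so the supremum over $n$ of $|\frac1N\sum_k g(T^n\omega_k)|$ must remain bounded away from $0$ with positive probability. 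Packaging this alignment as a \emph{witness of irregularity} — an up-crossing between two fixed levels that recurs over arbitrarily long blocks of the extracted sub-orbit — contradicts the assumed uniform convergence, completing the contrapositive.

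The main obstacle is precisely the upgrade in the second paragraph from $L^2$-decorrelation to the genuine asymptotic independence, or equivalently to the combinatorial shattering/up-crossing structure, that is strong enough to force the empirical supremum to stay large. Mere pairwise orthogonality of $\{g\circ T^{n_j}\}$ controls only second moments and does not by itself defeat the uniform law of large numbers; one must invoke the stability theory of Bourgain and Talagrand — the characterization of Glivenko-Cantelli families by the absence of a witness of irregularity — to convert the non-atomicity of the spectral measure into such a witness. Once that conversion is in hand the contradiction follows, and with it the $\mu$-compactness of $f$.
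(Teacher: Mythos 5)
Your opening reduction is exactly the paper's: you split $f = P_\mu f + g$, note that the previous proposition makes $\{(P_\mu f)\circ T^n\}$ Glivenko--Cantelli, and transfer the hypothesis to $\{g\circ T^n\}$ by the triangle inequality. The genuine gap is the step you yourself flag as ``the main obstacle,'' and it is never closed: converting the spectral continuity (or decorrelation) of $g$ into a witness of irregularity. Invoking ``the characterization of Glivenko--Cantelli families by the absence of a witness of irregularity'' cannot close it, because that characterization is an equivalence: the existence of a witness is exactly equivalent to the non-GC conclusion you are trying to reach, so appealing to it reduces the statement to itself and supplies no mechanism for manufacturing a witness out of a non-atomic spectral measure. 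The probabilistic heuristic offered in support is also not a valid inference: for the supremum of $M$ centered empirical averages, each of typical size $\|g\|_2/\sqrt{N}$, to stay bounded away from zero one needs $M$ to grow exponentially in $N$ \emph{together with} genuine joint combinatorial independence of the coordinates; pairwise (or even arbitrarily fast) $L^2$-decorrelation of $\{g\circ T^{n_j}\}$ provides neither. So, as written, the proposal proves the reduction and then defers the entire content of the proposition to an unproven ``conversion.''

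The paper closes this gap by using Talagrand's theory in the opposite, much more tractable direction: not the witness dichotomy, but the approximation lemma stating that a $\mu$-Glivenko--Cantelli family admits, for each $\varepsilon>0$, a \emph{single finite} subalgebra $\mathcal{P}$ with $\|h-\mathbf{E}(h\,|\,\mathcal{P})\|_1<\varepsilon$ for every member $h$ of the family. Applying this to $h = g\circ T^{n_j}$, where $(n_j)$ is a density-one sequence along which $g\circ T^{n_j}$ converges weakly to the constant $\int g\,d\mu$ (weak mixing of $g$), the finitely many coefficients $\frac{1}{\mu(P)}\int_P g\circ T^{n_j}\,d\mu$ all converge to $\int g\,d\mu$, so $\mathbf{E}(g\circ T^{n_j}\,|\,\mathcal{P})\to\int g\,d\mu$ in $L^1$; since $T$ preserves $\mu$, this yields $\|g-\int g\,d\mu\|_1\le\varepsilon$ for every $\varepsilon>0$, forcing $g$ to be constant, hence zero --- the desired contradiction. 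It is worth noting that this same lemma is what would make your decorrelation instinct rigorous: for a uniformly bounded family with $L^2$-norms bounded below and vanishing pairwise correlations, the conditional expectations $\mathbf{E}(g\circ T^{n_j}\,|\,\mathcal{P})$ live in a fixed finite-dimensional ball, and a subsequential $L^2$-limit $h$ would have to satisfy both $\|h\|_2^2\ge \|g\|_2^2-O(\varepsilon)$ and $\|h\|_2^2\le O(\varepsilon)$, which is absurd for small $\varepsilon$. The missing ingredient in your write-up is this approximation lemma, not the witness-of-irregularity dichotomy.
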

\begin{proof}
	First write the `compact-weak-mixing decomposition' of $f$, $f = f_c + f_{wm}$, where $f_c$ is $\mu$-compact and $f_{wm}$ is a non-constant 
	$\mu$-weak mixing vectors in $L^2(X,\mu )$. From the above proposition we know that $\{f_c\circ T^n\ |\ n\in \Z\}$ is $\mu$- Glivenko-Cantelli.
	Thus, the hypothesis implies that $\{(f-f_c)\circ T^n\ |\ n\in \Z\}$ is $\mu$-Glivenko-Cantelli, i.e. the family $\{f_{wm}\circ T^n\ |\ n\in \Z\}$
	is $\mu$-Glivenko-Cantelli. Since $f_{wm}$ is weakly-mixing, along a subsequence $(n_j)$ of density $1$, we have,
	$$
	\lim\limits_{j\to \infty}\langle f_{wm}\circ T^{n_j},g\rangle = \int  f_{wm} d\mu \int g d\mu \,.\quad \text {for any}\ g \in L^{\infty}(X,\mu)\,.
	$$
	Since the family $\{f_{wm} \circ T^n, n \in \Z\big\}$ is $\mu$-Glivenko-Cantelli,  by a lemma of M. Talagrand, (\cite[Proof of
	Proposition 2.5, p. 379]{T2}, or \cite[Chap. 46, pp. 59-60]{Fre} ), there exist a finite sub-algebra $\mathcal{P}$ of the Borel sigma algebra of $X$
	such that, for any $j\in \N$, one has
	\begin{equation} \label{approx}
	\big\|f_{wm} \circ T^{n_j} - {\bf E}(f_{wm} \circ T^{n_j}|\mathcal{P})\big\|_1 < \eps\,,
	\end{equation}
	where ${\bf E}(\cdot |\mathcal{P})$ is the projection operator of the conditional expectation with respect to $\mathcal{P}$. Moreover,
	by the property of this projection operator  we have,
	\begin{equation} 
	{\bf E}(f_{wm} \circ T^{n_j}|\mathcal{P}) = \sum_{P \in \mathcal{P}} \frac{1}{\mu(P)}\Big(\int_P f_{wm} \circ T^{n_j}d\mu\Big) \chi_P(x)\,\,.
	\end{equation}
	Letting $j\to \infty$, we get,
	\begin{equation} 
	\lim\limits_{j\to \infty}{\bf E}(f_{wm} \circ T^{n_j}|\mathcal{P}) = \int f_{wm} d\mu \sum_{P \in \mathcal{P}}  \chi_P(x)\,,
	\end{equation}
	for almost all $x \in X$. This combined with \ref{approx} yields
	\begin{equation}
	\limsup_{j \to \infty }\big\|f_{wm} \circ T^{n_j}- \sum_{P \in \mathcal{P}}  \chi_P(x) \int f_{wm} d\mu \big\|_1 \leq \eps \,.
	\end{equation}
	Whence
	\begin{equation}
	\Big\|f_{wm}-\int f_{wm} d\mu\Big\|_1 \leq \eps \,,,
	\end{equation}
	which is impossible since, $f_{wm}$ is a non-constant weak-mixing vector. The proof of the proposition is complete.
\end{proof}

\smallskip

\begin{rem} Now we try to capture $\mu$ compactness of the vector $f$ in terms of `up-crossings'. The notion of `witness of irregularity'
	captures the case when the `up-crossings of the family $\mathcal{F} \equiv \{f_t\ |\ t\in T\}$ are `wild' and $\mu$-compactness of $f$
	can be characterized when this does not happen. Following M. Talagrand, two non-negative numbers are introduced which are in some sense
	another notions of `entropy' for the above family $\mathcal{F}$. Again, $\mu$-compactness of $f$ can be characterized by vanishing of
	these two numbers and in this sense they `better' serve the purpose of characterizing $\mu$-compactness of $f$ than the usual measure theoretic
	or topological entropy. These notions will also allow us to generalize above two proposition to the setting of ergodic actions of countable,
	amenable groups. For the following concepts we refer to \cite{T2}.
\end{rem}

\smallskip

\begin{defn} We shall first recall our more general set up, where $(X,\mathcal{B},\mu )$ be a complete probability space. Let $Z$ be a uniformly
	bounded subset of real valued measurable maps on $X$. First we introduce quantities $N^1(Z,n,\eps)$ and $N^{\infty}(Z,n,\eps)$ whose
	logarithmic growth rate, (as $n\to \infty$), will capture a version of the `notion of entropy' for the family $Z$.
	
	\noindent (1) For a fixed $n\in \N$, consider the following norms on ${\R}^n$,
	$$
	\big\|x\big\|_1 = \frac {1}{n}\sum\limits_{k=1}^{n}x_k\,,\quad \text {and}\quad ||x||_\infty = \max \{|x_k|\ |\ 1\leq k\leq n\}\,.
	$$
	Let $X_k:\Omega \to X$, $(k\in \N)$  be an $X$ valued i.i.d. whose common distribution is $\mu$. Consider the random set
	$$
	\Sigma_Z(\omega ,n) \equiv \{(f\circ X_1(\omega ),\cdots ,f\circ X_n)(\omega ))\ |\ f\in Z\}\subset {\R}^n\,.
	$$
	Let $N^1(Z,n,\eps)(\omega )$ and $N^{\infty}(Z,n,\eps)(\omega)$ denote the minimum number of balls of radius $\eps$, (in the respective metric on
	${\R}^n$), required to cover the set $\Sigma_Z(\omega ,n)$.\\
	
	\noindent (2) Next, we recall the notion of `shattering'. As before $Z$ is a family of measurable, real valued maps on $X$. Let $\alpha < \beta$
	be two real numbers. A finite subset $F\subset X$ is said to be $(\alpha,\beta )$-shattered by $Z$ if given any subset $G\subset F$, there is some
	$f\in Z$ such that
	\begin{align}
	x\in G\quad & \text {implies}\quad f(x) < \alpha \,,\text {and} \notag \\
	x\in F \backslash G \quad & \text {implies}\quad f(x) > \beta \,. \notag
	\end{align}
	
	\noindent (3) Now to avoid measure theoretic technicalities, we shall suppose the family $Z$ is countable. Then it follows that for each fixed $n\in \N$
	the following subset $S^{(\alpha ,\beta )}_n$ of $X^n$ given by
	$$
	S^{(\alpha ,\beta )}_n = \{(x_1,\cdots ,x_n)\in X^n\ |\ \text {the set}\ F = \{x_1, \cdots ,x_n\}\ \text {has}\ n\ \text {elements and is}
	\ (\alpha ,\beta)\ \text {shattered by}\ Z\}\,
	$$
	is a measurable subset, since $Z$ is countable. The largest $n\in \N$ for which $\mu ^n\big(S^{(\alpha ,\beta )}_n\big) > 0$
	will be called the $(\alpha ,\beta ,\mu)$-shattering dimension of the family $Z$.
	
	\noindent (4) A measurable set $A\subset X$ is a $(\alpha ,\beta )$-witness of irregularity, (of the family $Z$), if (i) the restriction of $\mu$ to $A$
	is non-atomic and (ii) for each $n\in \N$, almost all $(x_1,\cdots,x_n)\in A^n$ belong to $S^{(\alpha ,\beta )}_n$.
	
	\noindent (5) A countable family $Z$ is said to have no witness of irregularity, (with respect to $\mu$), if any measurable set $A\subset X$ that is
	a $(\alpha ,\beta )$ witness of irregularity for some $\alpha < \beta$, then $\mu (A) = 0$.
\end{defn}

\smallskip

Then the following is a summary of results proved in \cite{T2}.

\smallskip

\begin{thm} \label{Talagrand} Let $(X,\mathcal{B},\mu )$ be a complete probability space. Let $Z$ be a uniformly bounded countable subset of real
	valued measurable maps on $X$. Then the following statements are equivalent.
	\begin{itemize}
		\item [(1)] The family $Z$ is a Glivenko-Cantelli family,
		\item [(2)] $\lim\limits_{n\to \infty}\frac {1}{n}\E\big(log (N^1(Z,n,\eps ))\big) = 0$, for each $\eps > 0$,
		\item [(3)] $\lim\limits_{n\to \infty}\frac {1}{n}\E\big(log (N^\infty(Z,n,\eps ))\big) = 0$, for each $\eps > 0$,
		\item [(4)] family $Z$ has no witness of irregularity with respect to $\mu$.
		\item [(5)] for any $\alpha < \beta$, $\lim\limits_{n\to \infty}\big(\mu^n(S^{(\alpha ,\beta )}_n\big)^{1/n} = 0$.
	\end{itemize}
	Here $\E$ denotes the expectation of the random variables on $X^n$.
\end{thm}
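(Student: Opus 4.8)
The plan is to prove the five statements equivalent through the single cycle $(5)\Rightarrow(3)\Rightarrow(2)\Rightarrow(1)\Rightarrow(4)\Rightarrow(5)$, chosen so that the two genuinely soft steps are isolated from the combinatorial and measure-theoretic heart of the matter. Throughout, only the probability space $(X,\mathcal B,\mu)$, the i.i.d. sample $\omega=(\omega_1,\omega_2,\dots)$ and the product measures $\mu^n$ enter; the group $T$ plays no role, so this is a statement purely about the uniformly bounded family $Z$, and I fix $B$ with $|f|\le B$ for all $f\in Z$.

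\textbf{Two soft steps.} The implication $(3)\Rightarrow(2)$ is immediate from the inequality $\|x\|_1=\frac1n\sum_k|x_k|\le\max_k|x_k|=\|x\|_\infty$ on $\R^n$: any $\eps$-cover of the trace $\Sigma_Z(\omega,n)$ in the $\ell^\infty$ metric is an $\eps$-cover in the $\ell^1$ metric, so $N^1(Z,n,\eps)\le N^\infty(Z,n,\eps)$ pointwise in $\omega$ and the entropy bound transfers. For $(2)\Rightarrow(1)$ I would run the standard symmetrization scheme of empirical process theory: replace $\frac1N\sum_k f(\omega_k)-\int f\,d\mu$ by its Rademacher-symmetrized version, observe that conditionally on the sample all functionals see only the finite trace $\Sigma_Z(\omega,N)$, cover that trace by $N^1(Z,N,\eta)$ balls in the $\ell^1$ metric, bound the oscillation inside a ball by $\eta$, and control each of the subexponentially many centers by Hoeffding's inequality. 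The hypothesis $\E\log N^1=o(N)$ makes the resulting deviation probabilities summable, and Borel-Cantelli promotes convergence in probability to the almost-sure uniform convergence required by the Glivenko-Cantelli property.

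\textbf{The combinatorial and extraction steps.} The step $(5)\Rightarrow(3)$ is the real-valued analogue of the Sauer-Shelah lemma: a fat-shattering estimate bounds the $\ell^\infty$-covering number $N^\infty(Z,n,\eps)(\omega)$ of the trace by a combinatorial quantity counting the subsets of $\{x_1,\dots,x_n\}$ that are $(\alpha,\beta)$-shattered at scale $\beta-\alpha\approx\eps$; integrating against $\mu^n$ and invoking the subexponential decay $\big(\mu^n(S_n^{(\alpha,\beta)})\big)^{1/n}\to0$ from $(5)$ yields $\E\log N^\infty=o(n)$, which is $(3)$. For $(1)\Rightarrow(4)$ I argue contrapositively, again through symmetrization, which is decisive: the Glivenko-Cantelli property forces the symmetrized process $\sup_{f\in Z}|\frac1N\sum_k\eps_k f(\omega_k)|$ to vanish. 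Given a witness of irregularity $A$ (with $\mu|_A$ non-atomic and almost every tuple of $A^n$ shattered at a scale which, after recentering, we take symmetric about $0$), the law of large numbers guarantees that a.s., for all large $N$, a positive fraction $\approx\mu(A)N$ of the sample lands in $A$ and forms a shattered subtuple; choosing $f\in Z$ adapted to the signs $\eps_k$ on that subtuple keeps $\sup_{f\in Z}|\frac1N\sum_k\eps_k f(\omega_k)|$ bounded below by a fixed $\delta>0$, contradicting $(1)$. Finally $(4)\Rightarrow(5)$ is once more contrapositive and is the extraction step: from $\limsup_n\big(\mu^n(S_n^{(\alpha,\beta)})\big)^{1/n}>0$ one must manufacture a single positive-measure, non-atomic set $A$ on which almost every tuple is shattered at every level $n$.

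\textbf{The main obstacle.} The two deep points are $(5)\Rightarrow(3)$ and $(4)\Rightarrow(5)$, and both are precisely the content of Talagrand's analysis. The covering bound requires a genuine fat-shattering Sauer-Shelah estimate that controls a real-valued $\ell^\infty$-packing by a scale-sensitive shattering dimension rather than by the Vapnik-Chervonenkis dimension of a set system. The witness extraction is subtler still: passing from ``shattering occupies an exponentially large share of $X^n$ for every $n$'' to ``shattering occupies almost all of $A^n$ for a fixed non-atomic $A$'' demands a martingale or Lebesgue-density selection argument that localizes the shattering onto a persistent core while preserving non-atomicity, and it is exactly this localization that makes the shattered subtuple appear with positive density in almost every sample (so that the step $(1)\Rightarrow(4)$ actually bites). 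I expect this extraction to be the single most delicate step; the remaining implications are either elementary norm comparisons, a routine symmetrization, or a clean combinatorial reduction.
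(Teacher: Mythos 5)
The first thing to note is that the paper contains no proof of this theorem at all: it is introduced with the sentence ``Then the following is a summary of results proved in \cite{T2}\,'' and is simply quoted from Talagrand, so there is no in-paper argument to compare your proposal against. Judged on its own terms, your cycle $(5)\Rightarrow(3)\Rightarrow(2)\Rightarrow(1)\Rightarrow(4)\Rightarrow(5)$ is logically sound and even economical: you correctly observe that only the trivial comparison $N^1\le N^\infty$ is ever needed (the hard converse being supplied by going around the cycle), and the three soft implications you sketch --- $(3)\Rightarrow(2)$ by the norm inequality, $(2)\Rightarrow(1)$ by symmetrization, covering of the trace, Hoeffding and Borel--Cantelli, and $(1)\Rightarrow(4)$ by the contrapositive symmetrization argument after recentering the shattering levels to $\pm\delta$ --- are standard empirical-process arguments and correct as outlined.

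The genuine gap is that the two implications carrying the entire content of the theorem, $(5)\Rightarrow(3)$ and $(4)\Rightarrow(5)$, are not proved but only named: you explicitly defer them to ``Talagrand's analysis,'' which is circular here, since the theorem being proved \emph{is} a summary of that analysis. Moreover, the tools you gesture at are not quite the right ones. For $(5)\Rightarrow(3)$, a deterministic fat-shattering Sauer--Shelah estimate (of Alon--Ben-David--Cesa-Bianchi--Haussler type) bounds $\ell^\infty$-covering numbers by the fat-shattering \emph{dimension}, but that dimension can be infinite even for Glivenko--Cantelli classes; what is actually needed is a probabilistic refinement controlling $\E\big(\log N^\infty(Z,n,\eps)\big)$ by the $\mu^n$-\emph{measure} of shattered tuples, and this is precisely Talagrand's combinatorial lemma rather than an off-the-shelf covering bound. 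For $(4)\Rightarrow(5)$, passing from $\limsup_n\big(\mu^n(S^{(\alpha,\beta)}_n)\big)^{1/n}>0$ to a single non-atomic set $A$ almost all of whose finite tuples are shattered is the deepest step of \cite{T1} and \cite{T2}; your one-sentence description (``a martingale or Lebesgue-density selection argument'') names the difficulty without resolving it. What you have, then, is a correct reduction of the theorem to its two hard components together with honest proofs of the easy ones --- a useful roadmap, but not a proof.
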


\smallskip

Given a compact metric topological dynamical system $(X,T,\mu )$ with $T$ countable, amenable and $f\in C(X)$, we can apply above theorem to 
the family $Z = \{f_t\ |\ t\in T\}$ to get the following characterization of a $\mu$-compact vector $f$.

\smallskip

\begin{prop} Let $(X,T,\mu )$ be a compact, metric ergodic dynamical system with $T$ countable, amenable. Let $f\in C(X)$, then the following statements are equivalent.
	\begin{itemize}
		\item [(1)] $f$ is a $\mu$-compact vector,
		\item [(2)] the family $\mathcal{F} \equiv \{f_t\ |\ t\in T\}$ is Glivenko-Cantelli, where $f_t(x) = f(xt)$, $x\in X$, $t\in T$.
		\item [(3)] $\lim\limits_{n\to \infty}\frac {1}{n}\E\big(log (N^{i}(\mathcal{F},n,\eps ))\big) = 0$, for each $\eps > 0$, where $i = 1$ or $\infty$.
		\item [(4)] $\lim\limits_{n\to \infty}{\mu}^n\big(S^{(\alpha ,\beta )}_n\big)^{1/n} = 0,$ for any $\alpha < \beta$.
		\item [(5)] The family $\mathcal{F}$ has no witness of irregularity.
	\end{itemize}
\end{prop}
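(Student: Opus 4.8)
The plan is to split the five conditions into two groups. Conditions (2)--(5) are literally Talagrand's conditions of Theorem \ref{Talagrand}, read off for the single family $\mathcal{F}=\{f_t\mid t\in T\}$: since $T$ is countable this family is countable, and since $f\in C(X)$ with $X$ compact it is uniformly bounded, with genuine (continuous) representatives so that the pointwise i.i.d.\ sampling defining the Glivenko--Cantelli property, the covering numbers $N^{i}(\mathcal{F},n,\eps)$, and the shattering sets $S^{(\alpha,\beta)}_n$ are all well defined. Hence (2)$\Leftrightarrow$(3)$\Leftrightarrow$(4)$\Leftrightarrow$(5) is immediate from Theorem \ref{Talagrand} (for complex $f$ one applies it to $\{\mathrm{Re}\,f_t\}$ and $\{\mathrm{Im}\,f_t\}$ and combines). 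All the real content therefore lies in the equivalence (1)$\Leftrightarrow$(2), which is the extension to a countable amenable $T$ of the two propositions already proved above for $T=\Z$.

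For (1)$\Rightarrow$(2) I would mirror the $T=\Z$ argument, using the Peter--Weyl decomposition lemma in place of the eigenfunction expansion. Writing $f=\sum_i P_if$ with each $P_if$ in a finite-dimensional $U_t$-invariant irreducible subspace $V_i$, the orbit $\{U_t(P^{(k)}f)\mid t\in T\}$ of a finite partial sum $P^{(k)}f=\sum_{i\le k}P_if$ lies in the fixed finite-dimensional space $\bigoplus_{i\le k}V_i$ and is norm-bounded (the $U_t$ are unitary); expanding in a fixed basis shows its coordinates are uniformly bounded, so the empirical $\ell^1$-covering numbers $N^1$ of this finite-dimensional family are bounded polynomially in $1/\eps$ and independently of $n$, whence $\tfrac{1}{n}\log N^1\to 0$ and, by Theorem \ref{Talagrand}, the finite-dimensional family is Glivenko--Cantelli. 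One then passes to $f$ itself using $\sum_i\|P_if\|_2^2=\|f\|_2^2<\infty$ together with the invariance identity $\|U_t(f-P^{(k)}f)\|_1=\|f-P^{(k)}f\|_1$, which makes the tail uniformly small in $t$; this limiting step is the only delicate point of this direction and proceeds as in the already-established $T=\Z$ proposition.

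For the substantive direction (2)$\Rightarrow$(1) I would reproduce the weak-mixing argument of the $T=\Z$ proposition in the amenable setting. Take the compact--weak-mixing splitting $f=f_c+f_{wm}$ with $f_c=P_\mu f$ ($\mu$-compact) and $f_{wm}=(I-P_\mu)f$ weak mixing, where $\int f_{wm}\,d\mu=0$ because the constants lie in the compact part. By the previous direction $\{U_tf_c\}$ is Glivenko--Cantelli, and since $\{U_tf\}$ is Glivenko--Cantelli by hypothesis and the supremum of a sum is at most the sum of the suprema, the matched-index family $\{U_tf_{wm}\}$ is Glivenko--Cantelli as well. Applying Talagrand's finite-algebra approximation lemma (\cite{T2},\cite{Fre}) yields a finite subalgebra $\mathcal{P}$ with $\|U_tf_{wm}-\mathbf{E}(U_tf_{wm}\mid\mathcal{P})\|_1<\eps$ for all $t\in T$. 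The one place where $T=\Z$ entered, the density-one subsequence killing the weak-mixing correlations, is replaced by the amenable weak-mixing statement $\frac{1}{|F_n|}\sum_{t\in F_n}\sum_{P\in\mathcal{P}}|\langle U_tf_{wm},\chi_P\rangle|\to 0$; choosing $t_j$ to minimise the inner sum over $F_j$ gives $\langle U_{t_j}f_{wm},\chi_P\rangle\to 0$ for each of the finitely many $P\in\mathcal{P}$. Since then $\mathbf{E}(U_{t_j}f_{wm}\mid\mathcal{P})=\sum_{P\in\mathcal{P}}\frac{1}{\mu(P)}\langle U_{t_j}f_{wm},\chi_P\rangle\chi_P\to 0$ in $L^1$ while $\|U_{t_j}f_{wm}\|_1=\|f_{wm}\|_1$ by invariance, letting $j\to\infty$ forces $\|f_{wm}\|_1\le\eps$, hence $f_{wm}=0$ and $f=f_c$ is $\mu$-compact.

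The main obstacle is this last direction, and precisely the amenable weak-mixing step: one must supply the Jacobs--de Leeuw--Glicksberg splitting for actions of a countable amenable group (so that $f_{wm}=(I-P_\mu)f$ really is a mean-zero flight vector) together with the mean-ergodic fact that Følner averages of $|\langle U_tf_{wm},g\rangle|$ vanish, and one must check that Talagrand's finite-algebra lemma, stated for sequentially indexed families, applies verbatim to the $T$-indexed countable family $\{U_tf_{wm}\}$ (it does, being insensitive to the index set). A secondary technical point, already latent in the $T=\Z$ case, is that mere $L^2$-total boundedness of the orbit does \emph{not} yield the Glivenko--Cantelli property by soft arguments alone; one genuinely needs the finite-dimensional collapse of the covering numbers used above, with the small-$L^2$-norm tail controlled through the invariance identity, to make (1)$\Rightarrow$(2) rigorous.
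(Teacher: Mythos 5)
Your handling of (2)$\Leftrightarrow$(3)$\Leftrightarrow$(4)$\Leftrightarrow$(5) via Theorem \ref{Talagrand}, and your direction (2)$\Rightarrow$(1), match the paper: the paper likewise treats the first group as a direct instance of Talagrand's theorem and reduces (2)$\Rightarrow$(1) to the $T=\Z$ argument (compact/weak-mixing splitting, Talagrand's finite-subalgebra lemma, decay of correlations of weak-mixing vectors along F\o{}lner averages); your write-up of that direction is in fact more detailed than the paper's one-line reduction and is sound modulo the standard amenable Jacobs--de Leeuw--Glicksberg facts you flag. The genuine problem is in (1)$\Rightarrow$(2), at the limiting step. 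After showing that each partial sum $P^{(k)}f$ generates a Glivenko--Cantelli family, you pass to $f$ by claiming that the invariance identity $\|U_t(f-P^{(k)}f)\|_1=\|f-P^{(k)}f\|_1$ makes the tail ``uniformly small in $t$''. What the limit actually requires is uniform-in-$t$ smallness of the \emph{empirical averages},
$$
\limsup_{n\to\infty}\ \sup_{t\in T}\ \frac1n\sum_{i=1}^{n}\big|(f-P^{(k)}f)_t(\omega_i)\big|\ \le\ \eps\,,
$$
and this does not follow from uniform smallness, or even identical distribution, of the $L^1$ norms: it is itself a Glivenko--Cantelli-type assertion about the tail family, i.e.\ the very thing being proved, so the step is circular as written. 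Nor is it a harmless formality: take $B_j$ ($j\in\N$) independent sets of common measure $\eps$ in a non-atomic probability space and $h_j=\chi_{B_j}$; these are uniformly bounded, identically distributed, with $\|h_j\|_1=\eps$ for all $j$, yet almost surely, for every $n$, infinitely many $j$ satisfy $\omega_1,\dots,\omega_n\in B_j$ (second Borel--Cantelli, the events being independent across $j$ with probability $\eps^n$), so $\sup_j\frac1n\sum_i h_j(\omega_i)=1$. Thus norms and distributions alone cannot control the tail; the joint orbit structure must be used. Your fallback that the step ``proceeds as in the already-established $T=\Z$ proposition'' does not rescue it, because at exactly this point the paper's $T=\Z$ proof says only that ``the rest is clear''.

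The paper's own proof of this direction uses a different mechanism and never isolates a tail family: it proves (1)$\Rightarrow$(3) by estimating the covering numbers $N^{\infty}(\mathcal{F},n,\eps)$ of the full family at fixed $n$ --- on a Lusin/Egorov set $K$ the Peter--Weyl coefficient maps $f^i_j$ are continuous and $f$ is uniformly approximated by $U_tP$, the finite-dimensional orbits are covered by finitely many translates of a small neighbourhood of the identity of the compact group $U(d_i)$ (a count $L$ depending on $\eps$ but not on $n$), and samples falling outside $K$ are absorbed into the estimate of the expectation of $\log N^{\infty}$. Whatever one thinks of the rigor of that absorption, the tail error there enters as an enlargement of the covering radius at fixed $n$, never as a uniform empirical bound over $t$; that is the structural difference from your argument and precisely what yours is missing. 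If you want a route that closes cleanly, prove (1)$\Rightarrow$(5) instead: $\mu$-compactness makes the orbit $L^2$-precompact, so every sequence $f_{t_k}$ has an $L^2$-convergent subsequence and hence a further subsequence converging $\mu$-a.e.; on the other hand, an $(\alpha,\beta)$-witness of irregularity yields, by Talagrand's extraction, a sequence in $\mathcal{F}$ whose oscillation between the levels $\alpha$ and $\beta$ is independent-like on a set of positive measure, so that no subsequence can converge a.e. This contradiction gives (5), and then (2), (3), (4) follow from Theorem \ref{Talagrand}.
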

\begin{proof} We shall prove that (1) implies (3) and (2) implies (1), the equivalence of (2), (3), (4) and (5) is established by the previous theorem.
	
	\noindent (1) implies (3) : Consider the set $\Sigma_Z(\omega ,n) \equiv \Sigma_f(x_1,\cdots ,x_n,n)\subset {\R}^n$ defined above, (where
	$Z  \{f_t\ |\ t\in T\}$ and $\omega = (x_1,\cdots ,x_n)\in X^n$). We shall show that for any $\eps >0$, $N^{\infty}(Z,n,\eps)(\omega)$ is bounded independent of $n$,
	for all $\omega = (x_1,\cdots ,x_n)\in X^n$ outside a set of arbitrarily small $\mu^n$ measure. This will prove (3). Now recall from Proposition 3.1, on a set $K$
	of measure arbitrarily close to $1$, $f(xt)$ can be written as $U_t P(x)$, where  $P(x) = \sum\limits_{i=1}^{k}\sum\limits_{j=1}^{d_i}\langle f^i_j(x),w^i_j\rangle w^i_j$,
	(here we are using the notation of Proposition 3.1). Here each $f^i_j : K\to {\mathbb C}^{d_i}$ is continuous. Now we notice that it is enough to show
	that $N^{\infty}(Z^i_j,n,\eps)(\omega)$ is bounded independent of $n$ for $\omega \in K^n$ for each $i$ and $j$, where Here $Z^i_j = \{(f^i_j)_t\ |\ t\in T\}$.
	Of course this family is a family of ${\R}^{d_i}$ valued maps on $X$ instead of being real valued, but that does not change anything. Now note that
	$f^i_j(x t) = U_t f^i_j(x)$, where $U_t$ is a finite dimensional unitary operator acting on a vector $f^i_j(x) \in {\Cc}^{d_i}$. Given $\epsilon > 0$ select
	a compact neighbourhood $V$ of the identity in the Unitary group $U(d_i)$ so that $||v-U_tv||<\eps$ for all $v\in {\Cc}^{d_i}$. Let $L$ be the
	number of translates of $V$ that cover the whole unitary group $D(d_i)$. Then it follows that the set $\{(U_tf^i_j(x_1),\cdots ,U_tf^i_j(x_n))\ |\ t\in T\}$
	is covered by at most $L$ balls of radius $\eps$, for $(x_1,\cdots ,x_n)\in K^n$. Note that (i) $L$ is independent of $n$ and depends only on $\eps$ and
	(ii) for $\omega$'s outside $K^n$ - a set of arbitrarily small $\mu^n$ measure - the number $N^{\infty}(Z^i_j,n,\eps)(\omega )$ is bounded by
	$\big(\frac {M}{\eps}\big)^n$, where $M$ is the maximum of $f^i_j$. Thus, given $n$ and $\eps$, choosing set $K$ to be a compact set with measure very close to $1$,
	we can make the expectation of the map $\omega\to log (N^{\infty}(Z^i_j,n,\eps)(\omega ))$ bounded independent of $n$. This proves (3).
	
	\noindent (2) implies (1) : This proof is exactly as in the case of $T = \Z$. Because $T$ is countable amenable, if we have a weak-mixing vector
	$f\in L^2(X,\mu)$, then $f_{t_n}\to 0$ weakly along a set $\{t_n\ |\ n\in \N\}$ of density one, (density is with respect to some fixed F\o{}lner sequence).
	Then the rest of the proof is as in the case of integer group.
\end{proof}

\smallskip

\begin{rem} It is easy to see that if for any $\alpha < \beta$, the $(\alpha ,\beta )$-shattering dimension is finite, then $f$
	is $\mu$-compact. The converse may not hold, but the above condition on the limit in (4) guarantees the converse.
\end{rem}

\smallskip

Now we introduce the notion of `weak-tameness'.

\smallskip

\begin{defn} A compact, metric dynamical system $(X,T)$ is weakly tame if each $f\in C(X)$ is $\mu$-tame for and every
	invariant Borel probability measure $\mu$ on $X$.
\end{defn}

\smallskip

The following characterization already follows from the results proved earlier.

\smallskip

\begin{thm}\label{Spectral-tame} Let $(X,T)$ be a compact, metric system where $T$ is countable and amenable. Then the following
	statements are equivalent.
	\begin{itemize}
		\item [(1)] $(X,T)$ is weakly tame,
		\item [(2)] $(X,T,\mu )$ has discrete spectrum with respect to every invariant Borel probability $\mu$ on $X$,
		\item [(3)] The family $\{f_t\ |\ t\in T\}$ is $\mu$-Glivenko-Cantelli for every $f\in C(X)$ and every invariant Borel probability
		measure $\mu$ on $X$.
		\item [(4)] For every $f\in C(X)$, the family $\{t_t\ |\ t\in T\}$ has no witness of irregularity with respect to $\mu$ for every
		invariant Borel measure $\mu$ on $X$.
	\end{itemize}
\end{thm}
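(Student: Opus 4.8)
The plan is to obtain all four equivalences by stringing together results already in hand, the only substantive step being a density argument that upgrades statements about $C(X)$ to statements about all of $L^2$. I would begin with $(1)\iff(2)$. By definition $(X,T)$ is weakly tame precisely when every $f\in C(X)$ is $\mu$-tame for every invariant Borel probability measure $\mu$. By the proposition asserting that a vector is $\mu$-tame if and only if it is $\mu$-compact, this is equivalent to the statement that every $f\in C(X)$ is a $\mu$-compact vector for every invariant $\mu$. Recall from Proposition 2.2 that the $\mu$-compact vectors are exactly the fixed vectors of $P_\mu$, i.e. the range of the orthogonal projection $P_\mu$, which is a closed subspace of $L^2(X,\mu)$. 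Since $C(X)$ is dense in $L^2(X,\mu)$ for a compact metric space, the inclusion of $C(X)$ in the range of $P_\mu$ forces the range to be all of $L^2(X,\mu)$, hence $P_\mu=I$. By the Remark following Proposition \ref{wap}, $P_\mu=I$ is exactly the assertion that $(X,T,\mu)$ has discrete spectrum. The reverse direction is immediate: if $P_\mu=I$ then every vector, in particular every $f\in C(X)$, is $\mu$-compact and hence $\mu$-tame. This yields $(1)\iff(2)$.

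Next I would establish $(2)\iff(3)$ and $(2)\iff(4)$ using the Glivenko--Cantelli characterization proved just above, which states that for $f\in C(X)$ the following are equivalent: $f$ is $\mu$-compact; the family $\{f_t\mid t\in T\}$ is $\mu$-Glivenko--Cantelli; and $\{f_t\}$ has no witness of irregularity with respect to $\mu$. Applying this for a fixed $\mu$ and all $f\in C(X)$, condition $(3)$ (respectively $(4)$) says that $\{f_t\}$ is Glivenko--Cantelli (respectively has no witness of irregularity) for every $f\in C(X)$ and every $\mu$, which by that characterization is equivalent to every $f\in C(X)$ being $\mu$-compact for every $\mu$. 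Repeating the density-and-closedness argument of the previous paragraph converts this into $P_\mu=I$ for every $\mu$, i.e. discrete spectrum for every $\mu$, which is $(2)$. Hence $(2)$, $(3)$ and $(4)$ are mutually equivalent, completing the cycle.

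The only step that is not a verbatim invocation of an earlier result is the passage from \emph{$C(X)$ lies in the range of $P_\mu$} to \emph{$P_\mu=I$}, and this is where I expect the main, if mild, obstacle to lie: it rests on the two facts that the $\mu$-compact vectors form a \emph{closed} subspace (because they are the range of the orthogonal projection $P_\mu$) and that $C(X)$ is dense in $L^2(X,\mu)$. A second point demanding care is that conditions $(3)$ and $(4)$ quantify over \emph{all} invariant measures, whereas the Glivenko--Cantelli characterization was stated for ergodic $\mu$. Here one must resist reducing to ergodic components: Example \ref{Namioka} shows that discrete spectrum with respect to every ergodic measure need not imply discrete spectrum with respect to every invariant measure, so the ergodic decomposition is not a valid shortcut. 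Instead I would observe that the structural facts actually used --- that $I_\mu$ is a compact topological group and $P_\mu$ the associated projection, together with the Peter--Weyl decomposition of a $\mu$-compact vector --- require only weak almost periodicity of $(\Omega_\mu,T)$ and hold for an arbitrary invariant $\mu$, so the per-function characterization extends to non-ergodic $\mu$ and the argument goes through for every invariant measure as stated.
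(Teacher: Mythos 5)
Your proposal is correct and is essentially the paper's own (unwritten) argument: the paper's entire proof is the remark that the theorem ``already follows from the results proved earlier,'' and your assembly --- the proposition that $\mu$-tame equals $\mu$-compact, the per-function Glivenko--Cantelli/witness-of-irregularity characterization, and the density-of-$C(X)$ plus closed-range-of-$P_\mu$ argument forcing $P_\mu = I$ --- is exactly that assembly made explicit. If anything you are more careful than the paper: the extension of the per-function characterization from ergodic to arbitrary invariant measures (which, as you rightly note via Example \ref{Namioka}, cannot be obtained by ergodic decomposition) is glossed over in the text, and your observation that the ingredients (the compact group $I_\mu$, Peter--Weyl, and density-one decay for weakly mixing vectors) need only weak almost periodicity of $(\Omega_\mu,T)$, not ergodicity, is the correct way to close that gap.
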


\smallskip

\begin{rem} We would like to pose a question : Can we give a characterization of a subclass of weakly tame dynamical systems
	for which all for which for any $\alpha < \beta$, the $(\alpha ,\beta )$-shattering dimension is finite and bounded as a function
	of the invariant measure $\mu$?
\end{rem}

\smallskip

In the following we shall compare `tameness' with `weak tameness'. The above characterization of weak-tameness is based on the
up-crossing behavior of the family $\mathcal{F} \equiv \{f_t\ |\ t\in T\}$. In topological dynamics one tries to capture
dynamical features in terms of the regularity properties of functions in the family $\{f_p\ |\ p\in \beta T\}$, which is the
closure of the family $\mathcal{F}$ in the topology of point-wise convergence. Recall that $(X,T)$ is tame if and only if each
$f_p$ is a Baire-$1$ function, for each $f\in C(X)$ and $p\in \beta T$. Thus, in some sense tameness can be studied by a
`topological version' of up-crossings. A more formal way to study this is by introducing the notion of `independence' for a
sequence of a pair disjoint subsets of $X$. But before introducing this notion, first we shall state a characterization of
tameness and ask the reader to compare it with the previous theorem.

\smallskip

\begin{thm} \label{GGM}Let $(X,T,\mu)$ be a compact metric topological dynamical system with $T$ countable. Then the following
	statements are equivalent;
	\begin{itemize}
		\item [(1)] $(X,T)$ is tame,
		\item [(2)] The family $\{f_t\ |\ t\in T\}$ is $\mu$-Glivenko-Cantelli for every $f\in C(X)$ and every Borel probability measure
		$\mu$ on $X$.
		\item [(3)] For every $f\in C(X)$, the family $\{f_t\ |\ t\in T\}$ has no witness of regularity with respect to $\mu$ for every
		Borel probability measure $\mu$ on $X$.
	\end{itemize}
\end{thm}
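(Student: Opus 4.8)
The plan is to dispose first of the easy equivalence $(2)\iff(3)$, and then to establish $(1)\iff(3)$, which carries all of the content. For $(2)\iff(3)$ I would simply invoke Theorem \ref{Talagrand}: for each \emph{individual} Borel probability measure $\mu$, that theorem gives the equivalence of the Glivenko--Cantelli property of the countable family $\{f_t\ |\ t\in T\}$ (uniformly bounded since $f\in C(X)$ and $X$ is compact) with the absence of an $(\alpha,\beta)$-witness of irregularity. Quantifying this biconditional over every $f\in C(X)$ and every Borel probability $\mu$ on $X$ yields $(2)\iff(3)$ with no further work.

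The substance is $(1)\iff(3)$, and the bridge is the combinatorial notion of an \emph{independent sequence}: a sequence $\{f_{t_n}\}$ in the family for which there are reals $\alpha<\beta$ such that every finite sign pattern is realized, i.e. for each finite $I\subset\N$ and each $J\subset I$ there is an $x\in X$ with $f_{t_n}(x)<\alpha$ for $n\in J$ and $f_{t_n}(x)>\beta$ for $n\in I\setminus J$. By the Bourgain--Fremlin--Talagrand theory together with Rosenthal's $\ell^1$-theorem (see \cite{GM2}, \cite{GM4}, \cite{T2}), a pointwise-bounded family of continuous functions on a compact metric space has its pointwise closure consisting of Baire-$1$ functions precisely when it contains no independent sequence. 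Since $(X,T)$ is tame exactly when, for every $f\in C(X)$ and every $p\in\beta T$, the element $f_p$ is Baire-$1$, and since the pointwise closure of $\{f_t\ |\ t\in T\}$ is exactly $\{f_p\ |\ p\in\beta T\}$, statement $(1)$ is equivalent to the assertion that for every $f\in C(X)$ the family $\{f_t\}$ contains no independent sequence. Thus the whole theorem reduces to the equivalence, for a fixed $f$, of ``no independent sequence'' with ``no witness of irregularity for any Borel $\mu$''.

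One half of this is immediate: if $A$ is an $(\alpha,\beta)$-witness of irregularity for some Borel $\mu$ with $\mu(A)>0$, then for each $n$ almost every $n$-tuple in $A^n$ is $(\alpha,\beta)$-shattered by $\{f_t\}$, so choosing points inductively and using countability of the family one extracts a sequence $\{f_{t_n}\}$ realizing all finite sign patterns, i.e. an independent sequence. Hence a witness of irregularity forces non-tameness, giving the contrapositive of $(1)\Rightarrow(3)$.

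The hard direction, and the main obstacle, is to run this in reverse: starting from an independent sequence I must \emph{manufacture} a Borel probability measure exhibiting a witness of irregularity. Given an independent $\{f_{t_n}\}$ with parameters $\alpha<\beta$, I would build the witnessing measure by a dyadic-tree construction: at stage $n$ split the current pieces according to whether $f_{t_n}<\alpha$ or $f_{t_n}>\beta$, using the independence to guarantee that each of the $2^n$ sign patterns is met by a nonempty (relatively open, by continuity of $f_{t_n}$) piece, and assign each stage-$n$ piece mass $2^{-n}$. Passing to a weak-$*$ limit produces a non-atomic Borel measure $\mu$ supported on a set $A$ for which $\mu^n$-almost every $n$-tuple is $(\alpha,\beta)$-shattered, so that $A$ is a witness of irregularity, contradicting $(3)$ (and, via Theorem \ref{Talagrand}, violating $(2)$). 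The delicate points are ensuring measurability and genuine non-atomicity of the limit measure and verifying the almost-sure shattering on $A^n$; this is precisely the measure-construction half of Talagrand's analysis in \cite{T2} (see also \cite{Fre}), and is where the real effort lies.
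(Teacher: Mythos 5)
Your architecture is in substance the same as the paper's: the paper proves $(1)\Leftrightarrow(2)$ by combining the Rosenthal dichotomy (via Nash--Williams, from Todor\v{c}evi\'{c}'s book) --- tame iff no ($\sigma$-)independent subsequence --- with van Handel's Theorem 1.3 (absence of a $\sigma$-independent subsequence $\Leftrightarrow$ universal Glivenko--Cantelli), deferring the measure construction to van Handel's Theorem A.1, and the equivalence with $(3)$ is Talagrand's theorem applied measure by measure, exactly as in your first step. What you do differently is to unpack van Handel's equivalence into its two halves and argue them separately: ``witness $\Rightarrow$ independent sequence'' by hand, and ``independent sequence $\Rightarrow$ witnessing measure'' by the dyadic construction deferred to Talagrand/Fremlin. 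The second half is fine as a citation (it is precisely van Handel's Theorem A.1). The problem is the first half, which you present as immediate.

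The step ``witness of irregularity $\Rightarrow$ independent sequence'' cannot be done by ``choosing points inductively'': shattering of point sets and independence of function sequences are dual notions, and the passage between them genuinely needs the measure. Indeed, the purely pointwise consequence you extract from the witness (a countable set every finite subset of which is shattered) does \emph{not} imply the existence of an infinite independent sequence: on $\N$, the countable family of all pairs $(B,C)$ of disjoint \emph{finite} sets (thought of as the level sets $\{g<\alpha\}$, $\{g>\beta\}$) shatters every finite subset of $\N$, yet contains no infinite independent sequence --- all witnesses of patterns of length $m$ with first symbol $1$ must lie in the finite set $B_1$ attached to the first term, and distinct patterns of equal length need distinct witnesses, so a pigeonhole argument kills independence once $2^{m-1}>|B_1|$. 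Equivalently, a finite independent family of functions need not extend, so a greedy induction on points breaks down. The repair uses the measure you discarded: run the induction on positive-measure cells rather than points. If $g_1,\dots,g_m$ have been chosen so that each cell $A_\epsilon=A\cap\bigcap_{k:\epsilon_k=1}\{g_k<\alpha\}\cap\bigcap_{k:\epsilon_k=0}\{g_k>\beta\}$, $\epsilon\in\{0,1\}^m$, has positive measure (each such cell is again a witness of irregularity, being a positive-measure subset of $A$), then, since $\mu^{2N}$-a.e.\ tuple in $A^{2N}$ ($N=2^m$) is shattered, writing the product of the cells as a countable union over $t\in T$ of the product sets $\prod_\epsilon\bigl[(A_\epsilon\cap\{f_t<\alpha\})\times(A_\epsilon\cap\{f_t>\beta\})\bigr]$ and applying a pigeonhole over the countable family $\{f_t\}$ yields a single $g_{m+1}=f_t$ with $\mu(A_\epsilon\cap\{g_{m+1}<\alpha\})>0$ and $\mu(A_\epsilon\cap\{g_{m+1}>\beta\})>0$ for every $\epsilon$; all cells of the next generation then have positive measure, and the resulting sequence $(g_k)$ is independent. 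With this Fubini-plus-pigeonhole induction in place of the pointwise one (or by simply citing van Handel's Theorem 1.3, as the paper does), your proof is complete.
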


\smallskip

Before getting into the technicalities of this proof we make some observations. We shall mention two theorems whose proofs share
a certain `common part' with the proof of the above theorem. The first one is due to D. Kerr and H. Li (see Proposition 6.4 of
\cite {Kerr-Li}).

\smallskip

\begin{thm}\label{KL} [Kerr-Li Theorem] A system $(X,T)$ is not tame if and only if there is a non-trivial `IT pair',
	(this notion will be defined shortly).
\end{thm}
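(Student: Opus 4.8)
The plan is to route through the enveloping semigroup and the Bourgain--Fremlin--Talagrand (BFT) dichotomy, in the same spirit as the earlier sections use Talagrand's machinery in the measure-theoretic setting. First I would record the standard reformulation of tameness: $(X,T)$ is tame precisely when, for every $f\in C(X)$, the pointwise closure of the orbit $\{f_t\ |\ t\in T\}$ inside $\R^X$ consists of Baire-$1$ functions. Since $X$ is Polish and this closure is pointwise compact, the BFT dichotomy applies: either the orbit is a Rosenthal family and every element of its closure is Baire-$1$, or there is a sequence $\{t_n\}$ in $T$ such that $\{f_{t_n}\}$ has no pointwise convergent subsequence. By Rosenthal's $\ell^1$ theorem the second alternative is equivalent to $\{f_{t_n}\}$ having a subsequence equivalent to the unit vector basis of $\ell^1$. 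Thus non-tameness is equivalent to the existence of $f\in C(X)$ and a sequence $\{t_n\}\subset T$ for which $\{f_{t_n}\}$ is an $\ell^1$-sequence.

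The second step converts this analytic statement into combinatorial independence. Using Rosenthal's characterization of $\ell^1$-sequences of uniformly bounded functions, after passing to a subsequence I would extract two reals $a<b$ and an infinite set $J\subseteq\{t_n\}$ with the following separation property: for every finite $F\subseteq J$ and every $\sigma:F\to\{0,1\}$ there is $x\in X$ with $f(xt)\leq a$ whenever $\sigma(t)=0$ and $f(xt)\geq b$ whenever $\sigma(t)=1$. Setting the disjoint closed sets $A_0=\{f\leq a\}$ and $A_1=\{f\geq b\}$, this says precisely that $J$ is an infinite independence set for the pair $(A_0,A_1)$, i.e. $\bigcap_{t\in F}\pi_t^{-1}\big(A_{\sigma(t)}\big)\neq\emptyset$ for all finite $F\subseteq J$ and all $\sigma$.

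The third step extracts an IT pair from such an independence set. For each $\omega\in\{0,1\}^J$ the finite-intersection property and compactness of $X$ give a point $x_\omega$ with $x_\omega t\in A_{\omega(t)}$ for all $t\in J$. I would then produce the IT pair $(x_0,x_1)$ by a neighborhood-shrinking argument: starting from $(A_0,A_1)$, repeatedly refine to smaller pairs of open sets while preserving the existence of an infinite independence set, and take a cluster point in the compact space $X\times X$. Because the limit point lies in $\overline{A_0}\times\overline{A_1}$ and $A_0\cap A_1=\emptyset$, the resulting pair is non-trivial, and by construction every product neighborhood of it inherits an infinite independence set, so it is an IT pair. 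For the converse I would run this in reverse: given a non-trivial IT pair $(x_0,x_1)$, choose $f\in C(X)$ and disjoint neighborhoods $U_0\ni x_0$, $U_1\ni x_1$ on which $f$ takes values near $0$ and near $1$ respectively; the infinite independence set for $(U_0,U_1)$ makes the corresponding sequence of translates an $\ell^1$-sequence, so by Step 1 the system fails to be tame.

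The main obstacle is the persistence of infinite independence sets under the neighborhood refinement in Step 3, together with the precise combinatorial lemma needed in Step 2 for a general countable acting group. An independence set for $(A_0,A_1)$ is not automatically one for a strictly smaller pair of open sets, so naive shrinking can destroy independence; the correct device is Kerr and Li's density version of Rosenthal's lemma (a Ramsey/ultrafilter-type argument guaranteeing that passing to subpairs retains independence sets of positive density, hence infinite ones). Getting this combinatorial persistence to work uniformly is where the substance of the argument lies; once it is in place, the topological extraction and both implications follow from routine compactness.
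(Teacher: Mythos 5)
Your plan follows essentially the same route as the paper, which itself only sketches this theorem and defers to Kerr--Li (Proposition 6.4 of their paper): a Rosenthal/BFT-type dichotomy converts non-tameness into an infinite independence set for a pair of disjoint level sets of some $f\in C(X)$, and a Ramsey-type splitting lemma plus compactness then extracts a non-trivial IT pair, the converse being the easy reversal. One correction to your Step 3: the persistence device is Kerr--Li's \emph{infinite}-independence splitting lemma (if $U\times V$ has an infinite independence set and $U=U_1\cup U_2$, then one of $U_1\times V$, $U_2\times V$ does too --- exactly the property the paper quotes), which is proved by a Ramsey/ultrafilter argument, and not the positive-density version used for IE pairs; non-tameness only hands you an infinite independence set, not one of positive density, so the density lemma could not even be started.
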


\smallskip

The second one is the famous dichotomy theorem of J. Bourgain, D. Fremlin and M. Talagrand. The following is its
`dynamical formulation' due to E. Glasner.

\smallskip

\begin{thm}\label{BFT} [BFT-dichotomy Theorem] A compact, metric dynamical system $(X,T)$, (with $T$ countable),
	is not tame if and only if the enveloping semi-group $E(X,T)$ contains a topological copy of $\beta T$.
\end{thm}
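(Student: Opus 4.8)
The plan is to deduce this dynamical dichotomy from the analytic Bourgain--Fremlin--Talagrand theorem together with the structure theory of Rosenthal compacta, the real labour being the translation between self-maps in $E(X,T)$ and real functions on $X$. One may assume $T$ is infinite, since for finite $T$ the semigroup $E(X,T)$ is finite, the system is trivially tame, and neither side of the equivalence can hold. First I would fix the dictionary. For $f\in C(X)$ put $\mathcal F_f=\{f_t\ |\ t\in T\}\subset C(X)\subset \R^X$, a pointwise-bounded countable family. Since the composition map $q\mapsto f\circ q$ from $E(X,T)$ to $\R^X$ is continuous for the pointwise topologies, and $E(X,T)=\overline{\{\pi_t\}}$, its image is exactly the pointwise closure $\overline{\mathcal F_f}=\{f\circ \rho_p\ |\ p\in \beta T\}$. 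Because $X$ is compact metric and $C(X)$ separable, an element $q\in E(X,T)$ is a Baire-$1$ map iff $f\circ q$ is a Baire-$1$ function for every $f$ in a countable dense subset of $C(X)$; hence $(X,T)$ is tame iff for every $f\in C(X)$ every member of $\overline{\mathcal F_f}$ is of Baire class $1$.

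For the forward implication, suppose $(X,T)$ is not tame, so there is an $f\in C(X)$ for which $\overline{\mathcal F_f}$ contains a function that is not Baire-$1$. Applying the analytic BFT theorem (see \cite{Fre}, \cite{T2}) to the pointwise-bounded countable family $\mathcal F_f$, the failure of the Baire-$1$ alternative forces the other horn of Rosenthal's dichotomy: there is a sequence $(t_n)$ in $T$ such that $(f_{t_n})$ is \emph{independent}, i.e. there exist reals $\alpha<\beta$ so that for every $S\subset \N$ one can find $x_S\in X$ with $f_{t_n}(x_S)<\alpha$ for $n\in S$ and $f_{t_n}(x_S)>\beta$ for $n\notin S$; this is the functional-analytic shadow of the non-trivial IT-pair of Theorem \ref{KL}. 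I would then lift this independence into $E(X,T)$. The set $\{t_n\}$ is an infinite discrete subset of the discrete group $T$, so its closure $K=\overline{\{t_n\}}$ in $\beta T$ is canonically homeomorphic to $\beta\N$; restrict the canonical continuous projection $\rho:\beta T\to E(X,T)$, $p\mapsto \rho_p$, to $K$. To see $\rho|_K$ is injective, take distinct ultrafilters $p\ne q$ on $\{t_n\}$ and choose $S$ with $S\in p$ and $\N\setminus S\in q$; evaluating $f$ at the witness $x_S$ gives $f(\rho_p(x_S))=\lim_{n\to p}f_{t_n}(x_S)\le \alpha<\beta\le \lim_{n\to q}f_{t_n}(x_S)=f(\rho_q(x_S))$, so $\rho_p\ne \rho_q$. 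A continuous injection from the compact space $K\cong \beta\N$ into the Hausdorff space $E(X,T)$ is a topological embedding, so $E(X,T)$ contains a copy of $\beta\N\cong \beta T$.

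For the converse, suppose $E(X,T)$ contains a homeomorphic copy of $\beta T\cong \beta\N$ and, toward a contradiction, that $(X,T)$ is tame. By the dictionary above, $E(X,T)$ is then a pointwise-compact set of Baire-$1$ self-maps of $X$, and embedding it through a countable dense $D\subset C(X)$ into the product of the compacta $\overline{\mathcal F_f}$ ($f\in D$) exhibits it as a Rosenthal compactum. By the structure theory of such spaces (Bourgain--Fremlin--Talagrand; see \cite{Fre}, \cite{G}) every Rosenthal compactum is Fr\'echet--Urysohn, in particular sequentially compact. But a copy of $\beta\N$ is compact, hence closed in the Hausdorff space $E(X,T)$, while $\beta\N$ is not sequentially compact, as the inclusion sequence $(n)$ has no convergent subsequence; since a closed subspace of a sequentially compact space is sequentially compact, this is a contradiction, and $(X,T)$ cannot be tame.

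The deep analytic inputs --- Rosenthal's $\ell^1$ dichotomy/BFT and the Fr\'echet--Urysohn property of Rosenthal compacta --- I am treating as cited black boxes, so the genuine work is the two translation steps, and the step I expect to be most delicate is the embedding argument: one must check both that the independent subsequence produced by BFT really separates ultrafilters (the injectivity computation above) and that passing to the pointwise closure in $X^X$ lands inside $E(X,T)$ rather than a larger product, which is exactly where the identification $\overline{\mathcal F_f}=\{f\circ \rho_p\}$ and the continuity of $\rho$ are essential.
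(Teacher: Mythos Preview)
Your proof is correct. Note, though, that the paper does not actually prove this theorem: it states it as Glasner's dynamical reformulation of the Bourgain--Fremlin--Talagrand dichotomy and, within the proof of Theorem~\ref{GGM}, only remarks that the argument shares a ``common part'' with Theorems~\ref{KL} and~\ref{GGM}---the Nash--Williams/Rosenthal step producing a $\sigma$-independent subsequence---after which ``one only needs some facts about the topology of a zero dimensional space'' from \cite{To}, Chapters~5 and~13. Your argument fleshes out precisely this sketch: the independent subsequence $(f_{t_n})$ is the common part, and your explicit injectivity computation for $\rho$ restricted to $\overline{\{t_n\}}\cong\beta\N$ using the witnesses $x_S$ is exactly the embedding step the paper defers to Todor\v{c}evi\'{c}. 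You also supply the converse direction (tame $\Rightarrow$ $E(X,T)$ is a Rosenthal compactum $\Rightarrow$ Fr\'echet--Urysohn, hence sequentially compact, so no closed copy of $\beta\N$), which the paper does not discuss at all. In short, your route and the paper's gesture agree; you have simply written out what the paper leaves to citation.
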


\smallskip

As mentioned before, the argument in proofs of the last three  theorems have a certain `common part'. This is
the part that involves `combinatorics', namely a `dichotomy argument' using the `Nash-Williams Theorem'. In the following
we shall show how the arguments in the above theorems can be sketched in a self contained way using only section 5 of
S. Todorcevic's book \cite {To} and a lemma from \cite {RVH}. We begin by recalling the notion of `independence'.

\smallskip

\begin{defn} ($1_a$) A sequence $\{({A_0}^n,{A_1}^n)\}_{n\in \N}$ of disjoint pairs of subsets of a set $X$ is
	independent if for every finite subset $F\subset \N$, and $G\subset F$, we have
	$$
	\bigcap_{j\in G}A_0^{j} \bigcap \bigcap_{j\in F \setminus G}A_1^{j} \neq \emptyset\,.
	$$
	
	\noindent ($1_b$) The same sequence is $\sigma$-independent if for every subset $F\subset \N$ we have,
	$$
	\bigcap_{j\in F}A^j_0 \bigcap \bigcap_{j\not \in F}A^j_1 \neq \emptyset\,.
	$$
	
	\noindent (2) Let $\mathcal{F} = \{f_n\ |\ n\in \N\}$ be a countable family of real valued functions on a set $X$.
	This family is said to be independent (or $\sigma$-independent) at level $(\alpha , \beta)$, (where $\alpha < \beta$
	are reals), if the sequence of sets $A^n_0 = f_n^{-1}(-\infty ,\alpha]$ and $A^n_1 = f_n^{-1}[\beta ,\infty)$.
	
	\noindent (3) Let $(X,T)$ be a compact, metric dynamical system. A pair $(x,y)\in X\times X$ is said to be an IT-pair if any product
	neighborhood $U\times V$ of $(x,y)$ has an infinite independent set, i.e. there is an infinite countable set $B\subset \T$
	such that the sequence $\{(U_t,V_t)\ |\ t\in B\}$ is an independent sequence, (without loss of generality, we are assuming
	that $U\cap V = \emptyset$).
\end{defn}

\smallskip

\noindent {\bf Proof of Theorem \ref{GGM}}

\smallskip

\begin{proof} First we recall the Rosenthal dichotomy theorem and its proof, (Chapter 5, \cite{To}), (which is a weaker
	form of the BFT theorem). A strong version is needed to prove the BFT theorem, see \cite[Theorem 11]{B76}.
	
	\smallskip
	
	\begin{thm} [Rosenthal Dichotomy Theorem] Let $X$ be a compact metric space and $\{f_n\}$ be a point-wise bounded sequence
		in $C_p(X)$- the space of continuous real valued maps on $X$ with the topology of pointwise convergence. Then either $\{f_n\}$
		contains a convergent sequence, (i.e. as a sequence in $C_p(X)$) or it contains a subsequence whose closure is homeomorphic
		to $\beta \N$.
	\end{thm}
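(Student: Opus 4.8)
The plan is to recognize this as Rosenthal's $\ell^1$-dichotomy in its $C_p(X)$ form and to prove it by the Ramsey-theoretic route advertised in the preamble, namely through the Nash--Williams partition theorem of Section 5 of \cite{To}. The first reduction is to localize ``failure of pointwise convergence'' to pairs of rationals. Since $X$ is compact metric, hence separable, and $\{f_n\}$ is pointwise bounded, a subsequence $(f_{n_k})$ converges pointwise on all of $X$ if and only if for no rationals $\alpha<\beta$ is there a point $x\in X$ with $\liminf_k f_{n_k}(x)\le\alpha$ and $\limsup_k f_{n_k}(x)\ge\beta$; equivalently, at every $x$ the sequence $(f_{n_k}(x))_k$ crosses the gap $(\alpha,\beta)$ only finitely often. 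Thus the whole theorem will follow once I handle a single rational gap and then diagonalize over the countably many gaps.

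Second comes the combinatorial core. Fixing rationals $\alpha<\beta$, set $A_0^n=f_n^{-1}(-\infty,\alpha]$ and $A_1^n=f_n^{-1}[\beta,\infty)$, with independence at level $(\alpha,\beta)$ understood as in the definition above, and consider the family of finite subsets $s=\{n_1<\cdots<n_k\}\subset\N$ that are ``realizable'', i.e.\ for which some $x\in X$ produces a prescribed $A_0/A_1$-pattern along $s$. Feeding the associated two-colouring of a front on $\N$ into the Nash--Williams theorem produces an infinite $M\subset\N$ on which the colouring is homogeneous. On one side of the homogeneity, realizable patterns along $M$ are bounded in length, so each point's sequence $(f_n(x))_{n\in M}$ crosses $(\alpha,\beta)$ boundedly often and hence does not oscillate across the gap; on the other side, full independence of $(A_0^n,A_1^n)_{n\in M}$ at level $(\alpha,\beta)$ is forced. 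Diagonalizing the ``no oscillation'' alternative over all rational gaps yields a single $M$ along which $\liminf$ and $\limsup$ coincide at every point, giving the pointwise convergent subsequence of the first horn.

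Third is the passage from independence to a copy of $\beta\N$. If instead some gap $(\alpha,\beta)$ yields an infinite $M$ with $(A_0^n,A_1^n)_{n\in M}$ independent, then the sets entering the independence condition are closed in the compact space $X$ and enjoy the finite intersection property, so compactness upgrades independence to $\sigma$-independence: for every $A\subseteq M$ there is $x_A\in X$ with $f_n(x_A)\ge\beta$ for $n\in A$ and $f_n(x_A)\le\alpha$ for $n\in M\setminus A$. I then consider the closure $K=\overline{\{f_n:n\in M\}}$ in the pointwise topology on $\R^X$. The inclusion of the discrete set $M$ into the compact Hausdorff space $K$ extends, by the universal property of the Stone--\v{C}ech compactification, to a continuous surjection $\Phi:\beta M\to K$; the points $x_A$ separate the images of disjoint closures, forcing $\Phi$ to be injective, and a continuous bijection from the compact space $\beta M=\beta\N$ onto the Hausdorff space $K$ is a homeomorphism. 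This realizes the second horn.

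The main obstacle is the combinatorial core of the second step: arranging the front and the colouring so that Nash--Williams homogeneity delivers \emph{full} Boolean independence of the pairs $(A_0^n,A_1^n)$ rather than merely long or alternating oscillation patterns. Getting this bookkeeping exactly right---ensuring that persistence of realizable patterns along $M$ is strong enough to verify the independence condition for all finite disjoint $F,G$---is the delicate part; by contrast the opening reduction to rational gaps and the closing identification of the independent subsequence's closure with $\beta\N$ are routine once compactness and the universal property are invoked.
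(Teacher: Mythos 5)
Your plan follows the same route as the paper's own treatment (which is essentially a citation of Todor\v{c}evi\'{c}'s Chapter 5): the Nash--Williams theorem gives the dichotomy between a pointwise convergent subsequence and a subsequence that is ($\sigma$-)independent at some level $(\alpha,\beta)$, and the independent horn is then converted into a copy of $\beta\N$ inside the pointwise closure. Your supporting details---the reduction to rational gaps, the compactness upgrade from independence to $\sigma$-independence (valid here because the level sets $f_n^{-1}(-\infty,\alpha]$ and $f_n^{-1}[\beta,\infty)$ are closed in the compact space $X$), and the Stone--\v{C}ech universal-property argument identifying the closure with $\beta\N$---are correct and are precisely the steps the paper leaves implicit by deferring to \cite{To}, including the delicate Nash--Williams bookkeeping, which neither you nor the paper spells out.
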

	
	\smallskip
	
	\noindent If we look at the proof of this theorem, (page 22-23 of \cite{To}), it uses the Nash-Williams theorem to conclude
	that if the point-wise closure of $\{f_n \, | \, n \in \N\}$ does not contain a (point-wise) convergent sub-sequence then
	for some $\alpha < \beta$, there is a sub-sequence $\{f_{n_i} \, | n_i\in \N \}$ such that this sub-sequence is actually-$\sigma$
	independent at level $(\alpha ,\beta )$. This fact is what we called `the `common part' to the proofs of all of the three
	theorems mentioned above. In the following we very briefly describe the arguments in the proofs of these theorems.
	
	\noindent(i) After this `common part', to prove the Rosenthal dichotomy theorem (or the BFT theorem) one only needs some facts
	about the topology of a zero dimensional space, (see \cite{To}, Chapter 5 and 13).
	
	\noindent(ii) In the proof of Kerr-Li Theorem, to get a non-trivial IT pair from an independent sub-sequence one has to use
	`more combinatorics' and a compactness argument, (see Proposition 3.9 and 6.4 of \cite {Kerr-Li}). A `Ramsey type' combinatorial
	property one needs is the following : If $U\times V$ has an infinite independent set and $U = U_1\cup U_2$ then either
	$U_1\times V$ or $U_2\times V$ has an infinite independent set.
	
	Now we gp to the proof of Theorem \ref{GGM}. As mentioned in the above remark, either the system $(X,T)$ is tame or for some
	$f\in C(X)$, the family $\mathcal{F}_B = \{f_t\ |\ t\in B\subset T\}$ is a $\sigma$-independent family, where $B$ is a
	(countable) infinite set. Now we use Theorem 1.3 of \cite{RVH} which says that non existence of a $\sigma$-independent infinite
	sub-sequence in the family $\mathcal{F} = \{f_t\ |\ t\in T\}$ is equivalent to saying that the family $\mathcal{F}$ is
	`universally Glivenko-Cantelli' i.e. it is $\mu$-Glivenko-Cantelli for every Borel probability measure $\mu$ on $X$.
	
	So the dichotomy argument in the proof of Rosenthal's theorem followed by the above theorem of van Handel gives a proof of
	Theorem \ref {GGM}. It is worth noting that existence of a $\sigma$-independent sub-sequence $\mathcal{F}_B$ allows one to
	construct a Bore probability measure $\nu$ on $X$ such that the family $\mathcal{F}_B$ is not $\nu$-Glivenko compact.
	This construction is Theorem A.1 in the appendix of \cite {RVH} and should be of independent interest despite work
	summarized in Theorem \ref{Talagrand}.   
\end{proof}

\smallskip

\begin{rem} For $T = \Z$, the statement of Theorem (\ref {GGM}) is motivated by Theorem 8.20 and Theorem 8.16 of \cite {GM2}.
	However, the authors have neither given a proof or a hint to Theorem 8.20, nor do they give any reference to Theorem 8.16.,
	(they attribute this theorem to M. Talagrand). In this paper the authors define notion of `topological stability' of the
	family $\mathcal{F}$ by requiring that such family do not have any `topologically critical' set. The authors define a closed
	set $A\subset X$ `topologically critical' if for some $\alpha < \beta$ the set $S^{(\alpha ,\beta )}_n$ is dense in $A^n$
	for every $n\in \N$. A more natural definition of `topologically critical' would have been given by demanding that ${\mu}^n$
	almost every point of $A^n$ belongs to $S^{(\alpha ,\beta )}_n$ for every $n$ and every probability measure $\mu$. These two
	notions may not be the same. In any case these notions are trying to capture the topological analogue of $(\alpha ,\beta )$-witness
	of irregularity.
\end{rem}

\smallskip

Finally we state a couple of consequences of various things discussed above.

\smallskip

\begin{cor} A dynamical system is weakly tame if and only if has a bounded measure theoretic complexity with respect to any invariant
	measure.
\end{cor}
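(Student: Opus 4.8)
The plan is to obtain the corollary by combining the characterization of weak tameness in Theorem \ref{Spectral-tame} with the boundedness of the covering numbers $N^\infty(\mathcal{F},n,\eps)$ that was already established in the course of proving the $\mu$-compact characterization. Recall that for a system $(X,T,\mu)$ equipped with a metric $d$ and a F\o{}lner sequence $\{F_n\}$, the measure theoretic complexity is bounded if for every $\eps>0$ there is a constant $C(\eps)$, independent of $n$, so that $X$ may be covered, up to a set of measure less than $\eps$, by at most $C(\eps)$ balls in the averaged pseudo-metric $\bar d_n(x,y)=\frac{1}{|F_n|}\sum_{t\in F_n}d(xt,yt)$. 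In the language of the family $\mathcal{F}=\{f_t\ |\ t\in T\}$ attached to a function $f\in C(X)$, this is the requirement that the covering numbers $N^\infty(\mathcal{F},n,\eps)(\omega)$ (and a fortiori $N^1(\mathcal{F},n,\eps)(\omega)$, since $\|\cdot\|_1\leq\|\cdot\|_\infty$) stay bounded in $n$ for $\omega$ outside a set of arbitrarily small $\mu^n$-measure.

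For the forward implication, suppose $(X,T)$ is weakly tame. By Theorem \ref{Spectral-tame} this is equivalent to $(X,T,\mu)$ having discrete spectrum with respect to every invariant Borel probability $\mu$, and hence to every $f\in C(X)$ being a $\mu$-compact vector for every such $\mu$. The computation carried out in the proof that $\mu$-compactness implies condition (3) of the proposition characterizing $\mu$-compact vectors shows precisely that, once $f$ is $\mu$-compact, the number $N^\infty(\mathcal{F},n,\eps)(\omega)$ is bounded by a constant $L(\eps)$ depending only on $\eps$ and not on $n$, for all $\omega$ outside a set of arbitrarily small $\mu^n$-measure. This is exactly boundedness of the complexity with respect to $\mu$, and since weak tameness furnishes $\mu$-compactness of every $f\in C(X)$ for every invariant $\mu$, the complexity is bounded with respect to every invariant measure.

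For the converse, assume the complexity is bounded with respect to every invariant $\mu$. Then for each $f\in C(X)$ and each such $\mu$ the covering numbers $N^\infty(\mathcal{F},n,\eps)$ are bounded in $n$, whence $\frac{1}{n}\E\big(\log N^\infty(\mathcal{F},n,\eps)\big)\to 0$ for every $\eps>0$. By Theorem \ref{Talagrand} the family $\mathcal{F}$ is then $\mu$-Glivenko-Cantelli, and the proposition characterizing $\mu$-compact vectors yields that $f$ is $\mu$-compact. As this holds for every $f\in C(X)$ and every invariant $\mu$, the system has discrete spectrum with respect to every invariant measure, so Theorem \ref{Spectral-tame} returns weak tameness.

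The only delicate point, and the place where one must be slightly careful rather than facing a genuine difficulty, is the dictionary between the classical averaged metric $\bar d_n$ on $X$ and the covering numbers $N^\infty$ of the orbit fingerprints $\Sigma_{\mathcal{F}}(\omega,n)$: one should verify that boundedness in one description is equivalent to boundedness in the other, uniformly over invariant measures, and that passing from a single generating metric to the individual continuous functions $f$ preserves boundedness. Once this bookkeeping is in place, the corollary is immediate from Theorems \ref{Spectral-tame} and \ref{Talagrand} together with the bounded-$N^\infty$ estimate extracted from the $\mu$-compactness proposition.
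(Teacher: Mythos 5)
The step you flag as ``the only delicate point'' and dismiss as bookkeeping is in fact the entire content of the corollary, and your proposal does not supply it. The measure theoretic complexity in the sense of \cite{HWY} asks that, for each $\eps>0$, a set of $\mu$-measure at least $1-\eps$ be covered by boundedly many balls in the orbit-averaged pseudo-metric $\bar d_n(x,y)=\frac{1}{|F_n|}\sum_{t\in F_n}d(xt,yt)$: here $n$ indexes F\o{}lner (orbital) averaging and the objects being covered are \emph{points of $X$}. Talagrand's covering number $N^\infty(\mathcal{F},n,\eps)(\omega)$ instead covers the set $\Sigma_{\mathcal{F}}(\omega,n)=\{(f(\omega_1 t),\dots,f(\omega_n t))\ |\ t\in T\}\subset\R^n$, where $\omega_1,\dots,\omega_n$ are \emph{i.i.d.\ samples} from $\mu$: here $n$ indexes independent random draws and the objects being covered are the \emph{group translates} $f_t$. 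The two quantities transpose the roles of points and group elements and, more seriously, replace orbital averaging by i.i.d.\ sampling. Passing between ergodic (orbital) averages and i.i.d.\ statistics is precisely the non-trivial content of the Glivenko--Cantelli propositions in Section 4 of the paper, so asserting that bounded measure complexity ``is exactly'' boundedness of $N^\infty(\mathcal{F},n,\eps)$ outside small sets begs the question rather than proving it. A secondary gap: in your converse you feed an arbitrary invariant measure into the propositions characterizing $\mu$-compactness via Glivenko--Cantelli families, but those propositions are stated for \emph{ergodic} $\mu$, and Example \ref{Namioka} shows that the distinction between ergodic and general invariant measures is genuinely material here.

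The paper closes exactly this hole by importing the bridge from the literature instead of deriving it from Theorem \ref{Talagrand}: it quotes Proposition 4.1 of \cite{HWY} (discrete spectrum implies bounded measure complexity) for one direction, and Theorem 4.3 of \cite{HLTXY} or Theorem 3.2 of \cite{Vershik} (bounded measure complexity implies $\mu$-mean equicontinuity, hence discrete spectrum) for the other, and then combines these with the equivalence of weak tameness and discrete spectrum for all invariant measures from Theorem \ref{Spectral-tame}. If you want a self-contained proof along your lines, you would have to prove an analogue of those cited results, i.e.\ an actual equivalence between the orbit-averaged covering numbers defining measure complexity and the i.i.d.\ covering numbers of Talagrand; that is a real theorem, not a change of notation.
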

\begin{proof} By Proposition 4.1 from \cite{HWY}, if $\mu$ is an invariant measure and the dynamical system $(X,\mathcal{A},\mu,T)$
	has a discrete spectrum, then its measure complexity is bounded. Conversely, by the main result of \cite{HLTXY} ([Theorem 4.3]) or
	\cite[Theorem 3.2]{Vershik}, if the measure complexity is bounded then the system is $\mu$-mean equicontinuous, hence, its spectrum
	is discrete. Therefore the dynamical system has a bounded complexity for any invariant measure if and only if for each invariant
	measure its spectrum is discrete. The proof of the corollary is complete.
\end{proof}

\smallskip

Now let $T = \Z$. We recall the notion of a null system.

\smallskip

\begin{defn} Let $(X,T)$ be a compact metric dynamical system. Given a sequence $S=\{s_i\} \subset \Z$ and a finite open cover
	$\mathcal{O}$ of $X$, we define the topological entropy of $(X,T)$ with respect to $S$ and  $\mathcal{O}$ by
	$$
	h_{\textrm{top}}(S, \mathcal{O})=\lim_{n \rightarrow +\infty }\frac{\log\Big(N\big(\bigvee_{i=1}^{n}T^{s_i}\big( \mathcal{O}\big)\big)\Big)}{n},
	$$
	where $N(.)$ is the minimal cardinality of a subcover. The sequential topological entropy of $T$ along $S$ is given by
	$$
	h_{\textrm{top}}(S)=\sup\Big\{h_{\textrm{top}}(S, \mathcal{U}), \mathcal{U} \textrm{~is~an~ open~ cover~  of~} X \Big\}.
	$$
	System $(X,T)$ is said to be null if its sequential topological entropy is zero for any subsequence. 
\end{defn}

\smallskip

\begin{cor} Suppose $(X,T)$ is null, then $(X,T)$ is weakly tame.
\end{cor}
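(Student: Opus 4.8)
The plan is to reduce the corollary to the single implication ``null $\Rightarrow$ tame'', and then invoke the Kerr--Li criterion recalled in Theorem \ref{KL}. The remark following the definition of tameness already records that a tame system is $\mu$-tame for every invariant Borel probability measure $\mu$, and since weak tameness asks exactly that every $f\in C(X)$ be $\mu$-tame for every such $\mu$, the implication ``tame $\Rightarrow$ weakly tame'' is immediate. Hence it suffices to show that a null system is tame, and I would prove the contrapositive: if $(X,T)$ is not tame, then it is not null.

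So assume $(X,T)$ is not tame. By Theorem \ref{KL} there is a nontrivial IT-pair $(x,y)$, i.e. $x\neq y$ such that every product neighborhood of $(x,y)$ carries an infinite independent set in the sense of the independence definition above. Since $X$ is metric and $x\neq y$, I would choose open sets $U\ni x$ and $V\ni y$ with $\overline{U}\cap\overline{V}=\emptyset$, and let $B=\{s_1<s_2<\cdots\}\subset\Z$ be an infinite independence set for the pair $(U,V)$. Then form the two-element open cover $\mathcal{O}=\{O_1,O_2\}$ with $O_1=X\setminus\overline{U}$ and $O_2=X\setminus\overline{V}$. This is a genuine cover because $\overline{U}\cap\overline{V}=\emptyset$, and it separates $U$ from $V$: every point of $U$ lies in $O_2$ but not in $O_1$, and every point of $V$ lies in $O_1$ but not in $O_2$.

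Now fix $n$ and consider the refinement $\bigvee_{i=1}^n T^{s_i}\mathcal{O}$, whose atom containing a point $z$ is recorded by the itinerary $(j_1,\dots,j_n)$ where $T^{s_i}z\in O_{j_i}$. For each pattern $\omega\in\{1,2\}^n$, the independence of $B$ furnishes a point $p_\omega$ with $T^{s_i}p_\omega\in U$ when $\omega(i)=1$ and $T^{s_i}p_\omega\in V$ when $\omega(i)=2$. By the separation property, at each coordinate $i$ the iterate $T^{s_i}p_\omega$ meets exactly one member of $\mathcal{O}$, so the itinerary of $p_\omega$ is completely determined by $\omega$ and distinct patterns give distinct itineraries, i.e. distinct atoms. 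Consequently any subcover of $\bigvee_{i=1}^n T^{s_i}\mathcal{O}$ must contain all $2^n$ of these atoms, whence $N\big(\bigvee_{i=1}^n T^{s_i}\mathcal{O}\big)\geq 2^n$ and $h_{\textrm{top}}(B,\mathcal{O})\geq\log 2$. Therefore $h_{\textrm{top}}(B)\geq\log 2>0$, contradicting the hypothesis that the sequential topological entropy of $(X,T)$ vanishes along every sequence. This forces $(X,T)$ to be tame, and hence weakly tame.

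The only genuinely technical point is the counting step: verifying that the separating cover $\mathcal{O}$ turns the $2^n$ independence cells into $2^n$ distinct, individually indispensable atoms of the refinement. This is the elementary ``easy half'' of the correspondence between combinatorial independence and positive (sequence) entropy, and the choice of $U,V$ with disjoint closures is precisely what guarantees that each $p_\omega$ sits in a unique atom. I would note in passing that one could instead route the argument through Kushnirenko's theorem---bounding the measure-theoretic sequence entropy of each invariant measure by the vanishing topological one to obtain discrete spectrum, and then applying Theorem \ref{Spectral-tame}---but the IT-pair argument above is cleaner and relies only on the machinery (Theorem \ref{KL}) already in hand.
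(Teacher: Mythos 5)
Your proof is correct, but it takes a genuinely different route from the paper's. The paper's own proof is a two-line reduction: by Kushnirenko's theorem, a null system has discrete spectrum with respect to every invariant measure, and Theorem \ref{Spectral-tame} identifies that property with weak tameness. You instead prove the strictly stronger statement that a null system is \emph{tame}: assuming non-tameness, Theorem \ref{KL} yields a nontrivial IT-pair $(x,y)$, and your counting argument---the separating two-element cover $\mathcal{O}=\{X\setminus\overline{U},\,X\setminus\overline{V}\}$ together with the $2^n$ points supplied by independence, each of which lies in exactly one atom of $\bigvee_{i=1}^{n}T^{s_i}\mathcal{O}$---forces $N\big(\bigvee_{i=1}^{n}T^{s_i}\mathcal{O}\big)\geq 2^n$, hence sequence entropy at least $\log 2$ along the infinite independence set, contradicting nullness; the implication tame $\Rightarrow$ weakly tame is then immediate from the remark following the definition of tameness, exactly as you say. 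The counting step is sound: disjointness of $\overline{U}$ and $\overline{V}$ guarantees both that $\mathcal{O}$ covers $X$ and that each $p_\omega$ meets exactly one member of $\mathcal{O}$ at each time $s_i$, so distinct patterns occupy distinct, individually indispensable atoms. As for what each approach buys: your route delivers the stronger intermediate conclusion null $\Rightarrow$ tame (strictly stronger, since the paper's own remark cites Fuhrmann--Kwietniak's example of a tame non-null system), and it trades the measure-theoretic black box (Kushnirenko) for a combinatorial one (Kerr--Li) plus an elementary self-contained counting argument; the paper's route is shorter and stays within the spectral characterization of weak tameness that is the theme of the section. The alternative you mention in passing---bounding measure-theoretic sequence entropy via Kushnirenko and invoking Theorem \ref{Spectral-tame}---is precisely the proof the paper gives.
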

\begin{proof} By Kushnirenko \cite{Kush}, for null systems any invariant measure has discrete spectrum and hence by our result
	the system is weakly tame.
\end{proof}

\smallskip
\begin{rem} (1) D. Kerr and H. Li have characterized null systems as those that do not admit non-trivial IN-pairs, (see
	\cite {Kerr-Li} for details).
	
	\noindent (2)Very recently, Fuhrmann and Kwietniak proved that there is a tame dynamical system which is non-null \cite{FK} (Tame
	in the sense of Glasner-K\"{o}hler). 
\end{rem}

\medskip

\medskip

\section{\texorpdfstring{The Veech systems and $\K(T)$}{}}

\smallskip

In \cite{V1} Professor W. Veech introduced a structure which he called `a bi-topological flow'. The following is a slight
modification of his original definition.

\smallskip

\begin{defn} Let $(X,T)$ be a topological dynamical system, where $\tau_1$ denotes the topology on $X$. Let $T$ be countable. The
system $(X,T)$ is said to be a Veech system if $X$ has another topology $\tau_2$ such that the following properties hold.
\begin{enumerate}[label=(\arabic*)]
\item  Topology $\tau_2$ is a metric topology generated by a metric $D:X\times X\to [0,\infty )$,
\item  $\tau_1 \leq \tau_2$,
\item  any $\tau_2$ open set is $\tau_1$-Borel, (i.e. is in the sigma algebra generated by the $\tau_1$ open sets).
\item The $T$ action preserves metric $D$, i.e. $D(xt,yt) = D(x,y)$, for all $x,y\in X$ and $t\in T$.
\item  The space $(X,\tau_2)$ is separable.
\end{enumerate}
\end{defn}

\smallskip 

\begin{rem} (1) In fact W. Veech introduced this structure in two of his papers, first in \cite {V1} and much later in \cite {V2}.
In the first paper instead of condition (3) above, he requires a much stronger condition of $\tau_1$-continuity of the map
$y\mapsto D(x,y)$ for a generic set of $x$'s. In his later paper he weakened it by demanding that $y\mapsto D(x,y)$ be lower
semi-continuous. In the second paper, his main interest was in studying the special case of the translation flow on orbit closure of
functions of class $\K(\Z)$ (which we shall recall below). For this system the condition in his first paper does not hold
but the one in the second paper holds. In his study, he posed the question : Whether the Sarnak conjecture\footnote{See sections 6. for more details.} holds for the
translation flow on the orbit closure of functions of class $\K(\Z)$. In a recent paper \cite {HWY} the authors claim to have proved this,
\cite[Theorem 5.1]{HWY}. However this proof has a gap. We shall discuss this and present a correct proof of this conjecture.

\noindent (2) W. Veech introduces this structure for uncountable acting groups $T$ as well. In general, for such groups
technicalities arise due to non-separability of $l^\infty (T)$ and hence even the definition of $\K(T)$ becomes cumbersome.
So we restrict ourselves to countable $T$'s.
\end{rem}

\smallskip

As mentioned above, a prime example of Veech-system is the translation flow on  the orbit closure of a function of `class $\K(T)$'. The precise definition follows.

\smallskip

\begin{defn} Consider $l^\infty (T)$-the space of bounded, complex valued function on $T$ with the weak* topology as a dual of $l^1(T)$.
Let $f\in l^\infty (T)$ and $X_f$ be the closure of the orbit $\{f_t\ |\ t\in T\}$ with respect to the weak* topology,
where $f_t(s) = f(st)$. A function $f\in l^\infty (T)$ belongs to the class $\K(T)$ if $X_f$ is separable with respect to
the topology induced by the restriction of the $l^\infty(T)$ norm to $X$. It is not difficult to verify that the translation flow
$(X_f,T)$ is a Veech system. Here $\tau_1$ is the weak* topology and $\tau_2$ is the $l^\infty (T)$ norm topology on $X_f$.
\end{defn}

\smallskip 

\begin{rem} One can show that if $f\in \K(T)$ then $X_f\subset \K(T)$ and $\K(T)$ is a subalgebra containing the subalgebra $\textrm{WAP}(T)$
of weakly almost periodic functions on $T$. The following is a concrete example when $T = \Z$, that shows that this containment is proper.
\end{rem}

\smallskip

\begin{exmpl} Here $T = \Z$. Let $S\equiv \{n_k\}$ be a sequence in $\N$ such that $n_{k+1}-n_k$ increases to $\infty$. Let
$\bar \eps \equiv \{\eps_k\}\in \{-1\,,1\}$. Corresponding to $(S,\bar \eps)$, define a map $f \equiv f^{(S,\bar \eps)}:\Z \to \{-1,0,1\}$
by setting
\begin{align}
f(n) & = 0\ \text {if}\ n\leq 0\,, \notag \\
& = \eps_k\,, \text {if}\ n_k\leq n<n_{k+1}\,, k\in \N\,. \notag
\end{align}
Consider $f$ to be a point in $\{-1,0,1\}^{\Z}$ and let $X_f$ be the orbit closure of $f$ under the left shift map.
\end{exmpl}

\smallskip

\begin{lem} Consider the above example, then its enveloping semigroup $E(X_f,\Z)$ is given by
$$
E(X_f,\Z) = \Z\,\cup \{\hat p,\hat q,\hat z\}\,
$$
where the elements $\hat p,\hat q$ and $\hat z$ of $E(X_f,\Z)$ will be described in the proof. In particular, $f\in K(\Z)$ and $(X_f,\Z)$
is a Veech system. Furthermore, every element of $E(X_f,\Z)$ is a Baire-1 function and hence $(X_f,\Z)$ is tame. Finally $f\notin \textrm{WAP}(\Z)$ i.e
$(X_f,\Z)$ is not weakly almost periodic.
\end{lem}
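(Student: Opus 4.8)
The plan is to determine $X_f$ completely, read off $E(X_f,\Z)$ from it, and then extract all four assertions (membership in $\K(\Z)$ and the Veech property, tameness, and failure of weak almost periodicity) from that description.

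First I would describe the orbit closure. Write $\sigma$ for the left shift and $f_m=\sigma^m f$ for the translates, and set $\ell_k=n_{k+1}-n_k$, so that by hypothesis $\ell_k\uparrow\infty$. The crucial consequence is that any fixed finite window around the origin eventually meets at most one block-boundary of the shifted sequences; hence a pointwise limit $\lim_\alpha f_{m_\alpha}$ can only be a constant sequence or a sequence with a single jump. Concretely, the points of $X_f$ other than the translates $f_m$ are: the zero sequence $\mathbf 0$ (the limit of $f_m$ as $m\to-\infty$); the constants $\mathbf 1$ and $-\mathbf 1$, reached as $m\to+\infty$ through the interiors of $+1$- resp. $-1$-blocks; and the single-transition sequences obtained by sitting at bounded distance from a block-boundary. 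In particular $X_f$ is a countable, compact, metrizable space, which is already almost all one needs for the first two conclusions.

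Next I would compute the enveloping semigroup. Since each $\sigma^m$ is a homeomorphism, $E(X_f,\Z)$ is by definition the closure of $\{\sigma^m\mid m\in\Z\}$ in the topology of pointwise convergence, and I claim it equals $\Z$ together with three limit maps $\hat z,\hat p,\hat q$. Here $\hat z=\lim_{m\to-\infty}\sigma^m$ sends every translate $f_m$ to $\mathbf 0$, fixes the constants, and collapses each transition sequence to its left-hand value; while $\hat p$ and $\hat q$ are the limits of $\sigma^m$ as $m\to+\infty$ taken through $+1$- resp. $-1$-blocks, each sending every translate to the corresponding constant and collapsing every transition to its right-hand value. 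The substantive step is the completeness of this list: given any net $m_\alpha\to+\infty$ I would pass to a subnet and use $\ell_k\uparrow\infty$ to show that the limit map is already determined on all of $X_f$ by the eventual position of $m_\alpha$ relative to the block-boundaries, and that the only possibilities are the maps above. \textbf{This completeness check is the main obstacle}; it is precisely where the increasing-block-length hypothesis is indispensable, since it is what prevents the boundary-approaching subnets from generating infinitely many independent limit maps (equivalently, a topological copy of $\beta\N$) inside $E(X_f,\Z)$.

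Finally I would harvest the conclusions. Because $X_f$ is countable it is separable in every topology, in particular in the $\ell^\infty$-norm topology; this is exactly the defining requirement $f\in\K(\Z)$, and taking $\tau_1$ to be the weak$^*$ (pointwise) topology and $\tau_2$ the norm topology exhibits $(X_f,\Z)$ as a Veech system. For tameness, I note that each of $\hat z,\hat p,\hat q$ is a pointwise limit of an honest \emph{sequence} of shifts ($\sigma^{-j}$ for $\hat z$, and a sequence running to $+\infty$ through blocks of the appropriate sign for $\hat p$ and $\hat q$); a sequential pointwise limit of continuous maps on a metric space is a Baire-$1$ map, and since the $\sigma^m$ are themselves continuous, every element of $E(X_f,\Z)$ is Baire-$1$, so $(X_f,\Z)$ is tame. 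For the failure of weak almost periodicity I would invoke the standard fact that a point-transitive system is weakly almost periodic iff every element of its enveloping semigroup is a continuous self-map, and then observe that $\hat z$ is discontinuous: choosing $f_{m_j}\to\mathbf 1$ one has $\hat z(f_{m_j})=\mathbf 0\not\to\mathbf 1=\hat z(\mathbf 1)$. Hence $(X_f,\Z)$ is not WAP, i.e. $f\notin\mathrm{WAP}(\Z)$.
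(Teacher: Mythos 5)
Your outline (first determine $X_f$, then read off $E(X_f,\Z)$, then harvest the four conclusions) is the same as the paper's, and your description of $X_f$ is in fact \emph{more} accurate than the paper's: you correctly include the single-transition sequences, whereas the paper's proof asserts $(x^*t)r=\mathbf 0$ for every $r\in \bar Z$ and concludes $X_f=\{x^*t\}\cup\{\mathbf 1,-\mathbf 1,\mathbf 0\}$, which is false for ultrafilters sitting at bounded distance from block boundaries of mixed sign. But this very point destroys your ``completeness check'', the step you yourself flag as the main obstacle. Suppose sign changes occur infinitely often, say $\eps_{k}=1$, $\eps_{k+1}=-1$ for $k$ in an infinite set $A$, and take $m_j=n_{k_j+1}-5$ with $k_j\in A$. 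You have already shown that $\sigma^{m_j}f$ converges pointwise to the transition sequence $g$ with $g(n)=1$ for $n<5$ and $g(n)=-1$ for $n\ge 5$. By compactness of $X_f^{X_f}$ some subnet of $(\sigma^{m_j})$ converges in $E(X_f,\Z)$ to a map $\rho$ with $\rho(f)=g$. Every $\sigma^m$ sends $f$ to a translate (which vanishes on a half-line), and $\hat p,\hat q,\hat z$ send $f$ to a constant, so $\rho\notin\Z\cup\{\hat p,\hat q,\hat z\}$. Varying the offset $5$ and the transition type, one gets infinitely many such elements, each sending every translate $f_t$ to a transition sequence. So the displayed equality $E(X_f,\Z)=\Z\cup\{\hat p,\hat q,\hat z\}$ cannot be proved the way you propose: it is false whenever $\bar\eps$ changes sign infinitely often, and your claim that ``the only possibilities are the maps above'' is internally inconsistent with your own (correct) description of $X_f$. (When $\bar\eps$ is eventually of constant sign the transitions disappear, but then $\hat q$ does too, so the three-map description fails in that case as well.)

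To be fair, the paper's own proof has the same defect, hidden one step earlier in the erroneous computation of $x^*r$ for $r\in\bar Z$; what is genuinely correct and is all that the rest of the paper uses are the four qualitative conclusions, and these survive in your write-up after a restructuring. Namely: (i) $X_f$ is countable (your first paragraph essentially proves this), hence norm-separable, which gives $f\in\K(\Z)$ and the Veech property; (ii) every element of the \emph{corrected} enveloping semigroup, including the extra transition-valued maps, is still a pointwise limit of a genuine sequence of shifts, hence Baire-1 --- or, more robustly, quote the paper's Lemma 5.7 that any compact \emph{countable} system is tame, which bypasses the enumeration of $E(X_f,\Z)$ entirely; (iii) your discontinuity argument for $\hat z$ (choose $f_{m_j}\to\mathbf 1$, note $\hat z(f_{m_j})=\mathbf 0\not\to\mathbf 1=\hat z(\mathbf 1)$) is correct, coincides with the paper's, and yields non-WAP. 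I would therefore drop the exact enumeration of $E(X_f,\Z)$, which is neither correct nor needed, and run the proof as: countability of $X_f$, tameness from countability, and the discontinuity of $\hat z$ for the failure of weak almost periodicity.
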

\begin{proof} Recall that  $f \equiv f^{(S,\bar \eps)}$ is given. To avoid confusion, we shall denote the point $f$ of $X_f$ by $x^*$. Denote
by $[a_k,b_k]$ the `middle third' of the interval $[n_k,n_{k+1}]$. Let
$$
P = \bigcup \Big\{n\in [a_k,b_k]\ |\ f(a_k) = 1\Big\}\,,\quad Q = \Big\{n\in [a_k,b_k]\ |\ f(a_k) = -1 \Big\}\quad \text {and}\ Z = \Z \backslash (P\cup Q)\,.
$$
The partition $\{P,Q,Z\}$ of the set $\Z$ induces a partition $\{\bar P\,,\bar Q\,,\bar Z\}$ of $\beta \Z$, given by their closures in $\beta \Z$.
Let $p\in \bar P$, we describe the map $\rho_p:X_f\to X_f$ giving its action on $X_f$. Let $U\in p$. Note that for arbitrarily large $k\in \N$,
$U\cap [a_k,b_k]\ne \emptyset$, where $[a_k,b_k]\subset P$. Thus given any $m\in \N$, select $k_m\in \Z$ and $t_{(U,k_m)}\in U\cap [a_k,b_k]$, where
$k_m$ is `slightly less' than one third $n_{k+1}-n_k$. Note that the net $\{t_{(U,k_m)}\}$ converges to $p$ and
$$
x^*p (t) = \lim\limits_{t_{(U,k_m)}} x^*t_{(U,k_m)}(t) = \lim\limits_{t_{(U,k_m)}} x^*(t_{(U,{k_m})}+t) = 1\,.\quad \text {for all}\ |t| <m\,.
$$
Since $n_{j+1}-n_j\to \infty$, it follows that $x^*p(t) = 1$ for all $t\in \Z$. Denoting the constant sequences $1$, $-1$ and $0$ by $\bf {1}$, 
$\bf {-1}$ and $\bf {0}$ respectively, we have shown that $x^*\cdot p \equiv f_p = \bf {1}$, if $p\in \bar P$. Actually the same argument is
valid for any translate of $f$ as well. Thus, $(x^*t)p = \bf {1}$ for all $t\in \Z$. Similarly, we can see that $(x^*t)r = \bf {-1}$ if $r\in \bar Q$ and
$(x^*t)r = \bf {0}$, if $r\in \bar Z$, for all $t\in \Z$. The last fact can be proved similarly, by considering a net, (or a sequence), $k_\ell \to -\infty$
and observing that $k_\ell \to r\in \bar Z$ and arguing as above. This shows that the only elements in the orbit closure of $x^*$, under the action of
$E(X_f,\Z)$ are $\bf {1}$, $\bf {-1}$ and $\bf {0}$, i.e. $X_f = \{x^*t\ |\ t\in \Z\}\cup \{{\bm 1}\,,{\bf {-1}}\,,{\bm 0}\}$. Now it is easy to verify
that each element $r\in \beta \Z$ fixes these three elements. Thus, we have a complete description of the elements of $E(X_f,\Z)\backslash \Z$,
they are the maps $\hat p$, $\hat q$ and $\hat z$ given by,
\begin{align}
(x^*t)\hat p & = \bf {1}\,,\quad \text {and}\ \hat p\ \text {fixes}\ \bf {1}\,,{-1}\,,\bf {0}\,, \notag \\
(x^*t)\hat q & = \bf {-1}\,,\quad \text {and}\ \hat q\ \text {fixes}\ \bf {1}\,,{-1}\,,\bf {0}\,, \notag \\
(x^*t)\hat z & = \bf {0}\,,\quad \text {and}\ \hat z\ \text {fixes}\ \bf {1}\,,{-1}\,,\bf {0}\,, \notag \\
\end{align}
where $t\in \Z$.

Finally, to show that $(X_f,\Z)$ is not weakly almost periodic, pick a sequence $k_\ell \to -\infty$ and $p\in \bar P$. Then $(x^*k_\ell)p = 1$,
for each $k_\ell$. But $\big(\lim\limits_{k_\ell \to -\infty} x^*k_\ell\big)\cdot p = {\bf{0}}\cdot p = \bf{0}$. This shows
that the map $\rho_p$, is discontinuous at $\bf {0}$. Thus $(X_f,\Z)$ cannot be weakly almost periodic, (since for such systems all
elements of the enveloping semigroup are continuous (see \cite {EN2})). 
\end{proof}

\smallskip

In fact, the following more general observation proves that countable, compact dynamical systems are tame.

\smallskip

\begin{lem}\label{Wu} Let $(X,T)$ be a compact countable dynamical system. Then $(X,T)$ is tame.
\end{lem}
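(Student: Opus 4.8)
The plan is to exploit the fact that a countable phase space forces the entire enveloping semigroup to be metrizable, which lets me replace the defining \emph{nets} in $E(X,T)$ by \emph{sequences} and read off the Baire-$1$ property directly. First I would record that a countable compact Hausdorff space is automatically metrizable (by the classical Mazurkiewicz--Sierpi\'{n}ski structure theorem for such spaces, or simply because such an $X$ is second countable and regular). Thus $(X,T)$ is in fact a compact \emph{metric} system and each translation $\pi_t:X\to X$ is continuous. This reduces ``tame'' to showing that every $p\in E(X,T)$, regarded as a map $X\to X$, is a pointwise limit of a \emph{sequence} of continuous maps.

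The key topological observation is that since $X$ is countable, the product $X^X=\prod_{x\in X}X$ is a product of only countably many copies of the compact metric space $X$; hence $X^X$ is itself compact and metrizable (Tychonoff together with metrizability of countable products). The enveloping semigroup $E(X,T)=\overline{\{\pi_t\ |\ t\in T\}}$ is a closed subset of $X^X$ in the topology of pointwise convergence, so $E(X,T)$ is again compact and metrizable, and $\{\pi_t\ |\ t\in T\}$ is by construction dense in it.

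Now I would invoke metrizability to upgrade closure to sequential closure: for every $p\in E(X,T)$ there exists a sequence $\{t_n\}\subset T$ with $\pi_{t_n}\to p$ in $X^X$, i.e. $\pi_{t_n}(x)\to p(x)$ for every $x\in X$. Thus $p$ is a pointwise limit of a sequence of continuous maps into the metric space $X$, which is exactly the assertion that $p$ is a Baire-$1$ function. Since this holds for each $p\in E(X,T)$, the system $(X,T)$ is tame.

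The only step needing genuine care is the metrizability of $E(X,T)$, and this is where countability of $X$ is used in an essential way (it is what makes $X^X$ a \emph{countable} product); no hypothesis on the acting group $T$ is required. As an alternative one could instead appeal to the BFT dichotomy (Theorem~\ref{BFT}): a metrizable $E(X,T)$ cannot contain a topological copy of the non-metrizable space $\beta T$, so the system cannot fail to be tame. I would, however, favour the direct argument above, since it is self-contained and does not rely on the countability assumption on $T$ built into the dichotomy.
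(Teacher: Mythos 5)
Your proof is correct, but it takes a genuinely different route from the paper's. The paper argues at the level of individual functions: it shows that \emph{every} map $f\colon X\to\R$ on a countable compact space is of Baire class $1$, because its set of discontinuity points is a countable union of non-isolated singletons, hence of first category; tameness follows since, in particular, every element of $E(X,T)$ is such a map, and no use is made of the semigroup structure or of the density of the translations. (Strictly speaking, ``first-category discontinuity set'' alone does not characterize Baire-$1$; the paper's argument implicitly relies on Baire's characterization via continuity points of restrictions to closed subsets, which works here because every closed subset is again countable compact.) You instead use countability of $X$ to make $X^X$ a \emph{countable} product of compact metrizable factors -- metrizability of $X$ itself being the Mazurkiewicz--Sierpi\'nski point, or, more simply, the observation that countably many continuous functions suffice to separate the countably many pairs of points, embedding $X$ into $\R^{\N}$ -- so that $E(X,T)$ is compact metrizable, nets can be replaced by sequences, and each $p\in E(X,T)$ is a pointwise limit of a sequence $\pi_{t_n}$ of continuous maps, i.e.\ Baire-$1$ by definition. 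Your route buys something the paper's does not: metrizability of the enveloping semigroup, which in the Glasner--Megrelishvili hierarchy is strictly stronger than tameness (it characterizes hereditarily non-sensitive systems), and it sidesteps the category-theoretic characterization of Baire-$1$ that the paper leaves implicit. The paper's route, in exchange, proves the stronger pointwise statement that \emph{all} real-valued functions on $X$, not just the elements of $E(X,T)$, are Baire-$1$. Both arguments, as you correctly note, place no countability hypothesis on the acting group $T$, unlike the shortcut through the BFT dichotomy.
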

\begin{proof} Let $f:X\to \R$ be any map. We want to show that $f$ is of Baire class one. This will show that any element of the enveloping
semigroup $E(X,T)$ is a Baire class one function and hence $(X,T)$ is tame. We need to show that the set $S \equiv \{x\in X\ |\ f\ \text {is
not continuous at}\ x\}$ is a set of first category. If $S$ is finite, this is obvious. So suppose $S$ is countable, say $S = \{y_j\ |\ j\in \N\}$.
Note that for any $j$, ${\overline {\{y_j\}}}^0 = \{y_j\}^0 = \emptyset$. If not, then $y_j$ is an isolated point and hence is not a point of
discontinuity of $f$. Thus, $S$ is of first category.
\end{proof}

\smallskip
\begin{rem}Of-course, the proof of Lemma \ref{Wu} can be obtained directly by applying Bourgain-Fremelin-Talagrand
dichotomy theorem, since the cardinality of $\beta T$ is at most $2^{\aleph_0}$. But, here, our arguments are much simpler. 
\end{rem}

The next result describes the nature of minimal sets and the support of an invariant ergodic measure on a Veech system. The proof presented by
W. Veech in \cite {V2} is primarily for the special case $(X_f,T)$, where $f\in \K(T)$. To prove analogous result for general Veech systems
we need to modify  arguments and use the enveloping semigroup machinery.

\smallskip

\begin{thm} \label{str-min-set} Let $(X,T)$ be a Veech system. Let $\mu$ be any invariant ergodic, Borel probability measure on $X$ with support $C(\mu )$. Then
\begin{enumerate}[label=(\arabic*)]
\item  $C(\mu)$ is a $\tau_1$-minimal set.
\item  In addition $T$ be amenable. Then every minimal subset of $X$ is almost automorphic, (in particular point distal) and is an `isometric extension'
(in the sense of \cite {DG}), i.e. is a measure theoretic isometric extension of its maximal equicontinuous factor.
\item  With $T$ amenable, every ergodic invariant measure on $X$ has discrete spectrum.
\end{enumerate}
\end{thm}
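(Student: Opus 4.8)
The plan is to establish the three statements in order, using the bi-topological structure of a Veech system together with the enveloping semigroup machinery developed earlier and the characterization of $\mu$-compactness. The key structural fact to exploit throughout is condition (4) of the definition of a Veech system: the $T$-action is an isometry for the metric $D$ generating the finer topology $\tau_2$, while the dynamics and the measure live on the coarser $\tau_1$-topology, with every $\tau_2$-open set being $\tau_1$-Borel.

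For statement (1), I would first argue that the support $C(\mu)$ is $\tau_1$-minimal by showing every point of $C(\mu)$ has a $\tau_1$-dense orbit in $C(\mu)$. The natural approach is to use the $D$-isometry: since $T$ acts by $D$-isometries and $(X,\tau_2)$ is separable, the closure $\overline{\mathcal{O}(x)}^{\,\tau_2}$ is a $T$-invariant complete separable metric space on which $T$ acts isometrically, hence that restricted system is distal (indeed equicontinuous) in $\tau_2$. Because $\mu$ is ergodic, $\mu$-almost every point is generic and has $\tau_1$-orbit closure equal to $C(\mu)$; the $D$-isometry lets me upgrade this to \emph{every} point of $C(\mu)$, since two points that are $\tau_2$-close stay $\tau_2$-close under all translates, so proximality in $\tau_1$ forces coincidence of $\tau_1$-orbit closures on the support. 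I would conclude that $C(\mu)$ contains no proper closed $\tau_1$-invariant subset, i.e. it is $\tau_1$-minimal.

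For statement (2), assuming $T$ amenable, I would take any $\tau_1$-minimal subset $Y\subset X$ and analyze it through its enveloping semigroup. The essential point is that the $D$-isometry makes the $\tau_2$-orbit closure an equicontinuous (indeed isometric) system, which serves as the maximal equicontinuous factor; the map from $Y$ to this factor is the candidate isometric extension in the sense of \cite{DG}. To get \emph{almost automorphic} (hence point-distal), I would show that over a residual set of points in the equicontinuous factor the fiber is a single point, using that $\tau_1\le\tau_2$ and that the $\tau_2$-topology is metric and separable: the obstruction to injectivity of the factor map is exactly a $\tau_1$-proximal pair whose $\tau_2$-distance is positive, and the isometry property rules out the formation of genuinely distinct distal fibers except on a meager set. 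This identifies $Y$ as an almost automorphic isometric extension.

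Statement (3) then follows by combining (2) with the spectral results established earlier. Given (2), each ergodic invariant measure is carried by a minimal set that is a measure-theoretic isometric extension of its maximal equicontinuous factor; since isometric extensions of discrete-spectrum systems again have discrete spectrum, and the equicontinuous factor manifestly has discrete spectrum, I conclude the system $(C(\mu),T,\mu)$ has discrete spectrum. Alternatively, and more in the spirit of the paper, I would verify $\mu$-compactness of an arbitrary $f\in C(X)$ directly: the $D$-isometry gives, for $\mu$-a.e. $x$, $\tau_2$-equicontinuity of the translate family, which via Proposition \ref{disc-impies-muequi} and its converse yields that every $f\in L^2(X,\mu)$ is a $\mu$-compact vector, hence discrete spectrum. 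The main obstacle I anticipate is in statement (2): controlling the interplay between the two topologies when passing from $\mu$-a.e.\ statements to the topological ``everywhere'' statements needed for almost automorphy, since the $\tau_2$-open sets are only $\tau_1$-Borel and one must ensure the relevant proximal cells and fibers are measurable and behave well under the enveloping semigroup action. Carefully setting up the factor map and its fibers, and showing the exceptional set is meager rather than merely null, is where the real work lies.
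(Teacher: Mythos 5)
Your proposal has genuine gaps in all three parts, and the most serious is the ``upgrade from almost every point to every point'' that you lean on in (1) and (2). For (1), recall that the support of an ergodic measure is in general \emph{not} minimal (a Bernoulli shift is the standard counterexample), so ergodicity gives dense $\tau_1$-orbits for a.e.\ point but nothing more; what is actually needed is an almost periodic point with dense orbit, i.e.\ syndetic return times to $\tau_1$-neighborhoods. The paper produces this by covering $X$ with countably many $D$-balls (separability of $\tau_2$), applying measure-theoretic recurrence (Lemma \ref{rec-lem}) inside each ball whose intersection with $C(\mu)$ has positive measure, and then using $T$-invariance of $D$ and the triangle inequality $D(y,yt)\le D(y,y_m)+D(y_m,y_mt)+D(y_mt,yt)$ to convert recurrence of a single reference point $y_m$ into syndetic (and, with a four-term version, $\Delta^*$) recurrence of \emph{every} point of that ball; intersecting the resulting full-measure set of almost periodic (resp.\ almost automorphic) points with the dense-orbit points gives minimality. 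Your mechanism --- ``$\tau_1$-proximality forces coincidence of orbit closures'' --- is never substantiated, and the stronger claim that \emph{every} point of $C(\mu)$ inherits these properties is precisely what the paper states it \emph{cannot} prove for general Veech systems: it only shows $\mu(C(\mu)\setminus C)=0$ and rules out $C(\mu)\setminus C\neq\emptyset$ much later, solely for $X_f$ with $f\in\K(T)$, using Borelness and $D$-contractivity of the maps $\rho_p$. Note also that $D$-balls centered at points of the support can have measure zero, so you cannot approximate an arbitrary point of $C(\mu)$ in the $D$-metric by generic points.

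In (2) you misidentify the maximal equicontinuous factor: the $\tau_2$-orbit closure is not a factor of $(Y,\tau_1)$ at all (there is no map from $Y$ onto it, and it is not $\tau_1$-compact); the factor in question is the quotient of $C(\mu)$ by the \emph{regional} proximal relation $Q$, and amenability enters twice --- via McMahon's theorem to know $Q$ is an invariant closed equivalence relation, and to know that every minimal set carries an ergodic measure so that the a.e.\ analysis applies to arbitrary minimal sets. The a.e.\ injectivity of $\pi:C(\mu)\to C(\mu)/Q$ then follows because almost automorphic points $x$ have trivial Veech cell $V[x]=\{x\}$ while $V[x]$ is dense in $Q[x]$ (Auslander--Greschonig--Nagar); your ``meager set'' discussion conflates category with measure and does not yield the full-measure injectivity that the definition of measure-theoretic isometric extension requires. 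Finally, in (3) your first argument invokes the principle that isometric extensions of discrete-spectrum systems have discrete spectrum, which is false as stated: the paper's own Example in Remark 1.5, the skew product $(x,y)\mapsto(x+\alpha,x+y)$, is an isometric (group) extension of a rotation and fails to have discrete spectrum. What the paper actually uses is that here the extension is a.e.\ one-to-one, so $(C(\mu),T,\mu)$ is measure-theoretically \emph{isomorphic} to its equicontinuous factor. Your fallback via Proposition \ref{disc-impies-muequi} ``and its converse'' also does not close the argument for the stated generality: the converse proved in the paper (building on Theorem \ref{abelianT}) assumes $T$ abelian, whereas the theorem claims all countable amenable $T$.
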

\begin{proof} (1): Let $\Big\{x_m\ |\ m\in \N \Big\}$ be a countable $\tau_2$-dense subset of $X$. Fix any $\eps >0$. Then $\Big\{B_\eps (x_m)\ |\ m\in \N\Big\}$ is a
cover of $X$, (recall that $B_\eps (x_m)$ is the $\eps$ ball centered at $x_m$ in metric $D$). Let $\Sigma (\eps )\subset \N$ be a
countable set such that $m\in \Sigma (\eps)$ if and only if $\mu (C_m) > 0$ where, $C_m(\eps) = B_\eps (x_m)\cap C(\mu )$. Then
$C(\eps ) = \bigcup\limits_{m\in \Sigma (\eps )}C_m(\eps )$ is a $\tau_1$-Borel subset of $C(\mu )$ of measure $1$ for every $\eps >0$.

Since $\mu$ is ergodic, by Lemma \ref{rec-lem}, there exists a $y_m\in C_m$ and a syndetic set $S_m\subset T$ such that $t\in S_m \equiv S_m(\eps )$ implies
$y_mt\in C_m$.\\

\noindent Claim : If $y\in C_m(\eps )$ and $t\in S_m$, then $D(y,yt)<3\eps$. This follows from the $T$ invariance of metric $D$ and following triangle inequality
$$
D(y,yt) \leq D(y,y_m) + D(y_m,y_m\cdot t) + D(y_m t ,yt) < 3\eps\,.
$$
Let $C_1 = \bigcap\limits_{n\in \N}C(\frac {1}{n})$. Then $\mu (C_1) = 1$ and if $y\in C_1(\mu )$, we have shown that the orbit of $y$ returns to
its $3\eps$ neighbourhood, (in $D$ metric), in a syndetic set, for every $\eps >0$. In particular, since $\tau_1\leq \tau_2$, it returns to its
given $\tau_1$-neighbourhood in a syndetic set. Since $(X,\tau_1)$ is compact, this means that $y$ is a $\tau_1$-almost periodic point, i.e. its
$\tau_1$-orbit closure is a $\tau_1$-minimal set. Note that we cannot say this with respect to the 
$\tau_2$ topology. To conclude the 
$\tau_2$-compactness of the $\tau_2$-orbit closure one would need some additional special properties, such as local compactness of the $\tau_2$-topology,
which in general we do not have.\\

\noindent (2): In fact, we can improve the previous claim to : If $y\in C_m(\eps )$ and $t,s\in S_m$, then $D(y,y(ts^{-1}))<4\eps$. This follows
from the inequality,
\begin{align}
D(y,y(ts^{-1})) & \leq D(y,y_m) + D(y_m,y_m(ts^{-1})) + D(y_m(ts^{-1}) ,y(ts^{-1})) \notag \\
& \leq D(y,y_m) + D(y_ms,y_mt) + D(y_m,y) \notag \\
& \leq D(y,y_m) + D(y_ms,y) + D(y,y_mt) + D(y_m,y) < 4\eps\,.\notag
\end{align}
This shows that every point in $C_1$ returns to its $4\eps$ neighbourhood in a set of times which is a $\Delta^*$ set, for every $\eps > 0$.
As before, since $\tau_1 \leq \tau_2$, the return time set of any point $y\in C_1$ to any of its $\tau_1$- neighbourhood is a $\Delta^*$ set. Hence
such a $y$ is $\tau_1$-almost automorphic.

Now , let
$$
C_2 = \Big\{y\in C(\mu )\ |\  \tau_1\text {-orbit closure of}\  y\ \text {equals}\ C(\mu ) \Big\}\,.
$$
Since $\mu$ is ergodic, $\mu (C_2) = 1$. Let $C = C_1\cap C_2\subset C(\mu )$. Then $\mu (C) = 1$ and if $y\in C$, then $\tau_1$-orbit closure
of $y$ is $C(\mu)$ and $y$ is almost automorphic. Even though $\mu (C(\mu )\backslash C) = 0$, unfortunately, for general Veech systems we are
unable to show that $C(\mu )\backslash C$ is the empty set. This would prove that $C(\mu )$ is actually minimal equicontinuous. We shall later
prove this for the special case of $(X_f,T)$, with $f\in \K(T)$.\\

We have shown that $C(\mu )$ is an almost automorphic minimal set. Now, we observe that it is a `regular' almost automorphic set and hence 
$(C(\mu ),T,\mu )$ is an measure theoretical `isometric extension' of its maximal equicontinuous factor, (see \cite {FGL} and \cite {DG}
for these notions). First, note that since $T$ is amenable, by a well known theorem of D. McMahon the regional proximality relation $Q(C(\mu ))$
on $C(\mu )$ is an `icer', i.e. invariant, closed equivalence relation. Next, we recall the `Veech relation' $V(Y,T)$ on any dynamical system $(Y,T)$, 
(see \cite {AGN}),
$$
V(Y,T) = \Big\{(y_1,y_2)\in Y\times Y\ |\ \text {there exists a net}\ t_\alpha\in T\ \text {and}\ z\in Y\ \text {such that}\ y_1t_\alpha\to z\ \text {and}\ zt^{-1}_\alpha \to y_2 \Big\}\,.
$$
Since each $x\in C\subset C(\mu)$ is almost automorphic, $V[x] \equaldef \{y\in X\ |\ (x,y)\in V(X,T)\} = \{x\}$. By Theorem 13
of \cite {AGN}, the cell $V[x]$ is dense in the cell $Q[x]$ of the regional proximality relation. Thus, if $\pi :C(\mu ) : \to C(\mu)/Q(C(\mu ))$
is the canonical factor map from $C(\mu )$ onto its maximal equicontinuous factor, then $\pi^{-1}(\pi (x)) = \{x\}$ for all $x\in C$. Thus $\pi$ is
one to one on set $C$, a set of full measure. Thus, $C(\mu )$ is an isometric extension of its maximal equicontinuous factor.\\

Finally, since $T$ is amenable, every minimal set $M$ is the support $C(\mu )$ of some ergodic invariant measure $\mu$. It follows that $(M,T)$
is minimal almost automorphic and is an isometric extension of its maximal equicontinuous factor.\\

\noindent (3): This immediately follows from (2), since $(C(\mu ),T,\mu )$ is measure theoretically isomorphic to a minimal equicontinuous system,
namely, its maximal equicontinuous factor.
\end{proof}

\smallskip

\begin{rem} The above theorem describes the structure of minimal sets in a general Veech system. However, for such systems (i) we cannot say much
about the regularity properties of the elements of its enveloping semigroup and (ii) in general the discrete nature of the spectrum cannot be
easily extended to non-ergodic measures. Now we shall show that for the special case of $(X_f,T)$, ($f\in \K(T)$), more can be said regarding
these two issues. 
\end{rem}

\smallskip

\begin{thm} Let $T$ be a countable group and $f\in \K(T)$. Then every element of $E(X_f,T)$ is Borel.
\end{thm}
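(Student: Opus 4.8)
The plan is to exploit the concrete realization of $(X_f,T)$ as a Veech system, where $\tau_1$ is the weak$^*$ topology inherited from $l^\infty(T)=(l^1(T))^*$, where $\tau_2$ is the norm topology, and where the defining metric is $D(x,y)=\|x-y\|_\infty$. Recall that $T$ acts by translation, $(xt)(s)=x(st)$, so the action is a $D$-isometry, $D(xt,yt)=D(x,y)$, reindexing via the bijection $s\mapsto st$. Since $T$ is countable, $l^1(T)$ is separable, so $(X_f,\tau_1)$ is compact (Banach--Alaoglu) and metrizable; in particular the enveloping semigroup $E(X_f,T)$ is the pointwise-$\tau_1$ closure of the translations, and an element $p\in E(X_f,T)$ is given by a net $\{t_\alpha\}$ in $T$ with $\rho_p(z)=\tau_1\text{-}\lim_\alpha zt_\alpha$ for every $z\in X_f$.

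First I would record the key semicontinuity fact: the metric $D$ is \emph{jointly $\tau_1$-lower semicontinuous}. Indeed $\|g\|_\infty=\sup_{t\in T}|g(t)|=\sup_{t\in T}|\langle g,\delta_t\rangle|$ is a supremum of weak$^*$-continuous functions (each $\delta_t\in l^1(T)$), hence weak$^*$-lower semicontinuous, while $(x,y)\mapsto x-y$ is jointly weak$^*$-continuous; composing shows that $(x,y)\mapsto\|x-y\|_\infty$ is jointly $\tau_1$-lower semicontinuous. Now fix $x,y\in X_f$ and apply this to the nets $xt_\alpha\to\rho_p(x)$ and $yt_\alpha\to\rho_p(y)$. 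Since $D(xt_\alpha,yt_\alpha)=D(x,y)$ for every $\alpha$ by isometry, lower semicontinuity yields
$$
D(\rho_p(x),\rho_p(y))\ \le\ \liminf_\alpha D(xt_\alpha,yt_\alpha)\ =\ D(x,y)\,.
$$
Hence $\rho_p$ is $D$-nonexpansive, and in particular $\rho_p:(X_f,\tau_2)\to(X_f,\tau_2)$ is continuous.

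It remains to transfer this $\tau_2$-continuity into a $\tau_1$-Borel statement, and here the Veech axioms do the work at the level of Borel $\sigma$-algebras. From $\tau_1\le\tau_2$ one gets $\mathcal{B}_{\tau_1}\subseteq\mathcal{B}_{\tau_2}$, while axiom (3) (every $\tau_2$-open set is $\tau_1$-Borel) gives $\tau_2\subseteq\mathcal{B}_{\tau_1}$, hence $\mathcal{B}_{\tau_2}\subseteq\mathcal{B}_{\tau_1}$; thus the two Borel structures coincide, $\mathcal{B}_{\tau_1}=\mathcal{B}_{\tau_2}$. A $\tau_2$-continuous self-map is $\mathcal{B}_{\tau_2}$-measurable, hence $\mathcal{B}_{\tau_1}$-measurable; and since any $\tau_1$-open set is $\tau_2$-open, its $\rho_p$-preimage is $\tau_2$-open and therefore lies in $\mathcal{B}_{\tau_1}$. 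Consequently $\rho_p:(X_f,\tau_1)\to(X_f,\tau_1)$ is Borel, as required.

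The main obstacle is the passage to the limit in the middle step: the metric $D$ generating $\tau_2$ is in general only lower semicontinuous, not continuous, for the coarser topology $\tau_1$, so one cannot hope for $\rho_p$ to be a $\tau_2$-isometry, only nonexpansive. This is exactly where the argument relies on the special $\K(T)$ structure, namely the weak$^*$-lower semicontinuity of the $l^\infty$-norm, rather than on the abstract Veech axioms alone. I would also stress that this argument delivers only Borel measurability and not membership in Baire class one; upgrading to tameness requires the finer analysis carried out separately.
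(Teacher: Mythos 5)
Your proof is correct, and it takes a genuinely different route from the paper's. The paper fixes a $\tau_1$-open set $U$, uses $\tau_2$-separability to write $U$ as a countable union of $D$-balls $B_\eps(y_n)$, and then proves a key lemma (Lemma \ref{keylemma}) identifying $\rho_p^{-1}(B_\eps(y_n))$ exactly as $\bigcup_{z\in \Sigma_n}B_\eps(z)$, where $\Sigma_n = \rho_p^{-1}(y_n)$ is the fiber over the center; this requires not only the nonexpansiveness estimate (\ref{contraction}), which you also prove, but also a reverse estimate $\big\|(x-z)p\big\|_\infty \geq \big\|x-z\big\|_\infty$, after which separability is invoked a second time to write the union over the fiber as a countable union of $\tau_1$-Borel balls. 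You dispense with the fiber description entirely: you obtain nonexpansiveness from the $T$-invariance of $D$ together with the weak$^*$-lower semicontinuity of the $\ell^\infty$-norm, conclude that $\rho_p$ is $\tau_2$-continuous, and then note that for $\tau_1$-open $U$ the preimage $\rho_p^{-1}(U)$ is $\tau_2$-open, hence $\tau_1$-Borel directly by axiom (3) of the definition of a Veech system. Two advantages of your route are worth recording. First, your lower-semicontinuity derivation of (\ref{contraction}) is cleaner than the paper's pairing argument, which picks a functional $z\in \ell^1(T)$ attaining $\big\|xp-yp\big\|_\infty$ — a supremum over the $\ell^1$-unit ball that need not be attained. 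Second, and more substantially, you avoid the reverse inclusion of Lemma \ref{keylemma} altogether; the paper's proof of that step passes from norming functionals $\xi_\alpha$ chosen for each $\alpha$ to the pairing with the weak$^*$ limit, an exchange that is only justified for a fixed functional (the weak$^*$ limit can strictly decrease norms, so $\rho_p$ need not be isometric — it is not, e.g., on orbit closures containing $\mathbf{0}$). What the paper's argument buys, when it can be carried out, is sharper structural information in the spirit of Veech's original analysis (preimages of balls are unions of balls over fibers); what yours buys is brevity and a clean isolation of the only two ingredients needed: $\tau_2$-continuity of $\rho_p$ (this is where the concrete $\K(T)$ realization enters, since the abstract Veech axioms do not provide it) and the Borel-transfer axiom (3). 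Your closing caveat is also accurate: this gives Borel measurability only, and the upgrade to Baire class one (tameness) is carried out separately in the paper under the additional strong Veech/metrizability hypotheses.
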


\begin{proof} We start with a general Veech system $(X,T)$. Recall that $D$ is the metric on $X$ generating the $\tau_2$-topology. Let $p\in E(X,T)$
and let $\rho_p :X\to X$ be $\rho_p(x)= xp$. We show that $\rho_p$ is a $\tau_1$-Borel map. It is enough to show that if
$U\subset X$ is $\tau_1$-open, then $\rho_p^{-1}(U)$ is $\tau_1$-Borel.\\

For each $y\in U$ let $\eps \equiv \eps (y) >0$ be such that $B_\eps(y) \subset U$, (recall 
that $B_\eps(y) = \big\{y\in X\ |\ D(x,y)<\eps \big\}$)
and this choice is possible since $\tau_1 \leq \tau_2$). Since $(X,D)$ is separable, there exists a countable set
$\big\{y_n\ |\ n\in \N\big\}\subset U$ such that $U =\ds \bigcup_{y\in U}B_\eps (y) = \ds \bigcup_{n\in \N}B_\eps (y_n)$. Thus,
$$
\rho_p^{-1}(U) = \rho_p^{-1}\big( \bigcup_{n=1}^{\infty}B_\eps (y_n)\big) = \bigcup_{n=1}^{\infty}\rho_p^{-1}(B_\eps (y_n))\,.
$$
Therefore, it is enough to show that $\rho_p^{-1}(B_\eps (y_n))$ is $\tau_1$-Borel for each $y_n$. Next, let
$$
\Sigma_n = \rho_p^{-1}(y_n) = \big\{z\in X\ |\ zp = y_n \big\}\,.
$$
So far $(X,T)$ was a general Veech system. The following lemma is where we restrict to the case $X = X_f$, $f\in \K(T)$.

\smallskip

\begin{lem} \label{keylemma} With the notation as above, $\rho_p^{-1}(B_\eps (y_n)) = \bigcup_{z\in \Sigma_n}B_\eps (z)$.
\end{lem}

\smallskip

Assuming this lemma and again using separability of $(X,D)$, we can write $\bigcup_{z\in \Sigma_n}B_\eps (z)$ as a countable union of such balls.
Since each ball in the $D$ metric is a $\tau_1$-Borel set, $\rho_P^{-1}(B_\eps(y_n))$ is Borel for each $n\in \N$ and the proof is complete.
\end{proof}

\smallskip

\noindent {\bf Proof of Lemma \ref{keylemma}:}
\\
In this proof $\langle x,\xi\rangle$ will denote the `pairing' of vectors $x\in \ell^\infty(T)$ and $\xi \in \ell^1(T)$ as vectors in dual space.
First we claim that
\begin{equation}
\label{contraction}
\big\|xp-yp\big\|_\infty \leq \big\|x-y\big\|_\infty\ \text {for any}\ x,y\in X_f\subset \ell^{\infty}(T)\,.
\end{equation}
Let $t_\alpha \to p$ in $\beta T$, where $\{t_\alpha\}$ is a net in $T$. Consider,
\begin{align}
\big\|x-y\big\|_\infty &= \big\|xt_\alpha - yt_\alpha\big\|_\infty \,, \quad (\text {since}\ T\ \text{action preserves the}\ \ell^{\infty}\  \text {metric}) \notag \\
& \geq |\langle xt_\alpha - yt_\alpha ,z\rangle | \geq |\langle xp - yp,z\rangle |\,, \notag
\end{align}
where $z\in \ell^1(T)$ is any vector with $\big\|z\big\|_1 \leq 1$. Now we pick $z$ such that $|\langle xp - yp,z\rangle | = \big\|xp - yp\big\|_\infty$ and the claim
is proved. This claim implies $B_\eps(z) \subset \rho_p^{-1}(B_\eps (y_n))$ for each $z\in \Sigma_n$.

To prove the reverse inclusion, we need to show that $\bigcap_{z\in \Sigma_n}B_\eps (z)^c \subset \big(\rho_p^{-1}(B_\eps (y_n))\big)^c$,
where $A^c$ denotes the complement of set $A$. Let $x\in \bigcap_{z\in \Sigma_n}B_\eps (z)^c$. Thus $\big\|x-z\big\|_\infty \geq \eps$ for any
$z\in \Sigma_n$. Now, for any $z\in \Sigma_n$ and $\xi\in \ell^1(T)$ with $\big\|\xi\big\|_1 \leq 1$, we have
$$
|\langle (x-z)p,\xi\rangle | = \lim\limits_{\alpha}|\langle xt_\alpha -zt_\alpha ,\xi \rangle|\,.
$$
Whence, for each $\alpha$ we can choose $\xi_\alpha$ with $\big\|\xi_\alpha \big\|_\infty \leq 1$ such that
$\big\|xt_\alpha - zt_\alpha \big\|_\infty = |\langle (x-z)t_\alpha,\xi_\alpha \rangle |$.
Thus,
$$
|\langle (x-z)p,\xi_\alpha \rangle | = \big\|(x-z)t_\alpha \big\|_\infty = \big\|x-z\big\|_\infty \geq \eps\,, \quad (\text {by the hypothesis})\,.
$$
Now, select $\xi^*\in \ell^1(T)$ with $\big\|\xi^*\big\| \leq 1$ such that
$$
\big\|xp-zp\big\|_\infty = |\langle (x-z)p,\xi^*\rangle | = sup \big\{|\langle (x-z)p,\xi\rangle| \ |\ ||\xi||_1\leq 1\big\},.
$$
Thus $\big\|(x-z)p\big\|_\infty \geq \big\|x-z\big\|_\infty \geq \eps$. This shows that $x\notin \rho_p^{-1}(B_\eps (y_n))$ and the proof is complete.
\endproof

\smallskip

Using the previous theorem we can now prove the following.

\smallskip

\begin{prop} Let $T$ be amenable. Then, any minimal set of $(X_f,T)$ is equicontinuous, where $f\in \K(T)$.
\end{prop}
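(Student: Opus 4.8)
The plan is to show that on a minimal set $M\subseteq X_f$ the weak$^*$ topology $\tau_1$ and the norm topology $\tau_2$ actually coincide; once this is known the Proposition is immediate, because the $T$-action is by $D$-isometries ($D(xt,yt)=D(x,y)$, with $D(x,y)=\|x-y\|_\infty$ generating $\tau_2$), and an isometric action on a compact metric space is automatically equicontinuous. Since $\tau_1\le \tau_2$ and $(M,\tau_1)$ is compact Hausdorff, it suffices to prove that $(M,\tau_2)$ is compact: then the identity map $(M,\tau_2)\to (M,\tau_1)$ is a continuous bijection of compact Hausdorff spaces, hence a homeomorphism, so that $\tau_1|_M=\tau_2|_M$.

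First I would record two soft facts about $D$. Because $M$ is $\tau_1$-closed (weak$^*$-closed) in $\ell^\infty(T)$ and norm convergence implies weak$^*$ convergence, $M$ is also norm-closed, so $(M,D)$ is complete. Moreover $D(x,y)=\sup_{s\in T}|x(s)-y(s)|$ is a supremum of $\tau_1$-continuous functions, hence $D$ is $\tau_1$-lower semicontinuous; consequently the $\tau_1$-closure of a $D$-ball $B^D_\eps(z)$ is contained in the closed ball $\{w:D(z,w)\le \eps\}$. Thus, by completeness, proving $(M,\tau_2)$ compact reduces to proving that $M$ is $D$-totally bounded.

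The heart of the matter is this total boundedness, and it is exactly the point flagged in the Remark after Theorem \ref{str-min-set}: in a general Veech system one cannot pass from $\tau_1$- to $\tau_2$-recurrence because $\tau_2$ is far from locally compact. Here the extra hypothesis $f\in\K(T)$ (norm-separability of $X_f$) is what saves the day. Since $T$ is amenable, $M$ is the support $C(\mu)$ of an ergodic invariant measure, so I may pick a point $y$ in the full-measure set $C=C_1\cap C_2$ produced in the proof of Theorem \ref{str-min-set}: $y$ has $\tau_1$-dense orbit, and for every $\eps>0$ the $D$-return set $G_\eps=\{r\in T:D(y,yr)<\eps\}$ contains a set of the form $RR^{-1}$, which is $\Delta^*$ and therefore right-syndetic, say $T=G_\eps K$ with $K$ finite. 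Using the right-invariance of $D$, for $t=gk$ (with $g\in G_\eps$, $k\in K$) one gets $D(yt,yk)=D((yg)k,yk)=D(yg,y)<\eps$, so the whole orbit is covered by the finitely many balls $\{B^D_\eps(yk):k\in K\}$. Taking $\tau_1$-closures and invoking the lower semicontinuity of $D$ recorded above, $M=\overline{\{yt\}}^{\tau_1}$ is covered by finitely many closed $D$-balls of radius $\eps$; as $\eps$ was arbitrary, $M$ is $D$-totally bounded.

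Combining the two steps, $(M,\tau_2)$ is compact, hence $\tau_1|_M=\tau_2|_M$ and $(M,T)$ is equicontinuous. The main obstacle is precisely the total-boundedness step: one must convert the measure-theoretic $\Delta^*$-recurrence of Theorem \ref{str-min-set} into a finite $D$-cover of the orbit, and for this the \emph{side} of syndeticity matters — it is the right-invariance $D(ak,bk)=D(a,b)$ together with the right-syndeticity of $G_\eps$ (which I would verify follows from the $\Delta^*$ property) that makes the covering argument close. I would double-check these two compatibility points carefully; everything else is soft topology.
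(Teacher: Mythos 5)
Your proposal is correct, but it takes a genuinely different route from the paper's. The paper argues by contradiction inside the almost-automorphy framework of Theorem \ref{str-min-set}: if some $y\in C(\mu)\setminus C$, minimality gives $p\in\beta T$ with $xp=y$ for some $x\in C$; the paper then uses its preceding theorem (every $\rho_p$ is Borel, via Lemma \ref{keylemma}) to show that $\rho_p$ preserves $\mu$, and the contraction property \eqref{contraction} to get $B_\eps(x)\subset\rho_p^{-1}(B_\eps(y))$, whence $0=\mu(B_\eps(y))\geq \mu(B_\eps(x))>0$, a contradiction; equicontinuity is then obtained by quoting \cite{AGN} (a minimal system all of whose points are almost automorphic is equicontinuous). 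You instead prove the stronger intermediate fact that $\tau_1$ and $\tau_2$ coincide on $M$, by establishing that $(M,D)$ is complete (norm-closed because weak$^*$-closed) and totally bounded, after which equicontinuity is immediate from the isometric action; your total-boundedness step reuses exactly the recurrence input that the paper's Theorem \ref{str-min-set}(2) extracts from Lemma \ref{rec-lem} (the sets $S_mS_m^{-1}\subset G_{4\eps}(y)$ are $\Delta^*$), together with the combinatorial fact, which you rightly flag and which does check out, that with the paper's definition of $\Delta^*$ (difference sets $\{t_nt_m^{-1}\,:\,n>m\}$) a $\Delta^*$ set $A$ satisfies $T=AK$ for some finite $K$: otherwise one inductively picks $t_{n+1}\notin \bigcup_{i\leq n}At_i$ and produces a difference set disjoint from $A$. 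This right-handed syndeticity is precisely what your covering needs, since $yt=(yg)k$ and $D((yg)k,yk)=D(yg,y)$. What your route buys: it bypasses the Borel-ness theorem for $E(X_f,T)$, the measure-preservation argument for $\rho_p$, and the external citation to \cite{AGN}, and it yields the clean statement that minimal subsets of $X_f$ are norm-compact with $\tau_1|_M=\tau_2|_M$. What the paper's route buys: the identification $C(\mu)=C$, i.e.\ that the almost-automorphic points of Theorem \ref{str-min-set} exhaust the minimal set, which is the form of the result that fits its surrounding structure theory. Note that both arguments genuinely use the $\ell^\infty$/weak$^*$ structure of $X_f$ rather than the axioms of a general Veech system: the paper through Lemma \ref{keylemma}, you through the $\tau_1$-lower semicontinuity of $D$ and the norm-completeness of $M$, neither of which is guaranteed by properties (1)--(5) of Definition 5.1.
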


\begin{proof} We recall the notation used in the proof of Theorem \ref{str-min-set}. Any minimal set can be taken to be of the form $C(\mu )$ for
some ergodic invariant measure $\mu$. We need to show that $C(\mu ) = C$. Suppose this is not true. Then pick $x\in C$ and $y\notin C$.
Since $(C(\mu ),T)$ is minimal, there exists $p\in \beta T$ such that $\rho_p(x) \equiv xp = y$. Since the map $\rho_p:X_f\to X_f$ contracts
the $\ell^\infty$ metric on $X_f$, (which is the metric $D$ in the notation of Theorem (\ref{str-min-set})), for any $\eps > 0$,
$\rho_p(B_\eps (x)) \subset B_\eps (\rho_p(x)) = B_\eps(y)$, (recall that $B_\eps (x)$ denotes the $\eps$ ball in metric $D$ centered at $x$).
Note that since $y\notin C$, $\mu (B_\eps (y)) = 0$ for all small enough positive $\eps$'s and since $x\in C$, $\mu (B_\eps (x)) > 0$ for
all positive $\eps$'s. Now we show that since $\rho_p$ is Borel, it preserves $\mu$ and this will lead to a contradiction.  

Consider the map $\eta : \beta T \to \Omega_\mu (C(\mu )) : p\to U_p$, where $U_p[f] = [f]p$, ($f\in L^2(C(\mu ),\mu)$). Since $\rho_p :X\to X$
is Borel, $U_p[f] = [f]_p = [f\circ \rho_p] = [f_p]$. Given $\delta > 0$ and any Borel set $A\subset X_f$, consider the open
neighborhood $W_{A,\delta}$ of $p$ in $\beta T$ defined by
$$
W_{A,\delta} = \{q\in \beta T\ |\ \big|\langle U_q \chi_A,1\rangle - \langle U_{p}\chi_A\,,1\rangle \big| < \delta\}\,.
$$
Pick $t\in W_{A,\delta}$ and note that
\begin{align}
\langle U_t\chi_A\,,1\rangle & = \int_{C(\mu )}\chi_A(\omega t)d\mu (\omega ) = \mu (At^{-1}) = \mu (A)\,,\quad \text {and} \notag \\
\langle U_{p}\chi_A,1\rangle & = \int_{C(\mu )}\chi_A\circ \rho_{p}(\omega )d\mu = \mu (\rho_{p}^{-1}(A))\,. 
\end{align}
Thus, $\big|\mu (\rho^{-1}_{p}(A)) - \mu (A)\big| < \delta$. Since $\delta$ is arbitrary, $\mu (\rho^{-1}_{p}(A)) = \mu (A)$.
Now if $\eps > 0$ is small enough, using the fact that $B_\eps (x)\subset \rho^{-1}_{p}(B_\eps (y))$, we have
$$
0 = \mu (B_\eps(y)) = \mu (\rho^{-1}_{p}(B_\eps (y)) \geq \mu (B_\eps (x)) > 0\,,
$$
a contradiction. Thus $C(\mu ) = C$ and each point of $C(\mu )$ is almost automorphic, $C(\mu )$ being minimal, it follows that $(C(\mu ),T)$
is equicontinuous, (see \cite {AGN} Corollary 8).
\end{proof}

\smallskip

\begin{rem} Next we study the spectral feature of an invariant measure on $(X_f,T)$, ($f\in \K(T)$). Professor W. Veech had posed the question :
`Is the Sarnak conjecture valid for the flow $(X_f,T)$'?. This question is answered affirmatively if one shows that every invariant measure
has discrete spectrum. In a recent paper, (see \cite {HWY}) the authors attempt to give a proof of this for $T = \Z$. But to us, the proof
appears to be incomplete! We shall discuss the underlying issues with their proof and shall present a different proof. Thus proving
Sarnak conjecture for $\K(T)$, for any countable amenable $T$.
\end{rem}

\smallskip

\begin{thm} \label{dis-spect-Veech}  Any invariant measure on $(X_f,T)$, $f\in \K(T)$ has discrete spectrum.
\end{thm}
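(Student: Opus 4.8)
The plan is to bypass the ergodic decomposition entirely and prove discrete spectrum for every invariant measure $\mu$ at once, by exploiting the two features that distinguish $(X_f,T)$ from the obstruction in Example~\ref{Namioka}: the $T$-action is by isometries of the finer metric $D$ (condition (4) of a Veech system), and $(X_f,D)$ is separable (condition (5)). Concretely, I would show (a) every bounded $D$-Lipschitz function on $X_f$ is a $\mu$-compact vector, and (b) such functions are dense in $L^2(X_f,\mu)$. Since the $\mu$-compact vectors form a closed subspace of $L^2(X_f,\mu)$ — indeed they are exactly the fixed vectors $P_\mu g = g$, and $P_\mu$ is a bounded idempotent — (a) and (b) force $P_\mu = I$, which is precisely the statement that $(X_f,T,\mu)$ has discrete spectrum.

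For step (a), fix a bounded $g$ with $|g(x)-g(y)| \le L\,D(x,y)$. Because $T$ preserves $D$, each translate $g_t(x)=g(xt)$ satisfies $|g_t(x)-g_t(y)| = |g(xt)-g(yt)| \le L\,D(xt,yt) = L\,D(x,y)$, so the orbit $\{g_t\ |\ t\in T\}$ is uniformly bounded and uniformly $D$-Lipschitz. Given any sequence $\{t_n\}$, a diagonal argument over a countable $D$-dense subset of $X_f$ (here separability is used) together with uniform equicontinuity produces a subsequence along which $g_{t_n}$ converges pointwise on all of $X_f$ to a bounded limit $g_\infty$. Dominated convergence then gives $g_{t_n}\to g_\infty$ in $L^2(\mu)$. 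Thus every sequence in $\{U_t[g]\} = \{[g_t]\}$ has an $L^2$-convergent subsequence, i.e. $[g]$ is a $\mu$-compact vector. This argument needs neither amenability nor compactness of the $\tau_2$-topology; the non-compactness obstruction flagged in Theorem~\ref{str-min-set}(1) is circumvented because we only require $L^2(\mu)$-precompactness, not $\tau_2$-compactness of an orbit closure.

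For step (b), I first record the measure-theoretic compatibility of the two topologies. By condition (3) every $\tau_2$-open set is $\tau_1$-Borel, so $\sigma(\tau_2)\subseteq \mathcal{B}(\tau_1)$ and $\mu$ restricts to a finite Borel measure on the separable metric space $(X_f,D)$; on any such space the bounded $D$-Lipschitz functions are dense in $L^2(\sigma(\tau_2),\mu)$. On the other hand, since $(X_f,\tau_1)$ is compact metric, the $\tau_1$-continuous functions are dense in the full space $L^2(\mathcal{B}(\tau_1),\mu)$, and by condition (2), ($\tau_1\subseteq\tau_2$), each such function is $\sigma(\tau_2)$-measurable, hence already lies in $L^2(\sigma(\tau_2),\mu)$ and is therefore $L^2$-approximable by bounded $D$-Lipschitz functions. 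Chaining the two approximations shows the bounded $D$-Lipschitz functions are dense in $L^2(X_f,\mu)$, completing (b).

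The main obstacle is the interplay between the two topologies and their $\sigma$-algebras rather than any single estimate. The delicate points are: verifying that $\mu$ is genuinely a Borel measure for $D$ so that Lipschitz approximation is available, and transporting density from $L^2(\sigma(\tau_2),\mu)$ to the larger $L^2(\mathcal{B}(\tau_1),\mu)$; and, in step (a), running the Arzel\`a--Ascoli argument on $(X_f,D)$, which is separable but not compact, so that equicontinuity plus density yields genuine pointwise convergence everywhere. Once these are in place the conclusion is immediate, and because the argument treats all invariant measures uniformly it automatically covers the non-ergodic case, which is exactly where Example~\ref{Namioka} shows discrete spectrum can fail for general systems.
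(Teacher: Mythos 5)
Your argument is correct, but it takes a genuinely different route from the paper's. The paper works with $\tau_1$-continuous test functions $g\in C(X_f)$: after the same diagonal extraction over a $\tau_2$-dense countable set, it must upgrade $\tau_1$-convergence of the points $x_mt_{n_k}$ to pointwise convergence of $g(xt_{n_k})$ at \emph{every} $x$, and for this it needs two inputs proved specifically for $X_f\subset \ell^\infty(T)$: the $D$-contraction property of $\rho_q$ (inequality \eqref{contraction} in Lemma~\ref{keylemma}) and the ``strong Veech'' property (6), which uniformly converts $D$-closeness into $\tau_1$-closeness near the diagonal. You instead use bounded $D$-Lipschitz test functions: since $T$ acts by $D$-isometries the translates $\{g_t\}$ are equi-Lipschitz, so the diagonal argument applied to the \emph{scalar values} $g(x_mt_n)$ (bounded sequences in $\Cc$, so no $\tau_2$-compactness is ever needed) plus the Lipschitz estimate makes $g(xt_{n_k})$ Cauchy at every $x$; condition (3) gives measurability of the limit, and dominated convergence gives norm-precompactness of the orbit. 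The price is the extra density step, and that is where the only real care is needed: conditions (2) and (3) force $\mathcal{B}(\tau_1)=\sigma(\tau_2)$ (the two Borel structures coincide, rather than one being larger as you suggest), so $\mu$ is a finite Borel measure on the metric space $(X_f,D)$ and the standard closed-regularity/Lipschitz approximation applies; your detour through $\tau_1$-continuous functions also works. What your route buys: it uses only the Veech axioms (1)--(5) --- no strong Veech property, no contraction lemma, no amenability, no $\tau_1$-metrizability --- so it actually proves the stronger statement that every invariant measure on an \emph{arbitrary} Veech system has discrete spectrum, subsuming the paper's later theorem on strongly Veech systems and removing the restriction to ergodic measures in Theorem~\ref{str-min-set}(3). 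What the paper's route buys: its pointwise convergence $g(xt_{n_k})\to g(xq)$ for all $\tau_1$-continuous $g$ identifies the limit as $g\circ\rho_q$, which is exactly what later yields tameness of $(X_f,T)$ (Theorem~\ref{Sarnak-for-Veech}); your Lipschitz-based argument gives the spectral conclusion but says nothing about regularity of elements of the enveloping semigroup.
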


\smallskip

\noindent {\bf A discussion on the proof.}

\smallskip

Consider a general Veech system $(X,T)$ and let $\big\{x_m\ |\ m\in \N\big\}$ be a $\tau_2$-dense subset of $X$. Using $\tau_1$-compactness of $X$,
given any sequence $\{t_n\} \in T$, by the `diagonal argument' we can pick a subsequence $\{t_{n_k}\}$ such that the sequence $\{x_mt_{n_k}\}$
is $\tau_1$-convergent for each $m\in \N$. The key issue is to show that the sequence $\{xt_{n_k}\}$ is $\tau_1$-convergent for each $x\in X$.

To do this one needs to use the special structure given by the $T$-invariant metric $D$ generating the $\tau_2$-topology. Note that by viewing
$\{t_n\}$ as a net in $\beta T$ there is a subnet, (which may not be a subsequence), that converges to some $q\in \beta T$. Since
$\{x_mt_{n_k}\}_{k\in \N}$ converges for each $m$, it follows that it must $\tau_1$-converge to $x_mq$. Now we make a note of the following points

\noindent (1) We know that for each $x\in X$, there is a subnet of $\{xt_{n_k}\}$ that $\tau_1$-converges to $xq$ and this subnet will
depend on $x$. The crucial point is to show that the sequence $\{xt_{n_k}\}$ itself $\tau_1$- converges to $xq$ for each $x$.

\noindent (2) To do this, one may think of using the following triangle inequality,
$$
D(xt_{n_k},xq) \leq D(xt_{n_k},x_mt_{n_k}) + D(x_mt_{n_k},x_mq) + D(x_mq,xq)\,,
$$
and try to show that each terms on the right hand side gets small as $n_k\to \infty$. Convergence in $D$ metric will yield $\tau_1$-convergence.

\smallskip

\noindent ($2_a$) One has to be careful about `interchanging the limits'. That is, suppose $x_m\to x$ in the $\tau_1$ topology, in general
$\lim\limits_{m\to \infty}\lim\limits_{k\to \infty}x_mt_{n_k}$ may not exist and even if it does, may not be equal to
$\lim\limits_{k\to \infty}\lim\limits_{m\to \infty}x_mt_{n_k}$. Of course, the second limit exists and it is equal to $xq$. One could do this if $(X,T)$ is
weakly almost periodic, (\`a la `Grothendieck', see \cite {EN1}), but not for a general Veech system. Thus, for a general Veech system making
the third term $D(x_m q,xq)$ small is a problem. We have proved, (see Lemma \ref{keylemma}), that for $(X_f,T)$, ($f\in \K(T)$), the map $\rho_q$
is not only Borel but it is in fact $D$ contracting. This will enable us to make the third term small as $n_k\to \infty$.

\smallskip

\noindent ($2_b$) Making the second term small is even more problematic, because $x_mt_{n_k}\to x_mq$ only in $\tau_1$-topology. This is due to
$\tau_1$-compactness of $X$. The $\tau_2$-topology given by the metric $D$ is not compact. This is a real hurdle in directly proving that
$x{t_{n_k}}\to xq$. A way out is to work with continuous functions on $X$ rather than $X$ itself. We shall follow this approach, as in \cite {HWY}.

\noindent ($2_c$) The first term in the above triangle inequality is exactly where one uses the $T$ invariance of metric $D$. However, just making
these terms small in $D$ metric will not be enough, we need to do this in the $\tau_1$-topology, to use the $\tau_1$-compactness of $X$. We also need 
a `certain uniformity' to get rid of the dependence on sequence $\{t_{n_k}\}$.

Thus, summarizing, to get $xt_{n_k}$ $\tau_1$-converge to $xq$, we need (a) a certain `uniform mechanism' that will give us `$\tau_1$-closeness'
from `$\tau_2$-closeness'. This will be used after making the first and the third term small in $D$ metric. (b) To make the second term small,
we have to abandon the above triangle inequality and consider its analogue `for a continuous function'.

We again point out that the authors of \cite{HWY} tacitly move pass the above issues by claiming `it is not hard', (see \cite[p.849] {HWY}),
without giving any indication of how to resolve these issues. This makes their proof of Theorem (5.1) incomplete. We shall prove why $\{xt_{n_k}\}$
converges for each $x\in X$ for the system $(X_f,T)$, $f\in \K(T)$ and for general Veech systems provided they satisfy an additional
`uniformity condition'. Now we introduce this additional condition that the topologies $\tau_1$ and $\tau_2$ have to satisfy in order to
carry out the above line of argument and this will lead to showing that any invariant measure on such systems has discrete spectrum.

\smallskip

\begin{defn} A Veech system $(X,T)$ is said to be a strongly Veech if in addition to the five properties in the definition of
Veech systems, we also have the following sixth property:
\begin{itemize}
\item [(6)] Given a $\tau_1$-open set $V\subset X\times X$ containing the diagonal $\Delta_X$, there exists a $\delta > 0$
such that $B_\delta (x)\times B_\delta (x) \subset V$ for all $x\in X$.
\end{itemize}
\end{defn}

\smallskip

\begin{lem} The Veech system $(X_{f^*}, T)$, $f^*\in \K(T)$ is strongly Veech.
\end{lem}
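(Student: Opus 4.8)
The plan is to exploit the concrete structure of $(X_{f^*},T)$ for $f^*\in\K(T)$: here $X_{f^*}\subset \ell^\infty(T)$ is the weak* orbit closure of $f^*$, so $\tau_1$ is the weak* topology, $\tau_2$ is the $\ell^\infty$-norm topology, and the invariant metric is $D(x,y)=\|x-y\|_\infty$. First I would record the two features that make property (6) accessible. Since translation preserves the norm, every element of the orbit has norm $\|f^*\|_\infty$, so $X_{f^*}$ is a bounded, weak*-closed subset of $\ell^\infty(T)=(\ell^1(T))^*$, hence $\tau_1$-compact; and because $T$ is countable, $\ell^1(T)$ is separable, so the weak* topology is metrizable on the bounded set $X_{f^*}\times X_{f^*}$ and sequences suffice. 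Moreover norm convergence trivially forces weak* convergence, since $|\langle x-y,\xi\rangle|\le\|x-y\|_\infty\|\xi\|_1$ for $\xi\in\ell^1(T)$.

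With these in hand I would prove (6) by contradiction. Fix a $\tau_1$-open set $V\subset X_{f^*}\times X_{f^*}$ with $\Delta_X\subset V$ and suppose no uniform $\delta$ works. Taking $\delta=1/n$ for each $n\in\N$, there would exist a center $x_n\in X_{f^*}$ and points $x_n',x_n''\in B_{1/n}(x_n)$ with $(x_n',x_n'')\notin V$. By the triangle inequality for $D$,
\[
\|x_n'-x_n''\|_\infty \le \|x_n'-x_n\|_\infty + \|x_n-x_n''\|_\infty < \frac{2}{n}\,,
\]
so the differences $x_n'-x_n''$ tend to $0$ in the $\ell^\infty$-norm.

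Next I would pass to the limit. By $\tau_1$-compactness (and metrizability) of $X_{f^*}\times X_{f^*}$, a subsequence satisfies $x_n'\to x'$ and $x_n''\to x''$ in the weak* topology. For every $\xi\in\ell^1(T)$ we have $|\langle x_n'-x_n'',\xi\rangle|\le\|x_n'-x_n''\|_\infty\|\xi\|_1\to 0$, so $\langle x'-x'',\xi\rangle=0$ for all $\xi$ and therefore $x'=x''$. Thus the limit pair $(x',x'')=(x',x')$ lies on $\Delta_X\subset V$. On the other hand $V^c$ is $\tau_1$-closed and contains every $(x_n',x_n'')$, hence contains the weak* limit $(x',x')$, contradicting $(x',x')\in V$. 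This contradiction yields the required uniform $\delta$.

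The only genuine obstacle is the \emph{uniformity} that (6) demands, namely a single $\delta$ valid for every center $x$ at once. This is exactly what the $\tau_1$-compactness of the product together with the weak*-closedness of $V^c$ supply: any sequence of would-be counterexamples has norm-vanishing separations, so its weak* accumulation point must sit on the diagonal, which is interior to $V$. I expect the preliminary facts—that $X_{f^*}$ is bounded and weak*-closed (hence compact) and that the weak* topology is metrizable for countable $T$—to be routine; the conceptual heart of the argument is the collapse of a norm-small pair onto a single diagonal point in the weak* limit.
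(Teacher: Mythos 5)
Your proof is correct, but it follows a genuinely different route from the paper's. The paper argues directly, in Lebesgue-number style: it writes down the subbasic weak* neighborhoods $V_{g,\eta}(f)=\{h:\ |\langle h-f,g\rangle|<\eta\}$, extracts by compactness a finite family $V_{g_i,\eta_i}(f_i)\times V_{g_i,\eta_i}(f_i)$ squeezed between $\Delta_X$ and $V$ whose half-size sets $V_{g_i,\eta_i/2}(f_i)$ cover $X_{f^*}$, and then produces an \emph{explicit} $\delta$ (roughly $\min_i \eta_i/(2\|g_i\|_1)$) via the estimate $|\langle h-f,g_i\rangle|\le \|h-f\|_\infty\|g_i\|_1$. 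You instead argue by contradiction: a sequence of failing centers gives pairs $(x_n',x_n'')\notin V$ with $\|x_n'-x_n''\|_\infty\to 0$, and weak* compactness forces any accumulation point onto the diagonal, which contradicts the closedness of $V^c$. The step where you conclude $x'=x''$ is exactly weak* lower semicontinuity of the $\ell^\infty$-norm, and this is the real content: your argument shows more generally that any Veech system whose invariant metric $D$ is $\tau_1$-lower semicontinuous (the hypothesis Veech himself imposed in his later paper) is strongly Veech, whereas the paper's computation is tied to the concrete $\ell^\infty/\ell^1$ pairing. What you lose is constructiveness (no explicit $\delta$) and you invoke metrizability of the weak* topology to use sequences — harmless here since $T$ is countable (and the paper proves this metrizability anyway), and avoidable by running the same argument with subnets. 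Both proofs ultimately rest on the same two pillars: $\tau_1$-compactness of $X_{f^*}$ and the inequality $|\langle x-y,\xi\rangle|\le\|x-y\|_\infty\|\xi\|_1$.
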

\begin{proof} First observe that, given a $f\in X_{f^*}\subset l^\infty (T)$, a typical $\tau_1$-open neighbourhood of $f$ is given by
$V_{g,\eta}(f)$, where $g\in l^1(T)$ and $\eta > 0$ and
$$
V_{g,\eta}(f) = \big\{h\in X_{f^*}\subset l^\infty (T)\ |\ \big|\langle h-f,g\rangle \big| < \eta \big\}\,,
$$
where $\langle \,\,,\, \rangle$ is the canonical pairing between vectors in $l^\infty (T)$ and $l^1(T)$.

Let $V\subset X_{f^*}\times X_{f^*}$ be a $\tau_1$-open set containing the diagonal. Pick a $\tau_1$-open set $V_1$ such that
$\Delta_{X_{f^*}} \subset V_1 \subset V$ and
$$
V_1 = \bigcup\limits_{i=1}^{\ell}V_{g_i,\eta_i}(f_i)\times V_{g_i,\eta_i}(f_i)\,,
$$
and $\{V_{g_i,\frac {\eta_i}{2}}(f_i)\ |\ 1\leq i \leq \ell\}$ is a cover of $X_{f^*}$. Compactness of $X_{f^*}$ makes this possible.

Now we claim that, given $f\in X_{f^*}$,  $B_\delta (f) \subset V_{g_i,\eta_i}(f_i)$ for some $i\in \{1,\cdots,\ell\}$, where
$0<\delta < \min \big\{\frac {\eta_i}{2||g_i||_\infty}\ |\ 1\leq i \leq \ell\big\}$. To see this, first pick an $i$ such that
$f\in V_{g_i,\frac {\eta_i}{2}}(f_i)$, let $h\in B_\delta (f)$ and observe that
\begin{align}
\big|\langle h-f_i,g_i\rangle \big| & \leq \big|\langle (h-f) + (f-f_i),g_i\rangle \big| \notag \\
& \leq \big|\langle h-f,g_i\rangle \big| + \big|\langle f-f_i,g_i\rangle \big| \notag \\
& \leq ||h-f||_\infty\,||g_i||_1 + \frac {\eta_i}{2}\,,\quad (\text {since}\ f\in V_{g_i,\frac {\eta_i}{2}}(f_i)) \notag \\
& \leq \delta||g_i||_1 + \frac {\eta_i}{2} \leq \frac {\eta_i}{2} + \frac {\eta_i}{2} = \eta_i\,. \notag
\end{align}
Hence $h\in V_{g_i,\frac {\eta_i}{2}}(f_i)$. Thus, $\B_\delta (f)\times B_\delta (f)\subset V_1\subset V$.
\end{proof}

\medskip

\noindent {\bf Proof of Theorem \ref{dis-spect-Veech}:}\\ 
It is enough to show that each $g\in C(X_f)\subset L^2(X_f,\mu )$ is $\mu$-compact vector. To do this we show that
given any sequence $\{t_n\}$ in $T$, it has a subsequence $\{t_{n_k}\}$ such that $g_{t_{n_k}}$ converges pointwise on $X_f$,
(and hence by the dominated convergence theorem, in the $L^2$ norm on $(X_f,\mu )$). This will prove $\mu$-compactness of $g$.

So, as discussed before, by the `diagonal procedure' select a subsequence $\{t_{n_k}\}$ such that the sequence
$x_mt_{n_k}$ converges, (as $k\to \infty$), for each $m\in \N$. Now a subnet of $\{t_{n_k}\}$ converges to some $q\in \beta T$,
(in the topology on $\beta T$). Since $\{x_mt_{n_k}\}$ converges, it will converge to $x_mq$, ($m\in \N$).

Now we show that the sequence $g(xt_{n_k})$ converges for each $x\in X_f$. So fix any $x\in X_f$ and let $\eps > 0$ be given.
Since $(x,y)\to g(x)-g(y)$ is continuous and $X$ is $\tau_1$-compact, we can find a $\tau_1$-open neighbourhood $V$ of the
diagonal $X_f\times X_f$ such that if $(x,y)\in V$ then $\big|g(x) - g(y)\big| < \frac {\eps}{3}$. For this $V$, pick $\delta > 0$
as in `Property (6)', (see the definition of strong Veech systems). Pick $m\in \N$ such that $D(x,x_m) < \delta$. Note that
(i) $D(x_mq,xq) \leq D(x_m,x)$ by (\ref{contraction}) and (ii) $D(xt_{n_k},x_mt_{n_k}) = D(x,x_m) < \delta$. Thus,
$\big(xt_{n_k},x_mt_{n_k}\big)\in V$ and $(x_mq,xq)\in V$. Now consider the inequality,
\begin{align}
\big| g(xt_{n_k}) - g(xq) \big| & \leq \big| g(xt_{n_k}) - g(x_mt_{n_k})\big| + \big| g(x_mt_{n_k}) -  g(x_mq) \big| + 
\big|g(x_mq) - g(xq)\big| \notag \\
& \leq \frac {\eps}{3} + \big| g(x_mt_{n_k}) - g(x_mq) \big| + \frac {\eps}{3}\,. \notag
\end{align}
Thus, there exists $k_0$ such that if $k > k_0$, then $\big| g(xt_{n_k}) - g(xq) \big| < \eps$. This proves pointwise
convergence of $g(xt_{n_k})$. 
\endproof

\smallskip

\begin{rem} Actually a tiny modification of the arguments in above proof yields the same conclusion for any strongly Veech system.
\end{rem}

\smallskip

\begin{thm} Let $T$ be amenable, then any invariant measure on a strongly Veech sytstem $(X,T)$ has discrete spectrum.
\end{thm}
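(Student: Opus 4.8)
The plan is to mirror the proof of Theorem \ref{dis-spect-Veech}, reducing the discrete-spectrum statement to the $\mu$-compactness of every $g\in C(X)$ and then running the same diagonal/subnet scheme, with the one place that used the $\ell^\infty$-contraction (\ref{contraction}) replaced by a uniform-limit argument needing only property (6) and the $T$-invariance of $D$. First I would reduce: by Proposition 2.2 a vector is $\mu$-compact iff it is fixed by $P_\mu$, so the $\mu$-compact vectors form a closed subspace of $L^2(X,\mu)$; since $C(X)$ is dense in $L^2(X,\mu)$, it suffices to show every $g\in C(X)$ is $\mu$-compact, for this forces $P_\mu=I$ and hence discrete spectrum of the given invariant $\mu$ (ergodic or not, since dominated convergence upgrades pointwise to $L^2$ convergence). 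To establish $\mu$-compactness of a fixed $g\in C(X)$, I would show that every sequence $\{t_n\}\subset T$ has a subsequence $\{t_{n_k}\}$ along which $g_{t_{n_k}}$ converges pointwise on all of $X$; boundedness of $g$ and finiteness of $\mu$ then give $L^2$-convergence, so the orbit $\{U_t g\}$ is relatively norm-compact.

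Next I would set up the limit object exactly as in Theorem \ref{dis-spect-Veech}. Fix a $\tau_2$-dense (hence $\tau_1$-dense) sequence $\{x_m\}_{m\in\N}$ in $X$. Using $\tau_1$-compactness and the diagonal argument, pass to a subsequence $\{t_{n_k}\}$ for which $x_m t_{n_k}$ is $\tau_1$-convergent for every $m$; choosing a subnet converging to some $q\in\beta T$ identifies these limits as $x_m t_{n_k}\to x_m q=\rho_q(x_m)$ in $\tau_1$. Now fix an arbitrary $x\in X$ and $\eps>0$. Since $(a,b)\mapsto g(a)-g(b)$ is $\tau_1$-continuous on the $\tau_1$-compact space $X\times X$, the set $V=\{(a,b): |g(a)-g(b)|<\eps/3\}$ is a $\tau_1$-open neighbourhood of the diagonal, and property (6) supplies $\delta>0$ with $B_\delta(z)\times B_\delta(z)\subset V$ for all $z$. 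Pick $m$ with $D(x,x_m)<\delta$. By $T$-invariance of $D$ we have $D(x_m t,xt)=D(x_m,x)<\delta$ for every $t\in T$, so $(x_m t,xt)\in B_\delta(xt)\times B_\delta(xt)\subset V$, whence the uniform bound $|g(x_m t)-g(xt)|<\eps/3$ holds for all $t\in T$.

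The crux, and the single departure from the $\K(T)$ proof, is controlling the ``third term'' $|g(x_m q)-g(xq)|$. In Theorem \ref{dis-spect-Veech} this was handled by the $\ell^\infty$-contraction $D(x_m q,xq)\le D(x_m,x)$ coming from (\ref{contraction}), which is unavailable for a general strongly Veech system. Instead I would pass to the limit in the uniform bound just obtained: taking a net $t\to q$ in $\beta T$ and using that $x_m t\to x_m q$ and $xt\to xq$ in $\tau_1$ together with $\tau_1$-continuity of $g$, the inequalities $|g(x_m t)-g(xt)|<\eps/3$ survive in the limit to give $|g(x_m q)-g(xq)|\le\eps/3$. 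Combining the three pieces in
$$|g(xt_{n_k})-g(xq)|\le |g(xt_{n_k})-g(x_m t_{n_k})|+|g(x_m t_{n_k})-g(x_m q)|+|g(x_m q)-g(xq)|,$$
the first term is $<\eps/3$ by the uniform bound with $t=t_{n_k}$, the middle term tends to $0$ since $x_m t_{n_k}\to x_m q$ and $g$ is $\tau_1$-continuous, and the last is $\le\eps/3$. Hence $\limsup_k|g(xt_{n_k})-g(xq)|\le 2\eps/3$, and as $\eps$ is arbitrary this yields $g(xt_{n_k})\to g(xq)$ for each $x$, completing the argument. The only genuine obstacle is this third-term estimate, and the whole point is that the ``interchange of limits'' difficulty flagged in the discussion preceding Theorem \ref{dis-spect-Veech} is bypassed: because $|g(x_m t)-g(xt)|<\eps/3$ is uniform over all $t\in T$, no delicate order-of-limits issue arises when passing to $q$, and amenability of $T$ enters only through the ambient discrete-spectrum framework rather than the convergence argument itself.
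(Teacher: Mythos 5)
Your proof is correct, and it resolves the crux of the argument by a different device than the paper does. The paper's own proof keeps exactly your setup (reduction to $\mu$-compactness of each $g\in C(X)$, the diagonal subsequence through a $\tau_2$-dense set $\{x_m\}$, and the uniform bound $|g(x_mt)-g(xt)|<\eps/3$ for \emph{all} $t\in T$ coming from property (6) plus $T$-invariance of $D$), but it never touches the limit point $q$ in the final step: it compares two terms of the sequence directly,
\[
\big|g(xt_{n_k})-g(xt_{n_l})\big| \le \big|g(xt_{n_k})-g(x_mt_{n_k})\big| + \big|g(x_mt_{n_k})-g(x_mt_{n_l})\big| + \big|g(x_mt_{n_l})-g(xt_{n_l})\big|,
\]
and concludes that $g(xt_{n_k})$ is a Cauchy sequence, the middle term being small because $\{x_mt_{n_k}\}_k$ is $\tau_1$-convergent by construction. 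You instead prove the problematic ``third term'' estimate $|g(x_mq)-g(xq)|\le \eps/3$ directly, by letting $t\to q$ along a net inside the uniform estimate and using $\tau_1$-continuity of $g$; this is a legitimate substitute, valid in any strongly Veech system, for the $\ell^\infty$-contraction (\ref{contraction}) that is only available for $X_f$ with $f\in\K(T)$, and it is precisely the step the paper's preceding discussion treats as the obstruction and deliberately sidesteps. Both arguments are sound and both rest on the same key uniform bound. The Cauchy route is more economical, needing no $\beta T$ machinery at the end at all. Your route gives slightly more: it identifies the limit, showing $g_{t_{n_k}}\to g\circ\rho_q$ pointwise on all of $X$, which is exactly the statement the paper later invokes in the proof of part (1) of Theorem \ref{Sarnak-for-Veech} (``we have already shown that \dots $g(xt_k)$ converges to $g(x\alpha)$ for all $x$''), whereas the Cauchy argument yields that identification only after an extra remark (a subnet of $\{t_{n_k}\}$ converges to $q$, so the already-established limit must equal $g(xq)$). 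In that sense your version is better adapted to the use the paper subsequently makes of this theorem.
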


\begin{proof} With the notation as in the previous theorem, we need to show that the sequence $g(xt_{n_k})$ converges for each $x\in X_f$.
We can show that it is a Cauchy sequence by considering the inequality
\begin{align}
\big| g(xt_{n_k}) - g(xt_{n_l}) \big| & \leq \big| g(xt_{n_k}) - g(x_mt_{n_k})\big| + \big| g(x_mt_{n_k}) -  g(x_mt_{n_l}) \big| + \big|g(x_mt_{n_l}) - g(xt_{n_l})\big| \,. \notag \\
& \leq \frac {\eps}{3} + \big| g(x_mt_{n_k}) - g(x_mt_{n_l}) \big| + \frac {\eps}{3}\,. \notag
\end{align}
The rest of the argument is as before.
\end{proof}

\smallskip

Finally, one would like to know whether $(X_f,T)$, ($f\in \K(T)$) is tame, or more generally any strongly Veech system is tame? We answer this question below.

\smallskip

\begin{thm} \label{Sarnak-for-Veech} Let $(X,T)$ be a strongly Veech system with $T$ amenable.
\begin{enumerate}[label=(\arabic*)]
\item  If $(X,\tau_1)$ is metrizable, then $(X,T)$ is tame.
\item  In particular $(X_f,T)$ is tame, where $f\in \K(T)$, (recall that $T$ is countable, amenable). \label{Misha}
\item  As a consequence, metrizable, strongly Veech systems have zero topological entropy and
\item  the Sarnak conjecture holds for such systems.\footnote{See Section 6. for more details.}
\end{enumerate}
\end{thm}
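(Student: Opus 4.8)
The plan is to concentrate essentially all of the work in part (1) and then read off parts (2)--(4) from results already established. For part (1) the key is to show that the enveloping semigroup $E(X,T)$ is sequentially compact in the strong sense that every sequence $\{t_n\}\subset T$ admits a subsequence $\{t_{n_k}\}$ along which $\lim_k xt_{n_k}$ exists in $(X,\tau_1)$ for every $x\in X$ simultaneously. This is exactly what the Cauchy-sequence argument in the proof that every invariant measure on a strongly Veech system has discrete spectrum already supplies: fixing a $\tau_2$-dense countable set $\{x_m\}$ and diagonalising, one extracts $\{t_{n_k}\}$ so that $g(x_mt_{n_k})$ is Cauchy for each $m$; then for an arbitrary $g\in C(X)$ and $x\in X$ the three-term triangle inequality shows $g(xt_{n_k})$ is Cauchy, its outer two terms being controlled by converting the $\tau_2$-closeness $D(x,x_m)<\delta$ into $\tau_1$-closeness via property (6) of the strong-Veech structure together with the $T$-invariance $D(xt,x_mt)=D(x,x_m)$. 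Since $C(X)$ separates points and $(X,\tau_1)$ is compact, the fact that $g(xt_{n_k})$ is Cauchy for every $g$ forces $xt_{n_k}$ to converge; and because the subsequence was chosen independently of $g$ and $x$, its limit defines a single map $\rho=\lim_k\pi_{t_{n_k}}\in E(X,T)$ obtained as a $\tau_1$-pointwise limit of continuous maps.

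With the metrizability of $(X,\tau_1)$ I would then invoke the dichotomy recalled above. For each $g\in C(X)$ the orbit $H_g=\{g_t\ |\ t\in T\}$ is a pointwise-bounded subset of $C_p(X)$, and the previous step, applied to every subsequence of every sequence in $T$, shows that every subsequence of every sequence in $H_g$ has a further pointwise convergent subsequence; hence $H_g$ contains no subsequence whose pointwise closure is homeomorphic to $\beta\N$. By Rosenthal's dichotomy theorem (equivalently Theorem \ref{BFT}) the pointwise closure $\overline{H_g}$ therefore consists of Baire-$1$ functions. For any $p\in E(X,T)$ and any $g$, the function $g\circ\rho_p$ lies in $\overline{H_g}$ and so is Baire-$1$; running $g$ over a countable subfamily of $C(X)$ that separates the points of the metric space $X$ upgrades this to the statement that $\rho_p\colon X\to X$ is itself Baire-$1$. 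Thus every element of $E(X,T)$ is Baire-$1$ and $(X,T)$ is tame. The genuine obstacle is the uniformity in the first step --- producing one subsequence good for all $x$ rather than merely for the dense set $\{x_m\}$ --- and this is precisely where property (6) and the $T$-invariance of $D$ are indispensable, since without them one cannot interchange the $m\to\infty$ and $k\to\infty$ limits, as the discussion following Theorem \ref{dis-spect-Veech} explains.

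Part (2) is then immediate: the preceding lemma shows that $(X_f,T)$ with $f\in\K(T)$ is a strongly Veech system, and its topology $\tau_1$ is the weak$^*$ topology on the norm-bounded set $X_f\subset\ell^\infty(T)=\ell^1(T)^*$; since $T$ is countable, $\ell^1(T)$ is separable and this weak$^*$ topology is metrizable, so part (1) applies. For part (3) I would quote the Kerr--Li independence theory: a system of positive topological entropy possesses a non-trivial IE-pair, every IE-pair is in particular an IT-pair, and by Theorem \ref{KL} a tame system has no non-trivial IT-pairs; hence a tame system has zero topological entropy, and by part (1) so does every metrizable strongly Veech system.

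Finally, for part (4), the theorem that every invariant measure on a strongly Veech system (with $T$ amenable) has discrete spectrum gives, through the corollary identifying weak tameness with bounded measure-theoretic complexity via \cite{HWY}, that $(X,T)$ has bounded measure complexity with respect to every invariant measure. Möbius disjointness then follows from the Huang--Wang--Ye theorem that bounded (indeed sub-polynomial) measure complexity forces Möbius orthogonality; specialised to $T=\Z$ this is precisely Sarnak's conjecture for the flow, and in particular it holds for $(X_f,T)$ with $f\in\K(\Z)$. I expect parts (2)--(4) to be routine once (1) is in hand, so that the entire difficulty of the theorem is concentrated in the uniform subsequence extraction underlying part (1).
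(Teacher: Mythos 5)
Your proposal is sound and reaches the right conclusions, but it is a genuinely different proof from the paper's in parts (1) and (3), so let me compare. For (1) the paper never invokes Rosenthal/BFT: using separability of $C(X)$ it fixes a countable sup-norm-dense family $\{g_n\}$ and, for each individual $\alpha\in\beta T\setminus T$, builds a single sequence $\{t_k\}$ in $T$ with $g_n(xt_k)\to g_n(x\alpha)$ for every $n$ and every $x$ (via the same property-(6)/$D$-invariance triangle inequality you use), so that $xt_k\to x\alpha$ pointwise and $\rho_\alpha$ is exhibited outright as a pointwise limit of a sequence of continuous maps, hence Baire-1; this is what the authors mean when they call their proof ``direct and self contained''. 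You instead prove sequential precompactness of each orbit $H_g$ in $C_p(X)$ and then quote the dichotomy to conclude that the entire pointwise closure consists of Baire-1 functions. That route is legitimate (it is the standard ``no $\ell^1$-sequence'' criterion for tameness), but two points need care: (i) the Rosenthal dichotomy as stated in the paper concerns sequences only and by itself says nothing about arbitrary (net-)limit points of the orbit, so at that step you genuinely need the function-space form of the BFT theorem --- your parenthetical ``equivalently Theorem \ref{BFT}'' is doing real work; and (ii) your subsequence must be extracted independently of $g$: as written (``so that $g(x_mt_{n_k})$ is Cauchy for each $m$'') the extraction reads as tied to one $g$, and then the middle term of your triangle inequality is uncontrolled for other functions; under the metrizability hypothesis the repair is immediate --- extract so that $x_mt_{n_k}$ $\tau_1$-converges for each $m$, by sequential compactness of a compact metric space, exactly as in the paper's proof of Theorem \ref{dis-spect-Veech} --- but it must be said, and likewise your final upgrade from ``$g\circ\rho_p$ is Baire-1 for a countable separating family'' to ``$\rho_p$ is Baire-1'' deserves its one-line argument (common continuity points on each closed subset). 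For (3) you argue via Kerr--Li (positive entropy gives an IE-pair, hence an IT-pair, contradicting tameness by Theorem \ref{KL}), whereas the paper deduces zero entropy from the fact that every invariant measure has discrete spectrum together with the variational principle; your route buys a purely topological-dynamical argument, the paper's buys economy by reusing what it has already proved. Parts (2) and (4) coincide with the paper's proof: metrizability of the weak$^*$ topology on the bounded set $X_f\subset\ell^\infty(T)=\ell^1(T)^*$ for countable $T$, and Sarnak's conjecture via the Huang--Wang--Ye theorem \cite{HWY} for systems all of whose invariant measures have discrete spectrum.
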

\begin{proof} (1): Our assumption implies that $C(X)$ the space of continuous real valued functions on $X$ with the sup-topology
is separable. Fix a countable dense set $g_n\in C(X)$. In the above theorem we have already shown that given any $\alpha\in \beta T\backslash T$,
and $g\in C(X)$, there exists a sequence $\{t_k\}$ in $T$ such that the sequence $g(xt_k)$ converges to $g(x\alpha )$ for all $x\in X$. Again,
by the arguments in the previous theorem, given any $\alpha \in \beta T\backslash T$, we can find a sequence $\{t_k\}$ such that $g_n(xt_k)$ converges to
$g_n(x\alpha )$ as $k\to \infty$, for each $x\in X$ and $n\in \N$. Since $\{g_n\ |\ n\in \N\}$ is dense in $C(X)$, this implies $xt_k\to x\alpha$,
(in $\tau_1$ topology), for each $x\in X$. This shows that $\rho_\alpha\in E(X,T)$ is of Baire class 1, for every $\alpha \in \beta T\backslash T$.
Thus $(X,T)$ is tame.\\

\noindent (2): We only need to observe that $(X_f,\tau_1)$ is metrizable. Note that since $T$ is countable and $f$ is bounded, with out loss of generality
$|f(t)| \leq 1$, $t\in T$. Let $\psi:X_f\to [0,1]^T$ be the map $(\psi (x))_t = xt$. Then $\psi$ is an injective map onto its image and it intertwines
the $T$ action on $X_f$ with the shift action on the Hilbert cube. Observe that $\psi$ is a homeomorphism where its domain has the $weak^*$ topology
and the range has the (restriction of) the product topology on $[0,1]^T$. The later topology being metric, it follows that $(X_,\tau_1)$ is metrizable.\\  

\noindent (3) and (4): Now these results follow from the fact that every invariant measure on $X$ has discrete spectrum and $X$ is metrizable.
\end{proof}

\smallskip

\begin{rem} After the first version of this paper was posted on Arxiv, M. Megrelishvili informed us that one can prove \ref{Misha} of
Theorem \ref{Sarnak-for-Veech}  by applying \cite[Theorem 8.2.4]{GM3} combined with \cite[Theorem 9.12]{GM3} and \cite[Theorem 6.1]{GM4}.However,
our proof for amenable acting groups is direct and self contained.
\end{rem}

\medskip

\section{Applications to number theory}

\medskip

\noindent {\bf M\"{o}bius disjointness.} In this section, we are interested in the applications of our results on Veech systems to
Number theory. Precisely, our applications are related to the so called M\"{o}bius randomness law as formulated by P. Sarnak in his
striking paper \cite{Sarnak}. This law is about the dynamical behavior of the M\"{o}bius and Liouville functions.

We recall that the integer is square-free if its prime decomposition does not contain any square. The Liouville function $\bml$ is
defined as $1$ if the number of the prime factor of the integer is even and $-1$ if not, and the M\"{o}bius function ${\bmu}$ coincide
with the Liouville function on its support which is the subset of square-free  integers.\\ 

The M\"{o}bius randomness law \`a la Sarnak state that the statistical average or C\'esaro average of the values of a continuous map
along a orbit of any point $x$ with respect to any transformation with zero topological entropy, averaged with weights given by the
M\"{o}bius function, converge to zero. Formally, this law can be stated as follows:

\noindent {\bf Sarnak's M\"obius disjointness Conjecture.} Let $(X,T)$ be a compact metric, topological dynamical system with topological
entropy zero, then, for any $x\in X$, for any continuous function $f:X\to \R$, the following should hold.
$$
\lim\limits_{N\to \infty}\frac{1}{N}\sum_{n=1}^{N}\bmu (n) f(T^nx) = 0\,.
$$

This law is also known as Sarnak's conjecture or  M\"{o}bius disjointness conjecture. We proved that the Sarnak conjecture holds for
the system $(X_f,\Z)$, where $f\in K(\Z)$. A bit later we shall see a number theoretic consequence of this. But first we recall that
for the simplest zero entropy dynamical system--the irrational rotation of the circle, Sarnak's conjecture is a consequence of
the following Davenport estimate,
(see  \cite{Da}),
\[
\max_{\theta \in \T}\left|\displaystyle\sum_{k \leq x}\bmu(k)e^{ik\theta}\right|
\leq \frac{x}{\log(x)^{A}},
\qquad {\rm where}\  A >0.
\]
We view this as `M\"{o}bius disjointness' for the almost periodic map $k\to e^{ik\theta} : \Z\to \R$. Now we can extend this
`M\"{o}bius disjointness' to Besicovitch almost periodic functions on $\Z$ by the following simple argument. Let $f:\Z\to \R$ be a
Besicovitch almost periodic map. Thus, there is a sequence $\{g_j\}$ of (Bohr) almost periodic maps from $\Z$ to $\R$ such that given
$\eps > 0$ there exists a $k\in \N$ such that $\big\|f-g_k\big\|_{B_1} < \eps$. Now for any $N\geq 1$, we have
\begin{align*}
\Big|\lim\limits_{N\to \infty}\frac{1}{N}\sum_{n=1}^{N}\bmu(n) f(n) - \lim\limits_{N\to \infty}\frac{1}{N}\sum_{n=1}^{N}\bmu(n) g_k(n)\Big|
&\leq \sup_{j}\Big(\frac{1}{N}\sum_{n =1}^{N}\Big|f(n)-g_k(n)\Big|\Big) \\
&= \big\|f-g_k\big\|_{B_1} < \eps \,.
\end{align*}

This extension of `M\"{o}bius disjointness' from Bohr almost periodic to Besicovitch almost periodic functions immediately yields the
following.

\smallskip

\begin{thm} Let $(X,T,\mu)$ be uniquely ergodic system with discrete spectrum. Then, the M\"{o}bius disjointness holds.
\end{thm}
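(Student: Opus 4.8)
The plan is to deduce the statement immediately from Corollary \ref{Besicovitch} together with the Besicovitch extension of Davenport's estimate carried out in the display just preceding this theorem. Fix any $x\in X$ and any $f\in C(X)$, and consider the function $\psi_{x,f}:\Z\to\R$ given by $\psi_{x,f}(n)=f(xn)=f(T^nx)$. Since $(X,T,\mu)$ is uniquely ergodic, every point of $X$ is $(f,\mu)$-generic, so Corollary \ref{Besicovitch} applies to \emph{every} $x\in X$, not merely to $\mu$-almost every $x$, and asserts that $\psi_{x,f}$ is Besicovitch almost periodic.

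Next I would specialize the notion of Besicovitch almost periodicity to the case $T=\Z$. Because $\Z$ is abelian, each irreducible unitary representation is one dimensional, so the generalized trigonometric polynomials approximating $\psi_{x,f}$ are ordinary trigonometric polynomials, i.e. finite linear combinations of the characters $n\mapsto e^{in\theta}$. Consequently, given $\eps>0$ there is a Bohr almost periodic map $g:\Z\to\R$ (in fact a trigonometric polynomial) with $\big\|\psi_{x,f}-g\big\|_{B_1}<\eps$.

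Finally I would invoke the extension argument in the display above: by Davenport's estimate the M\"obius disjointness holds for each character $n\mapsto e^{in\theta}$, hence, by linearity, for every trigonometric polynomial and, by uniform approximation, for every Bohr almost periodic $g$; the $\big\|~\big\|_{B_1}$ approximation then transfers this to $\psi_{x,f}$. Concretely, using $|\bmu(n)|\le 1$ one bounds the Ces\`aro average of $\bmu\cdot(\psi_{x,f}-g)$ by $\big\|\psi_{x,f}-g\big\|_{B_1}<\eps$ for all large $N$, while the Ces\`aro average of $\bmu\cdot g$ tends to $0$; letting $N\to\infty$ and then $\eps\to 0$ yields
\[
\lim_{N\to\infty}\frac{1}{N}\sum_{n=1}^{N}\bmu(n)f(T^nx)=0 .
\]

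Since both ingredients are already available, there is no substantive obstacle; the only point requiring care is the uniformity in $N$ of the $B_1$ estimate, namely that the relation $\limsup_{N\to\infty}\frac{1}{N}\sum_{n\le N}|\psi_{x,f}(n)-g(n)|=\big\|\psi_{x,f}-g\big\|_{B_1}$ controls the error term for all sufficiently large $N$ simultaneously, which is precisely what the preceding display establishes.
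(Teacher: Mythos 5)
Your proposal is correct and takes essentially the same route as the paper: the paper's proof is precisely the reduction to Corollary \ref{Besicovitch} (every orbit map $n\mapsto f(T^nx)$ is Besicovitch almost periodic, using unique ergodicity to upgrade genericity to every point) combined with the Davenport estimate and the $B_1$-approximation display immediately preceding the theorem, which is exactly the transfer argument you spell out. Your extra observations --- that for $T=\Z$ the generalized trigonometric polynomials are ordinary ones, and that the $\limsup$ defining $\|\cdot\|_{B_1}$ suffices for the error bound --- are correct and merely make explicit what the paper leaves implicit.
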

\begin{proof} This follows immediately from Corollary \ref{Besicovitch}.
\end{proof}

\smallskip

As an immediate consequence, we have the following.

\smallskip

\begin{cor} The M\"{o}bius disjointness holds for any weakly almost periodic system. 
\end{cor}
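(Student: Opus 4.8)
The plan is to reduce the statement to the theorem just proved, by showing that, from the point of view of a single orbit, a weakly almost periodic system looks like a uniquely ergodic system with discrete spectrum. Here $T=\Z$, as in the number theoretic setting. First I would fix a point $x\in X$ and a function $f\in C(X)$; since the Ces\`aro average $\frac1N\sum_{n=1}^N\bmu(n)f(T^nx)$ depends only on the values of $f$ along the orbit of $x$, I may replace $X$ by the orbit closure $Y=\overline{\mathcal{O}(x)}$. The subsystem $(Y,\Z)$ is again weakly almost periodic, since the restriction map $E(X,\Z)\to E(Y,\Z)$ is a surjective homomorphism carrying continuous maps to continuous maps, and now $(Y,\Z)$ is point transitive with $x$ a transitive point.

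Next I would invoke the structure theory of weakly almost periodic flows. Because every element of $E(Y,\Z)$ is continuous and its multiplication is separately continuous, $Y$ has a unique minimal set $M$, and a minimal weakly almost periodic flow is equicontinuous; thus $(M,\Z)$ is equicontinuous. Moreover a point transitive weakly almost periodic flow is uniquely ergodic, its unique invariant measure $\mu$ being supported on $M$: by Poincar\'e recurrence $\mu$-almost every point is recurrent, and in a weakly almost periodic flow every recurrent point lies in $M$ (one applies the identity idempotent of the minimal ideal of $E(Y,\Z)$, which acts as the identity on $M$ and collapses transient points into $M$), so $\mathrm{supp}(\mu)=M$. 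Since $\mu$ is carried by the equicontinuous set $M$, the orbit $\{U_tg\mid t\in\Z\}$ of every $g\in C(M)$ is relatively norm compact in $L^2$ by the Arzel\`a--Ascoli theorem; hence every such $g$ is a $\mu$-compact vector and $(Y,\Z,\mu)$ has discrete spectrum.

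With $(Y,\Z,\mu)$ now known to be uniquely ergodic with discrete spectrum, the argument is completed by results already established. Unique ergodicity makes every point of $Y$ — in particular the transitive point $x$ — an $(f,\mu)$ generic point, so Corollary~\ref{Besicovitch} applies and shows that $t\mapsto\psi_{x,f}(t)=f(xt)$ is Besicovitch almost periodic. The extension of M\"obius disjointness from Bohr almost periodic to Besicovitch almost periodic functions — the elementary $\|\cdot\|_{B_1}$-approximation estimate established just before the preceding theorem, resting on Davenport's bound — then gives $\frac1N\sum_{n=1}^N\bmu(n)\psi_{x,f}(n)=\frac1N\sum_{n=1}^N\bmu(n)f(T^nx)\to 0$. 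As $x\in X$ and $f\in C(X)$ were arbitrary, M\"obius disjointness holds for $(X,\Z)$.

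I expect the main obstacle to be the structural input of the second paragraph: the assertion that a point transitive weakly almost periodic flow is uniquely ergodic with its invariant measure supported on an equicontinuous minimal set. The equicontinuity of $M$ and the unique ergodicity both trace back to the continuity of every element of the enveloping semigroup and to the minimal ideal of $E(Y,\Z)$ being a group; the delicate point is the recurrence argument showing that recurrent points must lie in $M$, and this is precisely where the weakly almost periodic hypothesis is genuinely used (it fails for the trivial two-fixed-point system, which is why the reduction to a \emph{point transitive} subsystem is essential). Once this structural fact is in hand, the remainder is a direct appeal to Corollary~\ref{Besicovitch} together with the Besicovitch extension of Davenport's estimate.
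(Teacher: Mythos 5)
Your overall route is the paper's: reduce M\"obius disjointness for a weakly almost periodic system to the statement that each orbit closure is uniquely ergodic with discrete spectrum, and then conclude via Corollary~\ref{Besicovitch} and the Besicovitch extension of Davenport's estimate (the theorem immediately preceding this corollary). The paper disposes of the structural input in one line by citing \cite{EN2}; your orbit-closure reduction is in fact a useful clarification of that citation, since without point transitivity a WAP system need not be uniquely ergodic (two fixed points), and your final paragraph (unique ergodicity $\Rightarrow$ every point generic $\Rightarrow$ Besicovitch almost periodicity $\Rightarrow$ disjointness) is correct and is exactly what the paper does. The standard structure facts you quote (unique minimal set $M$ for a point-transitive WAP flow, equicontinuity of minimal WAP flows, discrete spectrum via Arzel\`a--Ascoli once the measure sits on $M$) are also fine.

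The genuine gap is the step you yourself flag as delicate: the claim that in a WAP flow every recurrent point lies in $M$. This claim is \emph{false}. Let $\sigma$ be a continuous probability measure on the circle supported on the compact set $K=\bigl\{\sum_{k\ge 1}\eps_k 10^{-k!} : \eps_k\in\{0,1\}\bigr\}$ and put $f_0(n)=\hat{\sigma}(n)$. Since $10^{n!}x\to 0\ (\mathrm{mod}\ 1)$ uniformly on $K$, we get $\sup_m\bigl|f_0(m+10^{n!})-f_0(m)\bigr|\le\int\bigl|e^{2\pi i\,10^{n!}x}-1\bigr|\,d\sigma(x)\to 0$, so the transitive point $f_0$ of the translation flow $(X_{f_0},\Z)$ is recurrent; $f_0$ is positive definite, hence a WAP function, so this flow is point-transitive WAP. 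Yet by Wiener's lemma $|f_0|^2$ has mean zero ($\sigma$ is continuous), so the zero function lies in the orbit closure: $X_{f_0}$ is not minimal, its unique minimal set is $M=\{\mathbf{0}\}$, and the recurrent point $f_0$ is not in $M$. Thus the Poincar\'e-recurrence route to $\mathrm{supp}(\mu)\subset M$ collapses, and your parenthetical justification (the minimal idempotent $u$ ``collapses transient points into $M$'' --- true, but it says nothing about where recurrent points sit) does not close the hole; note the conclusion you want is still true (it is the Ellis--Nerurkar theorem), but it needs a different idea. One elementary repair: since all elements of $E(Y,\Z)$ are continuous and are determined by their values on a countable dense subset of the compact metric space $Y$, the semigroup $E(Y,\Z)$ is metrizable, so $u$ is a \emph{pointwise limit of a sequence} $T^{n_k}$; dominated convergence and invariance then give $\int F(yu)\,d\mu(y)=\lim_k\int F(T^{n_k}y)\,d\mu(y)=\int F\,d\mu$ for all $F\in C(Y)$, i.e.\ $u_*\mu=\mu$, whence $\mu(M)=\mu(u^{-1}M)=1$ because $Yu\subset M$. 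With that substitution (or by simply citing \cite{EN2}, as the paper does) the rest of your argument goes through.
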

\begin{proof} Weakly almost periodic systems are uniquely ergodic with discrete spectrum, (see \cite{EN2}).
\end{proof}

\medskip

\begin{rem} With our notion of weak tameness, Theorem 1.2 of \cite{HWY} says that the Sarnak conjecture holds for weakly tame systems. Here,  we have used this theorem to prove the validity of Sarnak conjecture for
Veech systems. Unfortunately, this theorem does not say anything about the validity of M\"{o}bius disjointness for the simpler example
\ref{Namioka}. Furthermore, even if that theorem is improved to establish M\"{o}bius disjointness for systems with only countably many
ergodic measures with discrete spectrum, it still does not say anything about example \ref{Namioka}. In addition, one also observes
that the results of a recent paper \cite{FN} do not apply to our example to validate `logarithmic M\"{o}bius disjointness'. On the other
hand it is easy to check that this example satisfy M\"{o}bius disjointness conjecture. Notice further that the results of a recent
paper \cite{FN} do not apply to the graph maps and dendrites maps.

In the forthcoming paper \cite{AM2}, the authors proved that Sarnak's M\"{o}bius disjointness holds if each invariant measure has a singular
spectrum. Therefore, it suffices to establish that the conjecture holds only for the system for which invariant measures that have a
Lebesgue component. We further establish that the spectral measure of the M\"{o}bius function is absolutely continuous with respect
to the Lebesgue measure. We would like also to point out that therein the authors present a `dissection of M\"{o}bius flow \`a la Veech'
and use the result of Rokhlin-Sinai which assert that for any dynamical system with positive entropy has the relatively Kolmogorov
property with respect to Pinsker algebra. This was also observed and popularized by Jean-Paul Thouvenot \cite{Th}.  Accordingly, now it is obvious
to deduce that the dynamical system $(x,y) \in \T \mapsto (x,x+y)$ can not be a factor of the M\"{o}bius flow.
\end{rem}

\noindent {\bf Improving Motohashi-Ramachandra estimate.}

\medskip

Here, we will gives a simple argument which yields a slight improvement of an old result of Motohashi-Ramachandra \cite{Mo}, \cite{Ram}
on the behavior of Mertens function $M(x)\equaldef\sum_{n \leq x}\bmu(n)$ on the short interval. We start by recalling Motohashi-Ramachandra's result.

\smallskip

\begin{lem}[Motohashi-Ramachandra's theorem \cite{Mo}, \cite{Ram}]\label{MoRam} The Mertens function satisfy, 
$$
\big|M(x+h)-M(x)\big|=o(h)\,,
$$
uniformly in $h$ satisfying $x^{\tau} \leq  h \leq x$, whenever $\tau > \frac{7}{12}$. 
\end{lem}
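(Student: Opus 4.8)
The plan is to prove this by the classical analytic method: express the short-interval sum $M(x+h)-M(x)=\sum_{x<n\le x+h}\bmu(n)$ as a contour integral of $1/\zeta(s)$ via Perron's formula, and then bound that integral using the zero-free region together with a zero-density estimate for $\zeta$. This is the exact analogue, with $1/\zeta$ in place of $\zeta'/\zeta$, of Huxley's method giving $\psi(x+h)-\psi(x)\sim h$ for $h\ge x^{7/12+\eps}$; the threshold $\tau>7/12$ will come out of the same optimization. (Alternatively one may simply invoke the original arguments of Motohashi and Ramachandra, \cite{Mo}, \cite{Ram}.)

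First I would apply the truncated Perron formula with abscissa $c=1+1/\log x$ and a truncation height $T$ to be chosen slightly larger than $x/h$, obtaining
$$
M(x+h)-M(x)=\frac{1}{2\pi i}\int_{c-iT}^{c+iT}\frac{(x+h)^s-x^s}{s}\,\frac{ds}{\zeta(s)}+E,
$$
where $E$ is the standard truncation error, which is $o(h)$ for the chosen $T$. The source of the saving is the elementary bound, for $s=\sigma+it$,
$$
\frac{(x+h)^s-x^s}{s}=\int_x^{x+h}u^{s-1}\,du,\qquad \Big|\tfrac{(x+h)^s-x^s}{s}\Big|\le\min\Big(h\,x^{\sigma-1},\ \tfrac{2x^\sigma}{|t|}\Big):
$$
on a vertical line with $\sigma<1$ the factor $x^{\sigma-1}$ is $o(1)$, and the second bound forces the integral to localize near $|t|\asymp x/h$.

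Next I would shift the line of integration leftward to a line $\sigma=\sigma_0$ just to the right of $\tfrac12$. Since $\zeta$ has a pole (hence $1/\zeta$ a zero) at $s=1$, and the apparent pole of the integrand at $s=0$ is removable, the only obstructions are the nontrivial zeros of $\zeta$. Here the argument departs from the prime-counting case: because $M(x)$ has no clean explicit formula (the formal residues carry the unwieldy factor $1/\zeta'(\rho)$), I would not sum residues but instead bound the shifted integral and the horizontal connectors directly by the density method, splitting the strip $\tfrac12\le\sigma\le1$ into $O(\log x)$ horizontal boxes and $|t|$ into dyadic ranges, and controlling each piece by the number of zeros it can contain, $N(\sigma,V)=\#\{\rho=\beta+i\gamma:\beta\ge\sigma,\ |\gamma|\le V\}$, together with mean-value bounds for $1/\zeta$ off the zeros, weighted by the two bounds above.

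The key input, and the main obstacle, is the zero-density estimate. Feeding in Huxley's bound $N(\sigma,V)\ll V^{(12/5)(1-\sigma)+\eps}$, the controlling terms have the shape $h\,x^{\sigma-1}N(\sigma,x/h)$; writing $h=x^{\tau}$ this is
$$
h\,x^{\sigma-1}(x/h)^{(12/5)(1-\sigma)}=h\,\Big[x^{(7-12\tau)/5}\Big]^{\,1-\sigma},
$$
and the bracket is a strictly negative power of $x$ precisely when $\tau>\tfrac{7}{12}$ (the exponent $\tfrac{5}{12}$ being the reciprocal of the density exponent $\tfrac{12}{5}$). Combined with the classical (or Vinogradov--Korobov) zero-free region, which restricts the relevant range to $\sigma\le 1-c(\log x)^{-2/3}$ and thereby raises that negative power of $x$ to a positive power of the zero-free-region width, the whole contribution becomes $\exp(-c'(\log x)^{1/3})\cdot h=o(h)$, uniformly for $x^{\tau}\le h\le x$; the analogous dyadic sum from the second factor bound $2x^\sigma/|t|$ is handled the same way. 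I expect the delicate bookkeeping of this summation over zeros — balancing the density estimate against the zero-free region across the whole critical strip, and verifying that the thresholds match at $\tau=\tfrac{7}{12}$ — to be the technical heart of the matter, whereas the Perron setup and the error estimates are routine.
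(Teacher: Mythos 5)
You should first be aware that the paper does not prove Lemma \ref{MoRam} at all: it is imported verbatim from the literature, with attribution to \cite{Mo} and \cite{Ram}, and the paper's own contribution in this direction (Theorem \ref{Motohoshi-ext}) is obtained by an entirely different, dynamical route --- tameness of the Veech system $(X_f,\Z)$ and M\"{o}bius disjointness --- precisely so as to avoid this analytic machinery. So there is no internal proof to compare against; the fair comparison is with the original papers, and your sketch does follow their classical method (Ramachandra explicitly via the Hooley--Huxley contour, as the paper itself notes, Motohashi by a closely related zero-density argument). Your identification of the inputs is correct and the exponent arithmetic checks out: with Huxley's bound $N(\sigma,V)\ll V^{(12/5)(1-\sigma)+\eps}$ the bracket $x^{(7-12\tau)/5}$ is a negative power of $x$ exactly when $\tau>\frac{7}{12}$, and intersecting with the zero-free region converts this into a saving of the form $\exp(-c(\log x)^{1/3})$, which is the $o(h)$ of the statement.

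That said, your plan has a genuine gap at its center, namely the contour shift. For $\zeta'/\zeta$ one has a partial-fraction expansion over the zeros, so ``controlling each box by the number of zeros it contains'' is meaningful; for $1/\zeta$ there is no such expansion. The integrand $1/\zeta$ has a pole at every nontrivial zero, so a straight-line shift to $\sigma_0$ just to the right of $\frac12$ is not even legitimate, and away from the zeros one needs pointwise \emph{lower} bounds for $|\zeta|$, which the counting function $N(\sigma,V)$ does not supply. This is exactly why Ramachandra works on the Hooley--Huxley contour: a path that moves leftward only through boxes certified zero-free by the density estimate, staying $\gg 1/\log T$ away from every zero, where Borel--Carath\'{e}odory/Jensen-type arguments then bound $1/\zeta$. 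Your phrase ``mean-value bounds for $1/\zeta$ off the zeros'' is the entire proof hiding in a clause; you are right to call it the technical heart, but as written the argument would not run without that contour construction. Since the paper itself quotes the lemma rather than proving it, identifying the correct classical machinery is arguably all that can be asked --- but the missing step should be named as the Hooley--Huxley contour (or an equivalent device), not as routine bookkeeping.
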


\smallskip

However, let us mention that in the same year, using the so-called Hooley-Huxley contour, K. Ramachandra obtain the following estimations.

\begin{lem} [Ramachandra's theorem \cite{Ram}] The Mertens function satisfy, for any $A>0$, 	
\begin{align}
\sum\limits_{x \leq n \leq x+h}\bmu(n) & = O_{\eps , A}\Big(\frac{h}{\log(x)^A} + x^{\frac7{12}+\eps }\Big)\quad \text {and}\,, \textrm{as~~}x \longrightarrow +\infty \label{R1} \\
\frac1{X}\int_{X}^{2X}\Big|\sum\limits_{x \leq n \leq x+h}\bmu (n)\Big|^2 dx & = O_{\eps ,A}\Big(\frac{h}{\log(X)^A} +  X^{\frac1{6}+\eps }\Big)\,, \textrm{as~~}X \longrightarrow +\infty \,.\label{R2}
\end{align}
\end{lem}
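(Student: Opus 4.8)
The plan is to treat both estimates by the classical Perron/contour-integration method for $1/\zeta(s) = \sum_{n\ge 1}\bmu(n)n^{-s}$, with the second estimate refined by an extra averaging in $x$. First I would fix $c = 1 + 1/\log x$ and a truncation height $T$ (to be optimized), and apply the truncated Perron formula to the Dirichlet series $1/\zeta$, writing
\[
\sum_{x \le n \le x+h}\bmu(n) = \frac{1}{2\pi i}\int_{c-iT}^{c+iT}\frac{(x+h)^s - x^s}{s}\,\frac{ds}{\zeta(s)} + E(x,h,T),
\]
where $E(x,h,T)$ is the standard Perron error, absorbed into the claimed error terms by choosing $T$ a small power of $x$. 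The point of the difference kernel $\big((x+h)^s - x^s\big)/s = \int_x^{x+h}u^{s-1}\,du$ is that for $\sigma=\Re s\le 1$ it has size $\ll \min\!\big(h\,x^{\sigma-1},\,x^{\sigma}/|t|\big)$, so it carries a genuine saving of size $h$ over the full Mertens sum $M(x)$.

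The core of (\ref{R1}) is a contour shift to the left, and here I would use exactly the Hooley--Huxley contour already alluded to in the statement: rather than a straight vertical segment, one chooses a path that, guided by a zero-density estimate $N(\sigma,T) \ll T^{B(1-\sigma)}(\log T)^{c}$ for the nontrivial zeros of $\zeta$, stays as far to the left of $\sigma = 1$ as possible while threading between the zeros, so that $1/\zeta(s)$ admits a power-of-log bound along the path. The integral then splits into two regimes: the portion where $\sigma$ is pinned near $1$, controlled by the classical zero-free region and producing the main saving $\ll h/(\log x)^A$ for any fixed $A$; and the portion where the contour is forced toward the critical line, controlled by Huxley's density theorem together with a convexity bound for $1/\zeta$. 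Balancing the height $T$ against $h$ and optimizing the contour is precisely what yields the threshold exponent in the second error term $x^{7/12+\eps}$.

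For the mean-square bound (\ref{R2}) I would start from the same contour representation and then integrate $|\cdot|^2$ over $x \in [X,2X]$. Expanding the square and interchanging the $x$-integral with the double contour integral, the averaging in $x$ produces a kernel that is negligible unless the two spectral parameters (the ordinates $t_1,t_2$ on the shifted lines) are close; this near-diagonal concentration replaces the pointwise convexity input by a mean-value (second-moment) estimate for $1/\zeta$ on vertical lines. It is exactly this extra orthogonality coming from the averaging that improves the exponent from $7/12$ to $1/6$, giving $X^{1/6+\eps}$.

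The hardest step, in both parts, will be the contour and zero-density bookkeeping. For (\ref{R1}) the delicate point is constructing the Hooley--Huxley contour and combining Huxley's zero-density theorem with the subconvexity/convexity bounds for $\zeta$ so that the cumulative contribution of the zeros is $\ll x^{7/12+\eps}$ after optimization; obtaining the sharp constant $7/12$ rather than a weaker exponent is where the full strength of the density estimate is needed. For (\ref{R2}) the analogous obstacle is establishing the mean-value estimate that quantifies the near-diagonal cancellation and verifying that the off-diagonal terms are genuinely of lower order, which is what powers the passage to the exponent $1/6$.
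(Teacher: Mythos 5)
The paper itself contains no proof of this lemma: it is quoted directly from Ramachandra's 1976 paper \cite{Ram}, and the only methodological remark the authors make is that Ramachandra used ``the so-called Hooley--Huxley contour.'' So the comparison here is really between your outline and the cited literature. On that score your plan follows the correct and standard route: truncated Perron's formula for $1/\zeta$, the difference kernel $\bigl((x+h)^s-x^s\bigr)/s$ with the bound $\ll\min\bigl(h\,x^{\sigma-1},x^{\sigma}/|t|\bigr)$, a contour shift along the Hooley--Huxley path governed by a zero-density estimate, and an extra averaging in $x$ for the mean-square statement. Your arithmetic of exponents is also consistent with the truth: Huxley's density theorem $N(\sigma,T)\ll T^{\frac{12}{5}(1-\sigma)}(\log T)^{C}$ yields the pointwise threshold $\tfrac{7}{12}=1-\tfrac{5}{12}$ in \eqref{R1}, and the mean-square averaging doubles the saving, $\tfrac{1}{6}=1-2\cdot\tfrac{5}{12}$, which is the exponent in \eqref{R2}.

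That said, judged as a proof rather than as a research plan, the proposal has a genuine gap, and you identify it yourself: the entire content of the lemma is the pair of exponents $\tfrac{7}{12}$ and $\tfrac16$, and both are manufactured exactly in the steps you defer. You do not construct the Hooley--Huxley contour, do not prove a bound for $1/\zeta(s)$ along it (this requires combining the zero-density input with a Vinogradov--Korobov-type zero-free region and log-power bounds, and is the technical heart of Ramachandra's paper), and do not establish the second-moment estimate that justifies the claimed ``near-diagonal concentration'' after expanding the square in \eqref{R2}. Without these, no inequality has actually been derived; what you have is an accurate map of where the difficulty lies. Since the paper treats the lemma as an external citation, this does not put you at odds with the paper --- but it does mean the proposal, as written, proves neither \eqref{R1} nor \eqref{R2}.
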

In his 2016's paper \cite{V2}, Professor W. Veech observes that no progress was made on the behavior of Mertens function in the short interval since Motohashi and
Ramachandra original papers. It turns out that in the same year, Matomaki-Radzwi\l{}l in \cite{MR} improved \eqref{R2} by establishing that for any  $\varepsilon>0$
and $h \leq X^{\varepsilon}$, we have

\begin{align}\label{MR}
	\frac1{X}\int_{X}^{2X}\Big|\sum\limits_{x \leq n \leq x+h}\bmu (n)\Big|^2 dx & =o(Xh^2).
\end{align}
 
\noindent{}Notice that it is easy to obtain the following corollary from Motohashi-Ramachandra's theorem.

\smallskip

\begin{cor} Let $(x_n)$ a sequence of positive real numbers and $\tau> \frac{7}{12}.$  Suppose that $x_n+(x_{n+1}-x_n)^\tau \leq x_{n+1}\leq 2x_n$,
for a large $n$. Then,
$$
\sum_{k=1}^{n}\big|M(x_{k+1})-M(x_k)\big|=o(x_{n+1})\,.
$$
\end{cor}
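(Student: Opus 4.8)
The plan is to apply the Motohashi--Ramachandra estimate (Lemma \ref{MoRam}) to each block $[x_k,x_{k+1}]$ separately and then telescope. Write $h_k=x_{k+1}-x_k$. The hypothesis $x_{k+1}\le 2x_k$ gives $h_k\le x_k$, while the spacing condition forces $h_k\ge x_k^{\tau}$; together these place the pair $(x_k,h_k)$ in the admissible range $x_k^{\tau}\le h_k\le x_k$ of Lemma \ref{MoRam}. Note also that $x_{k+1}\ge x_k+x_k^{\tau}>x_k$, so the sequence is strictly increasing with $x_k\to\infty$; since the spacing hypothesis is assumed only for large $n$, it can fail for at most finitely many indices.

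The key point is that the $o(h)$ in Lemma \ref{MoRam} is \emph{uniform} in $h$ over the admissible range. Thus, given $\eps>0$, there is a single threshold $X_0$ such that whenever $x\ge X_0$ and $x^{\tau}\le h\le x$ one has $\big|M(x+h)-M(x)\big|<\eps h$. Since $x_k\to\infty$, only finitely many indices $k$ satisfy $x_k<X_0$ or fail the spacing hypothesis; denote by $C_\eps$ the partial sum $\sum\big|M(x_{k+1})-M(x_k)\big|$ taken over this finite exceptional set. The quantity $C_\eps$ depends on $\eps$ but not on $n$.

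For the remaining indices we use $\big|M(x_{k+1})-M(x_k)\big|<\eps h_k$ and telescope:
$$
\sum_{k=1}^{n}\big|M(x_{k+1})-M(x_k)\big|\le C_\eps+\eps\sum_{k=1}^{n}h_k=C_\eps+\eps\,(x_{n+1}-x_1)\le C_\eps+\eps\,x_{n+1}.
$$
Dividing by $x_{n+1}$ and letting $n\to\infty$ (so that $x_{n+1}\to\infty$ and $C_\eps/x_{n+1}\to 0$) gives $\limsup_{n}x_{n+1}^{-1}\sum_{k=1}^{n}\big|M(x_{k+1})-M(x_k)\big|\le\eps$. Since $\eps>0$ is arbitrary, this $\limsup$ is $0$, which is precisely the asserted bound $\sum_{k=1}^{n}\big|M(x_{k+1})-M(x_k)\big|=o(x_{n+1})$.

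I expect the only genuinely delicate step to be the correct use of uniformity: one must extract a single threshold $X_0$ valid for every block simultaneously, so that the contribution of the finitely many exceptional small-$x_k$ blocks is an $n$-independent constant $C_\eps$ which is then swamped by $x_{n+1}\to\infty$. Everything else reduces to the elementary telescoping identity $\sum_{k=1}^{n}h_k=x_{n+1}-x_1$.
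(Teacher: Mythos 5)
Your overall strategy --- apply the Motohashi--Ramachandra estimate blockwise with its stated uniformity, extract a single threshold $X_0$ valid for all blocks, absorb the finitely many exceptional blocks into an $n$-independent constant $C_\varepsilon$, and telescope $\sum_k h_k = x_{n+1}-x_1$ --- is exactly the ``easy'' argument the paper has in mind (the paper states this corollary with no proof at all, saying only that it is easy to obtain from Lemma \ref{MoRam}), and those parts of your write-up are correct. The genuine problem is the step where you assert that ``the spacing condition forces $h_k\ge x_k^{\tau}$.'' It does not. Writing $h_k = x_{k+1}-x_k$, the printed hypothesis $x_k+(x_{k+1}-x_k)^{\tau}\le x_{k+1}$ reads $h_k^{\tau}\le h_k$, which for $\tfrac{7}{12}<\tau<1$ is equivalent to the vastly weaker condition $h_k\ge 1$; it says nothing that relates $h_k$ to $x_k^{\tau}$. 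For instance $x_k=100$, $x_{k+1}=101$ satisfies both printed hypotheses, yet $h_k=1<100^{\tau}$, so the pair $(x_k,h_k)$ is nowhere near the admissible range of Lemma \ref{MoRam}.

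In fact the corollary as literally printed is false: the sequence $x_k=k$ satisfies $x_k+(x_{k+1}-x_k)^{\tau}=k+1=x_{k+1}\le 2x_k$ for all $k\ge 1$, yet $\sum_{k=1}^{n}\big|M(x_{k+1})-M(x_k)\big|=\sum_{k=1}^{n}|\bmu(k+1)|\sim \tfrac{6}{\pi^2}\,n$, which is not $o(x_{n+1})$. So the hypothesis must be a misprint for a condition that actually guarantees $x_k^{\tau}\le h_k\le x_k$, e.g.\ $x_k+x_k^{\tau}\le x_{k+1}\le 2x_k$; that corrected reading is the one your proof silently uses, and under it your argument is complete and correct. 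The repair needed is therefore expository rather than mathematical: you should state explicitly that the printed condition only yields $h_k\ge 1$, that the statement fails under that literal reading, and that you are proving the corrected statement in which $h_k\ge x_k^{\tau}$ --- rather than claiming, incorrectly, that the printed condition implies this lower bound.
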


\smallskip
\noindent{}One can state similar corollary by applying \eqref{MR}.\\

\noindent{} Here, our Theorem (\ref{Sarnak-for-Veech}) showing that Sarnak conjecture holds for $(X_f,\Z )$, for $f\in \K(\Z)$, will allow us to obtain
a stronger result, namely the following.

\smallskip  

\begin{thm}\label{Motohoshi-ext} Let $(x_n)$ a sequence of positive real numbers such that $x_{n+1}-x_n\to +\infty$ as $n\to +\infty$. Then,
$$
\sum_{k=1}^{n}\big|M(x_{k+1})-M(x_k)\big|=o(x_{n+1})\,.
$$
\end{thm}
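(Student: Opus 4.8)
The plan is to realize $\sum_{k=1}^{n}|M(x_{k+1})-M(x_k)|$ as a single M\"obius-twisted average of a carefully chosen $\K(\Z)$-function, and then to invoke the validity of Sarnak's disjointness for Veech systems established in Theorem \ref{Sarnak-for-Veech}. First I would put $m_k=\lfloor x_k\rfloor$ and choose signs $\eps_k=\sgn\big(M(x_{k+1})-M(x_k)\big)\in\{-1,+1\}$ (setting $\eps_k=+1$ when the difference vanishes, which is harmless), and then define $\phi:\Z\to\{-1,0,1\}$ by $\phi(j)=\eps_k$ for $m_k<j\le m_{k+1}$ and $\phi(j)=0$ otherwise. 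This is precisely a block-constant function of the type $f^{(S,\bar\eps)}$ from the example, with break-points $S=\{m_k\}$, and the hypothesis $x_{k+1}-x_k\to+\infty$ forces the block lengths $m_{k+1}-m_k$ to diverge. Because the blocks eventually swamp any fixed window, every pointwise limit of translates of $\phi$ is either a translate of $\phi$, a constant sequence, or a sequence with a single jump; hence the orbit closure $X_\phi$ is countable, in particular norm-separable, so $\phi\in\K(\Z)$.

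Next I would apply Sarnak's M\"obius disjointness to $(X_\phi,\Z)$. Since $\Z$ is countable and amenable and $\phi\in\K(\Z)$, Theorem \ref{Sarnak-for-Veech} shows that $(X_\phi,\Z)$ is a tame (strongly) Veech system for which disjointness holds. Testing against the continuous evaluation functional $g:X_\phi\to\R$, $g(\omega)=\omega(0)$, at the base point $\phi$ itself, and using $g(T^n\phi)=\phi(n)$, gives
$$
\frac1N\sum_{n=1}^{N}\bmu(n)\,\phi(n)\ \longrightarrow\ 0\qquad (N\to\infty).
$$
It then remains to unwind the left-hand side. Since $\phi$ vanishes for $j\le m_1$ and equals $\eps_k$ on $(m_k,m_{k+1}]$, and since the Mertens function is constant on each interval $[m,m+1)$ so that $M(m_k)=M(x_k)$, grouping the sum into blocks yields
$$
\sum_{j=1}^{m_{n+1}}\bmu(j)\,\phi(j)=\sum_{k=1}^{n}\eps_k\big(M(x_{k+1})-M(x_k)\big)=\sum_{k=1}^{n}\big|M(x_{k+1})-M(x_k)\big|,
$$
by the choice of $\eps_k$. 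Evaluating the disjointness estimate at $N=m_{n+1}$, which tends to infinity since $x_n\to\infty$, then gives $\sum_{k=1}^{n}|M(x_{k+1})-M(x_k)|=o(m_{n+1})=o(x_{n+1})$.

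The conceptual heart of the argument is the sign trick: by absorbing $\eps_k=\sgn(M(x_{k+1})-M(x_k))$ into the defining signs of $\phi$, the sum of \emph{absolute} differences is linearized into an ordinary M\"obius average, onto which the entire analytic weight of the theorem is transferred via Sarnak's disjointness. The one genuine point to verify is that $\phi$ indeed lies in $\K(\Z)$, and this is exactly where the hypothesis $x_{k+1}-x_k\to+\infty$ enters essentially: it forces the block lengths to diverge, so that $X_\phi$ is countable and the system tame, making Theorem \ref{Sarnak-for-Veech} applicable. The remaining steps — the integer-part bookkeeping and the block summation — are routine, and the gain over Motohashi--Ramachandra is precisely that no quantitative lower bound on the growth of $x_{k+1}-x_k$ is required, only that it tends to infinity.
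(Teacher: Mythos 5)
Your proposal is correct and follows essentially the same route as the paper: the identical sign trick $\eps_k=\sgn\big(M(x_{k+1})-M(x_k)\big)$, the same block-constant function built on the partition points $x_k$, membership in $\K(\Z)$ via the divergence of block lengths, and the same appeal to Theorem \ref{Sarnak-for-Veech} followed by block summation. If anything, your write-up is slightly more careful than the paper's (explicit integer-part bookkeeping with $m_k=\lfloor x_k\rfloor$, and an explicit countability argument for the orbit closure, which also accounts for the single-jump limit sequences), but the argument is the same.
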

\begin{proof} Let $k\in \N$ and put 
\[
\epsilon_k=\begin{cases}

\sgn\big(M(x_{k+1})-M(x_k)\big) &\text{if }  M(x_{k+1})-M(x_k) \neq 0,\\

1, &\text{if ~not,} 

\end{cases}
\]
where $\sgn(x)=\frac{x}{|x|},$ for $x \neq 0$. Now, define a sequence $f = f_{(\eps_k)}$ by
$$
f(n)=\sum_{k \geq 1}\epsilon_k\1_{[x_k,x_{k+1})}(n)\,.
$$
Clearly $f$ is in $\ell^{\infty}(\Z )$ and as shown before, $f\in K(\Z)$. Since Sarnak's conjecture
holds for $(X_f,\Z)$,
\begin{equation}\label{eq1}
\sum\limits_{k=x_1}^{x_{n+1}}\bmu (k)f(k) = o(x_{n+1})\,.
\end{equation}
But
\begin{align}\label{eq2}
\sum\limits_{k=x_1}^{x_{n+1}}\bmu(k)f(k) & = \sum_{j=1}^{n}\sum_{k=x_j}^{x_{j+1}}\bmu(k)f(k) \notag \\
& = \sum_{j=1}^{n}\epsilon_j\sum_{k=x_j}^{x_{j+1}}\bmu(k)\notag \\
& = \sum_{j=1}^{n}\sgn\big(M(x_{j+1})-M(x_j)\big)\sum_{k=x_j}^{x_{j+1}}\bmu(k) \notag \\
& = \sum_{j=1}^{n}\Big| M(x_{j+1})-M(x_j) \Big|\,.	
\end{align}
The last inequalities follows from the definition of $(\epsilon_k)$ and $M$. Combining \eqref{eq1} and \eqref{eq2},
we obtain the desired estimation, and the proof of the proposition is complete.\footnote{This result can be obtained also as
a consequence of Matomaki-Radzwi\l\l's result \cite{MR}. However, our proof avoid the heavy analytic Number Theory machinery. }
\end{proof}

\smallskip
 \begin{rem} Note that once we show that translation flow on Veech function is tame, validity of Sarnak conjecture for this flow follows from to Theorem 2.1 from \cite{HWY}. It is a common misunderstanding that one uses the work of Motomaki-Radzwi\l\l \cite{MR} for this purpose. It is a result of Matomaki-Radzwi{\l}{\l}-Tao on the validity of averaged form of Chowla of order two, (see \cite{MRT}) was used in \cite{HWY} for the proof of Sarnak conjecture for systems for which every invariant measure has discrete spectrum. Let us notice further that this improvement can be  obtained also as a consequence of Matomaki-Radzwi{\l}{\l}'s result \cite{MR}. However, our proof avoid the heavy analytic Number Theory machinery.  We point out that the Motomaki-Radzwi{\l}{\l}-Tao result on the validity of averaged form of Chowla of order two, (in \cite{MRT} ), does not need a more elaborate machinery of analytic number theory like the result of Motomaki-Radzwi\l\l in \cite{MR}  as it is shown in the appendix. Indeed, the only ingredient needed for the proof is Davenport estimate. Thus, even though Theorem \ref{Motohoshi-ext} can be derived also from Matomaki-Radzwi{\l}{\l}  result of \cite{MR}, our approach considerably reduces the input from Number Theory.
	
Having said this, we also point out that our proof of Sarnak conjecture for systems with singular spectrum in reference \cite{AM2} bypasses even Motom\"{a}ki-Radzwi{\l}{\l}-Tao and makes our approach to Mertens's growth far more dynamical/ergodic-theoretic with only a minimal number theory input, (namely, uses only Davenport estimate).
\end{rem}
\smallskip
\noindent{}The previous result can be improved by assuming Chowla conjecture which asserts that for any distinct integers $s_1,s_2,\cdots,s_k$,
$k \geq 1$, we have
$$
\frac{1}{N}\sum_{n=1}^{N}\bml(n+s_1)\cdots\bml(n+s_k) \tend{N}{+\infty}0.
$$
It follows that for any distinct integers $a_1,a_2,\cdots,a_k$, $k \geq 1$, we have
$$
\frac{1}{N}\sum_{n=1}^{N}\bml^{a_1}(n+s_1)\cdots\bml^{a_k}(n+s_k) \tend{N}{+\infty}0\,.
$$

\noindent{}We thus get that $\bml$ is normal, that is, generic for the Bernoulli measure $dB(1/2)=\ds \otimes_{k \in \N}
(\frac12 \delta_{1}+\frac12 \delta_{-1})$. \\

\noindent{}Let $X=\{-1,1\}^{\N}$ and $X_{\bml}$ be the orbit generated by $\bml$ under the shift map $S~~:~~\omega \mapsto  
S(\omega)=(\omega{(n+1)})$. For any $\omega \in X_{\bml}$, we define the random Mertens function  by
$$
M_{\omega}(x)=\sum_{n \leq x}\omega(n)\,.
$$

\smallskip

\begin{defn} Let $f$ be an arithmetic function $(f:\N \longrightarrow \Cc)$ and $\tau_0 \geq 0$. $f$ is said to satisfy Motohashi-Ramachandra
property of order $\tau_0$ if for any $\tau>\tau_0$, we have
$$
\sum_{x}^{x+h}f(n)=o(h)\,,
$$
uniformly in $h \geq x^{\tau}$, that is, there is a sequence $(\delta_x)$ such that $(\delta_x) \tend{x}{+\infty}0,$ and there exist a constant
$C_\tau>0$ for which we have
$$
\sup_{x^{\tau} \leq h \leq x}\Big| \frac{1}{h} \sum_{x}^{x+h}f(n) \Big| \leq  C_\tau \delta_x\,.
$$
\end{defn} 

\smallskip

At this point, let us point out that Motohashi \cite{Mo} and independently Ramanchandra \cite{Ram} proved that the M\"{o}bius function
satisfy their property for $\tau_0=\frac{7}{12}$(Lemma \ref{MoRam}). Later, T. Zhan extended Motohashi-Ramachandra's result by proving
that the estimation of Davenport holds on the short interval \cite{Zhan}, that is,
$$
\sup_{x^{\tau} \leq h \leq x}\Big| \frac{1}{h} \sum_{x}^{x+h}\bmu(n) e^{i n \theta} \Big| \leq  \frac{C_{\tau,\epsilon}}{\log(x)^\epsilon}\,,
$$
for any  $\tau>\frac{5}{8}$ and $\epsilon>0$, uniformly in $\theta$. For the quadratic case, J. Liu and T. Zhan improved Hua's result on the
sum of two primes and a prime square in \cite{Hua} by establishing that the bound can be $\frac{11}{16}$ and $\frac{2}{3}$ under GRH \cite{LZ}.
Besides, D. Hajela and J. Smith conjectured, (see \cite{HS}), that for any $\tau>\frac12$,
$$
\sup_{\theta}\Big| \frac{1}{x^{\tau}} \sum_{ n \leq x}\bmu(n) e^{i n \theta} \Big| \leq  \delta_{\tau,x}\,,
$$
with $\delta_{\tau,x} \tend{x}{+\infty}0,$.\\

When this paper was in final preparation, Igor Shparlinski informed us that Matom\"{a}ki- Ter\"{a}v\"{a}inen. improved the
bounded to $11/20$ \cite{MT}. Here, in the spirit of the dichotomy \`a la Sarnak \cite{Video-Sarnak}, \cite{Asia-Sarnak} we establish the following:

\smallskip

\begin{thm}Under Chowla conjecture, we have for almost all $\omega \in X_{\bml}$, for any $\tau>\frac12$,
$$
\sup_{{x}^\tau \leq h \leq x} \Big|\frac{1}{h}\Big(M_\omega(x+h)-M_\omega(x)\Big)\Big| \tend{x}{+\infty}0\,.
$$
\end{thm}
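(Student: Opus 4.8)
The plan is to reduce the statement to a classical almost-sure estimate for sums of independent Rademacher variables, the bridge being the Chowla conjecture. First I would recall, exactly as in the discussion preceding the theorem, that Chowla forces $\bml$ to be generic for the Bernoulli measure $B(1/2)=\bigotimes_{k\in\N}(\tfrac12\delta_{1}+\tfrac12\delta_{-1})$ on $\{-1,1\}^{\N}$. Consequently the orbit closure $X_{\bml}$ carries $B(1/2)$ as an ergodic (indeed mixing) invariant measure of full support, so that $B(1/2)(X_{\bml})=1$, and the phrase \emph{almost all} $\omega\in X_{\bml}$ is to be read with respect to $B(1/2)$. Under this measure the coordinate maps $\omega\mapsto\omega(n)$, $n\ge 1$, are precisely i.i.d. symmetric $\pm1$ random variables; hence $M_\omega(x)=\sum_{n\le x}\omega(n)$ is an ordinary mean-zero random walk, and $M_\omega(x+h)-M_\omega(x)=\sum_{x<n\le x+h}\omega(n)$ is one of its block increments.

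The heart of the argument is then a single maximal bound on $M_\omega$. I would invoke the law of the iterated logarithm for bounded, i.i.d., mean-zero variables, which provides one set of full $B(1/2)$-measure on which
\[
|M_\omega(N)| \le C\sqrt{N\log\log N}\qquad\text{for all large }N,
\]
with an absolute constant $C$. An entirely elementary substitute, amply sufficient here, is Hoeffding's inequality applied to $M_\omega(N)$ together with Borel--Cantelli along a sequence $\eps_k\downarrow 0$, which yields $|M_\omega(N)|=O(N^{1/2+\eps_k})$ eventually for every $k$ on one full set. Either way one secures, on a single event of full $B(1/2)$-measure and simultaneously for all large $N$, a bound of the form $|M_\omega(N)|=O(\sqrt{N\log\log N})$.

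With this in hand the conclusion is immediate and, crucially, uniform in $h$. Fixing $\omega$ in the full set and letting $x\to\infty$, for every $h$ with $x^{\tau}\le h\le x$ we have $x+h\le 2x$, so by the triangle inequality and the maximal bound
\[
\big|M_\omega(x+h)-M_\omega(x)\big| \le |M_\omega(x+h)| + |M_\omega(x)| = O\!\big(\sqrt{x\log\log x}\big),
\]
with the implied constant independent of $h$. Dividing by $h\ge x^{\tau}$ gives
\[
\sup_{x^{\tau}\le h\le x}\frac{1}{h}\big|M_\omega(x+h)-M_\omega(x)\big| = O\!\big(x^{1/2-\tau}\sqrt{\log\log x}\big),
\]
which tends to $0$ as $x\to\infty$ precisely because $\tau>\tfrac12$. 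Since the underlying full-measure event does not depend on $\tau$, the conclusion holds for \emph{every} $\tau>\tfrac12$ on the same set of $\omega$, as required.

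The only genuinely delicate point is conceptual rather than computational: it lies in identifying the correct reference measure, that is, in using Chowla to replace the arithmetic object $\bml$ by a bona fide i.i.d. sequence so that probabilistic tools become available and in checking that $B(1/2)$ has full support in $X_{\bml}$. Once this identification is made, the exponent $\tau>\tfrac12$ is exactly what is needed to overcome the $\sqrt{x}$ size of the random walk, and the uniformity over the entire range $x^{\tau}\le h\le x$ comes for free by bounding the increment through its two endpoint values rather than estimating the block sum directly. Thus the hard part will be the careful justification of the measure-theoretic reduction; the analytic estimate itself is routine.
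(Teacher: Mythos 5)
Your proof is correct, and the measure-theoretic reduction you flag as the delicate point (Chowla $\Rightarrow$ $\bml$ is generic for $B(1/2)$ $\Rightarrow$ under $B(1/2)$, which gives full measure to $X_{\bml}$, the coordinates are i.i.d.\ Rademacher and $M_\omega$ is a mean-zero random walk) is exactly the reduction the paper makes. Where you genuinely diverge is the probabilistic engine. The paper stays in $L^2$: it observes that $(M_\omega(x))$ is a martingale, applies the Doob--Kolmogorov maximal inequality to bound $\bigl\|\sup_{x^\tau\le h\le x}\frac1h|M_\omega(x+h)-M_\omega(x)|\bigr\|_2$ by $(\sqrt2+1)\,x^{1/2-\tau}$, and then upgrades to almost sure convergence via Etemadi's trick: summability of these norms along the geometric subsequence $x=[\rho^y]$, Borel--Cantelli, and finally ``letting $\rho\to1$'' to pass from the subsequence to all $x$. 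You instead invoke a single off-the-shelf almost-sure maximal bound --- the law of the iterated logarithm, or Hoeffding plus Borel--Cantelli --- after which everything is the deterministic triangle inequality $|M_\omega(x+h)-M_\omega(x)|\le|M_\omega(x+h)|+|M_\omega(x)|=O\bigl(\sqrt{x\log\log x}\bigr)$ divided by $h\ge x^\tau$; the paper uses the same endpoint-splitting, but inside an $L^2$ norm. Your route buys an explicit rate $O\bigl(x^{1/2-\tau}\sqrt{\log\log x}\bigr)$, a single full-measure event serving every $\tau>\frac12$ at once, and it sidesteps the subsequence-interpolation step, which is precisely the part the paper leaves sketchy; the paper's route has the virtue of running on second moments and the martingale property alone, i.e.\ on softer information than the i.i.d.-strength input behind the classical LIL, though under full Chowla both inputs are equally available. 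One small precision: what you actually need is $B(1/2)(X_{\bml})=1$, which follows from genericity because the empirical measures along the orbit of $\bml$ are supported on the closed invariant set $X_{\bml}$; full topological support of $B(1/2)$ is not required (though under Chowla it too holds, and in fact $X_{\bml}=\{-1,1\}^{\N}$).
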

%Now, we shall present the proof of Theorem \ref{dicho}.
\begin{proof}We start by noticing that under Chowla conjecture, the system $(X_{\bml},\mathcal{B},S,dB(\frac{1}{2}))$ is a Bernouilli system. Therefore,
the sequence of random Merstens function $(M_\omega(x))$ is a martingale. We thus get, by Doob-Klomogorov inequality 
\begin{eqnarray}
\Big\| \sup_{{x}^\tau \leq h \leq x}\Big|\frac{1}{h}\Big|M_{\omega}(x+h)-M_\omega(x)\Big| \Big\|_2
&\leq& \Big\| \sup_{{x}^\tau \leq h \leq x}\Big|\frac{1}{h}M_\omega(x+h)\Big| \Big\|_2+
\frac{1}{x^{\tau}}\Big\|M_\omega(x)\Big\|_2 \nonumber\\
&\leq& \frac{1}{x^{\tau}}\Big\|M_\omega(2x)\Big\|_2+\frac{1}{x^{\tau}}\Big\|M_\omega(x)\Big\|_2\\
&\leq& \big(\sqrt{2}+1\big)x^{\frac12-\tau}.
\end{eqnarray}	
Now, we  apply Etemadi's trick. Take $\tau>\frac12$ and $x=[\rho^y]$, $\rho>1$, to see that
$$
\sum_{x}  \Big\| \sup_{{x}^\tau \leq h \leq x}\Big|\frac{1}{h}\Big(M_x+h(\omega)-M_x(\omega)\Big)\Big| \Big\|_2<+\infty\,.
$$
This gives that, for almost all $\omega \in X_{\bml}$,
$$
\sup_{{x}^\tau \leq h \leq x}\Big|\frac{1}{h}\Big(M_\omega(x+h)-M_\omega(x)\Big)\Big|\tend{x}{+\infty}0\,.
$$
We finish the proof by letting $\rho \rightarrow 1$.
\end{proof}

\smallskip

\noindent {\bf Conjecture.} We conjecture that for any $\tau>\frac12$,
$$
\big|M(x+h)-M(x)\big|=o(h)\,,
$$
uniformly in $h$ provided $x^{\tau} \leq h \leq x$.

%When this paper was in final preparation, Igor Shparlinski informed us that Matom\"{a}ki- Ter\"{a}v\"{a}inen. improved the bounded to $11/20$ \cite{MT}.\\

\medskip

\noindent {\bf Besicovitch almost periodicity of certain number theoretic functions.}

\medskip

Now, we would like to mention that G. Rauzy pointed out that the square of the M\"{o}bius function is a Besicovitch almost periodic
sequence (i.e. a Besicovitch almost periodic function), (see \cite[p.99]{Rauzy}). Here, let us notice that this fact can be extended to
the analogous number theoretic map in the more general setting of $\B$-free integers. We recall this notion of P. Erd\"{o}s \cite{Er}.

\smallskip

\begin{defn} Let $\B = \{b_k\ |\ k\in \N\} \subset\{n\in \N\ |\ n\geq 2\}$ be a subset of natural numbers which have
the following properties:
\begin{equation} \label{eq:rel-prime}
\text {for all}\ 1\le k < k',\ b_k\ \text {and}\  b_{k'}\ \text {are relatively prime and}\ \sum_{k\ge 1}\frac{1}{b_k} < \infty \,.
\end{equation}
Integers with no factors in $\B$ are called $\B$-free integers and the set of $\B$-free integers will be denoted by the set $\mathbb{B}$.
\end{defn}

\smallskip

Let $\chi_{\mathbb{B}}$ denote the indicator function of the set $\mathbb{B}$. The set of square-free integers is a special case when
$\B$ is the set of all squares primes. L. Mirsky had studied, (see \cite{Mi1}, \cite {Mi2}, \cite {Mi3}), the distribution of patterns
in the characteristic function of \emph{$r$-free numbers}, that is, the numbers which are not divisible by the $r$-th power
of any prime ($r\ge 2$).

To establish that the indicator function of $\B$-free numbers is a Besicovitch sequence, it suffices to prove that the
indicator function $\chi_{m_{\B}}$ of the subset $m_{\B} \setdef \Big\{x | x \equiv 0\ \text{mod}\ b_k\ \text{for some}\ k \geq 1  \Big\}$  is a Besicovitch
sequence. For that let $K \geq 1$ and $\chi_{m_{\B_K}}$ the indicator function of the subset 
$m_{\B_K} \setdef \Big\{x | x \equiv 0\ \text{mod}\ b_k \ \text{for some}\ k \in \big\{1,\cdots, K\big\}\Big\}$. It follows that 

\begin{equation} \label{approxi}
\limsup \frac{1}{N}\sum_{n=1}^{N}|\chi_{m_{\B}}(n)-\chi_{m_{\B_K}}(n)| \leq \sum\limits_{k >K}\frac{1}{b_k} \to 0\,,\ \text {as}\ K\to \infty.
\end{equation}

Furthermore, $\chi_{m_{\B_K}}$ is a periodic function. Taking into account that Mirsky's theorem can be extended to $\B$-free integers ([4]),
(that is, the indicator function of $\B$-free integers is a `generic point' for the Mirsky measure), our Theorem 3.11 shows that the subshift
generated by $\chi_{\mathbb{B}}$ its Mirsky measure has discrete spectrum. This gives a new and simple proof of Cellarosi-Sinai theorem \cite{CS}
and el Abdalaoui-Lema\'{n}czyk-de-la-Rue extension of it \cite{ALR}.\\

We need to point out here that the principal tool in the proof of Mirsky theorem is based on the notion of admissibility.
This notion is  crucial in the studies of the dynamical behavior of $\B$-free systems. It is also fundamental in the
structure of M\"{o}bius flow and the well-know Chowla conjecture. For more details, we refer to \cite{AV}. \\

We recall that the subset $A$ of positive integers is \emph{$\B$-admissible} if for any $k \geq 1$, the image of
$A$ under the maps $x \in \N^* \mapsto \overline{x}\in\Z/b_k\Z$ is proper, that is,
$$
\left|\bigl\{y\in\Z/b_k\Z: \exists n\in A, n=y\ [b_k]\bigr\}\right|<b_k\,.
$$
An infinite sequence $x=(x_n)_{n\in\N^*}\in \{0,1\}^\N$ is said to be \emph{$\B$-admissible} if its \emph{support}
$\{n\in\N^*: x_n=1\}$ is $\B$-admissible. In the same way, a finite block $x_1\ldots x_N\in\{0,1\}^N$ is \emph{$\B$-admissible} if
$\{n\in\{1,\ldots,N\}: x_n=1\}$ is $\B$-admissible.

Let us notice that the approximation of $\chi_{\mathbb{B}}$ by the periodic function $\chi_{m_{\B_K}}$ can not be uniform in the following sense
$$
\limsup_{N}\sup_{k} \Big(\frac{1}{N}\sum_{n=1}^{N}|\chi_{\mathbb{B}}(n+k)-\chi_{m_{\B_K}}(n+k)|\Big)=0\,,
$$
since the flow generated by the indicator function of $\B$-free numbers has a positive topological entropy. We can also
see this directly. Indeed, for any $x >0$, the sequence $\underbrace{00\cdots0}_{[x] \text{~times}}$ is an admissible sequence.
Therefore, for any fixed $x$ there  is a positive density of $k$'s for which $\chi_{\mathbb{B}}(n+k)=0,$  for $n=1,\cdots, [x]$.
Moreover, if $k$ is a multiple of the period $c$ of $\chi_{m_{\B_K}}$, then we have $\chi_{m_{\B_K}}(n+k)=\chi_{m_{\B_K}}(n)$. Thus, we get
\begin{align}
\limsup_{N}\sup_{k} \Big(\frac{1}{N}\sum_{n=1}^{N}| & \chi_{\mathbb{B}}(n+k)-\chi_{m_{\B_K}}(n+k)|\Big) \notag \\
& \geq \limsup_{N}\Big(\frac{1}{N}\sum_{n=1}^{N}|\chi_{m_{\B}}(n+k.c)-\chi_{m_{\B_K}}(n+k.c)|\Big) \notag \\
& \geq \limsup_{N}\Big(\frac{1}{N}\sum_{n=1}^{N}\chi_{m_{\B_K}}(n)\Big) \notag \\
& = \prod_{k=1}^{K}\Big(1-\frac{1}{b_k}\Big)>0\,. \notag 
\end{align}
\textbf{Acknowledgment.}

The authors wish to express their thanks to Ahmed Bouziad for a discussion on the subject. We also thank Eli Glasner and M. Megrelishvili
for their comments and criticism which improved the earlier version of the paper. The first author would like to thanks Igor Shparlinski
for a discussion on the subject. He gratefully thanks the Rutgers University and the Institute of Mathematical Sciences at Chennai for
their hospitality. Both authors would like to express their gratitude to the Vivekananda Institute in Calcutta for their invitation and
hospitality where part of this work was done.

\appendix
\section*{Appendix. On the average Chowla of order two.}
In this short note, by applying a Bourgain's observation, we present a simple proof of Matom\"{a}ki-Radziwi{\l}{\l}-Tao theorem on the average Chowla of order two \cite{MRT} based on Davenport theorem. In their inequality $H$ is allowed to grow very slowly with respect to $X$. Here, for $H=X$ we obtain a bound for the speed of convergence. Notice that this is the only ingredient needed for the proof of the validity of Sarnak conjecture for systems with discrete spectrum in Huang-Wang-Ye's result.
%\begin{multicols}{2}
\begin{thm}\label{Main-Speed1}Let $\bnu$ be a M\"{o}bius or Liouville function.
	Then, for any $N \geq 2$,
	\begin{align*}
	&\frac1{N}\sum_{m=1}^{N}\Big|\frac1{N}\sum_{n=1}^{N} \bnu(n) \bnu(n+m)\Big| 
	\\&\leq \frac{C}{\log(N)^{\kappa}}.
	\end{align*}	
	where $C$ is some positive constant.
\end{thm}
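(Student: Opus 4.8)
The plan is to reduce the $L^1$ average over $m$ to an $L^2$ average by Cauchy--Schwarz, then to recognise the resulting sum of squares of correlation sums as (a piece of) a Parseval identity for a product of two exponential sums, which is finally controlled by Davenport's estimate. Write $S(m) = \sum_{n=1}^N \bnu(n)\bnu(n+m)$ for the inner correlation sum. By Cauchy--Schwarz,
\begin{equation}
\frac1N\sum_{m=1}^N \Big|\frac1N S(m)\Big| \leq \Big(\frac1{N^3}\sum_{m=1}^N |S(m)|^2\Big)^{1/2},
\end{equation}
so it suffices to show $\frac1{N^3}\sum_{m=1}^N |S(m)|^2 \leq C'/\log(N)^{2\kappa}$ for a suitable constant $C'$.

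For the second moment I would introduce the two exponential sums
\begin{equation}
G(\theta) = \sum_{n=1}^{N}\bnu(n)e^{in\theta}, \qquad H(\theta) = \sum_{n=1}^{2N}\bnu(n)e^{in\theta},
\end{equation}
and expand the product $\overline{G(\theta)}H(\theta) = \sum_{n_1=1}^N\sum_{n_2=1}^{2N}\bnu(n_1)\bnu(n_2)e^{i(n_2-n_1)\theta}$ (recall $\bnu$ is real). Collecting the terms with $n_2-n_1=m$, one checks that for every $0\leq m\leq N$ the $m$-th Fourier coefficient of $\overline{G}H$ is exactly $S(m)$; here the asymmetric choice of lengths $N$ and $2N$ is essential, since it guarantees that $n_1$ ranges over the full interval $1\leq n_1\leq N$ for all such $m$, so that no boundary truncation occurs. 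Discarding the remaining Fourier modes, Parseval's identity on the torus $\T$ then gives
\begin{equation}
\sum_{m=1}^{N}|S(m)|^2 \leq \int_{\T}\big|G(\theta)\big|^2\big|H(\theta)\big|^2\,d\theta.
\end{equation}

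This is precisely where Bourgain's observation enters: the $L^4$-type integral is dominated by a supremum times an $L^2$ norm,
\begin{equation}
\int_{\T}|G|^2|H|^2\,d\theta \leq \Big(\sup_{\theta}|H(\theta)|\Big)^2\int_{\T}|G(\theta)|^2\,d\theta \leq N\Big(\sup_{\theta}|H(\theta)|\Big)^2,
\end{equation}
the last step using $\int_{\T}|G|^2 = \sum_{n=1}^N|\bnu(n)|^2 \leq N$. Now Davenport's estimate, valid for both the M\"obius and the Liouville function, bounds $\sup_{\theta}|H(\theta)| = \max_{\theta}\big|\sum_{n\leq 2N}\bnu(n)e^{in\theta}\big| \leq 2N/\log(2N)^A$ for any $A>0$. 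Combining the last three displays yields $\sum_{m=1}^N |S(m)|^2 \leq 4N^3/\log(2N)^{2A}$, hence $\frac1{N^3}\sum_{m=1}^N|S(m)|^2 \leq 4/\log(2N)^{2A}$, and feeding this back into the Cauchy--Schwarz bound proves the theorem with $\kappa=A$ and a suitable absolute constant $C$.

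I expect the only genuine subtlety to be the bookkeeping in the third paragraph: one must verify that $S(m)$ -- whose inner variable $n$ runs up to $N$ while $n+m$ may exceed $N$ -- is captured exactly as a Fourier coefficient of $\overline{G}H$, and not of $|G|^2$, which would instead produce the \emph{truncated} correlation $\sum_{n\leq N-m}\bnu(n)\bnu(n+m)$ together with an uncontrolled boundary error of size $\sim m$ (summing to a non-negligible contribution $\sim N^3$ in the second moment). The asymmetric window $H$ of length $2N$ is exactly what eliminates this boundary term, after which the remainder is a routine application of Cauchy--Schwarz, Parseval, and Davenport's bound.
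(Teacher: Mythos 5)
Your proposal is correct and follows essentially the same route as the paper's own proof: Cauchy--Schwarz to pass to the second moment, the identification of the correlation sums $S(m)$ as Fourier coefficients of a product of two exponential sums with the asymmetric windows $N$ and $2N$ (this is exactly the paper's ``Bourgain observation''), Parseval/Bessel, and finally Davenport's estimate together with the $L^2$ bound on the other factor. The only cosmetic difference is that you place the sup norm on the long sum $H$ and the $L^2$ norm on $G$, while the paper does the reverse; both give the same bound, and your bookkeeping in fact cleans up a harmless normalization slip in the paper's final display.
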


\begin{proof}By Cauchy-Schwarz  inequality, we have	
	\begin{align}\label{CS}
	&\frac1{N}\sum_{m=1}^{N}\Big|\frac1{N}\sum_{n=1}^{N} \bnu(n) \bnu(n+m)\Big|\\
	&\leq
	\Big(\frac1{N}\sum_{m=1}^{N}\Big|\frac1{N}\sum_{n=1}^{N} \bnu(n) \bnu(n+m)\Big|^2\Big)^{\frac{1}{2}},
	\end{align}
	and by Bourgain's observation \cite[equations (2.5) and (2.7)]{BourgainD}, we have
	\begin{align}\label{bo2}
	&\sum_{m=1}^{N}\Big|\frac1{N}\sum_{n=1}^{N} \bnu(n)\bnu(n+m) \lambda^{n+m}\Big|^2 \nonumber\\
	&= \sum_{m=1}^{N}\Big|\int_{\T}\Big(\frac1{N}\sum_{n=1}^{N}
	\bnu(n)z^{-n}\Big)\Big(\sum_{p=1}^{2N} \bnu(p) {\big(\lambda.z\big)}^p\Big) z^{-m} dz\Big|^2 \nonumber \\
	&\leq  \int_{\T}\Big|\frac1{N}\sum_{n=1}^{N} \bnu(n)
	z^{-n}\Big|^2\Big|\sum_{p=1}^{2N} \bnu(p)
	{\big(\lambda.z\big)}^p\Big|^2 dz\\
	&\leq \sup_{z \in \T}\Big(\Big|\frac1{N}\sum_{n=1}^{N} \bnu(n)
	z^{-n}\Big|\Big)^2  \int_{\T}\Big|\sum_{p=1}^{2N} \bnu(p)
	{\big(\lambda.z\big)}^p\Big|^2 dz.
	\end{align}
	The  inequality (1.3) is due to Parseval inequality. Indeed, by putting
	\begin{align*}
	\Phi_N(z)=\Big(\frac1{N}\sum_{n=1}^{N}
	\bnu(n) z^{-n}\Big)\Big(\sum_{p=1}^{2N} \bnu(p)
	{\big(\lambda.z\big)}^p \Big).
	\end{align*}
	We see that for any $m \in \Z$,
	\begin{align*}
	&\widehat{\Phi_N}(m)\\
	&=\int_{\T}\Big(\frac1{N}\sum_{n=1}^{N}
	\bnu(n)z^{-n}\Big)\Big(\sum_{p=1}^{2N} \bnu(p)
	{\big(\lambda.z\big)}^p\Big) z^{-m} dz.
	\end{align*} 
	and
	\begin{eqnarray*}
		&&\sum_{m=1}^{N}\Big|\int_{\T}\Big(\frac1{N}\sum_{n=1}^{N}
		\bnu(n) z^{-n}\Big)\Big(\sum_{p=1}^{2N} \bnu(p)
		{\big(\lambda.z\big)}^p\Big) z^{-m} dz\Big|^2\\
		&=&\sum_{m=1}^{N}\Big|\widehat{\Phi_N}(m)\Big|^2\\
		&\leq& \int_{\T} |\Phi_N(z)|^2 dz.
	\end{eqnarray*}
	Now, by appealing to Davenport Theorem, we get
	\begin{align}\label{bo3}
	&\sum_{m=1}^{N}\Big|\frac1{N}\sum_{n=1}^{N} \bnu(n)\bnu(n+m) \lambda^{n+m}\Big|^2 \nonumber\\
	&\leq 
	\frac{C_\epsilon}{\log(N)^{\epsilon}}
	\end{align} 
	From this, we obtain the desired inequality and the proof is complete.
\end{proof}
%\end{multicols}

\end{document}